\documentclass[centering,11pt,reqno]{amsart}  

\usepackage[utf8]{inputenc}
\usepackage[T1]{fontenc}
\usepackage{amsmath,amsthm}
\usepackage{amsfonts,amssymb}
\usepackage[mathscr]{eucal}
\usepackage{url}
\usepackage{paralist}
\usepackage[colorlinks,cite color=blue,pagebackref=true,pdftex]{hyperref}
\usepackage[margin=2.422cm]{geometry}
\usepackage{tikz}
\usepackage{algpseudocode}

\usepackage{cleveref}

\newtheorem{Itheorem}{Theorem}
\newtheorem{theorem}{Theorem}[section]
\newtheorem*{theorem*}{Theorem}
\newtheorem*{lemma*}{Lemma}
\newtheorem*{prop*}{Proposition}
\newtheorem{lemma}[theorem]{Lemma}
\newtheorem{cor}[theorem]{Corollary}
\newtheorem{prop}[theorem]{Proposition}

\newtheorem{op}{Open Problem}

\newenvironment{customconj}[1]
{\innercustomconj}
{\endinnercustomconj}

\theoremstyle{definition}

\newtheorem{remark}[theorem]{Remark}

\newenvironment{example}
  {\pushQED{\qed}\examplex}
  {\popQED\endexamplex}

\newcommand{\Def}[1]{\textbf{#1}}

\renewcommand{\emptyset}{\varnothing}
\renewcommand{\epsilon}{\varepsilon}
\newcommand{\R}{\mathbb{R}}
\newcommand\C{\mathbb{C}}%
\newcommand{\Rnn}{\R_{\ge 0}}
\newcommand{\Z}{\mathbb{Z}}
\newcommand{\Znn}{\Z_{\ge 0}}
\newcommand{\conv}{\mathrm{conv}}

\newcommand{\Ind}{\mathcal{I}}

\newcommand\Gr[1]{\mathrm{Gr}(#1)}%
\newcommand\Fl[1]{\mathrm{Fl}(#1)}%
\newcommand\T{T}%
\newcommand\Proj{\mathbb{P}}%
\newcommand\rspan{\mathrm{rowspan}}%
\renewcommand\t{\mathbf{t}}%

\newcommand\Baues{\mathrm{Baues}}%
\newcommand\wt{w}%
\newcommand\Nb[2]{\mathrm{Nb}_{#1}(#2)}%

\newcommand\1{\mathbf{1}}%
\newcommand\0{\mathbf{0}}%
\newcommand\rk{\mathrm{rk}}%

\newcommand\MPPl[2]{\Sigma_{#1}(#2)}%
\newcommand\MPP[1]{\Sigma_{\1}(#1)}%

\newcommand\MPv[2]{\Psi_{#1}(#2)}%

\newcommand\LF{\mathcal{L}}%
\newcommand\ov[1]{\overline{#1}}%

\newcommand\Mf[1]{\widehat{#1}}%

\newcommand\SymGrp{\mathfrak{S}}%

\newcommand\PPGKZ{\psi}%
\newcommand\PP{\Pi}%

\newcommand\inner[2]{\langle #1, #2 \rangle}%

\newcommand\kk{{\mathbf{k}}}%
\newcommand\FlagM{\mathscr{F}}%

\newcommand\Build{\mathcal{B}}%
\newcommand\wBuild{\widehat{\Build}}%

\newcommand\UC{\mathcal{U}}%

\newcommand\SYT{\mathrm{SYT}}%
\newcommand\rSYT{\mathrm{rSYT}}%

\newcommand\Forest{\mathcal{F}}%
\newcommand\dForest{\mathrm{desc}_{\Forest}}%

\DeclareMathOperator{\argmax}{argmax}

\date{\today}

\keywords{flag matroids, flag polymatroids, monotone path polytopes, generalized
permutahedra}

\subjclass[2020]{
90C57, %
05A05, %
52B12, %
90C27} %

\parskip=5pt
\parindent=0pt

\begin{document}

\title{Underlying Flag Polymatroids}

\author{Alexander E. Black \and Raman Sanyal}
\address[A.~Black]{Dept.\ Mathematics, Univ. of California, Davis, 
CA 95616, USA}
\email{aeblack@ucdavis.edu}

\address[R.~Sanyal]{Institut f\"ur Mathematik, Goethe-Universit\"at Frankfurt,
Germany} 
\email{sanyal@math.uni-frankfurt.de}

\begin{abstract}
    We describe a natural geometric relationship between matroids and
    underlying flag matroids by relating the geometry of the greedy algorithm
    to monotone path polytopes.  This perspective allows us to generalize the
    construction of underlying flag matroids to polymatroids. 
    We show that the polytopes associated to underlying flag polymatroid are
    simple by proving that they are normally equivalent to certain nestohedra.
    We use this to show that polymatroids realized by subspace arrangements
    give rise to smooth toric varieties in flag varieties and we interpret our
    construction in terms of toric quotients.  We give various examples that
    illustrate the rich combinatorial structure of flag polymatroids. Finally,
    we study general monotone paths on polymatroid polytopes, that relate to
    the enumeration of certain Young tableaux.
\end{abstract}

\maketitle

\section{Introduction}\label{sec:intro}
Many exciting recent
developments have benefited from the discrete geometric perspective on
matroids: For a matroid $M$ on ground set $E$ and independent sets $\Ind$, its
matroid base  polytope is
\[
    B_M \ = \ \conv\{ e_B : B \in \Ind \text{ basis} \} \ \subset \ \R^E\, .
\]
This is a $0/1$-polytope with edge directions in the type-A roots $\{ e_i -
e_j : i \neq j \}$. The geometric perspective was pioneered by Gelfand,
Goresky, MacPherson, and Serganova~\cite{GGMS}, who showed that these geometric properties
characterize matroids. Matroid base polytopes play an important role in the
interplay of combinatorics and algebraic geometry~\cite{AHK} as well as in
tropical algebraic geometry~\cite{TropGeom}; see also Section~\ref{sec:toric}.

Another important polytope associated to $M$ comes from flag matroids.
Borovik, Gelfand, Vince, and White~\cite{flagmats} introduced the
\Def{underlying flag matroid} $\FlagM_M$ of $M$ as the collection of maximal
chains of independent sets and studied them via their flag matroid polytopes
\[
    \Delta(\FlagM_M) \ = \ \conv \{ e_{I_0} + e_{I_1} + \cdots + e_{I_r} : I_0
    \subset I_1 \subset \cdots \subset I_r \in \Ind \text{ maximal chain} \}
    \, .
\]
Underlying flag matroids were also called \emph{truncation flag matroids}
in~\cite{Doker}.  Like matroid base polytopes, flag matroid polytopes are also
generalized permutahedra~\cite{GenPermOrig} that occur in connection with
torus-orbit closures in flag varieties~\cite{FlagMatAlgGeo} and they are key in
understanding tropical flag varieties~\cite{TropicalFlagVarieties,Loho}.
Underlying flag matroids are special cases of general flag matroids and
Coxeter matroids~\cite[Section~1.7]{CoxeterMat}. 

A first goal of our paper is to describe a natural geometric relationship
between these two classes of polytopes that allows us to generalize the notion
of underlying flag matroids to polymatroids.  As our notation emphasizes the
reference to the underlying matroid $M$, we will simply speak of flag matroids
henceforth.

\subsection*{Geometry of the greedy algorithm}
The well-known greedy algorithm solves linear programs over $B_M$.
Edmonds~\cite{edmondsorig} interpreted the greedy algorithm geometrically and
extended it to polymatroids. \Def{Polymatroids} are certain submodular
functions $f : 2^E \to \Rnn$ that generalize rank functions of matroids and
that naturally emerge in combinatorial optimization~\cite{Fujishige} as well
as in the study of subspace arrangements~\cite{Bjorner-Subspace}. To a
polymatroid $(E,f)$, Edmonds associated the polymatroid polytope 
\[
    P_f \ = \ \Bigl\{ x \in \R^E : x \ge 0, \sum_{e \in A} x_e \le f(A) \text{
        for all } A \subseteq E \Bigr\} \, ,
\]
If $f$ is the rank function of a matroid $M$, then $P_f = \conv\{ e_I : I \in
\Ind\}$ is the independence polytope of $M$, which we denote by $P_M$.

The base polytope $B_f$ is the face of $P_f$ that maximizes the linear
function $\1(x) = \sum_{e \in E} x_e$ and Edmonds showed that $B_f$ completely
determines $f$.  Up to translation, base polytopes $B_f$ are precisely
Postnikov's generalized permutahedra~\cite{GenPermOrig}. Edmonds' greedy
algorithm combinatorially solves the problem of maximizing $\wt \in \R^E$ over
$B_f$ by tracing a $\1$-monotone path from $0$ to a $\wt$-optimal vertex of
$B_f$; see Figure~\ref{fig:permutahedron} for an example.  
\begin{figure}[h]
    \centering
    \includegraphics[height=4cm]{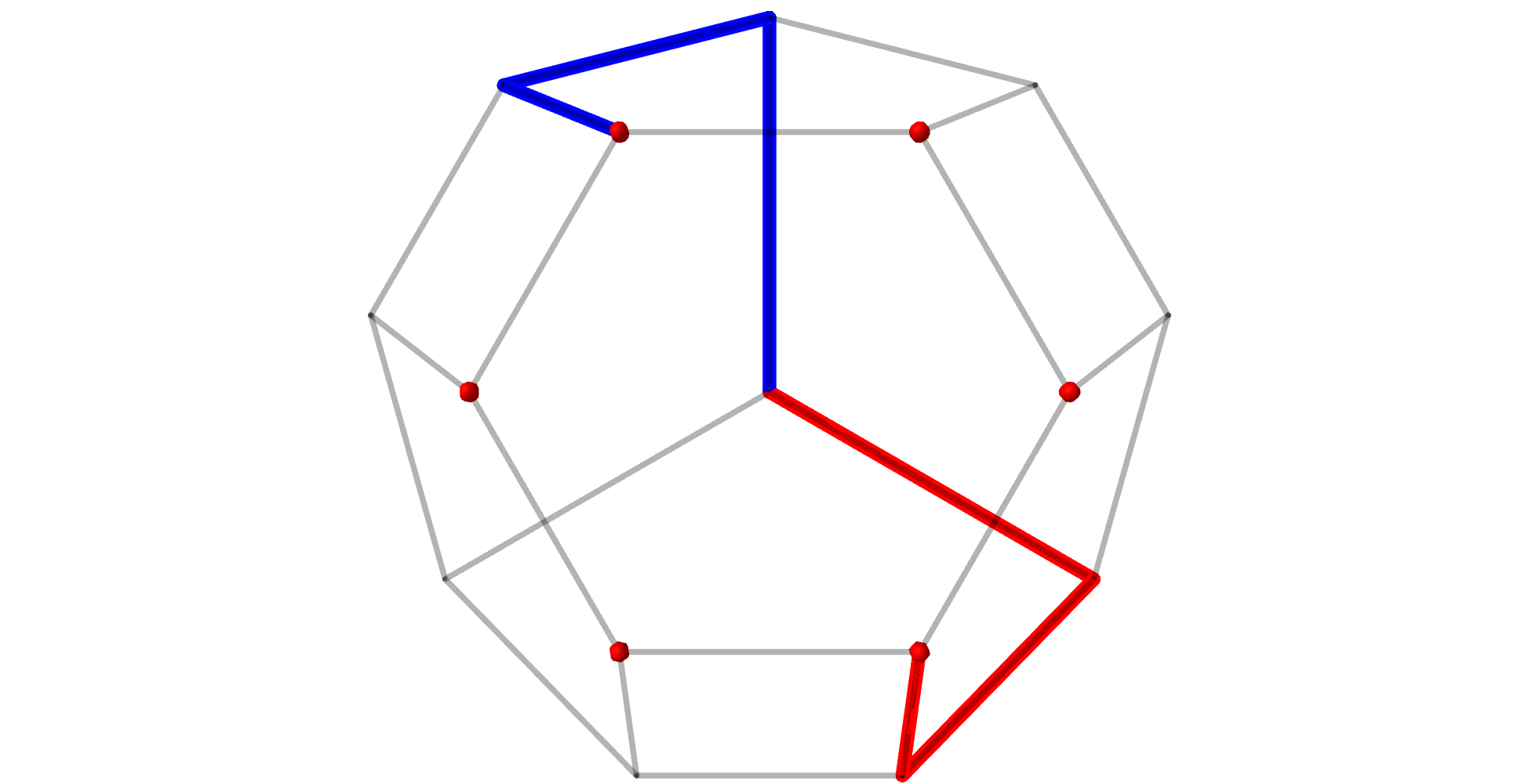}
    \caption{The gray polytope is a polymatroid polytope $P_f$. The hexagonal
    face marked by the red vertices is the base polytope $B_f$. The blue path
    and the red path are two (of six) $\1$-monotone (greedy) paths.}
\label{fig:permutahedron}
\end{figure}

So-called Baues posets capture the combinatorics of monotone
paths on polytopes and considerable attention was devoted to the topology of
Baues posets; see \Cref{sec:mp} and
\cite{CellString}. The subposet of \emph{coherent} $\1$-cellular strings on
$P_f$ is isomorphic to the face lattice of the \Def{monotone path
polytope} $\MPP{P_f}$ of Billera--Sturmfels~\cite{BSFiberPoly}. We show in
\Cref{thm:MPP_Baues} that all $\1$-cellular strings on $P_f$ are coherent and
arise from the greedy algorithm. Hence the geometry of the greedy algorithm is
completely captured by $\MPP{P_f}$. Applied to matroids, this yields the
relationship between matroid base polytopes and flag matroid polytopes.

\begin{Itheorem}\label{Ithm:relation}
    Let $M$ be a matroid. The flag matroid polytope $\Delta(\FlagM_M)$ is
    normally equivalent to the monotone path polytope $\MPP{P_M}$ of
    $\1$-cellular strings on $P_M$.
\end{Itheorem}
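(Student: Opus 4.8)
Normal equivalence means equality of normal fans. Since the normal fan of a polytope consists precisely of the faces of its maximal cones---the normal cones at vertices---it suffices to produce a bijection between the vertices of $\Delta(\FlagM_M)$ and those of $\MPP{P_M}$ under which corresponding vertices have equal normal cones. First I would fix a common index set. The edge directions of the independence polytope $P_M$ lie among $\pm e_i$ and $\pm(e_i-e_j)$, so a $\1$-monotone edge path on $P_M$ adds one element at a time and has the form $0 = e_{I_0}\to e_{I_1}\to\cdots\to e_{I_r}$ for a maximal chain of independent sets $I_0\subsetneq I_1\subsetneq\cdots\subsetneq I_r$, and every maximal chain arises this way. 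By Billera--Sturmfels~\cite{BSFiberPoly} the vertices of $\MPP{P_M}$ are the points $u_C := \tfrac1r\sum_{j=1}^{r}\tfrac{e_{I_{j-1}}+e_{I_j}}{2}$ for $C$ ranging over the \emph{coherent} $\1$-monotone paths, and by \Cref{thm:MPP_Baues} all $\1$-monotone paths on $P_M$ are coherent; hence the vertices of $\MPP{P_M}$ are exactly $\{u_C : C \text{ a maximal chain}\}$, indexed bijectively by maximal chains. On the other side, set $e_C := e_{I_0}+\cdots+e_{I_r}$; then $\Delta(\FlagM_M) = \conv\{e_C\}$ by definition, the map $C\mapsto e_C$ is injective (the $e$-coordinate of $e_C$ is the number of steps during which $e$ lies in the chain, which determines $C$), and it will follow below that each $e_C$ is a vertex.

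Next I would match $\wt$-optimal vertices. Fix a generic $\wt\in\R^E$ and let $C(\wt)$ be the greedy chain it produces. For $\MPP{P_M}$: the $\wt$-maximal face of a monotone path polytope is the one indexed by the $\wt$-coherent $\1$-cellular string, which for generic $\wt$ is the unique $\wt$-maximal $\1$-monotone path; since Edmonds' greedy algorithm traces out exactly the section $t\mapsto\argmax\{\inner{\wt}{x} : x\in P_M,\ \1(x)=t\}$---the geometric content that \Cref{thm:MPP_Baues} formalizes---that path is the greedy path, so the $\wt$-maximal vertex of $\MPP{P_M}$ is $u_{C(\wt)}$. For $\Delta(\FlagM_M)$: a $\wt$-maximal vertex maximizes $\sum_{j}\inner{\wt}{e_{I_j}} = \sum_{j}\wt(I_j)$ over maximal chains; rewriting this as $\sum_{e}\wt_e\cdot(r+1-k_e)$, where $k_e$ is the step at which $e$ enters the chain (the summand being $0$ if $e$ never enters), shows a maximizer inserts heavier elements earlier, hence is the greedy chain, and for generic $\wt$ the unique maximizer is $e_{C(\wt)}$. (Equivalently, one can identify $\Delta(\FlagM_M) = B_{M_1}+\cdots+B_{M_r}$, the Minkowski sum of base polytopes of the rank-$j$ truncations $M_j$ of $M$, read off from the fiber-vertex formula that $\MPP{P_M} = \tfrac1r(B_{M_1}+\cdots+B_{M_{r-1}})+\tfrac1{2r}B_{M_r}$, and use that a Minkowski sum with positive coefficients has normal fan the common refinement of the summands' normal fans.)

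Finally I would conclude. Every maximal chain $C$ is the greedy chain $C(\wt)$ of some generic $\wt$---take weights strictly decreasing along the insertion order of $C$ and strictly smaller off the chain---so by the previous paragraph each $e_C$ is a vertex of $\Delta(\FlagM_M)$, which completes the vertex bijection $C\mapsto e_C$ and shows it matches the bijection $C\mapsto u_C$. For every maximal chain $C$ the normal cone of $u_C$ in $\MPP{P_M}$ and the normal cone of $e_C$ in $\Delta(\FlagM_M)$ both equal the closure of $\{\wt : C(\wt)=C\}$; hence the two polytopes have the same collection of vertex normal cones, the same normal fan, and so are normally equivalent. I expect the main obstacle to be the geometric identification of the $\wt$-maximal $\1$-monotone path on $P_M$ with the greedy path---together with the companion matroid fact that greedy simultaneously maximizes over every truncation of $M$---which is precisely where \Cref{thm:MPP_Baues} and the geometric reading of Edmonds' greedy algorithm are genuinely used; genericity must be tracked throughout so that the $\wt$-maximizers on both sides are unique and the normal-cone identification is an exact equality rather than merely up to the boundary.
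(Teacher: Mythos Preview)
Your argument is correct. Both your main route and your parenthetical alternative are valid, but the paper's proof is essentially your parenthetical, carried out more cleanly via a general lemma.

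The paper does not argue vertex by vertex. Instead it invokes \Cref{prop:MPPnorm}: for any $(P,c)$, the monotone path polytope $\MPPl{c}{P}$ is normally equivalent to the Minkowski sum of the fibers $P_s$ over the finitely many values $s$ attained by $c$ at vertices. For $P_M$ and $c=\1$, these fibers are exactly the base polytopes $B_{M_j}$ of the truncations $M_j$, so $\MPP{P_M}$ is normally equivalent to $B_{M_0}+\cdots+B_{M_r}$. The paper then cites the known identity $\Delta(\FlagM_M)=B_{M_1}+\cdots+B_{M_r}$ (Corollary~1.13.5 of~\cite{CoxeterMat}) and is done. This avoids computing the exact coefficients $\tfrac1r,\tfrac1{2r}$ you obtained from the vertex formula and never needs to name individual normal cones.

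Your primary approach---bijecting vertices via maximal chains and identifying each normal cone as the closure of $\{\wt:C(\wt)=C\}$ using the greedy characterization---is more hands-on but also more self-contained: it does not rely on \Cref{prop:MPPnorm} or on the external Minkowski-sum description of $\Delta(\FlagM_M)$, only on \Cref{thm:MPP_Baues} (really Proposition~\ref{thm:greedycoh}) and the elementary fact that greedy simultaneously optimizes over every truncation. The paper's route is shorter and scales immediately to partial flags $\kk$ (this is how \Cref{thm:FlagDelta} is actually stated and proved), while your route makes the normal-cone structure explicit and would be the natural argument if one did not already have the fiber-sum lemma in hand.
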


Normal equivalence, reviewed in Section~\ref{sec:background}, means that
$\Delta(\FlagM_M)$ and $\MPP{P_M}$ have the same underlying normal fan and, in
particular, is a strong form of combinatorial equivalence. The case of
\emph{partial} flag matroids associated to a matroid $M$ is settled by
rank-selected independence polytopes that we define in \Cref{sec:flag}.
\Cref{Ithm:relation} is then a special case of \Cref{thm:FlagDelta}.

\subsection*{Flag polymatroids}
Flag matroid polytopes are polymatroid base polytopes. Hence the behavior of
the greedy algorithm on a matroid $M$ is governed by an associated
polymatroid. 

\begin{Itheorem}[\Cref{thm:MPP_polymatroid}]\label{Ithm:MPP_polymatroid}
    Let $(E,f)$ be a polymatroid. Then $\MPP{P_f}$ is a polymatroid base
    polytope for the polymatroid 
    \[
        \Mf{f}  \ := \ 2f(E) \cdot f - f^2 \, .
    \]
\end{Itheorem}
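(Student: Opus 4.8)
The plan is to compute the support function of $\MPP{P_f}$ directly from its fiber polytope description, evaluate the resulting integral by means of Edmonds' greedy algorithm, and recognize the outcome as a dilate of the base polytope of $\Mf{f}$. Since $0\in P_f$ and $\1$ attains its maximum value $f(E)$ exactly on $B_f$, the projection $\pi:=\1|_{P_f}$ maps $P_f$ onto the segment $[0,f(E)]$, so $\MPP{P_f}$ is the monotone path (fiber) polytope $\Sigma_\pi(P_f)$. The fiber of $\pi$ over $t\in[0,f(E)]$ is the slice $P_f^{(t)}:=P_f\cap\{\1=t\}$, and the first key point is that this slice is exactly the base polytope of the \emph{truncated polymatroid} $f^{(t)}(A):=\min(f(A),t)$: indeed $f^{(t)}$ is again a normalized, monotone, submodular function with $f^{(t)}(E)=t$, and since any $x\in\Rnn^E$ with $x(E)=t$ automatically satisfies $x(A)\le t$, the inequalities $x(A)\le\min(f(A),t)$ defining $B_{f^{(t)}}$ are equivalent to $x(A)\le f(A)$; hence $B_{f^{(t)}}=\{x\ge 0 : x(A)\le f(A)\text{ for all }A,\ x(E)=t\}=P_f^{(t)}$.

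Next I would invoke the standard support function formula for a fiber polytope over a segment, $h_{\MPP{P_f}}(c)=\frac1{f(E)}\int_0^{f(E)}h_{P_f^{(t)}}(c)\,dt$~\cite{BSFiberPoly}, together with Edmonds' greedy formula~\cite{edmondsorig}: for a functional $c$, an ordering $\sigma$ of $E$ with $c_{\sigma(1)}\ge\cdots\ge c_{\sigma(n)}$, and chains $S_i:=\{\sigma(1),\dots,\sigma(i)\}$, the support function of the base polytope of a polymatroid $g$ is $h_{B_g}(c)=\sum_{i=1}^n c_{\sigma(i)}\bigl(g(S_i)-g(S_{i-1})\bigr)$. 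Because the greedy ordering $\sigma$ depends only on $c$, applying this with $g=f^{(t)}$ and integrating term by term is legitimate, and the only definite integral that occurs is $\int_0^{f(E)}\min(f(S),t)\,dt=f(E)f(S)-\tfrac12 f(S)^2=\tfrac12\Mf f(S)$, using monotonicity $f(S)\le f(E)$. Collecting terms gives $h_{\MPP{P_f}}(c)=\frac1{2f(E)}\sum_{i=1}^n c_{\sigma(i)}\bigl(\Mf f(S_i)-\Mf f(S_{i-1})\bigr)=\frac1{2f(E)}h_{B_{\Mf f}}(c)$, so $\MPP{P_f}=\frac1{2f(E)}B_{\Mf f}$ is a positive dilate of $B_{\Mf f}$, i.e.\ the base polytope of the polymatroid $\frac1{2f(E)}\Mf f$, and in particular is normally equivalent to $B_{\Mf f}$. (Equivalently, using \Cref{thm:MPP_Baues}, the $c$-vertex of $\MPP{P_f}$ is the $t$-average of the greedy vertex of $B_{f^{(t)}}$, and the same integral identifies it with a rescaled greedy vertex of $B_{\Mf f}$.)

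Finally, for the statement to be meaningful one must check that $\Mf f=2f(E)\cdot f-f^2$ is itself a polymatroid. Normalization $\Mf f(\emptyset)=0$ is immediate, and $\Mf f$ is monotone because $u\mapsto u(2f(E)-u)$ is nondecreasing on $[0,f(E)]$, which contains the range of $f$. The only genuinely computational step is submodularity. Setting $p:=f(A)-f(A\cap B)$, $q:=f(B)-f(A\cap B)$, and $m:=f(A\cup B)-f(A\cap B)$, monotonicity and submodularity of $f$ give $0\le p,q\le m\le p+q$ and $f(A\cap B)+m=f(A\cup B)\le f(E)$, and a short rearrangement shows
\[
\Mf f(A)+\Mf f(B)-\Mf f(A\cup B)-\Mf f(A\cap B)=2\bigl(f(E)-f(A\cup B)\bigr)(p+q-m)+\bigl(2pq-(p+q-m)^2\bigr).
\]
Both summands are nonnegative: the first by monotonicity and submodularity of $f$, and the second because $0\le p+q-m\le\min(p,q)$ forces $(p+q-m)^2\le pq$. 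I expect this submodularity estimate to be the only real obstacle; the remainder is bookkeeping with the greedy algorithm and the fiber-polytope formula.
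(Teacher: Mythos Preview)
Your approach is essentially the same as the paper's: both identify the fiber $P_f\cap\{\1=t\}$ with the base polytope of the truncation $f_t(A)=\min(f(A),t)$ and then integrate, arriving at the same elementary integral $\int_0^{f(E)}\min(f(S),t)\,dt$. The paper avoids your direct verification of submodularity of $\Mf f$ by invoking closure of base polytopes under Minkowski sums (and integrals), so that $\MPP{P_f}=B_g$ for \emph{some} polymatroid $g$, and then simply reads off $g(S)$ from the integral. One small discrepancy: the paper uses its own normalization $\MPPl{c}{P}=2\int_I P_s\,ds$ (see \Cref{sec:mp}), not the Billera--Sturmfels normalization $\tfrac{1}{\mathrm{vol}(I)}\int_I$, so with the paper's conventions you should obtain $\MPP{P_f}=B_{\Mf f}$ on the nose rather than the dilate $\tfrac{1}{2f(E)}B_{\Mf f}$.
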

We call $(E,\Mf{f})$ the \Def{underlying flag polymatroid} of $(E,f)$.

A flat of $(E,f)$ is a subset $F \subseteq E$ such that $f(F \cup e) > f(F)$
for all $e \in E \setminus F$. The lattice of flats $\LF(f)$ is the collection
of flats partially ordered by inclusion. For matroids, lattices of flats are
geometric lattices that completely determine the combinatorial structure of
$B_M$. For general polymatroids, this is not true. However, for flag
polymatroids it turns out that $\MPP{P_f}$ is completely determined by
$\LF(f)$. To make this more transparent, we relate flag polymatroids to yet
another class of generalized permutahedra. Postnikov~\cite{GenPermOrig} and
Feichtner--Sturmfels~\cite{nestsets} introduced nestohedra, a rich class of
simple generalized permutahedra associated to building sets $\Build \subseteq
2^E$; see \Cref{sec:pivpolys}.

\begin{Itheorem}\label{Imainthm:ppnesto}
    Let $(E,f)$ be a polymatroid. The base polytope $B_{\Mf{f}}$ of the flag
    polymatroid $(E,\Mf{f})$ is
    normally equivalent to the nestohedron for the building set
    \[
        \UC(f) \ := \ \{ E \setminus F : F \text{ flat of } f \} \, .
    \]
    In particular, flag (poly)matroid polytopes are simple polytopes.
\end{Itheorem}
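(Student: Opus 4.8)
The plan is to identify $B_{\Mf f}$, up to translation and up to positive rescaling of its Minkowski summands, with the nestohedron $\sum_{K \in \UC(f)} \Delta_K$, where $\Delta_K := \conv\{e_i : i \in K\}$. Since the normal fan of a Minkowski sum $\sum_K y_K \Delta_K$ with all $y_K \ge 0$ depends only on the set system $\{K : y_K > 0\}$, and a zero-dimensional summand $\Delta_{\{e\}}$ only translates, everything will come down to the signed Minkowski expansion $B_{\Mf f} = \sum_{\emptyset \ne K \subseteq E} y_K \Delta_K$ together with the claim that, for $|K| \ge 2$, $y_K > 0$ exactly when $E \setminus K$ is a flat of $f$ and $y_K = 0$ otherwise. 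Two facts recalled in \Cref{sec:pivpolys} then close the argument: first, $\UC(f) \cup \{\{e\} : e \in E\}$ is a building set, because the intersection of two flats of $f$ is again a flat --- if $e \notin F_1 \cap F_2$, say $e \notin F_1$, monotonicity of marginals gives $f((F_1 \cap F_2) \cup e) - f(F_1 \cap F_2) \ge f(F_1 \cup e) - f(F_1) > 0$ --- so $(E \setminus F_1) \cup (E \setminus F_2) = E \setminus (F_1 \cap F_2) \in \UC(f)$; and second, nestohedra are simple.

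For the computation I would write $\tilde g(S) := g(E) - g(E \setminus S)$ for the co-function of a submodular $g$ and set $c := f(E)$. Since $\Mf f = \phi \circ f$ for $\phi(t) = 2ct - t^2$, which is concave and increasing on $[0,c]$, the function $\Mf f$ is a polymatroid with $\LF(\Mf f) = \LF(f)$ (as $\phi$ is strictly increasing below $c$ and $E$ is the only flat on which $f$ attains $c$); this is what singles out $\UC(f)$. A one-line computation gives
\[
    \widetilde{\Mf f}(S) \;=\; c^2 - \bigl(2c\, f(E \setminus S) - f(E \setminus S)^2\bigr) \;=\; \bigl(c - f(E \setminus S)\bigr)^2 \;=\; \tilde f(S)^2 .
\]
Under the standard linear identification of generalized permutahedra with their simplex-coefficient vectors --- normalized so that $\tilde g$ is the subset-sum (zeta) transform of the coefficient vector $y^{B_g}$ --- and using that the zeta transform carries the union-convolution $(a * b)_K = \sum_{A \cup B = K} a_A b_B$ to the pointwise product, one gets $\widetilde{\Mf f} = \tilde f^2 = \zeta\bigl(y^{B_f} * y^{B_f}\bigr)$, hence
\[
    y_K \;=\; \sum_{J \subseteq K} (-1)^{|K \setminus J|}\, \tilde f(J)^2 \;=\; \sum_{A \cup B = K} y^{B_f}_A\, y^{B_f}_B .
\]

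The inclusion $\mathrm{supp}(y) \subseteq \UC(f)$ is the easy half. If $E \setminus K$ is not a flat, pick $e \in K \cap \mathrm{cl}_f(E \setminus K)$; for every $J \subseteq K \setminus e$ one has $E \setminus (J \cup e) \supseteq E \setminus K$, so monotonicity of the polymatroid closure gives $e \in \mathrm{cl}_f(E \setminus (J \cup e))$, i.e.\ $\tilde f(J \cup e) = \tilde f(J)$. Pairing each $J \ni e$ with $J \setminus e$ in the formula for $y_K$ then cancels every term, so $y_K = 0$.

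The reverse inclusion together with strict positivity is where I expect the real difficulty. If $F := E \setminus K$ is a flat, the contracted polymatroid $g := f(\,\cdot\,\cup F) - f(F)$ on ground set $K$ is loopless (since $F$ is a flat), and one checks $\tilde g = \tilde f|_{2^K}$; hence $y_K = \sum_{J \subseteq K} (-1)^{|K \setminus J|} \tilde g(J)^2$ is precisely the top simplex coefficient of $B_{\Mf g}$. So it suffices to show this top coefficient is strictly positive for \emph{every} loopless polymatroid $g$ on $K$, which I would prove by induction on $|K|$. For $|K| = 1$ it equals $g(\{e\})^2 > 0$; for $|K| = 2$ it equals $2pq - (p + q - r)^2$ with $p = g(\{1\})$, $q = g(\{2\})$, $r = g(K)$, and submodularity ($0 \le p + q - r$) and monotonicity ($r \ge \max(p,q)$, so $p + q - r \le \min(p,q)$) force $(p + q - r)^2 \le pq < 2pq$. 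The inductive step must again use submodularity of $g$ itself, not merely of $\tilde g$ --- exactly what excludes the configurations that would make the alternating sum nonpositive --- and carrying this out, e.g.\ through a deletion--contraction recursion tracking the loops created by contraction, is the crux. Granting it, the simplex coefficients of $B_{\Mf f}$ are supported on and strictly positive over $\UC(f)$, so $B_{\Mf f}$ is normally equivalent to $\sum_{K \in \UC(f)} \Delta_K$, the nestohedron of the building set $\UC(f)$; simplicity of nestohedra yields the last assertion, and together with \Cref{Ithm:relation} (and \Cref{thm:FlagDelta} for the partial case) also the simplicity of all flag matroid polytopes.
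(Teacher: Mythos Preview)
Your approach has a fatal gap at exactly the point you flag as ``the crux'': the claim that the top simplex coefficient $y_K$ of $B_{\Mf g}$ is strictly positive for every loopless polymatroid $g$ on $K$ is \emph{false}. Take $g$ to be the rank function of the uniform matroid $U_{4,3}$, so $g(A)=\min(|A|,3)$ on $K=[4]$. Then $\tilde g(S)$ equals $0,0,1,2,3$ for $|S|=0,1,2,3,4$, and M\"obius inversion gives
\[
    y_{[4]} \;=\; \sum_{j=0}^{4}(-1)^{4-j}\binom{4}{j}\,\tilde g_j^{\,2}
    \;=\; 0 - 0 + 6\cdot 1 - 4\cdot 4 + 9 \;=\; -1 .
\]
Here $\emptyset$ is a flat of $g$, so $K=[4]\in\UC(g)$, yet $y_K<0$. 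This is precisely the content of \Cref{prop:MPP_not_nesto}: $\MPP{f}=B_{\Mf f}$ is in general \emph{not} a nonnegative sum of standard simplices, hence not itself a nestohedron. So you cannot hope to show $B_{\Mf f}$ \emph{is} the nestohedron $\sum_{K\in\UC(f)}\Delta_K$; the theorem only asserts normal equivalence, and the signed-Minkowski route cannot establish that directly, because normal equivalence does not follow from having the same \emph{support} in the simplex basis when some coefficients are negative.

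The paper sidesteps this by passing through a different polytope that \emph{is} a genuine nestohedron: the max-slope pivot rule polytope $\PP_{P_f,\1}$. \Cref{prop:pivpolymat} gives an explicit expansion $\PP_{P_f,\1}=\sum_{F\in\LF(f)\setminus\{E\}}\rho(F)\,\Delta_{E\setminus F}$ with $\rho(F)>0$, so $\PP_{P_f,\1}$ is the nestohedron for $\UC(f)$. The substantive step is then \Cref{mainthm:ppnormmp}, which proves that $\MPP{P_f}$ and $\PP_{P_f,\1}$ are normally equivalent; the key combinatorial input is \Cref{lem:mp_implies_pr}, saying that the greedy path from $0$ already determines the entire max-slope arborescence. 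Your computations $\widetilde{\Mf f}=\tilde f^{\,2}$ and the vanishing of $y_K$ when $E\setminus K$ is not a flat are correct and pleasant, but they do not by themselves yield the theorem.
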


In order to prove Theorem~\ref{Imainthm:ppnesto}, we make a detour via
max-slope pivot rule polytopes~\cite{PivotPoly}. We show that the greedy
algorithm on $P_f$ coincides with the simplex algorithm on $P_f$ with respect
to the max-slope pivot rule. The behavior of the max-slope pivot rule on a
fixed linear program such as $(P_f,\1)$ is encoded by an arborescence. The
arborescence represents the choices made by the pivot rule along the simplex
path started at a vertex $v$ of $P_f$ to an optimal vertex. Pivot rule
polytopes geometrically encode these arborescences. We show that $\MPP{P_f}$
is normally equivalent to the max-slope pivot rule polytope $\PP_{P_f,\1}$.
From the optimization perspective, this says the greedy path completely
determines the behavior of the max-slope pivot rule on $P_f$.
Lemma~\ref{lem:mp_implies_pr} makes that precise and might be of independent
interest. 

\subsection{Realizable polymatroids and toric quotients}

A polymatroid $(E,f)$ is realizable over $\C$ if there are linear subspaces
$(U_e)_{e \in E}$ of some common vector space such that $f(A) = \dim_\C
\sum_{e \in A} U_e$ for all $A \subseteq E$. If all subspaces are
$1$-dimensional, then $f$ is the rank function of a (realizable) matroid $M$.
Choosing an ordered bases for each $U_i$ determines a point $L_f$ in the
Grassmannian $\Gr{N,r}$ for $N = \sum_e \dim U_e$ and $r = f(E)$. We describe
the action of the algebraic torus $T^n = (\C^*)^n$ for $n = |E|$ on $\Gr{N,r}$
for which the closure of the torus orbit $T^n \cdot L_f$ is a projective toric
variety $X_f$ whose moment polytope is $B_f$ (\Cref{thm:X_f}). If $(E,f)$ is a
matroid, then this goes back to~\cite{GGMS}.

We show how a realization determines a point in the flag variety
$\Fl{N,r}$.  With respect to a suitable action of $T^n$, the torus-orbit
closure yields a projective toric variety $Y_f \subseteq \Fl{N,r}$ with moment
polytope normally equivalent to $\MPP{f}$. If $(E,f)$ is a matroid, then the
moment polytope is precisely $\Delta(\FlagM_M)$. There is a $1$-dimensional
subtorus $H \subseteq T^n$ for which $X_f / H$ is isomorphic to $Y_f$ as
topological spaces. Kapranov, Sturmfels, and Zelevinsky~\cite{KSZ} showed that
quotients of toric varieties by subtori are again toric varieties. The
associated fan is called the \emph{quotient fan} and toric varieties with this
fan are called \emph{combinatorial quotients}.

\begin{Itheorem}[\Cref{thm:comb_quot_poly}]
    The toric variety $Y_f$ is a combinatorial quotient for the action of $H$
    on $X_f$ and the moment polytope of $Y_f$ is normally equivalent to
    $\MPP{f}$. In particular, $Y_f$ is a smooth toric variety for every
    realizable polymatroid.
\end{Itheorem}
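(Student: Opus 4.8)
The plan is to combine the toric quotient package of Kapranov--Sturmfels--Zelevinsky~\cite{KSZ} with the identification of monotone path polytopes as fiber polytopes. Recall that $X_f$ is the projective toric variety attached to the realization, with moment polytope the polymatroid polytope $P_f$ and dense orbit a quotient of $T^n$, and that $H \subseteq T^n$ is the one-parameter subtorus whose cocharacter is $\1 \in \Z^E$; dually, $H$ corresponds to the linear projection $\1\colon\R^E\to\R$, $x\mapsto\sum_{e\in E} x_e$. By~\cite{KSZ} the combinatorial quotient of $X_f$ by $H$ is the normal projective toric variety for $T^n/H$ whose fan is the quotient fan of the fan of $X_f$ along $\R\1$, and by Billera--Sturmfels~\cite{BSFiberPoly} this quotient fan is the normal fan of the fiber polytope $\Sigma(P_f\to\1(P_f))$. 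Since $\1(P_f)=[0,f(E)]$ is a segment, this fiber polytope is precisely the monotone path polytope $\MPP{f}$, which by \Cref{Ithm:MPP_polymatroid} is, up to translation, the flag polymatroid base polytope $B_{\Mf f}$. Hence the combinatorial quotient $X_f /\!/ H$ is the projective toric variety with moment polytope $B_{\Mf f}$.

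It remains to identify $X_f /\!/ H$ with the orbit closure $Y_f$. By the construction preceding the theorem the topological quotient $X_f/H$ is homeomorphic to $Y_f$, and this homeomorphism is $T^n/H$-equivariant, being induced by the $T^n$-equivariant structures and the quotient map $T^n\to T^n/H$. The real content of the theorem is therefore that this topological quotient \emph{is} the combinatorial quotient $X_f /\!/ H$ --- a priori the topological quotient of a toric variety by a subtorus need not be separated, and it may differ from the Chow quotient. This is where the geometry of the greedy algorithm enters: by \Cref{thm:MPP_Baues} every $\1$-cellular string on $P_f$ is coherent, so every subdivision of the segment $\1(P_f)$ induced by a face of $P_f$ --- equivalently by a cone of the fan of $X_f$ --- is $\1$-coherent, and this coherence is exactly the condition under which the quotient fan coincides with the normal fan of the fiber polytope of $\1$ and the topological, Chow, and combinatorial quotients all agree. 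With $X_f/H = X_f /\!/ H$ in hand, the $T^n/H$-equivariant homeomorphism between the normal toric variety $X_f /\!/ H$ and $Y_f$ is an isomorphism of toric varieties; in particular $Y_f$ is normal and its moment polytope is, up to normal equivalence, $\MPP{f} = B_{\Mf f}$, which also recovers the moment-polytope assertion of the theorem.

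It then remains only to deduce smoothness. The fan of $Y_f = X_f /\!/ H$ is the normal fan of $B_{\Mf f}$, which by \Cref{Imainthm:ppnesto} equals the normal fan of the nestohedron for the building set $\UC(f)$. As recalled in \Cref{sec:pivpolys} (see also~\cite{GenPermOrig,nestsets}), the normal fan of a nestohedron is smooth: a nestohedron is obtained from a simplex by a sequence of stellar subdivisions along the elements of its building set. Hence the fan of $Y_f$ is unimodular and $Y_f$ is a smooth projective toric variety for every realizable polymatroid $(E,f)$.

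The main obstacle is the middle step. Showing that the topological quotient $X_f/H$ coincides with the combinatorial quotient $X_f /\!/ H$ --- rather than being a non-separated object or disagreeing with the Chow quotient --- is the heart of the matter, and it is exactly here, and not in the bookkeeping around $Y_f$ or in the passage to nestohedra, that \Cref{thm:MPP_Baues}, together with the dictionary between the greedy algorithm and the max-slope pivot rule (\Cref{lem:mp_implies_pr}), is indispensable; everything else assembles results already in hand.
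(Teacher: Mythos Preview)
There is a genuine gap. Your central claim---that coherence of all $\1$-cellular strings (\Cref{thm:MPP_Baues}) is ``exactly the condition under which the quotient fan coincides with the normal fan of the fiber polytope and the topological, Chow, and combinatorial quotients all agree''---fails on both counts. First, by the KSZ results the paper quotes, the quotient fan \emph{always} coincides with the normal fan of the fiber polytope when $X$ is projective: the maximal cones of the quotient fan are indexed by the cellular strings arising as $(\wt,\1)$-parametric face sequences for varying $\wt$, and those are coherent by definition; no hypothesis on incoherent strings is needed. Second, coherence does not force the topological quotient $X_f/H$ to be separated. Take $E=\{1,2\}$ and $f(A)=|A|$: then $P_f=[0,1]^2$, $X_f\cong\Proj^1\times\Proj^1$ with $H$ the diagonal $\C^*$, and every cellular string is trivially coherent, yet $X_f/H$ is non-Hausdorff because every generic $H$-orbit closure contains both fixed points $([0{:}1],[0{:}1])$ and $([1{:}0],[1{:}0])$, so their images cannot be separated. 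In particular $X_f/H\not\cong Y_f\cong\Proj^1$, and the sentence from the introduction you build on is heuristic at best. \Cref{lem:mp_implies_pr} plays no role whatsoever here.

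The paper's argument is entirely different and does not invoke \Cref{thm:MPP_Baues}. It computes the moment polytope of $Y_f$ directly: in the matroid case the moment polytope of $Y_{F_\bullet}\subset\prod_i\Gr{n,i}$ is $\sum_i B_{M(F_i)}$, which for a very general flag equals $\Delta(\FlagM_M)$, and \Cref{thm:FlagDelta} (ultimately via \Cref{prop:MPPnorm}) shows this is normally equivalent to $\MPP{P_M}$. Since normally equivalent polytopes share a normal fan, the fan of $Y_f$ is the quotient fan and $Y_f$ is a combinatorial quotient by definition; the polymatroid case is asserted to follow the same pattern. Smoothness is then read off from $\MPP{f}$ being a \emph{simple} generalized permutahedron (\Cref{cor:flag_simple}): at each vertex the $|E|-1$ primitive edge directions lie among the roots $e_i-e_j$ and hence form a lattice basis. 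Your stellar-subdivision argument for smoothness is a valid alternative for that final step, but the route to ``$Y_f$ has the quotient fan'' must go through the moment polytope of $Y_f$, not through the topological quotient.
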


This gives an algebro-geometric explanation for the relationship between
matroid base polytopes and flag matroid polytopes.

\subsection*{Algebraic combinatorics of monotone paths on polymatroids} 

A simple variant of the greedy algorithm allows for optimization over $P_f$
and gives rise to \emph{partial} greedy paths. We show that the corresponding
monotone path polytopes are again polymatroid base polytopes. As an
application, we completely resolve a conjecture of
Heuer--Striker~\cite{partperms} on the face structure of partial permutation
polytopes (Theorem~\ref{thm:heuer_striker}).

Partial greedy paths on $P_f$ can be seen as paths on polymatroid base
polytopes.  In Section~\ref{sec:base_poly}, we investigate the combinatorics
of monotone paths on base polytopes $B_f$ with respect to the special linear
functions $\1_S(x) = \sum_{i \in S} x_i$ for $S \subseteq E$.  A case that we
study in some detail are $\1_S$-monotone paths on the permutahedron
$\Pi_{n-1}$. Let $\SYT(m,n)$ be the set of standard Young tableaux  of
rectangular shape $m \times n$. Following Mallows and
Vanderbei~\cite{VanderbeiMallows}, we call a rectangular standard Young
tableau \emph{realizable} if it can be obtained from a tropical rank-$1$
matrix; see Section~\ref{sec:base_poly} for details. Let us denote by
$\SymGrp_m$ the symmetric group on $m$ letters.

\begin{Itheorem} \label{Ithm:rSYT}
    Let $S \subseteq [n]$ with $k = |S|$.  The $\1_S$-monotone paths on the
    permutahedron $\Pi_{n-1}$ are in bijection with $\SymGrp_{k} \times
    \SymGrp_{n-k} \times \SYT(k,n-k)$. A path is coherent if and only if it
    corresponds to a realizable standard Young tableau.
\end{Itheorem}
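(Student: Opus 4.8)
The plan is to reduce everything to the combinatorics of sorting a binary string. Write each vertex of $\Pi_{n-1}$ as $v_\sigma = (\sigma(1),\dots,\sigma(n))$ for $\sigma \in \SymGrp_n$, and to $v_\sigma$ attach the string $b(\sigma) \in \{0,1\}^n$ with $b(\sigma)_j = 1$ iff the value $j$ occupies a position in $S$, i.e.\ $\sigma^{-1}(j) \in S$. Then $\1_S(v_\sigma) = \sum_{j : b(\sigma)_j = 1} j$, so the $\1_S$-minimal face is $\{v_\sigma : b(\sigma) = 1^k0^{n-k}\}$ and the $\1_S$-maximal face is $\{v_\sigma : b(\sigma) = 0^{n-k}1^k\}$, both isomorphic up to translation to $\Pi_{k-1} \times \Pi_{n-k-1}$. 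An edge of $\Pi_{n-1}$ swaps two consecutive values $m,m+1$, and it changes $\1_S$ by $+1$ exactly when the value $m$ sits in an $S$-position and $m+1$ in a non-$S$-position; on the string this is the move $(1,0) \mapsto (0,1)$ in coordinates $(m,m+1)$. Since a monotone path traverses only edges on which $\1_S$ is non-constant, $\1_S$ increases along it by steps of exactly $+1$ from its minimum $\binom{k+1}{2}$ to its maximum, so every $\1_S$-monotone path consists of exactly $k(n-k)$ such moves and transforms $1^k0^{n-k}$ into $0^{n-k}1^k$.

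First I would record two invariants. At each move the word of values read left to right across the $S$-positions has exactly one entry incremented, so the relative order of the values in the $S$-positions is constant along the path; likewise for the non-$S$-positions. This common order is read off from the starting (hence any) vertex, and the starting vertex lies in the min face, whence the pair of orders is exactly a pair $(\alpha,\beta) \in \SymGrp_k \times \SymGrp_{n-k}$, namely the $\SymGrp_k \times \SymGrp_{n-k}$ datum of a min-face vertex. Next I track the moves themselves: the $a$-th $1$ of the string (from the left) can only ever pass the $b$-th $0$, and I would prove the \emph{key lemma} that the move ``the $a$-th $1$ passes the $b$-th $0$'' is available at a given state precisely when all moves ``the $a'$-th $1$ passes the $b$-th $0$'' with $a' > a$ and all moves ``the $a$-th $1$ passes the $b'$-th $0$'' with $b' < b$ have already occurred. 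Consequently the sequence of moves along a monotone path is exactly a linear extension of the poset on $\{(a,b) : a \in [k],\, b \in [n-k]\}$ with $(a,b) \le (a',b')$ iff $a \ge a'$ and $b \le b'$; relabelling $a \mapsto k+1-a$ identifies these linear extensions with standard Young tableaux of shape $k \times (n-k)$, the cell $(i,j)$ carrying the index of the move ``the $(k+1-i)$-th $1$ passes the $j$-th $0$''. Conversely, the key lemma shows that any min-face vertex together with any such linear extension does produce a valid $\1_S$-monotone path, yielding the bijection with $\SymGrp_k \times \SymGrp_{n-k} \times \SYT(k,n-k)$.

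For the coherence statement I would use that a $\1_S$-monotone path is coherent iff it is the shadow path of $\Pi_{n-1}$ under $x \mapsto (\1_S(x), g(x))$ for some generic $g = (g_1,\dots,g_n)$ (Billera--Sturmfels). Because every edge of $\Pi_{n-1}$ changes $\1_S$ by at most $1$, the upper boundary of this planar shadow is a concave chain whose vertices hit each $\1_S$-level exactly once, so its level-$\ell$ vertex is the $g$-maximal vertex among all vertices with $\1_S = \ell$. Two consequences: $g$-maximality over the min face forces the order of the values in the $S$-positions (resp.\ non-$S$-positions) to be the order of the corresponding coordinates of $g$, so $\alpha$ and $\beta$ are the sorting permutations of $g$ restricted to $S$ and to its complement and may be prescribed arbitrarily; and, writing $\phi(a)$ (resp.\ $\psi(b)$) for the $S$-position (resp.\ non-$S$-position) carrying the rank-$a$ (resp.\ rank-$b$) value, the move ``the $a$-th $1$ passes the $b$-th $0$'' changes $g$ by $g_{\phi(a)} - g_{\psi(b)}$, and concavity of the chain forces the moves to be performed in order of \emph{decreasing} $g_{\phi(a)} - g_{\psi(b)}$.

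Finally I would recognize the Mallows--Vanderbei condition. Setting $u_i := -g_{\phi(k+1-i)}$ and $v_j := g_{\psi(j)}$, both strictly increasing by the previous step, the move indexed by cell $(i,j)$ of the tableau is performed in increasing order of $u_i + v_j$; hence the tableau attached to a coherent path is exactly the ranking of the tropical rank-$1$ matrix $(u_i + v_j)$, i.e.\ a realizable standard Young tableau. Conversely, given any realizable tableau witnessed by strictly increasing $u,v$ and any target $(\alpha,\beta)$, one defines $g$ on the $S$-positions from $u$ (with the sign reversal) and on the non-$S$-positions from $v$, perturbs it generically without disturbing any of the relevant comparisons, and checks that its shadow path realizes exactly that datum; so the coherent paths are precisely those with a realizable tableau. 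I expect the main obstacle to be the careful proof of the key lemma, together with the more tedious task of keeping the three bookkeeping conventions consistent — the reversal $a \mapsto k+1-a$, the sign on the $S$-coordinates of $g$, and the choice of the upper (versus lower) shadow boundary — so that the $\Delta g$-ordering lands on the Mallows--Vanderbei ranking with the correct orientation; the planar geometry (concavity, the level-by-level description of the shadow path) is then routine.
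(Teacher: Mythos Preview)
Your argument is correct and takes a genuinely different route from the paper. For the bijection, the paper reduces (via symmetry) to the starting vertex $(1,\dots,n)$ and then records for each step which of the first $k$ coordinates is incremented, obtaining a rectangular lattice permutation, and cites Stanley's bijection with $\SYT(k,n-k)$; you instead track the binary string and prove your key lemma directly, landing on linear extensions of the product poset $[k]\times[n-k]$ without passing through lattice permutations. For coherence, the paper invokes the sweep polytope machinery: it shows $\MPPl{\1_S}{\Pi_{n-1}}$ is normally equivalent to the sweep polytope of $T'=\{e_i-e_j:i\in S,\,j\notin S\}$ (using a general result on monotone path polytopes of zonotopes), so coherent paths correspond to total orders on $T'$ induced by linear forms, and then writes $\wt=(u,-v)$ to recognise the Mallows--Vanderbei ranking. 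You bypass this entirely by exploiting that every $\1_S$-increasing edge has $\Delta\1_S=1$, so the planar shadow has a vertex at every integer level and the concavity of its upper hull forces the moves to occur in decreasing order of $g_{\phi(a)}-g_{\psi(b)}$; the translation $u_i=-g_{\phi(k+1-i)}$, $v_j=g_{\psi(j)}$ then identifies this with the tropical rank-$1$ ranking. Your route is more self-contained (no external zonotope results) and makes the appearance of the outer-sum matrix quite transparent; the paper's route is shorter once the sweep-polytope fact is in hand and also yields the side product (Corollary~\ref{cor:prod_simplices}) that $\MPPl{\1_S}{\Pi_{n-1}}$ is the sweep polytope of $\Delta_{k-1}\times\Delta_{n-k-1}$. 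The one point you should make explicit in writing up the converse is that when you perform the moves in decreasing order of $g_{\phi(a)}-g_{\psi(b)}$, each move is indeed available: this follows because the poset predecessors $(a',b)$ with $a'>a$ and $(a,b')$ with $b'<b$ have strictly larger slope by the monotonicity of $g\circ\phi$ and $g\circ\psi$, which you have already established.
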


For $k=2$, Mallows and Vanderbei showed that all $2 \times n$ rectangular
standard Young tableaux are realizable.  We give a short proof of this fact by
relating realizable standard Young tableaux to regions of the Shi arrangement
contained in the fundamental region.

\subsection*{Organization of the paper}

In Section~\ref{sec:background}, we recall notation and results on polytopes,
polymatroids, and monotone path polytopes. In Section~\ref{sec:MPP}, we show
that all cellular strings of $P_f$ are coherent and that the monotone path
polytope $\MPP{P_f}$ is a polymatroid base polytope.  We also determine the
vertices and facets. To that end, we show that $f$ and $\Mf{f}$ have the same
lattice of flats but all flats of $\Mf{f}$ are facet-defining. We illustrate
the construction on Loday's associahedron (Example~\ref{ex:associahedra}). In
Section~\ref{sec:pivpolys} we study max-slope pivot rule polytopes of
$(P_f,\1)$ and show that they are normally equivalent to $\MPP{P_f}$. We also
show that they are nestohedra for certain union-closed building sets.  In
Section~\ref{sec:flag}, we show that partial flag matroids arise as monotone
path polytopes of rank-selected independence polytopes. In
Section~\ref{sec:toric} we treat realizable polymatroids from the viewpoint of
toric varieties in Grassmannians and flag-varieties.
Section~\ref{sec:indep_greedy} extends our results to partial greedy paths and
treats a conjecture of Heuer--Striker. We close with
Section~\ref{sec:base_poly} on general monotone paths on polymatroid base
polytopes. We give plenty of examples throughout.

\subsection*{Acknowledgments}
We thank Jes\'{u}s De Loera, Johanna Krist, Georg Loho, and Milo Bechtloff
Weising for insightful conversations.  We also thank Chris Eur and Nathan
Ilten for helpful discussions regarding Section~\ref{sec:toric}.  Much of the
results obtained in this paper would not have been possible without the
\textsc{OEIS}~\cite{oeis} and \textsc{SageMath}~\cite{sage}.  The first author
is grateful for the financial support from the NSF GRFP, NSF DMS-1818969 and
the wonderful hospitality of the Goethe-Universit\"at Frankfurt and Freie
Universität Berlin, where parts of the research was conducted.

\section{Background}\label{sec:background}

In this section, we briefly recall the necessary background on polytopes,
(poly)matroids, and monotone path polytopes. For more background on polytopes,
we refer to~\cite{grunbaum} and~\cite{zieglerbook}. For a finite set $E$, the
elements of $\R^E$ are vectors $(x_a)_{a \in E}$. We will sometimes abuse
notation and identify $\R^E \cong \R^{|E|}$ with standard basis $(e_a)_{a \in
E}$ and standard inner product $\inner{x}{y} = \sum_{a \in E} x_ay_a$. For a
$A \subseteq E$, we denote by $e_A = \sum_{a \in A} e_a$ the characteristic
vector of $A$ and for any $x \in \R^E$, we write $x(A) = \1_A(x) = \sum_{a \in
A} x_a$.  We also abbreviate $\1 = \1_E$.

A polytope $P \subset \R^d$ is the convex hull of finitely many points $P =
\conv\{v_1,\dots,v_n\}$. For $\wt \in \R^d$, we write
\[
    P^\wt \ := \ \{ x \in P : \inner{\wt}{x} \ge \inner{\wt}{y} \text{ for all }
    y \in P \}
\]
for the face in direction $\wt$, that is, the set of maximizers of the linear
function $x \mapsto \inner{\wt}{x}$ over $P$. If $P^\wt = \{v\}$, then $v$ is a
vertex of $P$ and we write $V(P)$ for the set of vertices. The face lattice
$\LF(P)$ of $P$ is the collection of faces of $P$ partially ordered by
inclusion. Two polytopes $P,Q$ are combinatorially isomorphic if they have
isomorphic face lattices.

The Minkowski sum of two polytopes $P,Q \subset \R^d$ is the polytope 
\[
    P + Q \ = \ \{ p + q : p \in P, q \in Q \} \ = \ \conv\{ u + v : u \in
    V(P), v \in V(Q) \} \, .
\]
A polytope $Q$ is a \Def{Minkowski summand} of $P$ if there is a polytope $R$
such that $Q+R = P$. More generally, $Q$ is a \Def{weak} Minkowski summand if
$Q$ is a Minkowski summand of $\mu P$ for some $\mu > 0$. We will use the
following characterization of weak Minkowski summands.
\begin{prop}[{\cite[Thm.~15.1.2]{grunbaum}}]
    Let $P,Q \subset \R^d$ be polytopes. Then $Q$ is a weak Minkowski summand of
    $P$ if and only if for all $\wt \in \R^d$ it holds that $Q^\wt$ is a vertex
    whenever $P^\wt$ is.
\end{prop}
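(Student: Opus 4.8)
The plan is to argue through support functions. For a polytope $P \subset \R^d$ let $h_P(\wt) = \max_{x \in P} \inner{\wt}{x}$ be its support function; recall that $h_P$ is positively homogeneous and convex, that it is linear precisely on the maximal cones of the normal fan $\mathcal{N}(P)$, that $h_{P+Q} = h_P + h_Q$, and that conversely every positively homogeneous, convex, piecewise-linear function on $\R^d$ equals $h_R$ for a unique polytope $R$. Recall also that $P^\wt$ is a vertex exactly when $\wt$ lies in the interior of a maximal cone of $\mathcal{N}(P)$. The forward implication is then immediate: if $Q + R = \mu P$ for some polytope $R$ and $\mu > 0$, then $Q^\wt + R^\wt = (\mu P)^\wt = \mu\, P^\wt$ for every $\wt$, and whenever $P^\wt$ is a vertex the right-hand side is a single point, forcing $Q^\wt$ (and $R^\wt$) to be single points, hence vertices.

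For the converse, the first step is to show that the hypothesis forces $h_Q$ to be linear on every maximal cone $\sigma$ of $\mathcal{N}(P)$. For $\wt$ in the interior of $\sigma$ the face $P^\wt$ is a vertex, so by hypothesis $Q^\wt = \{q(\wt)\}$ is a vertex; the map $\wt \mapsto q(\wt)$ is locally constant on the connected set $\mathrm{int}(\sigma)$, since its fibers are the interiors of the normal cones of $Q$, hence it is constant, say equal to $q_0$. Thus $\mathrm{int}(\sigma)$, and therefore $\sigma$, is contained in the normal cone $N_Q(q_0)$, on which $h_Q = \inner{q_0}{\cdot}$ is linear.

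The second step constructs the summand. Set $h := \mu h_P - h_Q$ for a scalar $\mu > 0$ to be chosen. By the first step $h$ is positively homogeneous and linear on each maximal cone of $\mathcal{N}(P)$, so it suffices to make $h$ convex, and a piecewise-linear positively homogeneous function that is linear on the maximal cones of a complete fan is globally convex as soon as it is convex across each wall. Let $\tau$ be a wall of $\mathcal{N}(P)$ separating maximal cones $\sigma_1$ and $\sigma_2$; these correspond to an edge of $P$ with distinct endpoints $v_1, v_2$ and to vertices $w_1, w_2$ of $Q$ (which may coincide). Since $\wt \mapsto \inner{\wt}{v_1 - v_2}$ is nonnegative on $\sigma_1$, nonpositive on $\sigma_2$, and vanishes on $\tau$, and the same holds with $w_1 - w_2$ in place of $v_1 - v_2$, both vectors are normal to the hyperplane spanned by $\tau$ and point to the same side, so $w_1 - w_2 = \lambda_\tau (v_1 - v_2)$ for some $\lambda_\tau \ge 0$. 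The jump in the gradient of $h$ across $\tau$ is therefore $(\mu - \lambda_\tau)(v_1 - v_2)$, which points to the convex side provided $\mu \ge \lambda_\tau$. Choosing $\mu := \max_\tau \lambda_\tau$ over the finitely many walls makes $h$ convex, so $h = h_R$ for a polytope $R$, and $h_R + h_Q = \mu h_P$ yields $R + Q = \mu P$; hence $Q$ is a weak Minkowski summand of $P$.

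The main obstacle is the convexity argument in the last step: isolating the correct quantity (the jump in gradient across each wall), verifying that $v_1 - v_2$ and $w_1 - w_2$ are positively proportional, and confirming that a single uniform scalar $\mu$ handles all walls simultaneously. One should also keep in mind that $P$ need not be full-dimensional, so the normal fans share a lineality space; this causes no trouble, since the hypothesis likewise forces the direction space of $\mathrm{aff}(Q)$ into that of $\mathrm{aff}(P)$, and the arguments above refer only to the maximal cones and walls of $\mathcal{N}(P)$.
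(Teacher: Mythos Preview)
Your proof is correct. The paper does not give its own proof of this proposition; it is quoted as \cite[Thm.~15.1.2]{grunbaum} and used as a black box, so there is no ``paper's approach'' to compare against. Your argument via support functions and the wall-crossing convexity criterion is the standard one and is carried out cleanly: the key points---that the hypothesis forces each maximal normal cone of $P$ into a maximal normal cone of $Q$, that the gradient jump of $\mu h_P - h_Q$ across a wall is $(\mu-\lambda_\tau)(v_1-v_2)$ with $\lambda_\tau\ge 0$, and that a piecewise-linear function on a complete fan is convex once it is convex across every wall---are all correctly identified and justified.

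One tiny point: your choice $\mu := \max_\tau \lambda_\tau$ may equal $0$ (precisely when $Q$ is a single point), whereas the definition of weak Minkowski summand in the paper requires $\mu>0$. Replacing it by $\mu := \max\bigl(1,\max_\tau \lambda_\tau\bigr)$, or simply observing that enlarging $\mu$ preserves the wall-crossing inequalities, removes this edge case.
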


If $P$ is also a weak Minkowski summand of $Q$, then $P$ and $Q$ are
\Def{normally equivalent}. If $P$ and $Q$ are normally equivalent, then $P$ and
$Q$ are combinatorially isomorphic and the isomorphism between face lattices is
given by $P^\wt \mapsto Q^\wt$. Note that any two full-dimensional axis-parallel
boxes in $\R^d$ are normally equivalent but in general not affinely isomorphic.

\subsection{Matroids and Polymatroids}\label{sec:polymatroids}
There is a vast literature on matroids and polymatroids and we refer the
reader to~\cite{Oxley} and~\cite{Fujishige} for more.

Let $E$ be a finite set. A \Def{polymatroid}~\cite{edmondsorig} is a monotone
and submodular function $f : 2^E \to \Rnn$. That is, $f(\emptyset) = 0$ and
for all $A, B \subseteq E$
\[
    f(A) \ \le \ f(A \cup B) \ \le f(A) + f(B) - f(A \cap B) \, .
\]
The \Def{polymatroid (independence) polytope} of $f$ is
\[
    P_f \ := \ \{ x \in \R^E : x \ge 0, \1_A(x) \le f(A) \text{ for all } A
    \subseteq E \} \, .
\]
The polytope $P_f$ is of full dimension $|E|$ if and only if $f(\{e\}) > 0$
for all $e \in E$. Note that if $y \in P_f$ and $x \in \R^E$ satisfies $0 \le
x_e \le y_e$ for all $e \in E$, then $x \in P_f$. Polytopes satisfying this
condition are called \Def{anti-blocking} polytopes~\cite{Fulkerson}. 

Edmonds~\cite{edmondsorig} originally defined polymatroids as those
anti-blocking polytopes for which all points $y \in P_f$ maximal with respect
to the componentwise order have the same coordinate sum $\1(y)$. Theorem~14
in~\cite{edmondsorig} shows the equivalence to our definition above.  The
\Def{base polytope} $B_f$ of $f$ is the face $P_f^\1$. Edmonds' definition
implies that $P_f = \Rnn^E \cap (-\Rnn^E + B_f)$ and thus $B_f$ completely
determines the polymatroid. It follows from
submodularity that 
\begin{equation}\label{eqn:B_f}
     B_f \ = \ \{ x \in P_f : \1(x) = f(E) \} \, .
\end{equation}

Up to translation, base polytopes are characterized as precisely those
polytopes $B \subset \{ x : \1(x) = c \}$ for some $c$ and such that if
$[u,v]$ is an edge of $B$, then $u-v = \mu (e_i - e_j)$ for some $\mu \in \R$
and $i,j \in E$; see~\cite{edmondsorig}. In the context of geometric
combinatorics, such polytopes were studied by Postnikov~\cite{GenPermOrig}
under the name \Def{generalized permutahedra}. The prototypical examples are
permutahedra: A \Def{permutahedron} is a polytope of the form
\[
    \Pi(a_1,\dots,a_d) \ = \ \conv \{ (a_\sigma(1),\dots,a_\sigma(d)) : \sigma
    \text{ permutation of } [d] \} 
\]
for $a_1,\dots,a_d \in \R^d$; see also Example~\ref{ex:cube}.
The \Def{standard} permutahedron is $\Pi_{n-1} := \Pi(1,2,\dots,n)$.

The most well-known polymatroids are matroids. A \Def{matroid} is a pair $M =
(E, \Ind)$, where $E$ is a finite set and $\Ind \subseteq 2^E$.  The
collection $\Ind$ is a nonempty hereditary set system (or simplicial complex)
that satisfies the augmentation property: if $I,J \in \Ind$ such that $|I| <
|J|$, then there is $e \in J \setminus I$ such that $I \cup e \in \Ind$.  The
sets in $\Ind$ are called \Def{independent} and the inclusion-maximal sets are
called \Def{bases}. The \Def{rank function} of $M$ is $r_M : 2^E \to
\Z_{\ge0}$ given by $r_M(X) := \max\{ |I| : I \in \Ind, I \subseteq X \}$. The
rank function is a polymatroid with the additional property that $r_M(X) \le
|X|$ and this characterizes matroid rank functions among polymatroids.

For $A \subseteq E$, let $e_A \in \{0,1\}^E$ be its
characteristic vector. The independence polytope of a matroid $M$ is 
\[
    P_M \ := \ P_{r_M} \ = \ \conv \{ e_I : I \in \Ind \} \, .
\]
The \Def{base  polytope} of $M$ is then 
\[
    B_M \ := \ P_M^\1 \ = \ \conv \{ e_B : B \text{ basis of } M \} \, .
\]
The \Def{uniform matroid} on $n$ elements of rank $k$ is the matroid $U_{n,k}
= ([n],\Ind)$ for which a set $A \subseteq [n]$ is independent if and only if
$|A| \le k$. The corresponding base polytope is the \Def{(n,k)-hypersimplex}
\[
    \Delta(n,k) \ := \ B_{U_{n,k}} \ = \  \conv\{ e_A : A \subseteq [n], |A| =
    k \} \, .
\]

A set $F \subseteq E$ is \Def{closed} or a \Def{flat} with respect to $f$ if
$f(F \cup e) > f(F)$ for all $e \in E \setminus F$.  For $A \subset E$, the
\Def{closure} of $A$ is the flat $\ov{A} := \{ e \in E : f(A \cup e) = f(A)
\}$.  Note that $\dim P_f = |E|$ if and only if  $\emptyset$ is a flat. We
call a flat \Def{proper} if $F \neq \ov{\emptyset}$ and $F \neq E$.  The
\Def{lattice of flats} $\LF(f)$ is the collection of flats of $f$, partially
ordered by inclusion. A flat $F$ is \Def{separable} if $F = F_1 \cup F_2$ for
two disjoint, nonempty flats $F_1,F_2$ with $f(F) = f(F_1) + f(F_2)$. 

\begin{theorem}[{\cite[Thm.~28]{edmondsorig}}]
    Let $(E,f)$ be a polymatroid such that $\emptyset$ is closed. An
    irredundant inequality description of $P_f$ is given by 
    \[
        P_f \ = \ \{ x \in \R^E : x \ge 0, \1_F(x) \le f(F) \text{ for all
        proper and inseparable } F \in \LF(f) \} \, .
    \]
\end{theorem}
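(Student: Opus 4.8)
The plan is to prove the asserted equality of polytopes first and then check that no inequality in the description can be dropped. Write $Q$ for the polytope on the right-hand side. Since $Q$ is defined by a subsystem of the inequalities defining $P_f$, the inclusion $P_f \subseteq Q$ is immediate, and the content of the equality is that the retained inequalities already imply $\1_A(x) \le f(A)$ for \emph{every} $A \subseteq E$. I would obtain this by two reductions. First, for arbitrary $A$ and any $x \ge 0$ one has $\1_A(x) \le \1_{\ov{A}}(x) \le f(\ov{A}) = f(A)$, using $A \subseteq \ov{A}$ and $f(\ov{A}) = f(A)$; so it suffices to keep flats. Second, if a flat $F$ is separable, say $F = F_1 \cup F_2$ with $F_1, F_2$ disjoint nonempty flats and $f(F) = f(F_1) + f(F_2)$, then $\1_F(x) = \1_{F_1}(x) + \1_{F_2}(x) \le f(F_1) + f(F_2) = f(F)$, so $F$ may be replaced by $F_1$ and $F_2$. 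Iterating this decomposition (it strictly decreases the size of the block, hence terminates) leaves only inseparable flats, and every block produced is proper: it is nonempty, hence distinct from $\ov{\emptyset} = \emptyset$ (here we use that $\emptyset$ is closed), and strictly contained in $F \subseteq E$. This proves $Q \subseteq P_f$, hence $Q = P_f$.

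For irredundancy I would first note that $P_f$ is full-dimensional, since $\emptyset$ being a flat forces $f(\{e\}) > 0$ for all $e \in E$. Consequently every facet of $P_f$ is induced by one of the inequalities of its defining system, i.e.\ it lies in a hyperplane $\{x_e = 0\}$ or $\{\1_A(x) = f(A)\}$ for some $e$ or some $A$. In the latter case, the chain $\1_A(x) \le \1_{\ov{A}}(x) \le f(\ov{A}) = f(A)$, which holds on all of $P_f$ and is tight on the facet, forces the facet into $\{\1_{\ov{A}}(x) = f(\ov{A})\}$; and if $\ov{A}$ were separable as $F_1 \cup F_2$, the same squeezing applied to each block would put the facet into $\{\1_{F_1}(x) = f(F_1)\} \cap \{\1_{F_2}(x) = f(F_2)\}$, an affine subspace of codimension $2$ since the normals $e_{F_1}$ and $e_{F_2}$ have disjoint nonempty supports. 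That is impossible for a facet of a full-dimensional polytope, so $\ov{A}$ is an inseparable flat; discarding $\ov{A} = \ov{\emptyset}$, whose inequality is the vacuous $0 \le 0$, we conclude that every facet of $P_f$ is one of those listed.

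It remains to see that each listed inequality is genuinely facet-defining. For $x_e \ge 0$, the face $P_f \cap \{x_e = 0\}$ is the polymatroid polytope of the restriction of $f$ to $E \setminus e$, which is full-dimensional in $\R^{E \setminus e}$ because $f(\{e'\}) > 0$ for every $e' \ne e$; hence it is a facet. For an inseparable flat $F \ne \ov{\emptyset}$ (the proper inseparable flats, together with $E$ when $E$ is inseparable), I would identify the face $P_f \cap \{\1_F(x) = f(F)\}$ with the Cartesian product $B_{f|_F} \times P_{f/F}$, where $f|_F$ denotes the restriction of $f$ to $F$ and $f/F$ the contraction $(f/F)(A) := f(A \cup F) - f(F)$ on $E \setminus F$. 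The inclusion of this face into the product is read directly off the inequalities indexed by subsets and supersets of $F$; the reverse inclusion is a short submodularity computation pairing a set $A_1 \subseteq F$ with a set $A_2 \subseteq E \setminus F$ through $f(A_1) + f(A_2 \cup F) \le f(A_1 \cup A_2) + f(F)$. Now $\dim P_{f/F} = |E \setminus F|$, because $(f/F)(\{e\}) = f(F \cup e) - f(F) > 0$ for all $e \in E \setminus F$ (precisely the condition that $F$ is a flat), while $\dim B_{f|_F} = |F| - 1$ because $F$ is inseparable; adding, the face has dimension $(|F| - 1) + |E \setminus F| = |E| - 1$ and is a facet. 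Since the hyperplanes coming from distinct retained flats, and from the individual coordinates, are pairwise distinct, dropping any single listed inequality enlarges the polytope, so the description is irredundant.

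The step I expect to be the main obstacle is the exact dimension count $\dim B_{f|_F} = |F| - 1$ for inseparable flats. One direction is a clean submodularity calculation: a separating partition $F = F_1 \cup F_2$ induces a splitting $B_{f|_F} = B_{f|_{F_1}} \times B_{f|_{F_2}}$, so that $\dim B_{f|_F} \le (|F_1| - 1) + (|F_2| - 1) = |F| - 2$. The converse---that if $B_{f|_F}$ is not full-dimensional in its supporting hyperplane $\{\1(x) = f(F)\}$, then $F$ must be separable---is where the real work lies: one must show that any further hyperplane $\{\1_S(x) = c\}$ containing $B_{f|_F}$ satisfies $\emptyset \ne S \subsetneq F$ and $c = f(S) = f(F) - f(F \setminus S)$, and that then both $S$ and $F \setminus S$ are flats, yielding a separation of $F$. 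This rests on the structure theory of restrictions and contractions of polymatroids---in particular on the formulas for the maximum and minimum of $\1_S$ over a base polytope---together with the characterization of base polytopes among generalized permutahedra recalled in Section~\ref{sec:background}; the remaining steps are bookkeeping with monotonicity and submodularity.
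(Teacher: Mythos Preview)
The paper does not prove this statement; it is quoted from Edmonds and used as background. So there is no in-paper proof to compare against, and your argument stands as an independent verification.

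Your outline is the standard one and is essentially correct: closure reduces arbitrary $A$ to flats, iterated decomposition reduces separable flats to inseparable ones, and irredundancy is obtained from the identification $P_f \cap \{\1_F(x) = f(F)\} \cong B_{f|_F} \times P_{f/F}$ together with $\dim B_{f|_F} = |F|-1$ for inseparable $F$. You correctly flag the dimension count as the crux. The missing ingredient you allude to---that if $F$ is a flat and $f(F)=f(S)+f(F\setminus S)$ for a nontrivial partition then $S$ and $F\setminus S$ are automatically flats of $f$, not just of $f|_F$---is a short submodularity argument: for $e\in E\setminus F$, submodularity of the pair $(F,\,S\cup e)$ gives $f(S\cup e)-f(S)\ge f(F\cup e)-f(F)>0$; for $e\in F\setminus S$, the modular splitting on $F$ gives $f(S\cup e)=f(S)+f(\{e\})>f(S)$ since $\emptyset$ is closed. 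You should include this step rather than leave it to ``structure theory.''

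One inconsistency to resolve. As stated in the paper, ``proper'' excludes $E$, yet when $E$ is inseparable the inequality $\1_E(x)\le f(E)$ is facet-defining and cannot be dropped (e.g.\ for $f=r_{U_{n,k}}$ with $k<n$ the all-ones vector satisfies every proper-flat constraint but violates $\1(x)\le k$). You notice this in the irredundancy paragraph (``together with $E$ when $E$ is inseparable''), but your equality argument silently relies on it as well: starting the reduction at $A=E$ when $E$ is inseparable produces no decomposition, so the inequality for $E$ must already be present in $Q$. Make this correction to the statement explicit so the two halves of the proof agree.
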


An operation that will be used later is the \emph{truncation} of a polymatroid:
For $0 \le \alpha \le f(E)$, the \Def{truncation} \cite[Sect.~3.1(d)]{Fujishige}
of $f$ by $\alpha$ is the polymatroid $f_\alpha$ with
\[
    f_\alpha(A) \ = \ \min(\alpha, f(A)) 
\]
The base polytope of $f_\alpha$ is $B_{f_\alpha} \ = \ P_f \cap \{ x : \1(x) =
\alpha \}$.

A matroid $M$ is \Def{realizable} over $\C$ if there are $1$-dimensional
linear subspaces $U_e \subset \C^n$ for $e \in E$ such that $r_M(X) = \dim
\sum_{e \in X} U_e$. If $U_1,\dots,U_n$ is any collection of linear subspaces,
then $f(X) = \dim \sum_{e \in X} U_e$ defines an integral polymatroid, that we
call a \Def{realizable polymatroid}. In this case $P_f$ and hence $B_f$ is a
lattice polytope.

\subsection{Monotone path polytopes}\label{sec:mp}
Let $P \subset \R^d$ be a polytope and $c \in \R^d$ a linear function that is
not constant on $P$. Let $P_{\min} =
P^{-c}$ and $P_{\max} = P^c$ be the faces on which $c$ is minimized and
maximized, respectively. A \Def{cellular string} of $(P,c)$ is a sequence of
faces $F_* = (F_0, F_1, F_2,\dots, F_r)$ of $P$ such that $c$ is not
constant on $F_i$, $F_0^{-c} \subseteq
P_{\min}, F_r^{c} \subseteq P_{\max}$, and
\[
    F_i^{c} \ =  F_i \cap F_{i+1} \ = \  F^{-c}_{i+1}
\]
for all $0 \le i < r$. If $c$ is edge generic, that is, $\inner{c}{u} \neq
\inner{c}{v}$ whenever $[u,v]$ is an edge of $P$, then the condition
simplifies to $F_0^{-c} = P_{\min}, F_r^c = P_{\max}$ and $F_i^c =
F^{-c}_{i+1}$. Cellular strings for generic $c$ were introduced and studied
in~\cite{CellString}. A partial order on cellular strings is given by
refinement, for which some $F_i$ are replaced by a cellular string of $F_i$.
For general $c$, the collection of cellular strings is still partially ordered
by refinement and we continue to call the partially ordered set the \Def{Baues
poset} $\Baues(P,c)$. The minimal elements are the \Def{$c$-monotone paths}.
They correspond to sequences of vertices $v_* = (v_0, v_1, \dots, v_k)$ such
that $v_0 \in V(P_{\min}), v_k \in V(P_{\max})$, and $[v_i,v_{i+1}] \subset P$
is an edge with $\inner{c}{v_i} < \inner{c}{v_{i+1}}$ for all $0 \le i < k$.
Figure~\ref{fig:perut_mp} gives an illustration.

Let $\wt \in \R^d$. The projection $\pi : \R^d \to \R^2$ given by $x \mapsto
(\inner{c}{x},\inner{\wt}{x})$ maps $P$ to a (degenerate) polygon $\pi(P)$.
The projections $\pi(P_{\min}), \pi(P_{\max})$ are faces of $\pi(P)$. The set
of points $(s,t) \in \pi(P)$ with $(s,t+\epsilon) \not \in \pi(P)$ for all
$\epsilon > 0$ is a vertex-edge path from the vertex $\pi(P_{\min}^\wt)$ to
the vertex $\pi(P_{\max}^\wt)$. The preimage of every edge of this path is a
cellular string, called a \Def{coherent} cellular string.  If $\wt$ is
generic, then this is a $c$-monotone path $v_* = (v_0, v_1, \dots, v_k)$,
called the \Def{shadow vertex path} of $(P,c)$ with respect to $\wt$. A
$c$-monotone path $v_*$ of $P$ is called \Def{coherent} if $v_*$ is a shadow
vertex path with respect to some $\wt$. We refer to~\cite[Section
4]{Hypersimps} for an illustration of non-coherent monotone paths

Let $I := \{ \inner{c}{x} : x \in P \} \subset \R$. A section of $(P,c)$ is a
continuous map $\gamma : I \to P$ such that $\inner{c}{\gamma(s)} = s$ for all $s \in
I$. The collection of sections is a convex body and
Billera--Sturmfels~\cite{BSFiberPoly} showed that the projection
\[
    \MPPl{c}{P} \ = \ \left\{ 2 \int_I \gamma \, ds : \gamma \text{
        section}\right\} \subset \R^d
\]
is a convex polytope, called the \Def{monotone path polytope} of $(P,c)$.  Every
$c$-monotone path $v_*$ gives rise to a piecewise-linear section
$\gamma_{v_*}$ of $(P,c)$ and 
\begin{equation}\label{eqn:MPv}
    \MPv{P,c}{v_*} \ := \ \
    2\int_I \gamma_{v_*} \, dt \ = \ 
    \sum_{j=1}^k \inner{c}{v_j - v_{j-1}} (v_{j-1} + v_j) \, .
\end{equation}
Billera--Sturmfels~\cite{BSFiberPoly} showed that a $c$-monotone path $v_*$ is
coherent with respect to $\wt$ if and only if $ \MPPl{c}{P}^\wt =
\MPv{P,c}{v_*}$.

\begin{theorem}[\cite{BSFiberPoly}] 
    The poset of coherent cellular strings is isomorphic to the face lattice of
    $\MPPl{c}{P}$.
\end{theorem}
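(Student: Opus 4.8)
The plan is to prove this as the segment case of the Billera--Sturmfels fiber polytope theorem, carried out directly with the objects set up above. The heart of the argument is an explicit description of the face $\MPPl{c}{P}^\wt$ for an arbitrary $\wt\in\R^d$: I claim it is the set of averages $2\int_I\gamma\,ds$ of those sections $\gamma$ that, for almost every $s$, take values in the $\wt$-maximal face of the fiber $P_s:=P\cap\{x:\inner{c}{x}=s\}$, and that the faces of $P$ arising this way form exactly the coherent cellular string attached to $\wt$ via the projection $\pi_\wt$ recalled before the theorem. Once this is established, the asserted poset isomorphism $F_*\mapsto\MPPl{c}{P}^\wt$ is almost formal.

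First I would carry out the computation of $\MPPl{c}{P}^\wt$. Since $\inner{\wt}{2\int_I\gamma\,ds}=2\int_I\inner{\wt}{\gamma(s)}\,ds$, a section is $\wt$-optimal for $\MPPl{c}{P}$ exactly when it maximizes $\int_I\inner{\wt}{\gamma(s)}\,ds$ among all sections. For each $s\in I$ the integrand is bounded above by $g(s):=\max\{\inner{\wt}{y}:y\in P,\ \inner{c}{y}=s\}$, and the graph of the concave piecewise-linear function $g$ is precisely the upper path of the polygon $\pi_\wt(P)$; its linear pieces are the upper edges, and the preimage in $P$ of the $i$-th upper edge is the face $F_i=P^{\alpha_i c+\wt}$ for suitable decreasing slopes $\alpha_i$, on which $\inner{c}{\cdot}$ is non-constant. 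Thus $F_*(\wt):=(F_0,\dots,F_r)$ is by definition the coherent cellular string attached to $\wt$. Gluing sections of the $F_i$ along the breakpoints $s_i$---where they agree because $F_i^c=F_i\cap F_{i+1}=F_{i+1}^{-c}$, the cellular-string condition---yields a continuous section attaining $g$ almost everywhere, so the supremum $2\int_I g\,ds$ is attained, and a section is $\wt$-optimal precisely when $\gamma(s)\in F_i$ for almost every $s$ in the $i$-th subinterval of $I$. Hence $\MPPl{c}{P}^\wt$ equals the set $\Phi(F_*(\wt))$ of averages of all $F_*(\wt)$-supported sections.

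Next I would build the bijection. For a coherent cellular string $F_*=(F_0,\dots,F_r)$, let $\Phi(F_*)$ be the set of averages $2\int_I\gamma\,ds$ over sections $\gamma$ with $\gamma(s)\in F_i$ on the $i$-th subinterval. By definition of coherence $F_*=F_*(\wt)$ for some $\wt$, so the previous step gives $\Phi(F_*)=\MPPl{c}{P}^\wt$; in particular $\Phi(F_*)$ is a face of $\MPPl{c}{P}$, independent of the choice of $\wt$. Conversely every face of $\MPPl{c}{P}$ has the form $\MPPl{c}{P}^\wt=\Phi(F_*(\wt))$, so $\Phi$ is onto. For injectivity, note that $2\int_I\gamma\,ds$ lies in the face $\Phi(F_*)=\MPPl{c}{P}^\wt$ if and only if $\gamma$ is $\wt$-optimal, i.e.\ $F_*$-supported; thus the set of $F_*$-supported sections is determined by $\Phi(F_*)$, and from it one recovers each $F_i$ as the closure of $\{\gamma(s):\gamma\ \text{$F_*$-supported},\ s\ \text{interior to the $i$-th piece}\}$ together with the breakpoints where this family changes. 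Finally $\Phi$ is order-preserving: a refinement $F_*'$ of $F_*$ is precisely a cellular string all of whose supported sections are $F_*$-supported, equivalently $\Phi(F_*')\subseteq\Phi(F_*)$; the trivial string $(P)$ maps to $\MPPl{c}{P}$ itself and a $c$-monotone path $v_*$ maps to the single point $\MPv{P,c}{v_*}$ of \eqref{eqn:MPv}, recovering the vertex correspondence stated above. With the usual convention that the empty face corresponds to no cellular string (or is formally adjoined as a bottom element), this is the claimed isomorphism of face lattices.

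The main obstacle is the interface between the analytic and combinatorial sides in the second step: one must check that the upper envelope $g$ and its breakpoints are governed exactly by the two-dimensional projection $\pi_\wt$---so that the induced subdivision of $I$ is the one that defines a \emph{coherent} cellular string---that the fiberwise $\wt$-maximal faces genuinely patch into faces $F_i$ of $P$ meeting along $F_i^c=F_{i+1}^{-c}$ even when $c$ is not edge generic and $\pi_\wt(P)$ degenerates to a segment, and that the constructed maximizing sections can be chosen continuous across the breakpoints. The remaining points---surjectivity, injectivity, and order-preservation---are then formal, and the identification of the extreme cases with \eqref{eqn:MPv} closes the argument.
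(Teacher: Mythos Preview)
The paper does not supply its own proof of this theorem: it is quoted as a result of Billera--Sturmfels \cite{BSFiberPoly} and used as a black box in the background section, so there is no in-paper argument to compare your proposal against. Your sketch is essentially the original Billera--Sturmfels argument specialized to the segment case (fiber polytopes for a linear projection to $\R$), and it is correct in outline; the technical caveats you flag---continuity of the patched section at breakpoints, the degenerate case where $\pi_\wt(P)$ is a segment, and recovering the $F_i$ from the set of $F_*$-supported sections---are the genuine points one has to pin down, but none of them is an obstruction.
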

We remark that the definition given in \cite{BSFiberPoly} is
actually $\frac{1}{2 \mathrm{vol}_1(I)} \MPPl{c}{P}$. This does not change the
combinatorics and has the benefit that if $c \in \Z^d$ and $P$ is a lattice
polytope, then $\MPPl{c}{P}$ as well.

For $s \in I$ define $P_s := \{ x \in P : \inner{c}{x} = s \}$.  The monotone path
polytope $\MPPl{c}{P}$ is equivalently given by the Minkowski integral
\begin{equation}\label{eqn:MPint}
    \MPPl{c}{P} \ = \ 2 \int_I P_s \, ds \, .
\end{equation}
Let $I' = \{ \inner{c}{v} : v \in V(P) \} = \{ t_0 < t_1 < \cdots < t_m \}$.  For $0
\le i < m$ and $t_i < s < t_{i+1}$, the polytope $P_s$ is normally equivalent
to $P_{t_i} + P_{t_{i+1}}$. The additivity of the integral gives a simple
construction for a polytope normally equivalent to the monotone path polytope.

\begin{prop}\label{prop:MPPnorm}
    The monotone path polytope $\MPPl{c}{P}$ is normally equivalent to
    $\sum_{s \in I'} P_{s}$.
\end{prop}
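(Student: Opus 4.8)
The plan is to cut the integration interval $I$ at the vertex values in $I'$, evaluate each resulting piece of the Minkowski integral up to normal equivalence using the slice description recorded just before the statement, and then reassemble using the compatibility of normal equivalence with Minkowski sums. Write $I = [t_0,t_m] = \bigcup_{i=0}^{m-1}[t_i,t_{i+1}]$. By \eqref{eqn:MPint} and additivity of the Minkowski integral,
\[
    \MPPl{c}{P} \ = \ 2\int_I P_s\,ds \ = \ 2\sum_{i=0}^{m-1}\int_{t_i}^{t_{i+1}} P_s\,ds \,.
\]

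The main step is the following claim: if $(Q_s)_{s\in[a,b]}$ is a family of polytopes, continuous in $s$, such that $Q_s$ is normally equivalent to a fixed polytope $Q$ for every $s\in(a,b)$, then $R := \int_a^b Q_s\,ds$ is normally equivalent to $Q$. I would deduce this from the characterization of weak Minkowski summands recalled in Section~\ref{sec:background}, for which it suffices to show that $R^\wt$ is a vertex if and only if $Q^\wt$ is, for every $\wt$. A point $\int_a^b\gamma(s)\,ds\in R$ (with $\gamma(s)\in Q_s$) lies in $R^\wt$ exactly when $\gamma(s)\in(Q_s)^\wt$ for almost every $s$, so $R^\wt = \int_a^b(Q_s)^\wt\,ds$. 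Moreover, for each direction $\wt'$ the width of a Minkowski integral equals the integral of the widths — the width of a polytope $K$ in direction $\wt'$ being $h_K(\wt')+h_K(-\wt')$, which is additive under Minkowski sums — so $R^\wt$ is a single point precisely when $(Q_s)^\wt$ is a single point for almost every $s$; here one interchanges the quantifier over $\wt'$ with the almost-everywhere quantifier over $s$ by using a countable dense set of directions and continuity of the width in $\wt'$. Since $(a,b)$ has full measure in $[a,b]$ and $Q_s$ is normally equivalent to $Q$ there, $(Q_s)^\wt$ is a vertex for a.e.\ $s$ exactly when $Q^\wt$ is a vertex. This proves the claim. (Alternatively, one argues directly with support functions: $h_R = \int_a^b h_{Q_s}\,ds$ is linear on every maximal cone of the normal fan of $Q$ and keeps a genuine fold across each of its walls.)

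Applying the claim on each $[t_i,t_{i+1}]$ with $Q = P_{t_i}+P_{t_{i+1}}$ — legitimate because $P_s$ is normally equivalent to $P_{t_i}+P_{t_{i+1}}$ for all $s\in(t_i,t_{i+1})$, as recorded before the statement — shows that $\int_{t_i}^{t_{i+1}}P_s\,ds$ is normally equivalent to $P_{t_i}+P_{t_{i+1}}$. I then use that normal equivalence is preserved by Minkowski sums and by positive rescaling of summands: if $A_i$ is normally equivalent to $B_i$ for all $i$ then $\sum_iA_i$ is normally equivalent to $\sum_iB_i$, because the normal fan of a Minkowski sum is the common refinement of the normal fans of the summands and so depends only on those fans; and $\sum_i\mu_iB_i$ is normally equivalent to $\sum_iB_i$ for any reals $\mu_i>0$. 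Hence $\int_IP_s\,ds$, and therefore $\MPPl{c}{P} = 2\int_IP_s\,ds$, is normally equivalent to
\[
    \sum_{i=0}^{m-1}\bigl(P_{t_i}+P_{t_{i+1}}\bigr) \ = \ P_{t_0}+2P_{t_1}+\dots+2P_{t_{m-1}}+P_{t_m}\,,
\]
which in turn is normally equivalent to $P_{t_0}+P_{t_1}+\dots+P_{t_m} = \sum_{s\in I'}P_s$, as desired. The only genuinely delicate point is the claim in the middle paragraph; the rest is bookkeeping with normal fans.
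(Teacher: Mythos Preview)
Your argument is correct and follows exactly the line the paper indicates: the paper does not give a separate proof but records just before the proposition that $P_s$ is normally equivalent to $P_{t_i}+P_{t_{i+1}}$ on each open subinterval and then states that ``the additivity of the integral gives a simple construction for a polytope normally equivalent to the monotone path polytope.'' Your proof supplies precisely these omitted details---splitting the integral at the $t_i$, arguing that each piece is normally equivalent to the corresponding $P_{t_i}+P_{t_{i+1}}$, and reassembling---so there is no methodological difference to discuss. One small simplification: in your middle claim you can avoid the quantifier swap via a countable dense set of directions, because for $s\in(a,b)$ the polytopes $Q_s$ all share the \emph{same} normal fan, so the faces $(Q_s)^\wt$ are translates of a fixed linear subspace and a single witness direction $\wt'$ works uniformly in $s$.
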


We give a useful local criterion of when a monotone path is coherent. Let $P$
be a polytope and $c \in \R^d$. For a vertex $v \in V(P)$, we write
\[
    \Nb{P,c}{v} \ := \ \{ u \in V(P) : [u,v] \text{ edge of $P$}, \inner{c}{u} >
    \inner{c}{v} \} 
\]
for the \Def{$c$-improving neighbors} of $v$.

\begin{lemma}\label{lem:cMP_local}
    Let $v_* = (v_0,v_1, \dots, v_k)$ be a $c$-monotone path on $(P,c)$. Then
    $v_*$ is coherent if and only if there is a weight $\wt
    \in \R^d$ such that $v_0 = (P_{\min})^\wt$ and for every $i=1,\dots,k$
    \begin{equation}\label{eqn:coMP_local}
        \frac{\inner{\wt}{v_{i}-v_{i-1}}}{\inner{c}{v_{i}-v_{i-1}}} \ > \ 
            \frac{\inner{\wt}{u-v_{i-1}}}{\inner{c}{u-v_{i-1}}} \quad \text{
                for all } u \in \Nb{P,c}{v_{i-1}} \setminus \{v_i\} \, .
    \end{equation}
\end{lemma}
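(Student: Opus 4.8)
The plan is to characterize coherence through the monotone path polytope and then translate the optimality condition $\MPPl{c}{P}^\wt = \MPv{P,c}{v_*}$ into the stated local inequalities. By the result of Billera--Sturmfels recalled above, $v_*$ is coherent with respect to $\wt$ if and only if $\MPv{P,c}{v_*}$ is the $\wt$-maximal face of $\MPPl{c}{P}$, equivalently the unique $\wt$-maximal vertex among the points $\MPv{P,c}{w_*}$ as $w_*$ ranges over all $c$-monotone paths. The key geometric observation is that, for a $c$-monotone path, changing one step $v_{i-1} \to v_i$ to a different edge $v_{i-1} \to u$ with $u \in \Nb{P,c}{v_{i-1}}$ and continuing along \emph{some} $c$-monotone path from $u$ does not, by itself, determine whether the resulting path beats $v_*$ under $\wt$; but the projection $\pi$ to $\R^2$ given by $x \mapsto (\inner{c}{x}, \inner{\wt}{x})$ reduces everything to a \emph{planar} comparison. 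In the plane, the shadow path is the upper boundary of $\pi(P)$, and being on the upper boundary is a condition that can be checked edge-by-edge: at the point $\pi(v_{i-1})$, the outgoing edge $\pi(v_{i-1})\pi(v_i)$ must have the largest slope $\frac{\inner{\wt}{v_i - v_{i-1}}}{\inner{c}{v_i - v_{i-1}}}$ among all edges of $\pi(P)$ leaving $\pi(v_{i-1})$ in the direction of increasing $c$.

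First I would show the "only if" direction. Suppose $v_*$ is the shadow vertex path for a generic $\wt$, so $v_0 = (P_{\min})^\wt$ and, for each $i$, the point $\pi(v_i)$ lies on the upper boundary of $\pi(P)$. The edge $\pi(v_{i-1})\pi(v_i)$ is the edge of the upper boundary leaving $\pi(v_{i-1})$; since the upper boundary is concave, its slope exceeds the slope of the segment from $\pi(v_{i-1})$ to $\pi(w)$ for \emph{any} vertex $w$ of $P$ with $\inner{c}{w} > \inner{c}{v_{i-1}}$ that is not on that edge --- in particular for every $u \in \Nb{P,c}{v_{i-1}} \setminus \{v_i\}$. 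Writing out the slope comparison in coordinates gives exactly \eqref{eqn:coMP_local}. (One must be slightly careful: the edge $[v_{i-1}, u]$ of $P$ need not project to an edge of $\pi(P)$, but $\pi(u)$ still lies in $\pi(P)$, and concavity of the upper boundary together with $\inner{c}{u} > \inner{c}{v_{i-1}}$ forces $\pi(u)$ to lie strictly below the line through $\pi(v_{i-1})$ with the upper-boundary slope, which is the inequality we want; genericity of $\wt$ and the fact that $[v_{i-1}, u]$ is an edge of $P$ ensure the slope $\frac{\inner{\wt}{u - v_{i-1}}}{\inner{c}{u - v_{i-1}}}$ is well-defined and the inequality is strict.)

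For the "if" direction, assume such a $\wt$ exists; I would first perturb $\wt$ to make it edge-generic without destroying the finitely many strict inequalities in \eqref{eqn:coMP_local}, so that the shadow path with respect to $\wt$ is a genuine $c$-monotone path $v'_*$. Then I argue $v'_* = v_*$ by induction on $i$: we have $v'_0 = (P_{\min})^\wt = v_0$, and if $v'_{i-1} = v_{i-1}$, then $v'_i$ is the vertex $u \in \Nb{P,c}{v_{i-1}}$ maximizing the slope $\frac{\inner{\wt}{u - v_{i-1}}}{\inner{c}{u - v_{i-1}}}$ (this is the defining property of the shadow step, reading off the upper boundary of $\pi(P)$ at $\pi(v_{i-1})$), and \eqref{eqn:coMP_local} says precisely that this maximizer is $v_i$. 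Hence $v_* = v'_*$ is coherent. The main obstacle --- and the step deserving the most care --- is the "if" direction's reduction to the planar picture: one must confirm that the shadow path, defined as the preimage of the upper boundary of $\pi(P)$, really does make its $i$-th step along the slope-maximizing improving edge at $v_{i-1}$, i.e.\ that local slope-maximality among $c$-improving \emph{neighbors in $P$} coincides with following the upper boundary of the $2$-dimensional projection. This follows because any $c$-improving direction out of $v_{i-1}$ along the upper boundary of $\pi(P)$ lifts to a face of $P$ containing $v_{i-1}$ on which $\wt$ (after the generic perturbation) is maximized along an edge, and that edge is an improving neighbor; concavity then pins down which neighbor.
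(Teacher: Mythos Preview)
Your proposal is correct and follows essentially the same approach as the paper: project via $\pi(x) = (\inner{c}{x}, \inner{\wt}{x})$ and read off the local slope conditions from concavity of the upper hull of $\pi(P)$. The only difference is cosmetic --- in the ``if'' direction you perturb $\wt$ to a generic weight before running the induction, whereas the paper argues directly that the strict inequalities \eqref{eqn:coMP_local} (together with $v_0 = (P_{\min})^\wt$) already force each preimage $\pi^{-1}(e')$ of an upper-hull edge to be an edge of $P$, so the original $\wt$ works without perturbation.
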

\begin{proof}
    Let $\wt \in \R^d$ such that $\wt$ is not constant on $P$. The projection
    $P' = \pi(P) = \{ (\inner{c}{x}, \inner{\wt}{x}) : x \in P\}$ is a convex
    polygon and the upper hull $U$ of $P'$ is the set of points $p \in P'$
    such that $p + (0,\epsilon) \not \in P'$ for all $\epsilon > 0$. The upper
    hull is a union of edges and the corresponding coherent cellular string
    consists of the preimages of the edges of $U$ under $\pi$. If the celluar
    string is a monotone path $v_* = (v_0,v_1, \dots, v_k)$ in $P$, then
    $\pi([v_{i-1},v_i]) \subseteq U$ implies that the stated conditions
    are necessary.

    Conversely, if $u \in P$ is a vertex such that $\pi(u)$ is a vertex in
    the upper hull, then its neighbor to the right, provided it exists, is
    given by $\pi(v)$ with $\inner{c}{v} > \inner{c}{u}$ and such that $e' =
    [\pi(u),\pi(v)]$ has maximal slope. Now, in order for $\pi^{-1}(e')$ to be
    an edge, $u$ and $v$ have to be unique. This means that $v_0$ is the
    unique maximizer of $\wt$ over $P_{\min}$ and \Cref{eqn:coMP_local} has to
    be satisfied for all $i=1,\dots,k$.
\end{proof}

\section{Monotone Paths on Polymatroid Polytopes} \label{sec:MPP}

Let $(E,f)$ be a fixed polymatroid and let $\1(x) := \sum_{i \in E} x_i$. The
first goal of this section is to show that all $\1$-cellular strings on $P_f$
are coherent. We write $\MPP{f} := \MPP{P_f}$ for the monotone path polytope of
$P_f$ with respect to $\1$.

\begin{theorem}\label{thm:MPP_Baues}
    Let $(E,f)$ be a polymatroid.  Every $\1$-cellular string on $P_f$ is
    coherent. In particular, the Baues poset $\Baues(P_f,\1)$ is isomorphic to
    the face lattice of $\MPP{P_f}$.
\end{theorem}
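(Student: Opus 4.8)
The plan is to realize every $\1$-cellular string on $P_f$ explicitly as the coherent cellular string $\pi_\wt^{-1}(\text{upper hull})$ for a weight $\wt$ read off from Edmonds' greedy algorithm. First note the two ends: since $P_f$ is anti-blocking and $0 \in P_f$, the function $\1$ attains its minimum $0$ only at the origin, so $P_{\min} = \{0\}$, and by \eqref{eqn:B_f} we have $P_{\max} = P_f^\1 = B_f$. Hence in any $\1$-cellular string $F_* = (F_0,\dots,F_r)$ we have $0 \in F_0$ and $F_r^\1$ a face of $B_f$.

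The technical heart is a structural description of the $\1$-non-constant faces of $P_f$, which is essentially a repackaging of Edmonds' analysis of the greedy algorithm \cite{edmondsorig} as a statement about the face lattice of $P_f$. Using submodularity one shows that the collection of subsets $A$ with $v(A) = f(A)$ is closed under unions, so for each vertex $v$ there is a largest such set; it is a flat $F_v$, one has $v_e = 0$ for $e \notin F_v$, and consequently $f(F_v) = \1(v)$. If $\1(v) < f(E)$, then for each $e \in E \setminus F_v$ the segment $[v,\, v + \lambda_e e_e]$ with $\lambda_e := \min\{ f(A) - v(A) : A \ni e\} > 0$ is an edge of $P_f$ along which $\1$ strictly increases, and conversely \emph{every} $\1$-increasing edge out of $v$ arises in this way. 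More generally, a face $F$ of $P_f$ with $\1$ non-constant has $F^{-\1}$ equal to the restricted base polytope at some level $s$ and is obtained from $F^{-\1}$ by greedily growing along a subset of $E \setminus F_v$. It follows that every $\1$-monotone path $(0 = v_0,\dots,v_k)$ has $v_i = v_{i-1} + \lambda_i e_{\omega(i)}$ with $\omega(1),\dots,\omega(k)$ distinct (closures only grow along the path), hence equals the greedy path of any linear order extending $\omega(1) < \dots < \omega(k)$; and every $\1$-cellular string is encoded by an ordered set partition $(S_1,\dots,S_p)$ of (a spanning subset of) $E$ compatible with $\LF(f)$, the block $S_i$ recording the coordinate directions activated along $F_i$.

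Given such data, set $\wt_e := -i$ for $e \in S_i$ (and $\wt_e$ more negative for any leftover $e$). I claim the coherent cellular string associated to $\wt$, computed via the upper hull of $\pi_\wt(P_f) = \{(\1(x),\inner{\wt}{x}) : x \in P_f\}$ as in the proof of \Cref{lem:cMP_local}, is exactly $F_*$. The point is the slope computation: along a $\1$-increasing edge $[v, v+\lambda_e e_e]$ the value $\1$ changes by $\lambda_e > 0$ and $\inner{\wt}{\cdot}$ by $\lambda_e \wt_e$, so the slope in direction $e_e$ is just $\wt_e$, independently of $v$; thus at each vertex the maximal-slope outgoing direction is the coordinate $e$ of largest $\wt_e$ among the available ones $E \setminus F_v$, which by construction is precisely the next block of the partition (the whole block tying, which produces exactly the ``fat'' face $F_i$ rather than a refinement). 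Verifying the inequality \eqref{eqn:coMP_local} for monotone paths, and its non-generic analogue for cellular strings, then reduces to these already-established slope identities; with $\MPP{P_f}$ normally equivalent to $\sum_{s \in I'} P_s$ by \Cref{prop:MPPnorm}, coherence of all cellular strings is equivalent to $\Baues(P_f,\1)$ being its face lattice, via \cite{BSFiberPoly}.

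The main obstacle is the structural lemma of the second paragraph: pinning down exactly which faces of $P_f$ are $\1$-non-constant and showing that the $\1$-increasing edges out of a vertex are precisely the greedy coordinate steps; everything afterward is bookkeeping plus the elementary slope computation. A secondary subtlety is the passage from monotone paths to arbitrary cellular strings: one must check that the matching conditions $F_i^\1 = F_{i+1}^{-\1}$ in a cellular string force the encoding data to be a \emph{single} ordered set partition — equivalently, that local compatibility of the level-wise faces propagates to a globally coherent weight. This is exactly the step that fails for a general polytope and projection (where non-coherent monotone paths exist), and here it is supplied by the tree-like compatibility inherent in the greedy/flat structure.
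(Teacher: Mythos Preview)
Your proposal is correct and takes essentially the same approach as the paper: encode a $\1$-cellular string by the ordered set partition $(S_1,\dots,S_p)$ of coordinate directions activated in each face, then exhibit the block-constant weight $\wt_e = -i$ for $e \in S_i$ (the paper uses the equivalent $\wt = k e_{I_1} + (k-1) e_{I_2} + \cdots + e_{I_k} - e_{E \setminus I_k}$) and verify coherence via the observation that the slope of any $\1$-increasing edge $[v, v+\lambda_e e_e]$ under $\pi_\wt$ is just $\wt_e$. The paper's version is terser because it invokes Theorem~\ref{thm:greedy} directly rather than re-deriving the structural lemma about $F_v$ and the $\1$-increasing edges, but the content is the same.
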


We will identify $E = \{1,\dots,n\}$. Edmonds~\cite{edmondsorig_old} showed
that the following geometric version of the greedy algorithm can be used on
polymatroids\footnote{The paper was again published in the Edmonds
Festschrift~\cite{edmondsorig} and throughout we will reference the results
there.}.

\begin{theorem}[Greedy Algorithm]\label{thm:greedy}
    Let $(E,f)$ be a polymatroid and $\wt \in \R^E$. Let $\sigma$ be a
    permutation such that $\wt_{\sigma(1)} \ge \wt_{\sigma(2)} \ge \cdots \ge
    \wt_{\sigma(n)}$. For $i = 0,\dots,n$ define $A_i := \{ \sigma(1), \dots,
    \sigma(i) \}$ and $x \in \R^E$ by 
    \[
        x_{\sigma(i)} \ := \ f(A_{i}) - f(A_{i-1})
    \]
    for $i=1,\dots,n$. Then $x$ maximizes $\wt$ over the base polytope $B_f$.
    If the greedy algorithm is stopped at $\wt_{\sigma(i)} < 0$ and
    $x_{\sigma(j)} := 0$ for $j \ge i$, then $x$ maximizes $\wt$ over $P_f$.
\end{theorem}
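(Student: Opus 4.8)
The plan is to certify optimality of the greedy vector $x$ by linear-programming duality: exhibit a feasible solution of the dual of the relevant maximization problem whose objective value equals $\inner{\wt}{x}$. After relabelling the ground set, I may assume $\sigma$ is the identity, so $\wt_1 \ge \wt_2 \ge \cdots \ge \wt_n$ and $A_i = \{1,\dots,i\}$. I would prove the statement for $P_f$ first and then deduce the one for $B_f$.

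The first step, and the only one requiring real work, is to check that $x$ is primal feasible. Monotonicity of $f$ gives $x_i = f(A_i) - f(A_{i-1}) \ge 0$, so $x \ge 0$. The inequality $\1_A(x) \le f(A)$ for an arbitrary $A = \{i_1 < \cdots < i_m\} \subseteq E$ is where submodularity enters: putting $B_\ell := A \cap A_{i_\ell} = \{i_1,\dots,i_\ell\}$, the inclusion $B_{\ell-1}\subseteq A_{i_\ell-1}$ together with submodularity (marginal returns decrease as the base set grows) gives
\[
    x_{i_\ell} \ = \ f(A_{i_\ell-1}\cup i_\ell) - f(A_{i_\ell-1}) \ \le \ f(B_{\ell-1}\cup i_\ell) - f(B_{\ell-1}) \ = \ f(B_\ell) - f(B_{\ell-1}) \, ,
\]
and summing over $\ell = 1,\dots,m$ telescopes to $\1_A(x) \le f(B_m) - f(\emptyset) = f(A)$. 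In the stopped variant, where $\wt_k \ge 0 > \wt_{k+1}$ and $x_i = 0$ for $i > k$, the same estimate survives after discarding the nonnegative terms with $i_\ell > k$. Hence $x \in P_f$; and in the unstopped case $\1(x) = f(E) - f(\emptyset) = f(E)$ telescopes as well, so then $x \in B_f$ by \eqref{eqn:B_f}.

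Next I would write down the dual of $\max\{\inner{\wt}{x} : x \ge 0,\ \1_A(x) \le f(A)\ \text{for all } A \subseteq E\}$: find nonnegative multipliers $(y_A)_{A\subseteq E}$ with $\sum_{A \ni e} y_A \ge \wt_e$ for every $e \in E$, minimizing $\sum_A y_A f(A)$. The natural guess is the telescoping dual $y_{A_i} := \wt_i - \wt_{i+1}$ for $1 \le i < k$, $y_{A_k} := \wt_k$, and $y_A := 0$ otherwise; these are nonnegative because $\wt$ is sorted and $\wt_k \ge 0$, and since $A_i \ni e$ exactly when $i \ge e$, one computes $\sum_{A\ni e}y_A = \wt_e$ for $e \le k$ and $\sum_{A\ni e}y_A = 0 \ge \wt_e$ for $e > k$, so $y$ is dual feasible. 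An Abel summation with $f(\emptyset) = 0$ gives
\[
    \inner{\wt}{x} \ = \ \sum_{i=1}^{k}\wt_i\bigl(f(A_i) - f(A_{i-1})\bigr) \ = \ \sum_{i=1}^{k-1}(\wt_i - \wt_{i+1})f(A_i) + \wt_k f(A_k) \ = \ \sum_A y_A f(A) \, ,
\]
and weak duality forces $x$ to be optimal over $P_f$.

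Finally, the $B_f$-statement is the case in which the algorithm never stops, and I would obtain it from the $P_f$-case: replacing $\wt$ by $\wt + c\1$ for $c$ large changes neither the order $\sigma$ nor the set of maximizers over $B_f$ (on which $\1$ is constant) but makes every weight positive, so the maximum of $\wt + c\1$ over the anti-blocking polytope $P_f$ is attained on its componentwise-maximal face $B_f$, and the greedy vector for $\wt + c\1$ is $x_i = f(A_i) - f(A_{i-1})$ for all $i = 1,\dots,n$; invoking the $P_f$-case with $k=n$ shows this $x$ maximizes $\wt + c\1$, hence $\wt$, over $B_f$. I expect the feasibility step $\1_A(x) \le f(A)$ to be the main obstacle --- concretely, recognizing the nested chain $\emptyset = B_0 \subset B_1 \subset \cdots \subset B_m = A$ along which iterated submodularity telescopes --- while the dual certificate and the verification that the two objectives agree are routine bookkeeping; a combinatorial proof by basis exchange is also possible but less transparent about the role of submodularity.
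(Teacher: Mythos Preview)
Your argument is correct and is essentially the classical LP-duality proof of Edmonds' greedy theorem: verify primal feasibility via iterated submodularity along the chain $B_0 \subset \cdots \subset B_m = A$, exhibit the telescoping dual $y_{A_i} = \wt_i - \wt_{i+1}$, and match the objectives by Abel summation. The reduction of the $B_f$-statement to the $P_f$-statement by shifting $\wt \mapsto \wt + c\1$ is clean and correct.

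There is, however, nothing to compare against: the paper does not prove Theorem~\ref{thm:greedy}. It is quoted as a result of Edmonds~\cite{edmondsorig_old,edmondsorig} and used as a black box throughout Section~\ref{sec:MPP}. So your write-up supplies a proof where the paper supplies only a citation; the approach you take is the standard one found in, e.g., Fujishige~\cite{Fujishige} or Edmonds' original paper.
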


In particular every vertex of $P_f$ and $B_f$ can be found using the greedy
algorithm. For a vertex $v \in V(P_f)$, the support $I(v) := \{ i \in E : v_i
> 0 \}$ is called the \Def{basis} of $v$. This is rarely a closed set. For
example, if $v \in B_f$, then $\ov{I(v)} = E \setminus \ov{\emptyset}$.

Let $v$ be the vertex of $B_f$ obtained from the greedy algorithm with respect
to $\wt$. We can assume that $\wt$ is generic.  Let $I(v) = \{ j_1, j_2,
\dots, j_k \}$ so that $w_{j_1} > w_{j_2} > \cdots > w_{j_k}$. Define $F_0
\subset F_1 \subset \cdots \subset F_k$ by $F_i := \ov{\{j_1,\dots,j_i\}}$ for
$i=0,\dots,k$ and define $0 = v_0,v_1,\dots,v_k= v$ by setting
\[
    (v_{i})_j \ = \ 
    \begin{cases}
        f(F_{j_h}) - f(F_{j_{h-1}}) & \text{if } j = j_h, h \le i \\
        0 & \text{otherwise.}
    \end{cases}
\]
Theorem~\ref{thm:greedy} implies that $v_0,\dots,v_k$ are distinct vertices of
$P_f$ such that $\ov{I(v_i)} = F_i$, $\sum_j (v_i)_j = f(F_i)$, and
$[v_i,v_{i+1}]$ is a $\1$-increasing edge of $P_f$.  Note that the
$\1$-monotone path is completely determined by the ordered sequence $j_* =
(j_1,j_2,\dots,j_k)$. We call $v_0,\dots,v_k$ or, equivalently, $j_*$ a
\Def{greedy path} of $P_f$.

\begin{prop} \label{thm:greedycoh}
    Let $(E,f)$ be a polymatroid and $\wt \in \R^E$ generic. The greedy path
    associated to $\wt$ is a coherent $\1$-monotone path.
\end{prop}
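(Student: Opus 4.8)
The plan is to verify the local coherence criterion from \Cref{lem:cMP_local} directly, using the very weight $\wt$ that produced the greedy path. So let $\wt \in \R^E$ be generic, let $\sigma$ be the sorting permutation with $\wt_{\sigma(1)} > \wt_{\sigma(2)} > \cdots > \wt_{\sigma(n)}$, and let $v_0 = \0, v_1, \dots, v_k = v$ be the associated greedy path, where $I(v) = \{j_1, \dots, j_k\}$ with $\wt_{j_1} > \cdots > \wt_{j_k}$ is the subsequence of $\sigma$ indexing the positive coordinates. Note $v_i - v_{i-1} = x_{j_i} e_{j_i}$ with $x_{j_i} = f(F_i) - f(F_{i-1}) > 0$, so $\inner{c}{v_i - v_{i-1}} = x_{j_i}$ and $\inner{\wt}{v_i - v_{i-1}} = \wt_{j_i} x_{j_i}$; thus the slope along the greedy step is simply $\inner{\wt}{v_i-v_{i-1}}/\inner{c}{v_i-v_{i-1}} = \wt_{j_i}$. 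First I would check the base case: since $\emptyset$ need not be closed, $P_{\min} = P_f^{-\1}$ is the face $\{x \in P_f : \1(x) = f(\ov\emptyset)\}$ (the coordinates in $\ov\emptyset$ being forced), and one checks that $v_0 = \0$ is indeed its unique $\wt$-maximizer because $\wt$ is generic — actually one must be slightly careful here: if $f(\ov\emptyset) > 0$ then $\0 \notin P_f$, so the correct starting vertex $v_0$ is the greedy vertex of that face, and its coordinates supported on $\ov\emptyset$ are again read off by the greedy rule; genericity of $\wt$ forces uniqueness.

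The main work is the inequality \eqref{eqn:coMP_local}: for each $i$ and each $\1$-improving neighbor $u \in \Nb{P_f,\1}{v_{i-1}} \setminus \{v_i\}$, we need
\[
    \wt_{j_i} \ = \ \frac{\inner{\wt}{v_i - v_{i-1}}}{\inner{c}{v_i - v_{i-1}}}
    \ > \ \frac{\inner{\wt}{u - v_{i-1}}}{\inner{c}{u - v_{i-1}}}\, .
\]
The key structural input is that, because $P_f$ is a polymatroid polytope, the $\1$-increasing edges out of a vertex $w$ are well understood: they correspond to moving in a direction $e_a$ (for $a$ not yet "saturated" relative to $\rk$) or $e_a - e_b$; in either case $\inner{c}{u - v_{i-1}} > 0$. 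So I would enumerate the edge directions of $P_f$ emanating from $v_{i-1}$ that increase $\1$. The anti-blocking structure plus Edmonds' description of $P_f$ (the theorem of \cite[Thm.~28]{edmondsorig} quoted above) should give that such an edge either (a) adds mass in a single new coordinate $e_a$, or (b) transfers mass from one coordinate into another, $e_a - e_b$ with $b \in I(v_{i-1})$. For type (a), the slope of the edge toward $u$ is $\wt_a$, and since the greedy algorithm chose $j_i$ as the $\wt$-largest index that is still unsaturated at step $i$, we get $\wt_{j_i} \ge \wt_a$, with equality excluded by genericity unless $a = j_i$. For type (b) edges and for the case $\ov{I(v_{i-1})}$ differing from $I(v_{i-1})$, a short computation using submodularity of $f$ shows the slope is a weighted average of $\wt$-values all of which are $\le \wt_{j_i}$, again with strict inequality by genericity. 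Assembling these cases gives \eqref{eqn:coMP_local}.

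The step I expect to be the main obstacle is the complete and careful enumeration of the $\1$-improving edge directions at the intermediate vertices $v_{i-1}$ of $P_f$ — in particular handling the possibility that $v_{i-1}$ lies on the boundary face $\1(x) = f(\ov\emptyset)$ or that its support $I(v_{i-1})$ is not closed, so that the "natural" greedy direction $e_{j_i}$ is not literally an edge but a combination. One clean way around this is to observe that the greedy path on $P_f$ is, after projecting away the coordinates in $\ov\emptyset$ and after the obvious affine identification, exactly the greedy path for the truncations $f_\alpha$ as $\alpha$ ranges over $f(F_0) < f(F_1) < \cdots < f(F_k)$, reducing to edges of base polytopes where the edge directions are exactly $e_a - e_b$; one then transports coherence back. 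Alternatively, and perhaps most cleanly, I would deduce the proposition as a consequence of the fact (to be proved just after, in \Cref{thm:MPP_Baues}) identifying greedy paths with shadow vertex paths — but since that theorem presumably relies on this proposition, the self-contained route via \Cref{lem:cMP_local} and the edge enumeration above is the one I would carry out.
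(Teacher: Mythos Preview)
Your approach---verify \Cref{lem:cMP_local} using the weight $\wt$ itself---is exactly the paper's, and your type~(a) computation (slope equals $\wt_a$, greedy picks the maximum) is the entire proof, essentially verbatim.

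The gap is that you do not recognize this as already complete. Every edge of $P_f$ lies in a direction $e_i$ or $e_i-e_j$, and since $\1(e_i-e_j)=0$, your ``type (b)'' edges are never $\1$-improving. Your assertion that ``in either case $\inner{c}{u-v_{i-1}}>0$'' is simply false for type (b); the slope there would be a division by zero, not a weighted average as you suggest. Once type (b) is discarded, every difficulty you flag---the discrepancy between $\ov{I(v_{i-1})}$ and $I(v_{i-1})$, edges that are ``combinations'', the workaround via truncations---evaporates. Your concern about the starting vertex is likewise unnecessary: $\0$ lies in $P_f\subseteq\Rnn^E$ and is its unique $\1$-minimizer (since $\1(x)=0$ forces $x=\0$ on $\Rnn^E$), so $P_{\min}=\{\0\}$ regardless of whether $\emptyset$ is closed.
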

\begin{proof}
    Let $v' \in V(P_f)$ be a vertex. If $u$ is a neighbor of $v$ with $\1(u) >
    \1(v)$, then $u-v' = \delta e_i$ for some $i=1,\dots,n$ and $\delta > 0$.
    Hence,
    \[
        \frac{\inner{\wt}{u-v'}}{\1(u-v')} \ = \ \frac{\delta \wt_i}{\delta} \
        = \ \wt_i
    \]
    and~\eqref{eqn:coMP_local} implies that the coherent monotone path $0 =
    v_0,v_1,\dots,v_k = v$ of $P_f$ is precisely the path obtained from the
    greedy algorithm.
\end{proof}

\begin{prop}
    Every $\1$-monotone path on $P_f$ is a greedy path.
\end{prop}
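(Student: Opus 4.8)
The plan is to show that a $\1$-monotone path $v_* = (v_0, \dots, v_k)$ on $P_f$ is forced, step by step, to coincide with the output of the greedy algorithm for a suitable weight. First I would pin down the endpoints: since $P_f$ is anti-blocking, $0$ is its unique $\1$-minimal vertex, so $v_0 = 0$, while $v_k$ lies in $P_f^{\1} = B_f$. Next I would record, exactly as in the proof of Proposition~\ref{thm:greedycoh}, that a $\1$-increasing edge of $P_f$ out of a vertex $v'$ has the form $[v', v' + \delta e_j]$ with $\delta > 0$. Combining these, $v_i = \sum_{h \le i} \delta_h e_{j_h}$ for positive scalars $\delta_h$ and indices $j_h \in E$, and it remains to identify the $j_h$ and the $\delta_h$.

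The key lemma I would establish is that the support $S = I(u)$ of any vertex $u$ of $P_f$ is a tight set, i.e.\ $u(S) = f(S)$. For this I would use that for a feasible point the tight sets $\{A : u(A) = f(A)\}$ are closed under unions and intersections (an immediate consequence of submodularity of $f$ together with feasibility), and that a vertex $u$ is cut out by its active constraints, whose only nonnegativity part is $\{x_e = 0 : e \notin S\}$; hence the tight subsets of $S$ must have characteristic vectors spanning $\R^S$, which forces their union --- itself tight --- to be all of $S$. Applied along the path, this gives $\1(v_i) = f(\{j_1, \dots, j_i\})$, where the set collapses any repeated indices.

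From here the proposition falls out quickly. If some index $j_i$ repeated an earlier one, the set $\{j_1,\dots,j_i\}$ would not grow and $\1(v_i) = \1(v_{i-1})$ would contradict $\1$-monotonicity; so $j_1,\dots,j_k$ are distinct, each $j_h \notin \ov{\{j_1,\dots,j_{h-1}\}}$ (as $\delta_h = \1(v_h) - \1(v_{h-1}) > 0$), and $\delta_i = f(\{j_1,\dots,j_i\}) - f(\{j_1,\dots,j_{i-1}\}) = f(F_i) - f(F_{i-1})$ with $F_i = \ov{\{j_1,\dots,j_i\}}$, which is precisely the increment in the definition of a greedy path. Finally I would exhibit the weight: any generic $\wt$ with $\wt_{j_1} > \cdots > \wt_{j_k} > 0 > \wt_e$ for the remaining $e$ has $v_*$ as its greedy path, since the greedy algorithm (Theorem~\ref{thm:greedy}) applied to $\wt$ reconstructs exactly the vertices $v_0, \dots, v_k$ --- once the running $f$-value reaches $f(E)$ all later increments vanish, so the $\wt$-optimal vertex of $B_f$ is $\sum_h \delta_h e_{j_h} = v_k$.

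I expect the only real obstacle to be the tight-set lemma, or more precisely the statement that the path never increments the same coordinate twice. Consecutive repetitions are cheap to exclude (three consecutive vertices would be collinear, with the middle one interior to $P_f$), but a non-consecutive repetition --- incrementing coordinate $j$, moving elsewhere, then returning to $j$ --- cannot be ruled out by the edge structure alone and genuinely needs the submodularity of $f$ through the behaviour of tight sets. Everything else is bookkeeping.
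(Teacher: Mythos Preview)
Your argument is correct. Both your proof and the paper's begin by recording $v_0 = \0$, that each step is $v_i - v_{i-1} = \delta_i e_{j_i}$ with $\delta_i > 0$, and then choose a weight $\wt$ with $\wt_{j_1} > \cdots > \wt_{j_k}$ dominating the remaining coordinates. The paper then finishes in one sentence: since each $v_i$ is a vertex of the truncation $P_{f_\alpha}$ at level $\alpha = \1(v_i)$, the greedy path for $\wt$ must coincide with the given path. You instead isolate and prove the structural lemma that the support $I(u)$ of any vertex $u$ of $P_f$ is tight, and from $\1(v_i) = f(\{j_1,\dots,j_i\})$ deduce directly that the $j_i$ are distinct and that $\delta_i = f(F_i) - f(F_{i-1})$. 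Your route is longer but more transparent about the one genuine obstruction you correctly flag --- that the path never revisits a coordinate --- which the paper's proof leaves implicit when it writes down the strict chain $\wt_{j_1} > \cdots > \wt_{j_s}$. The tight-set lemma is a standard polymatroid fact and is exactly what underlies the truncation appeal, so the two arguments are close in spirit; yours simply unpacks what the paper compresses into its invocation of Theorem~\ref{thm:greedy} on truncations.
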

\begin{proof}
    Let $\0 = v_0, v_1, \dots, v_s$ be a $\1$-monotone path on $P_f$.
    Then $v_i - v_{i-1} = \delta_i e_{j_i}$ for $i = 1,\dots,s$. Choose a
    weight $\wt$ with $\wt_{j_1} > \wt_{j_2} > \cdots > \wt_{j_s} > \wt_h$ for
    $h \not \in \{j_1,\dots,j_s\}$. Since $v_i$ is a vertex of the truncation
    $P_{f_\alpha}$ for $\alpha = \1(v_i)$, the greedy path with respect to
    $\wt$ will be precisely the given monotone path.
\end{proof}

\begin{proof}[Proof of Theorem~\ref{thm:MPP_Baues}]
    Let $\0 = F_0, F_1, F_2,\dots, F_k$ be a $\1$-cellular string on $P_f$.
    For $h = 1,\dots,k$ define 
    \[
        I_h \ := \ \{ i \in E : p + \delta e_i \in F_i \text{ for some } p \in
        F_{h-1} \cap F_h \text{ and } \delta > 0 \} \, .
    \]
    We claim that the cellular string is completely determined by
    $I_1,\dots,I_k$. Indeed, let $L_h = \mathrm{span}\{ e_i : i \in I_h\}$.
    Then $F_1 = P_f \cap L_1$. If, by induction,  $F_h$ is determined, then we
    can employ the greedy algorithm to find a point $p$ in $F_h^\1 = F_h \cap
    F_{h+1}$ and $F_{h+1} = P_f \cap (p + L_{h+1})$. Again the greedy
    algorithm shows that $F_0, F_1,\dots,F_k$ is precisely the coherent
    cellular string for $\wt = k e_{I_1} + (k-1) e_{I_2} +  \cdots + e_{I_k} -
    e_{E \setminus I_k}$.
\end{proof}

\begin{remark}
    Note that the linear function $c = \1$ is essential for the validity of
    Theorem~\ref{thm:MPP_Baues}. Consider, for example the uniform matroid
    $U_{4,2}$ with rank function $f(A) = \min(|A|,2)$ for $A \subseteq [4]$.
    The polymatroid polytope $P_f$ is the convex hull of all $v \in \{0,1\}^4$
    with at most two entries equal to $1$.  The linear function $c =
    (-10,-5,7,8)$ is generic on the polymatroid (independence) polytope $P_f$
    with minimum $v_{\min} = (1,1,0,0)$ and maximum $v_{\max} = (0,0,1,1)$. It
    can be checked that, for example using Lemma~\ref{lem:cMP_local}, that the
    $c$-monotone path $(1, 1, 0, 0), (1, 0, 1, 0), (1, 0, 0, 1), (0, 1, 0, 1),
    (0, 0, 1, 1)$ is not coherent.
    Note that the monotone path is contained in the base polytope $B_f$.
    In Section~\ref{sec:base_poly} we focus on monotone paths in base
    polytopes.
\end{remark}

We give a complete combinatorial description of $\MPP{f}$ in
Section~\ref{sec:pivpolys}. Here, we only describe the vertices and
facet-defining inequalities.  The greedy algorithm readily gives a purely
combinatorial description of the vertices of $\MPP{f}$. 

\begin{cor}\label{cor:comb_vertices}
    Let $(E,f)$ be a polymatroid. The vertices of $\MPP{f}$ are in
    correspondence with sequences $j_* = (j_1,j_2,\dots,j_k)$ of distinct
    elements of $E$ such that 
    \[
        \ov{\emptyset} \ \subset \ 
        \ov{\{j_1\}} \ \subset \ 
        \ov{\{j_1,j_2\}} \ \subset \ 
        \cdots \ \subset \ 
        \ov{\{j_1,j_2,\dots,j_k\}} \ = \ E
    \]
    is a maximal chain of flats in $\LF(f)$.
\end{cor}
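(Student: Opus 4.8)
\emph{The plan} is to read the vertices of $\MPP{f}$ straight off the description of $\1$-monotone paths obtained above, translated into the language of flats. First I would invoke \Cref{thm:MPP_Baues} together with the Billera--Sturmfels correspondence ($v_*$ is coherent for $\wt$ iff $\MPP{f}^\wt = \MPv{P_f,\1}{v_*}$) to get that the vertices of $\MPP{f} = \MPP{P_f}$ are in bijection with the $\1$-monotone paths of $P_f$. By the two propositions preceding the proof of \Cref{thm:MPP_Baues}, these paths are exactly the greedy paths, and each greedy path $\0 = v_0,v_1,\dots,v_k$ is recorded by and recovered from its sequence of edge directions $j_* = (j_1,\dots,j_k)$ (with $v_i - v_{i-1} = \delta_i\, e_{j_i}$, $\delta_i > 0$). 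So what remains is to characterise which sequences $j_*$ of distinct elements of $E$ arise this way, and to check that distinct sequences give distinct vertices.

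\emph{For the characterisation} I would argue both directions. Starting from the greedy path attached to a generic $\wt$, set $F_i := \ov{\{j_1,\dots,j_i\}}$. The construction preceding \Cref{thm:greedycoh} shows $[v_{i-1},v_i]$ is a $\1$-increasing edge of $P_f$ along $e_{j_i}$ with $(v_i)_{j_i} = f(F_i) - f(F_{i-1}) > 0$; monotonicity of the closure gives $F_{i-1} \subseteq F_i$, and $f(F_i) > f(F_{i-1})$ makes the inclusion strict, while $F_0 = \ov{\emptyset}$ and $v_k \in B_f$ forces $f(I(v_k)) = f(E)$, hence $F_k = \ov{I(v_k)} = E$. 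Conversely, given a strictly increasing chain of flats $\ov{\emptyset} = F_0 \subsetneq \dots \subsetneq F_k = E$ with each $F_i = \ov{F_{i-1} \cup \{j_i\}}$, I would pick $\wt$ with $\wt_{j_1} > \dots > \wt_{j_k} > \wt_h$ for all $h \notin \{j_1,\dots,j_k\}$: in \Cref{thm:greedy} the element $j_i$ contributes a positive coordinate (since $j_i \notin F_{i-1}$) whereas everything processed after $j_k$ contributes $0$ (the running set already closes to $F_k = E$), so the greedy path of $\wt$ has edge sequence exactly $j_*$. Injectivity is immediate --- distinct sequences are distinct monotone paths --- and one may cross-check using the formula \eqref{eqn:MPv} for $\MPv{P_f,\1}{v_*}$. (The bijection is with sequences $j_*$, not with chains: when $F_i \setminus F_{i-1}$ has several elements, more than one $j_*$ traverses the same chain.)

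\emph{The main obstacle} is the word \emph{maximal}: one must show the chain $\ov{\emptyset} = F_0 \subsetneq \dots \subsetneq F_k = E$ coming from a greedy path admits no refinement in $\LF(f)$, i.e.\ that $\ov{F_{i-1} \cup \{j_i\}}$ covers $F_{i-1}$. For matroids this is the familiar geometric-lattice fact that single-element closures are covers, which is exactly what makes the sequences above enumerate the maximal chains of flats. In the polymatroid generality this covering property has to be reconciled with submodularity of $f$; if it can fail, the clean statement is that the vertices of $\MPP{f}$ biject with the sequences $j_*$ for which $\ov{\emptyset} \subsetneq \ov{\{j_1\}} \subsetneq \dots \subsetneq \ov{\{j_1,\dots,j_k\}} = E$ is a strictly increasing chain of flats each step of which is a one-element closure. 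Everything else is routine bookkeeping over \Cref{thm:MPP_Baues} and the two propositions that precede it.
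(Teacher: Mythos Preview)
Your approach is exactly the paper's: the corollary is stated without proof, as an immediate consequence of \Cref{thm:MPP_Baues} together with the two propositions identifying the $\1$-monotone paths on $P_f$ with greedy paths encoded by sequences $j_*$. Your bookkeeping is correct and matches what the authors intend.

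Your flagged obstacle is real, and you are right to be cautious. For general polymatroids the chain $\ov{\emptyset} \subsetneq \ov{\{j_1\}} \subsetneq \cdots \subsetneq E$ produced by a greedy path need \emph{not} be maximal in $\LF(f)$. A two-element example already shows this: take $E=\{1,2\}$ with $f(\{1\})=2$, $f(\{2\})=1$, $f(\{1,2\})=2$. Then $\LF(f)=\{\emptyset,\{2\},\{1,2\}\}$, and the greedy path $j_*=(1)$ (from $0$ to $2e_1 \in B_f$) yields the chain $\emptyset \subsetneq \{1,2\}$, which skips the flat $\{2\}$. So single-element closures are not covers in $\LF(f)$ in the polymatroid generality, and the word ``maximal'' in the corollary is an overstatement outside the matroid case. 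Your proposed clean version---vertices of $\MPP{f}$ correspond to sequences $j_*$ for which the closures form a strictly increasing chain of flats with each step a one-element closure---is the accurate characterization, and it is what the greedy-path description actually delivers. For matroids the two formulations coincide, which is presumably the case the authors have foremost in mind.
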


Corollary~\ref{cor:comb_vertices} also prompts an organizing principle to
group vertices which produce the same maximal chain of flats. For a sequence
$j_* = (j_1,j_2,\dots,j_k)$ define $F_i(j_*) :=\ov{\{j_1,j_2,\dots,j_i\}}$ for
$i = 0,1,\dots,k$. 
Using~\eqref{eqn:MPv}, a direct computation yields the vertices of $\MPP{f}$.

\begin{cor}\label{cor:comb_vertices_coord}
    Let $j_* = (j_1,\dots,j_k)$ be a $\1$-monotone path of $P_f$ and let $F_i
    = F_i(j_*)$ for $i=0,\dots,k$. The vertex $\Psi(j_*)$ of $\MPP{f}$
    corresponding to the greedy path $j_*$ satisfies $\Psi(j_*)_r = 0$ if $r
    \not\in \{j_1,\dots,j_k\}$ and 
    \[
        \Psi(j_*)_{j_i} \ = \ (f(F_i) - f(F_{i-1})) (f(E) - f(F_i) + f(E) -
        f(F_{i-1})) \, .
    \]
\end{cor}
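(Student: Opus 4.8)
The plan is to compute $\Psi(j_*)$ directly from the Billera--Sturmfels formula \eqref{eqn:MPv}. Let $v_* = (v_0, v_1, \dots, v_k)$ be the sequence of vertices of $P_f$ attached to $j_*$ as in the paragraph preceding Proposition~\ref{thm:greedycoh}, and abbreviate $F_i := F_i(j_*)$. Since every $\1$-monotone path of $P_f$ is a greedy path, Proposition~\ref{thm:greedycoh} together with Theorem~\ref{thm:MPP_Baues} guarantees that $v_*$ is coherent, so $\MPv{P_f,\1}{v_*}$ is an actual vertex of $\MPP{f}$; by definition this is the vertex $\Psi(j_*)$. It therefore only remains to expand \eqref{eqn:MPv} coordinate by coordinate.

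Two elementary facts drive the computation. First, along the path $v_h - v_{h-1} = (f(F_h) - f(F_{h-1}))\,e_{j_h}$, so $\inner{\1}{v_h - v_{h-1}} = f(F_h) - f(F_{h-1})$. Second, from the explicit description of the $v_h$, the $j_i$-coordinate of $v_{h-1}+v_h$ equals $2(f(F_i) - f(F_{i-1}))$ when $h > i$, equals $f(F_i) - f(F_{i-1})$ when $h = i$, and equals $0$ when $h < i$; moreover every coordinate indexed by $r \notin \{j_1, \dots, j_k\}$ vanishes on each $v_{h-1}+v_h$. This last observation immediately gives $\Psi(j_*)_r = 0$ for such $r$.

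For $r = j_i$, substituting these facts into \eqref{eqn:MPv} and splitting the sum over path steps $h$ into the ranges $h < i$, $h = i$, $h > i$ gives
\[
    \Psi(j_*)_{j_i} \ = \ (f(F_i) - f(F_{i-1}))\Bigl[(f(F_i) - f(F_{i-1})) + 2\sum_{h=i+1}^k (f(F_h) - f(F_{h-1}))\Bigr].
\]
The remaining sum telescopes to $f(F_k) - f(F_i) = f(E) - f(F_i)$ because $F_k = E$, and rewriting the bracket as $2f(E) - f(F_i) - f(F_{i-1}) = (f(E) - f(F_i)) + (f(E) - f(F_{i-1}))$ yields the asserted formula. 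I do not anticipate any genuine obstacle here: the only points requiring care are the three-case bookkeeping for the $j_i$-coordinate of $v_{h-1}+v_h$ and the reminder that it is precisely the coherence of greedy paths (Proposition~\ref{thm:greedycoh}) which legitimizes calling $\MPv{P_f,\1}{v_*}$ a vertex rather than merely a point of $\MPP{f}$.
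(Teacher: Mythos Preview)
Your proof is correct and follows exactly the approach the paper indicates: the paper simply states ``Using~\eqref{eqn:MPv}, a direct computation yields the vertices of $\MPP{f}$,'' and you have carried out precisely that computation, splitting on the three cases $h<i$, $h=i$, $h>i$ and telescoping. Your extra remark that coherence (via Proposition~\ref{thm:greedycoh}) is what makes $\MPv{P_f,\1}{v_*}$ a vertex rather than an arbitrary point of $\MPP{f}$ is a nice clarification the paper leaves implicit.
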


Our next goal is to show that monotone path polytopes of polymatroid polytopes
are polymatroid base polytopes. Figure~\ref{fig:perut_mp} gives a first
illustration.
\begin{figure}[h]
    \centering
    \includegraphics[width=7cm]{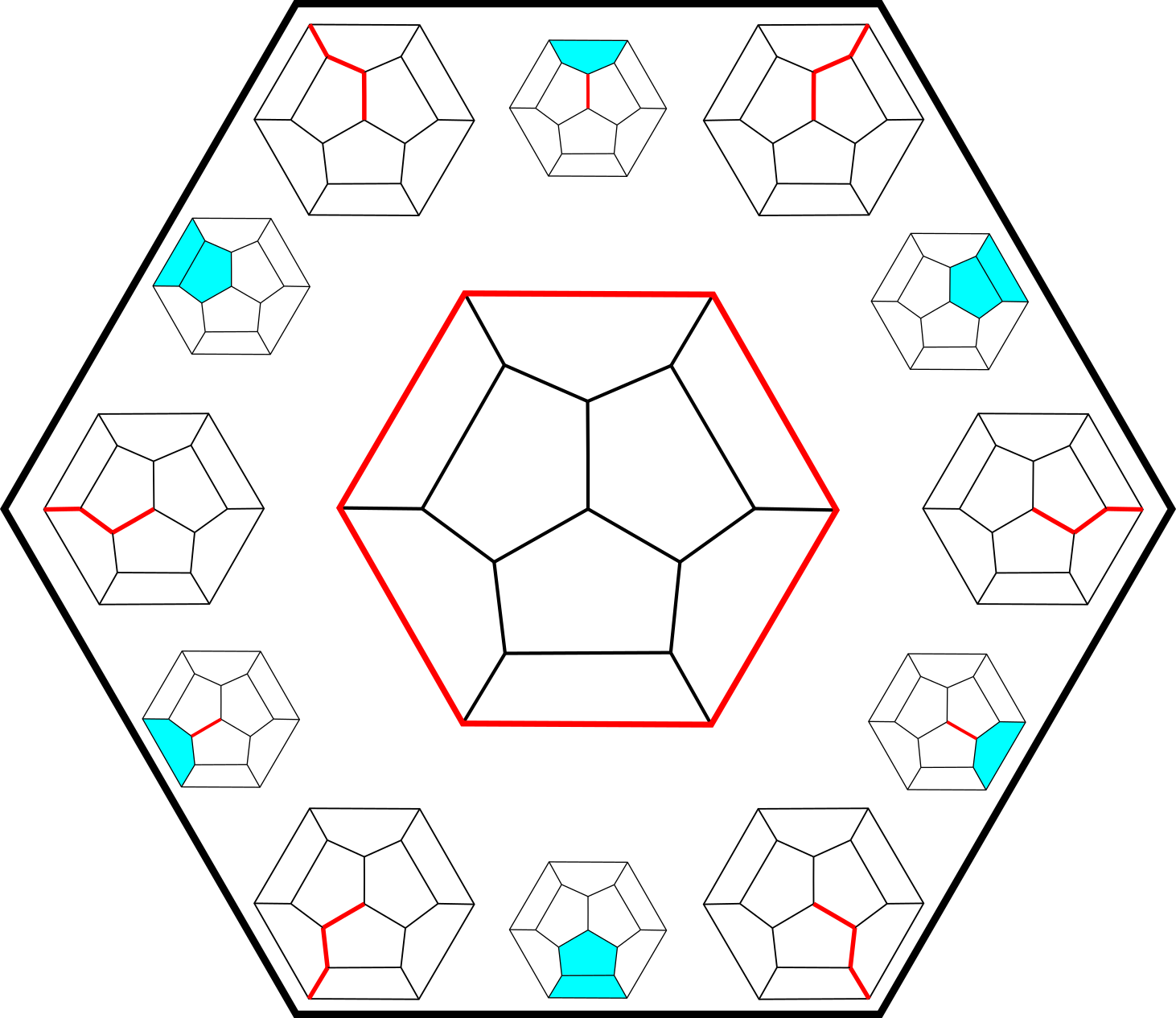}
    \caption{The black hexagon is the monotone path polytope $\MPP{f}$ of the
    polymatroid polytope in Figure~\ref{fig:permutahedron}. The correspondence
    between cellular strings and faces of $\MPP{f}$ is indicated.}
\label{fig:perut_mp}
\end{figure}

\begin{theorem}\label{thm:MPP_polymatroid}
    Let $(E,f)$ be a polymatroid. Then $\MPP{f}$ is a polymatroid base
    polytope for the polymatroid 
    \[
        \Mf{f}  \ := \ 2f(E) \cdot f - f^2 \, .
    \]
\end{theorem}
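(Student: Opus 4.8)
The plan is to verify that $\Mf{f} = 2f(E)\cdot f - f^2$ is a polymatroid and that its base polytope is (a translate of) $\MPP{f}$, by matching vertices via the greedy algorithm. First I would observe that Corollary~\ref{cor:comb_vertices_coord} already gives all vertices of $\MPP{f}$ in closed coordinate form: for a $\1$-monotone path $j_* = (j_1,\dots,j_k)$ with flats $F_i = F_i(j_*)$,
\[
    \Psi(j_*)_{j_i} \ = \ \bigl(f(F_i)-f(F_{i-1})\bigr)\bigl(2f(E)-f(F_i)-f(F_{i-1})\bigr)
\]
and $\Psi(j_*)_r = 0$ otherwise. The key algebraic identity is that $2f(E)a - a^2 - (2f(E)b - b^2) = (a-b)(2f(E)-a-b)$ for $a = f(F_i)$, $b = f(F_{i-1})$; hence $\Psi(j_*)_{j_i} = \Mf{f}(F_i) - \Mf{f}(F_{i-1})$. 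This means $\Psi(j_*)$ is exactly the vector produced by running the greedy algorithm of Theorem~\ref{thm:greedy} on the set function $\Mf{f}$ along the order $j_1, j_2, \dots, j_k$ (and filling in $0$ on the remaining coordinates). In particular $\Psi(j_*)$ lies in the base polytope $B_{\Mf{f}}$ provided $\Mf{f}$ is a polymatroid, since then the greedy output is a vertex of $B_{\Mf{f}}$.

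Next I would check that $\Mf{f}$ is genuinely a polymatroid, i.e.\ normalized, monotone, and submodular. Normalization $\Mf{f}(\emptyset)=0$ is immediate. For monotonicity and submodularity, write $g(t) := 2f(E)t - t^2$, a concave function that is increasing on $[0, f(E)]$; since $0 \le f(A) \le f(E)$ for all $A$, we have $\Mf{f} = g \circ f$. A standard fact is that a concave, nondecreasing, univariate function composed with a monotone submodular function is again monotone submodular: monotonicity is clear since $g$ is increasing on the relevant range; for submodularity one uses that for $a \le b$ and $\delta, \delta' \ge 0$ with $a+\delta,\ b+\delta' \le f(E)$, concavity gives $g(a+\delta)-g(a) \ge g(b+\delta)-g(b)$ when the increment structure of $f$ (submodularity: $f(A\cup x)-f(A) \ge f(B\cup x)-f(B)$ for $A \subseteq B$) is fed in. I would spell this out via the increment inequality $\Mf{f}(A \cup x) - \Mf{f}(A) \ge \Mf{f}(B \cup x) - \Mf{f}(B)$ for $A \subseteq B$, which is equivalent to submodularity; the only subtlety is keeping track that all arguments of $g$ stay in $[0, 2f(E)]$ where concavity is what we need, and in fact in $[0,f(E)]$ where $g$ is also increasing — and $f(B) \le f(E)$ handles that.

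Finally I would assemble the two halves. Having shown $\Mf{f}$ is a polymatroid, its base polytope $B_{\Mf{f}}$ is a generalized permutahedron whose vertices are precisely the greedy outputs of $\Mf{f}$ over all generic weights $\wt$; by Theorem~\ref{thm:greedy} these are indexed exactly by the maximal chains $\ov{\emptyset} \subset \ov{\{j_1\}} \subset \cdots \subset \ov{\{j_1,\dots,j_k\}} = E$ of flats of $\Mf{f}$. It therefore remains to note that $f$ and $\Mf{f}$ have the same flats (hence the same lattice of flats, and the same maximal chains): since $g$ is strictly increasing on $[0,f(E)]$, we have $\Mf{f}(F \cup e) > \Mf{f}(F) \iff f(F\cup e) > f(F)$, so $F$ is a flat of $\Mf{f}$ iff it is a flat of $f$. (This is the statement the authors flag as needed in the organizational overview — "we show that $f$ and $\Mf{f}$ have the same lattice of flats.") Combining, the vertex set of $B_{\Mf{f}}$ is $\{\Psi(j_*)\}$ ranging over all $\1$-monotone paths $j_*$, which by Corollary~\ref{cor:comb_vertices_coord} is exactly the vertex set of $\MPP{f}$; since both are polytopes, $\MPP{f} = B_{\Mf{f}}$. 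The main obstacle I anticipate is the careful bookkeeping in the submodularity verification of $\Mf{f}$ — making sure the concavity of $g$ is invoked only on the interval where $f$-values live and correctly combining it with submodularity of $f$ in the right direction; the vertex-matching part is essentially forced once Corollary~\ref{cor:comb_vertices_coord} and the telescoping identity are in hand.
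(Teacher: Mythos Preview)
Your proof is correct but takes a genuinely different route from the paper's. The paper exploits the Minkowski-integral description \eqref{eqn:MPint}: since $P_f \cap \{\1(x)=t\}$ is the base polytope of the truncation $f_t(A)=\min(f(A),t)$, one has $\MPP{f} = \int_0^{f(E)} 2B_{f_t}\,dt$, and because base polytopes are closed under Minkowski sums/integrals this is automatically $B_g$ for some polymatroid $g$; computing $g(S)=\int_0^{f(E)} 2f_t(S)\,dt$ gives $2f(E)f(S)-f(S)^2$ in two lines. In particular, submodularity of $\Mf{f}$ comes for free from the closure of polymatroids under integration, and no vertex bookkeeping is needed. Your approach instead matches vertex sets: you recognize the telescoping identity $\Mf{f}(F_i)-\Mf{f}(F_{i-1}) = (f(F_i)-f(F_{i-1}))(2f(E)-f(F_i)-f(F_{i-1}))$, so the formula of Corollary~\ref{cor:comb_vertices_coord} is literally the greedy output for $\Mf{f}$; you then separately verify that $\Mf{f}=g\circ f$ with $g(t)=2f(E)t-t^2$ concave increasing on $[0,f(E)]$ is a polymatroid, and that $f$ and $\Mf{f}$ share flats (anticipating Proposition~\ref{prop:same_flats}), so both polytopes have the same vertex set. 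Your argument is more elementary---it avoids the fiber-polytope integral and works purely with greedy vertices---and it makes the vertex-level correspondence explicit; the paper's argument is shorter and sidesteps the direct submodularity check entirely. One small point to tidy: when you say ``running the greedy algorithm on $\Mf{f}$ along the order $j_1,\dots,j_k$,'' you should extend to a full permutation of $E$ and note that the remaining increments vanish because $\ov{\{j_1,\dots,j_k\}}=E$; this is implicit but worth stating.
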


Theorem~\ref{thm:MPP_polymatroid} is a first justification of calling $\Mf{f}$
a \Def{flag polymatroid} associated to $f$. Note that the transformation $f
\mapsto \Mf{f}$ is homogeneous of degree $1$, that is, $\Mf{ \alpha f } =
\alpha\Mf{f}$ for all $\alpha > 0$.

\begin{proof}
    By the characterization~\eqref{eqn:MPint}
    \[
        \MPP{f} 
        \ = \ 2\int^{f(E)}_0 P_f \cap \{ x : \1(x) = t \}\, dt 
        \ = \ \int^{f(E)}_0 2B_{f_t} \, dt  \, ,
    \]
    where $B_{f_t}$ is the base polytope of the truncation $f_t(A) = \min(f(A),
    t)$; see
    Section~\ref{sec:polymatroids}.  Since polymatroid base polytopes are
    closed under Minkowski sums, it follows that $\MPP{P_f} = B_g$ for
    some submodular function $g$. In order to determine $g$, we compute for $S
    \subseteq E$
    \[
        g(S) \ = \ \int_{0}^{f(E)} 2f_t(S) \, dt
        \ = \ 
        \int_{0}^{f(S)} 2t \, dt + 
        \int_{f(S)}^{f(E)} 2f(S) \, dt
        \ = \ f(S)^2 + 2f(S)f(E) - 2f(S)^2 \, ,
    \]
    which finishes the proof.
\end{proof}

Via~\eqref{eqn:B_f}, Theorem~\ref{thm:MPP_polymatroid} gives an inequality
description. We next determine the inseparable flats.

\begin{prop}\label{prop:same_flats}
    Let $(E,f)$ be a polymatroid. For $A,B \subseteq E$ we have 
    \[
        \Mf{f}(A) \ = \  \Mf{f}(B) 
        \quad \text{ if and only if } \quad
        f(A) \ = \ f(B)  \, .
    \]
    In particular, $f$ and $\Mf{f}$ have the same lattices of flats.
\end{prop}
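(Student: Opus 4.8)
The plan is to exploit that $\Mf f$ is the postcomposition of $f$ with a single univariate function. Write $m := f(E)$ and define $\phi\colon [0,m]\to\R$ by $\phi(t) := 2mt - t^2$, so that by definition $\Mf f(A) = \phi(f(A))$ for every $A\subseteq E$. Since $f$ is monotone with $f(\emptyset)=0$, we have $f(A)\in[0,m]$ for all $A$, so $\phi$ is only ever evaluated on $[0,m]$. Thus the entire statement reduces to understanding $\phi$ on that interval.

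The key observation is that $\phi$ is strictly increasing on $[0,m]$: its derivative $\phi'(t) = 2m - 2t = 2(m-t)$ is positive for $t<m$ and vanishes only at the right endpoint $t=m$, so $\phi|_{[0,m]}$ is injective. Consequently $\Mf f(A) = \Mf f(B)$ holds if and only if $\phi(f(A)) = \phi(f(B))$, which holds if and only if $f(A) = f(B)$. This is exactly the displayed equivalence.

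For the ``in particular'' statement, recall that a subset $F\subseteq E$ is a flat of a polymatroid $g$ precisely when $g(F\cup e) \neq g(F)$ for every $e\in E\setminus F$ (monotonicity of $g$ upgrades $\neq$ to $>$). Applying the equivalence just proved with $A = F\cup e$ and $B = F$ shows $\Mf f(F\cup e)\neq \Mf f(F)$ if and only if $f(F\cup e)\neq f(F)$; since both $f$ and $\Mf f = 2f(E)\cdot f - f^2$ are monotone polymatroids (the latter by \Cref{thm:MPP_polymatroid}, or directly because $\phi$ is increasing and $f$ monotone), it follows that $F$ is a flat of $f$ exactly when it is a flat of $\Mf f$. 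As the partial order on flats is inclusion in both cases, $\LF(f) = \LF(\Mf f)$ as posets.

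There is no genuine obstacle here; the only point that deserves care is that the parabola $\phi$ has its vertex exactly at $t = m = f(E)$, the largest value $f$ attains. This is precisely why monotonicity of $f$ is invoked and why $\phi$ is strictly increasing on the whole range of $f$ rather than only on part of it — on a larger interval $\phi$ would fail to be injective.
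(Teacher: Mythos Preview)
Your proof is correct and follows essentially the same approach as the paper: both arguments observe that $\Mf f = \phi\circ f$ for the parabola $\phi(t)=2f(E)t-t^2$ (the paper first rescales so that $f(E)=1$), and use that $\phi$ is injective on the range $[0,f(E)]$ of $f$. Your write-up is more detailed, in particular in spelling out the flat statement, but the underlying idea is identical.
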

	
\begin{proof}
    Consider the function $g(t) := 2t - t^2$, which is an injective
    function on $[0,1]$. We may assume that $f(E) = 1$ so that $\Mf{f} =
    g(f)$ and the result follows.
\end{proof}

\begin{prop}\label{prop:all_insep}
    Let $(E,f)$ be a polymatroid and $\Mf{f}$ its flag polymatroid.
    Every flat $A$ of $\Mf{f}$ is inseparable.
\end{prop}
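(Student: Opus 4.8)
The plan is to show that no flat $A$ of $\Mf{f}$ decomposes as $A = A_1 \sqcup A_2$ into two disjoint nonempty flats with $\Mf{f}(A) = \Mf{f}(A_1) + \Mf{f}(A_2)$. By \Cref{prop:same_flats}, the flats of $\Mf{f}$ are exactly the flats of $f$, so suppose for contradiction that $A = A_1 \sqcup A_2$ with $A_1, A_2$ nonempty flats of $f$ and $\Mf{f}(A) = \Mf{f}(A_1) + \Mf{f}(A_2)$. Normalize $f(E) = 1$, so that $\Mf{f}(S) = g(f(S))$ with $g(t) = 2t - t^2$. Writing $a = f(A_1)$, $b = f(A_2)$, and using submodularity $f(A) \le a + b$ together with monotonicity $f(A) \ge \max(a,b)$, the assumed additivity becomes $g(f(A)) = g(a) + g(b)$.

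The key computation is to expand both sides. Since $f(A) \le a+b$ and $g$ is increasing on $[0,1]$ (as $a, b, f(A) \le 1$), we get $g(f(A)) \le g(a+b) = 2(a+b) - (a+b)^2 = 2a + 2b - a^2 - b^2 - 2ab = g(a) + g(b) - 2ab$. So the equation $g(f(A)) = g(a) + g(b)$ forces $g(f(A)) \le g(a) + g(b) - 2ab$, i.e. $2ab \le g(a) + g(b) - g(f(A)) = 0$. Since $A_1, A_2$ are nonempty flats, I expect $a = f(A_1) > 0$ and $b = f(A_2) > 0$: indeed, if $\emptyset$ is a flat then $f(A_i) > f(\emptyset) = 0$ because $A_i \ne \emptyset = \ov{\emptyset}$ (a nonempty flat strictly contains the bottom flat, hence has strictly larger $f$-value by the flat axiom applied along a chain); if $\emptyset$ is not a flat then $\ov{\emptyset} \ne \emptyset$ has $f$-value $0$, and a separable flat by definition has two nonempty parts — one should check the $\ov{\emptyset}$-case doesn't sneak in, but the homogeneity remark lets us assume $\emptyset$ closed, i.e. $\dim P_f = |E|$, WLOG. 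Then $ab > 0$ contradicts $2ab \le 0$.

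The main obstacle is the edge case where $f$ is not full-dimensional, i.e. $\ov{\emptyset} \neq \emptyset$ and possibly $f(A_1) = 0$ or $f(A_2) = 0$ for some nonempty flat $A_i$. The clean way around this is to observe that $f$ and $\Mf{f}$ can be assumed full-dimensional after restricting to $E \setminus \ov{\emptyset}$: the separability of a flat $A \supseteq \ov{\emptyset}$ of $\Mf{f}$ would have to involve parts of positive $\Mf{f}$-measure in a genuine decomposition, and $\ov{\emptyset}$ itself contributes nothing, so one reduces to the case where every nonempty flat has strictly positive $f$-value. With that reduction in place, the inequality $2ab \le 0$ with $a, b > 0$ is the desired contradiction, so every flat of $\Mf{f}$ is inseparable.
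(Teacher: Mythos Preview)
Your argument takes a cleaner route than the paper's. The paper reparametrises via $b_i = 1 - f(A_i)$, $b = 1 - f(A)$ and argues geometrically that the triangle cut out by $b \le b_1, b_2$ and $b_1 + b_2 \le 1+b$ meets the circle $b_1^2 + b_2^2 = 1+b^2$ only at the points $(1,b)$ and $(b,1)$, which forces one of the $A_i$ to coincide with $A$. You instead exploit the identity $g(a+b) = g(a) + g(b) - 2ab$ to reach $2ab \le 0$ in one line, which is more direct and avoids the geometric picture entirely.

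One step needs a patch. The inequality $g(f(A)) \le g(a+b)$ follows from monotonicity of $g$ on $[0,1]$ only if $a+b \le 1$; your parenthetical ``as $a, b, f(A) \le 1$'' does not give this, and a priori $a+b$ could exceed $1$. The fix is short: from $g(a)+g(b) = g(f(A)) \le g(1) = 1$ one gets $(1-a)^2 + (1-b)^2 \ge 1$, and since $t^2 \le t$ for $t \in [0,1]$ this forces $(1-a)+(1-b) \ge 1$, i.e.\ $a+b \le 1$. With that in place your argument is complete. Your edge-case discussion is also more elaborate than needed: if $\ov{\emptyset} \neq \emptyset$ then every flat contains $\ov{\emptyset}$, so two nonempty flats can never be disjoint and separability is vacuous.
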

\begin{proof}
    We may assume that $f(E) = 1$. Let $A$ be a fixed flat with $a = f(A) \le 1$ and 
    assume that $A$ is separable with respect to $\Mf{f}$. That is, there are
    disjoint flats $A_1,A_2 \subseteq A$ such that $\Mf{f}(A) = \Mf{f}(A_1) +
    \Mf{f}(A_2)$.
    Then $(a_1,a_2) = (f(A_1),f(A_2))$ satisfies
    \[
        2a_1 - a_1^2 + 2a_2 - a_2^2 \ = \ 2a - a^2 
        \quad \Longleftrightarrow \quad
        (1-a_1)^2 + (1-a_2)^2 \ = \ (1-a)^2 + 1 \, .
    \]
    Monotonicity and submodularity yield $0 \le a_1,a_2 \le a$ and $a \le a_1
    + a_2$.  Reparametrizing $(a_1,a_2,a) = (1-b_1,1-b_2,1-b)$, we are thus
    looking at pairs $(b_1,b_2)$ such that 
    \[
        b \le b_1,b_2 \le 1 
        \quad \text{ and } \quad 
         b_1 + b_2  \ \le \ 1+b
        \quad \text{ and } \quad 
        b_1^2 + b_2^2 \ = \ 1+ b^2  \, .
    \]
    The linear inequalities describe a triangle in the plane contained in the
    disc with radius $\sqrt{1+b^2}$ and meeting the bounding circle in the
    points $(1,b)$ and $(b,1)$. This, however, implies that $\Mf{f}(A_1) =
    \Mf{f}(A)$ or $\Mf{f}(A_2) = \Mf{f}(A)$ and hence $A = A_1$ or $A = A_2$.
    This shows that $A$ is inseparable.
\end{proof}

Theorem~\ref{thm:MPP_polymatroid} together with the last two propositions give
an irredundant inequality description:
\[
    \MPP{f} \ = \ \Bigl\{ x \in \R^E : x \ge 0, \1(x) = f(E)^2, \1_F(x) \le
    2f(E)f(F) - f(F)^2 \text{ for all proper } F \in \LF(f) \Bigr\}
    \, .
\]

\begin{cor}
    Let $F \in \LF(f)$ be a flat. The vertices of the facet $\MPP{f}^{e_F}$
    are precisely the greedy paths that pass through the flat $F$.

    Moreover, let $\emptyset = F_0 \subset \cdots \subset F_k = E$ a maximal
    chain of flats in $\LF(f)$. The collection of vertices $j_*$ with
    $F_i(j_*) = F_i$ for $i=0,\dots,k$ form a face of $\MPP{f}$
    combinatorially isomorphic to a product of simplices of dimensions $|F_i
    \setminus F_{i-1}|-1$ for $i = 1,\dots,k$.
\end{cor}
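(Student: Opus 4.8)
The plan is to prove the two assertions separately, in both cases reducing everything to the combinatorial description of vertices of $\MPP{f}$ from Corollary~\ref{cor:comb_vertices} together with the inequality description obtained from Theorem~\ref{thm:MPP_polymatroid}, Proposition~\ref{prop:same_flats}, and Proposition~\ref{prop:all_insep}. First I would recall that, by Proposition~\ref{prop:all_insep}, for every proper flat $F \in \LF(f)$ the inequality $\1_F(x) \le \Mf{f}(F)$ is facet-defining for $\MPP{f}$, and $\MPP{f}^{e_F}$ is the face on which this inequality is tight. A vertex $\Psi(j_*)$ of $\MPP{f}$ attains equality $\1_F(\Psi(j_*)) = \Mf{f}(F)$ precisely when the greedy path $j_*$ "passes through $F$" in the sense that some $F_i(j_*) = F$; this is exactly the submodularity/flat bookkeeping already used in the proof of Theorem~\ref{thm:MPP_polymatroid}. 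Concretely, $\1_F(\Psi(j_*)) = \sum_{i : j_i \in F} \Psi(j_*)_{j_i}$, and using the coordinate formula of Corollary~\ref{cor:comb_vertices_coord} this telescopes: if the chain $F_0(j_*) \subset F_1(j_*) \subset \cdots$ contains $F$, say $F = F_m(j_*)$, the terms with $j_i \in F$ are exactly those with $i \le m$, and the sum $\sum_{i\le m}(f(F_i)-f(F_{i-1}))(2f(E)-f(F_i)-f(F_{i-1}))$ telescopes to $2f(E)f(F)-f(F)^2 = \Mf{f}(F)$; whereas if the chain skips across $F$, a short submodularity estimate shows the sum is strictly smaller. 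This gives the first sentence.

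For the second assertion, fix a maximal chain $\emptyset = F_0 \subset F_1 \subset \cdots \subset F_k = E$ of flats. The face $G$ cut out by intersecting the facets $\MPP{f}^{e_{F_1}}, \dots, \MPP{f}^{e_{F_{k-1}}}$ has as its vertices exactly those $\Psi(j_*)$ whose flat chain $F_*(j_*)$ is the given chain, by the previous paragraph. A sequence $j_* = (j_1,\dots,j_k)$ with $F_i(j_*) = F_i$ for all $i$ is equivalent to a choice, for each $i = 1, \dots, k$, of an element $j_i \in F_i \setminus F_{i-1}$ (one checks $\ov{\{j_1,\dots,j_i\}} = F_i$ is automatic once each $j_i \in F_i\setminus F_{i-1}$, since flats are closed and the closure can only grow to reach, but not exceed, $F_i$ — this is where one uses that $f$ and $\Mf{f}$ have the same lattice of flats and that the $F_i$ form a chain). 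Hence $V(G)$ is in bijection with $\prod_{i=1}^k (F_i \setminus F_{i-1})$, a set of size $\prod_i |F_i \setminus F_{i-1}|$, which is the vertex set of $\prod_{i=1}^k \Delta_{|F_i\setminus F_{i-1}|-1}$.

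To upgrade this vertex bijection to a combinatorial isomorphism of polytopes, I would argue that $G$ is itself a monotone path polytope — or, more directly, appeal to the structure of $\MPP{f}$ as a generalized permutahedron: the face $G$ of $\MPP{f}$ is again a polymatroid base polytope (a face of one is), and it decomposes as a Minkowski sum (equivalently, a product after quotienting by the lineality) corresponding to the "blocks" $F_i \setminus F_{i-1}$. Cleanly: along the chain $F_*$, the coordinates split into the groups $F_i \setminus F_{i-1}$, the constraints tying coordinates across different groups become equalities (they are the facet equalities defining $G$), and within the $i$-th group the only remaining active inequalities are $x_r \ge 0$ for $r \in F_i\setminus F_{i-1}$ together with the group's fixed coordinate sum $\Mf{f}(F_i) - \Mf{f}(F_{i-1})$ — i.e.\ a dilated simplex on $|F_i\setminus F_{i-1}|$ vertices. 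Thus $G$ is affinely a product of simplices of the stated dimensions. The main obstacle is the bookkeeping in this last step: one must verify that no further inequality $\1_F(x)\le \Mf{f}(F)$ for a flat $F$ not in the chain cuts $G$ into a proper subpolytope, i.e.\ that all such remaining facets of $\MPP{f}$ either contain $G$ or meet it only in lower faces already accounted for by the simplex-product structure; this follows because any flat $F$ is comparable in the lattice to the $F_i$ in a controlled way — $F \cap F_i$ and $F \cup F_i$ are flats with $f(F\cap F_i) + f(F\cup F_i) \le f(F)+f(F_i)$ — and a maximal chain refining with $F$ inserted only subdivides one block, which corresponds exactly to a facet of the simplex for that block. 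Carrying out this comparison carefully is the crux; everything else is the telescoping identity and a counting bijection.
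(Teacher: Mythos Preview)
Your approach to the first assertion is correct and is essentially the intended argument: the telescoping computation
\[
\sum_{i\le m}(f(F_i)-f(F_{i-1}))\bigl(2f(E)-f(F_i)-f(F_{i-1})\bigr)
= 2f(E)f(F_m)-f(F_m)^2 = \Mf f(F_m)
\]
shows that $\Psi(j_*)$ lies on the facet whenever $F$ occurs in the flat chain of $j_*$, and strict inequality otherwise follows (your ``short submodularity estimate'' can be made precise, but since $\Psi(j_*)\in B_{\Mf f}$ and $\1_F$ attains its maximum $\Mf f(F)$ only on that facet, it suffices to check that equality forces $F$ into the chain, which the same telescoping identity does).

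For the second assertion your plan works but is more laborious than necessary, and the worry you flag at the end---that some flat $F$ outside the chain might cut $G$ nontrivially---is a non-issue. The shortcut you are missing is already contained in Corollary~\ref{cor:comb_vertices_coord}: the value $\Psi(j_*)_{j_i} = (f(F_i)-f(F_{i-1}))(2f(E)-f(F_i)-f(F_{i-1}))$ depends only on the position $i$ in the chain, not on the choice of $j_i\in F_i\setminus F_{i-1}$. Writing $c_i$ for this constant, the vertex set of $G$ is literally
\[
V(G) \;=\; \Bigl\{\,\sum_{i=1}^k c_i\,e_{j_i} \;:\; j_i\in F_i\setminus F_{i-1}\,\Bigr\},
\]
which is exactly the vertex set of the product $\prod_{i=1}^k c_i\,\Delta_{F_i\setminus F_{i-1}}$. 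Since a polytope is the convex hull of its vertices, $G$ \emph{equals} this product of dilated simplices (not merely combinatorially). No analysis of the remaining facet inequalities is needed at all. The paper states the corollary without proof precisely because it drops out of Corollary~\ref{cor:comb_vertices_coord} this directly.
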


\begin{example}[Matroids]
    Let $M$ be a rank-$r$ matroid on ground set $E$.  It follows from
    Corollary~\ref{cor:comb_vertices} that a sequence \mbox{$j_* =
    (j_1,\dots,j_k)$} is a greedy path if and only if $k = r$ and
    $\{j_1,\dots,j_i\}$ is independent in $M$ for $i=0,\dots,r$. In
    particular, $(j_1,\dots,j_r)$ is an ordered basis of $M$.  Using
    Corollary~\ref{cor:comb_vertices_coord} together with the fact that
    $r_M(\ov{\{j_1,\dots,j_i\}}) = i$, we find that the vertex of
    $\MPP{P_M}$ corresponding to the greedy path is 
    \[
        (2r-1) e_{j_1} + 
        (2r-3) e_{j_2} + 
        \cdots +  3 e_{j_{r-1}} 
        +  e_{j_r} 
    \]
    If $B \subseteq E$ is a basis of $M$, then the face $\MPP{M}^{e_B}$ is
    linearly isomorphic to $-\1 + 2\Pi_{r-1}$. We will come back to this
    example in the next sections.
\end{example}

\begin{example}[Cubes and permutahedra]\label{ex:cube}
    For $E = [n]$, let $f : 2^E \to \Znn$ be the polymatroid given by $f(A) =
    |A|$. This is the rank function of the uniform matroid $U_{n,n}$ and $P_f
    = [0,1]^n$ with $B_f = \{ (1,\dots,1)\}$. The greedy paths are given by
    all permutations $(\sigma(1),\dots,\sigma(n))$. The flag polymatroid is
    $\Mf{f}(A) \ = \ n^2 - (n - |A|)^2$ and using
    Corollary~\ref{cor:comb_vertices_coord}, we see that the vertices of
    $\MPP{f}$ are the permutations of $(1,3,\dots,2n-1)$.  Hence $\MPP{f} =
    -\1 + 2 \Pi_{n-1}$.
\end{example}

Notice that if $(E,f)$ is a polymatroid with $\LF(f) = 2^E$, then $\MPP{f}$
has $2^{|E|}-2$ facet-defining inequalities and hence is normally equivalent
to the permutahedron. 

\begin{prop}
    If $f(E) - f(E \setminus i) > 0$ for all $i \in E$, then $\MPP{f}$ is
    normally equivalent to the permutahedron.
\end{prop}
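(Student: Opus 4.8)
The plan is to reduce the statement to the remark immediately preceding it, which says that $\MPP{f}$ is normally equivalent to the permutahedron $\Pi_{n-1}$ as soon as $\LF(f) = 2^E$. So the whole content is to check that the hypothesis $f(E) - f(E \setminus i) > 0$ for all $i \in E$ forces every subset of $E$ to be a flat of $f$.

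First I would prove this by contradiction using submodularity. Suppose $X \subseteq E$ is not a flat, and pick $e \in E \setminus X$ with $f(X \cup e) = f(X)$. Since $e \notin X$, the sets $X \cup e$ and $E \setminus e$ have union $E$ and intersection $X$, so submodularity gives
\[
    f(E) + f(X) \ \le \ f(X \cup e) + f(E \setminus e) \ = \ f(X) + f(E \setminus e),
\]
whence $f(E) \le f(E \setminus e)$; together with monotonicity this forces $f(E) = f(E \setminus e)$, contradicting the hypothesis. Thus every $X \subseteq E$ is a flat. The same inequality with $X = \emptyset$ and $e = i$ also gives $f(\{i\}) \ge f(E) - f(E \setminus i) > 0$, so $\ov{\emptyset} = \emptyset$, and hence the proper flats of $f$ are exactly the $2^{|E|}-2$ subsets different from $\emptyset$ and $E$.

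It then remains to invoke the machinery already in place: by Propositions~\ref{prop:same_flats} and~\ref{prop:all_insep} together with the inequality description of $\MPP{f}$ recorded above, each proper flat $F$ contributes a facet $\1_F(x) \le 2f(E)f(F) - f(F)^2$, while the constraints $x \ge 0$ are redundant since on $\MPP{f}$ one has $x_i = f(E)^2 - \1_{E \setminus i}(x) \ge (f(E) - f(E \setminus i))^2 > 0$ (here the hypothesis is used). Hence $\MPP{f}$ has exactly $2^{|E|}-2$ facets, one for each proper nonempty subset of $E$, so $\LF(f) = 2^E$ and the remark yields normal equivalence with $\Pi_{n-1}$. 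I expect the only genuinely delicate point to be the part already absorbed into the remark — that a generalized permutahedron attaining the maximal possible number $2^{|E|}-2$ of facets must have the braid fan as its normal fan (equivalently, that slicing $P_{\Mf{f}}$ by $\{\1(x) = f(E)^2\}$ destroys no facet); the submodularity argument above is entirely routine by comparison.
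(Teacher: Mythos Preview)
Your argument is correct and reduces to the same remark as the paper: once $\LF(f) = 2^E$, normal equivalence with the permutahedron follows. The route to $\LF(f) = 2^E$ is genuinely different, however. You give a direct one-line submodularity argument: if $X$ were not a flat, applying submodularity to the pair $X \cup e$ and $E \setminus e$ immediately forces $f(E) \le f(E \setminus e)$, a contradiction. The paper instead argues via the greedy algorithm: the hypothesis is shown to be equivalent to $B_f \subset \R^E_{>0}$, and since every maximal chain $\emptyset = A_0 \subset A_1 \subset \cdots \subset A_n = E$ arises as the sequence of closures along some execution of the greedy algorithm on $B_f$, each step must strictly increase $f$, so every subset is a flat. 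Your approach is more elementary and uses nothing beyond the polymatroid axioms; the paper's approach is more geometric and has the bonus of identifying the hypothesis with the condition $B_f \subset \R^E_{>0}$, which directly motivates the notion of \emph{tight} polymatroid introduced immediately afterward. Your verification that the constraints $x_i \ge 0$ become redundant on $\MPP{f}$, via $x_i \ge (f(E) - f(E \setminus i))^2 > 0$, fills in a step the paper leaves implicit in the word ``hence'' of the remark.
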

\begin{proof}
    Assume that $B_f \subset \R^E_{>0}$.  Any two subsets $A \subset A'$ with
    $|A'| = |A| + 1$ occur in some execution of the greedy algorithm
    (Theorem~\ref{thm:greedy}) and lead to a vertex $v \in B_f$. It thus
    follows that $f(A) < f(A')$ and  $\LF(f) = 2^E$. Now $B_f \subset
    \R^E_{>0}$ if and only if the maximum of the linear function $x \mapsto
    -x_i$ is positive over $B_f$ for all $i$. The greedy algorithm shows that
    this is the case if and only if $f(E) - f(E \setminus i) > 0$ for all $i
    \in E$.
\end{proof}

We call a polymatroid $f$ \Def{tight} if $f(E \setminus i) = f(E)$ for all $i
\in E$.

\newcommand\Ass{\mathrm{Ass}}%
\newcommand\oAss{\overline{\Ass}}%
\begin{example}[Associahedra]\label{ex:associahedra}
    Let $n \ge 1$. For $1 \le i \le j \le n$, we write $\Delta_{[i,j]} =
    \conv\{e_i,e_{i+1},\dots,e_j\}$.  The \Def{Loday
    Associahedron}~\cite{Loday} is the polymatroid base polytope
    \[
        \Ass_{n-1} \ := \ \sum_{1 \le i \le j \le n} \Delta_{[i,j]} \, .
    \]
    More precisely, $\Ass_{n-1}$ is a nestohedron; see next section
    and~\cite[Sect.~8.2]{GenPermOrig}.  The underlying polymatroid
    $([n],f_\Ass)$ is given by
    \[
        f_\Ass(A) \ := \ |\{ 1 \le i \le j \le n : \{i,\dots,j\} \cap A \neq
        \emptyset \}|
    \]
    for $A \subseteq [n]$. The vertices of $\Ass_{n-1}$ are in bijection with
    plane binary trees. For a generic weight $\wt \in \R^n$, let $i \in [n]$
    with $\wt_i$ maximal. The vertex $v$ of $\Ass_{n-1}$ maximizing $\wt$
    corresponds to the plane binary $T$ with root $i$ and left and right
    subtree recursively determined by $(w_1,\dots,w_{i-1})$ and
    $(w_{i+1},\dots,w_n)$, respectively. Let $\sigma$ be the unique
    permutation such that $w_{\sigma^{-1}(n)} > w_{\sigma^{-1}(n-1)} > \cdots
    > w_{\sigma^{-1}(1)}$.  Then, viewed as a linear function $\sigma \in
    \R^n$, $\sigma$ determines the same binary tree. The permutation
    determines how $T$ is built up. Hence every permutation represents a
    different greedy path and hence $\MPP{f_\Ass}$ is normally equivalent to a
    permutahedron.

    To see this differently, let $T$ be a plane binary tree and let $L_j$
    and $R_j$ be the number of nodes in the left, respectively, right subtree
    of $T$ rooted at $j$. The vertex $v$ of $\Ass_{n-1}$ corresponding to $T$
    has coordinates $v_j = (L_j + 1)(R_j + 1)$  \cite[Cor.~8.2]{GenPermOrig}.
    In particular, $f_\Ass$ is not tight.

    The number of greedy paths that lead to a fixed tree $T$ can be computed
    as follows. View $T$ as a poset where the minimal elements are precisely
    the leaves of $T$. A linear extension is a permutation $\sigma$ with
    $\sigma(i) < \sigma(j)$ whenever $j$ is on the path from $i$ to the root.
    The greedy paths leading to $T$ are precisely the linear extensions of $T$.
    The number of linear extensions can be computed by the \emph{tree
    hook-length formula} \cite[Exercise
    5.1.4.(20)]{Knuth}\cite[Prop.~22.1]{Stanley-Ordered}
    \[
        e(T) \ = \ n! \prod_{i=1}^n \frac{1}{(L_i+R_i+1)} \, .
    \]

    Consider the polytope
    \[
        \oAss_{n-1} \ := \ \sum_{1 \le i < j \le n} \Delta_{[i,j]} \ = \ -\1 +
        \Ass_{n-1} \, .
    \]
    This is a tight version of the associahedron with $f_{\oAss}(A) =
    f_\Ass(A) - |A|$. The polytopes $\Ass_{n-1}$ and $\oAss_{n-1}$ differ only
    by a translation but their polymatroid polytopes and their flag 
    polymatroids are different. For a binary tree $T$ let $T'$ be the
    tree obtained from $T$ by removing all leaves.  Two permutations $\sigma^1$
    and $\sigma^2$ yield the same greedy path on $P_{f_{\oAss}}$ if and only
    if both are linear extensions of $T$ and they yield the same linear
    extension of $T'$ after relabelling. The number of vertices of
    $\MPP{f_{\oAss}}$ is then
    \[
        \sum_{T} e(T')  \, ,
    \]
    where the sum is over all plane binary trees on $n$ nodes. The first few
    numbers starting with $n = 2$ are $2, 5, 14, 46, 176, 766, 3704, 19600,
    112496$.
\end{example}

\newcommand\PM{\mathscr{P}^1}%
Let us close this section with the observation that the flag polymatroid
defines a nonlinear transformation on the space of polymatroids.  For
instance, let $\PM_n$ be the compact convex set of polymatroids $f : 2^{[n]}
\to \Rnn$ with $f([n]) = 1$. Then $f \mapsto \Mf{f} = 2f - f^2$ defines a
discrete dynamical system on $\PM_n$.

\begin{prop}
    Let $f \in \PM_n$ be a polymatroid for which $\emptyset$ is closed. Define
    $f^0 := f$ and $f^{i+1} := 2\Mf{f^i}$. The sequence $(f^i)_{i \ge 0}$
    converges to the function $f^\infty \in \PM_n$ with $f^\infty(A) = 1$ for
    all $A$.
\end{prop}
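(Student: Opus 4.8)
The plan is to reduce the whole statement to the one-dimensional dynamics of the quadratic map $g(t) = 2t - t^2$ on the interval $[0,1]$. First I would check that the operator $f \mapsto \Mf{f}$ genuinely maps $\PM_n$ to itself: by \Cref{thm:MPP_polymatroid} the function $\Mf{f^i}$ is again a polymatroid, and $\Mf{f^i}([n]) = f^i([n])^2 = 1$ by induction, so the normalization $f^i([n]) = 1$ persists along the orbit. Consequently, on $\PM_n$ the operator acts by post-composition with $g$, i.e.\ $\Mf{f^i}(A) = 2 f^i(A) - f^i(A)^2 = g(f^i(A))$ for every $A \subseteq [n]$, and hence $f^i(A) = g^{(i)}(f(A))$, where $g^{(i)}$ denotes the $i$-fold composite of $g$.

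Next I would trivialize these dynamics by the substitution $t \mapsto 1-t$. Since $g(t) = 1 - (1-t)^2$, the quantity $b_i(A) := 1 - f^i(A)$ satisfies $b_{i+1}(A) = b_i(A)^2$, so that
\[
    1 - f^i(A) \ = \ \bigl(1 - f(A)\bigr)^{2^i} \qquad \text{for all } i \ge 0,\ A \subseteq [n].
\]
Now I would bring in the hypothesis that $\emptyset$ is a flat of $f$: by definition this means $f(\{e\}) > 0$ for every $e \in [n]$, whence monotonicity gives $0 < f(A) \le f([n]) = 1$ for every nonempty $A$. Therefore $1 - f(A) \in [0,1)$, so $(1-f(A))^{2^i} \to 0$ and $f^i(A) \to 1$ as $i \to \infty$; while $f^i(\emptyset) = g^{(i)}(0) = 0$ for all $i$. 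Since $2^{[n]}$ is finite, this pointwise convergence is convergence in the compact convex set $\PM_n$, and the limit is the polymatroid $f^\infty$ with $f^\infty(A) = 1$ for every nonempty $A$ (and $f^\infty(\emptyset) = 0$) — that is, the rank function of the uniform matroid $U_{n,1}$.

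I do not expect any real obstacle here; the argument is essentially the closed-form solution of the recursion. The only point worth emphasizing is that the hypothesis ``$\emptyset$ is closed'' is exactly what the conclusion requires: without it some singleton, hence some nonempty set $A$, would have $f(A) = 0$, so $1 - f(A) = 1$ and $f^i(A) = 0$ for all $i$, and the orbit could not converge to the all-ones function. (Relatedly, one should read the conclusion as ``$f^\infty(A) = 1$ for all nonempty $A$'', since $f^\infty(\emptyset) = 0$ necessarily.)
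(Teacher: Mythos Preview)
Your proof is correct and follows essentially the same route as the paper: both reduce the question to the one-variable dynamics of $g(t)=2t-t^2$ on $[0,1]$, use the hypothesis that $\emptyset$ is closed to ensure $f(A)\in(0,1]$ for every nonempty $A$, and then argue that the iterates $g^{(i)}(f(A))$ tend to $1$.

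The only difference is in how that last step is carried out. The paper observes that $g(t)\ge t$ on $[0,1]$ with equality only at $0$ and $1$, so the sequence $f^i(A)$ is increasing and bounded and hence converges to a fixed point of $g$, which must be $1$. You instead conjugate by $t\mapsto 1-t$ to obtain the closed form $1-f^i(A)=(1-f(A))^{2^i}$. Your version is a little sharper: it gives the doubly-exponential rate of convergence for free and makes the necessity of the hypothesis ($f(A)>0$) completely transparent. Your closing remark that the conclusion should read ``$f^\infty(A)=1$ for all nonempty $A$'' is also well taken.
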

\begin{proof}
    It follows from Theorem~\ref{thm:MPP_polymatroid} that $f^{i+1} \in
    \PM_n$. Since $g(t) = 2t - t^2$ is strictly increasing on the interval
    $(0,1)$, we have $f^{n}(A) \le f^{n+1}(A) = g(f^n(A))$ for all $A
    \subseteq [n]$ and with strict inequality unless $f(A) = 1$. Now if
    $\emptyset$ is closed, this implies $f(A) > 0$ for all $A \neq \emptyset$.
\end{proof}

\newcommand\arb{\mathcal{A}}%
\section{Pivot Polytopes and Nestohedra} \label{sec:pivpolys}

In the context of linear optimization, the authors, De Loera, and
L\"{u}tjeharms introduced \emph{pivot polytopes} in~\cite{PivotPoly}. Let $P
\subset \R^n$ be a fixed polytope with vertex set $V(P)$. Recall that
$\Nb{P,c}{v}$ is the collection of $c$-improving neighbors of $v \in V(P)$.
For a fixed linear function $c$, a memory-less pivot rule for the pair $(P,c)$
is a map $\arb : V(P) \to V(P)$ such that $\arb(v) = v$ for all vertices $v$
maximizing $c$ and $\arb(v) \in \Nb{P,c}{v}$ otherwise. If $c$ is generic,
then $\arb$ is an arborescence of the graph of $P$ with acyclic orientation
induced by $c$. For the simplex algorithm, $\arb(v)$ encodes the choices made
by a (memory-less) pivot rule at the vertex $v$. We refer the reader
to~\cite{PivotPoly} for details.  We abuse notation and will refer to such
maps $\arb$ as \Def{arborescences} for the pair $(P,c)$ even when the
maximizer of $c$ over $P$ is not unique.

For a weight $\wt \in \R^n$ linearly independent of $c$, the \Def{max-slope
pivot rule} on $(P,c)$ corresponds to the arborescence $\arb^\wt_{P,c}$
determined by 
\begin{equation}\label{eqn:MSarb}
    \arb^\wt_{P,c}(v) \ = \ \argmax \Bigl\{
        \frac{\inner{\wt}{u-v}}{\inner{c}{u-v}} : u \in \Nb{P,c}{v} \Bigr\} \, .
\end{equation}

For an arborescence $\arb$, define 
\begin{equation}\label{eqn:PPGKZ}
    \PPGKZ(\arb) \ := \ \sum_{v} \frac{\arb(v) - v}{\inner{c}{\arb(v) -v}} \,,
\end{equation}
where we tacitly assume that $\frac{0}{\inner{c}{0}} = 0$. The \Def{max-slope pivot
rule polytope} is the polytope
\begin{equation}\label{eqn:PP}
    \PP_{P,c} \ := \ \conv \{ \PPGKZ(\arb) : \arb \text{
        arborescence of $(P,c)$} \} \, .
\end{equation}

\begin{theorem}[{\cite[Theorem~1.4]{PivotPoly}}]
    The vertices of $\PP_{P,c}$ are in one-to-one correspondence to the
    max-slope arborescences of $(P,c)$.
\end{theorem}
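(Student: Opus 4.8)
The plan is to exploit the additive structure of $\PPGKZ(\arb)$ over the vertices of $P$: optimizing a linear functional over $\PP_{P,c}$ decouples into independent local choices at each vertex, and the locally optimal choice is exactly the max-slope pivot rule. Concretely, for $\wt \in \R^n$, formula \eqref{eqn:PPGKZ} gives
\[
    \inner{\wt}{\PPGKZ(\arb)} \ = \ \sum_{v \in V(P)}
        \frac{\inner{\wt}{\arb(v) - v}}{\inner{c}{\arb(v) - v}} \, ,
\]
where the summand for a $c$-maximal vertex is $0$ by convention, while every other summand depends only on the single value $\arb(v) \in \Nb{P,c}{v}$ and is well defined since $\inner{c}{\arb(v)-v} > 0$. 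Hence $\max_\arb \inner{\wt}{\PPGKZ(\arb)} = \sum_v \max_{u \in \Nb{P,c}{v}} \frac{\inner{\wt}{u-v}}{\inner{c}{u-v}}$, and a maximizing arborescence is obtained by picking, at each non-maximal $v$, a slope-maximizing neighbor $u$; comparing with \eqref{eqn:MSarb}, this maximizing arborescence is precisely $\arb^\wt_{P,c}$.

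Next I would pin down genericity. For distinct edge-neighbors $u \neq u'$ of $v$ with $\inner{c}{u-v},\inner{c}{u'-v} > 0$, the equality of slopes $\frac{\inner{\wt}{u-v}}{\inner{c}{u-v}} = \frac{\inner{\wt}{u'-v}}{\inner{c}{u'-v}}$ becomes, after clearing denominators, a linear condition on $\wt$; it is not identically satisfied because $u-v$ and $u'-v$ span distinct rays of the edge cone at $v$ and so are not positively proportional. Thus there is a finite union $\mathcal H$ of hyperplanes such that for every $\wt \notin \mathcal H$ the $\argmax$ in \eqref{eqn:MSarb} is a single vertex at each non-maximal $v$; call such $\wt$ generic. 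For generic $\wt$, the decomposition above shows $\PPGKZ(\arb^\wt_{P,c})$ is \emph{strictly} larger than $\PPGKZ(\arb)$ for every other arborescence $\arb$, so it is the unique maximizer of $\inner{\wt}{\cdot}$ over $\PP_{P,c}$; in particular it is a vertex.

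For the converse, every vertex $p$ of $\PP_{P,c}$ has a full-dimensional normal cone, which, since $\R^n \setminus \mathcal H$ is dense, contains a generic $\wt$ in its interior; for that $\wt$ we get $p = \PPGKZ(\arb^\wt_{P,c})$. Finally, $\arb \mapsto \PPGKZ(\arb)$ is injective on max-slope arborescences: if $\arb_1 = \arb^\wt_{P,c}$ with $\wt$ generic and $\PPGKZ(\arb_2) = \PPGKZ(\arb_1)$, then $\inner{\wt}{\PPGKZ(\arb_2)} = \inner{\wt}{\PPGKZ(\arb_1)}$, and since $\arb_1$ is the unique maximizer this forces $\arb_2 = \arb_1$. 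Combining these statements gives the claimed bijection. The only genuine work is the bookkeeping around $\mathcal H$ — choosing it so that simultaneously every non-maximal vertex has a unique slope-maximizer and every vertex normal cone meets the generic locus — and checking the non-proportionality of distinct edge directions; the conceptual core, the additive decomposition, is a one-line computation once \eqref{eqn:PPGKZ} is at hand, so I do not expect a serious obstacle.
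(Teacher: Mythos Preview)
The paper does not give its own proof of this statement: it is quoted verbatim as \cite[Theorem~1.4]{PivotPoly} and then used as a black box. Your argument is correct, and in fact it is exactly the natural proof one extracts from the Minkowski decomposition $\PP_{P,c} = \sum_{v} \PP_{P,c}(v)$ that the paper records in \eqref{eqn:PPsum}--\eqref{eqn:PPv} immediately after the theorem: optimizing $\wt$ over a Minkowski sum decouples into optimizing over each summand, and optimizing over $\PP_{P,c}(v)$ is precisely the max-slope choice at $v$. So your additive decomposition of $\inner{\wt}{\PPGKZ(\arb)}$ is the functional-analytic dual of \eqref{eqn:PPsum}, and your genericity and injectivity steps are the standard bookkeeping that turns ``$\wt$-maximizer over the Minkowski sum equals the sum of $\wt$-maximizers over the summands'' into a bijection between vertices and tuples of summand-vertices.
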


We can canonically decompose $\PP_{P,c}$ into a Minkowski sum
\begin{equation}\label{eqn:PPsum}
    \PP_{P,c} \ = \ \sum_{v \in V(P)} \PP_{P,c}(v) \, ,
\end{equation}
where
\begin{equation}\label{eqn:PPv}
    \PP_{P,c}(v) \ := \ \conv\Bigl\{ \frac{u-v}{\inner{c}{u-v}} : 
    u \in \Nb{P,c}{v} \Bigr\} \, .
\end{equation}

The max-slope pivot rule polytope is intimately related to the monotone path
polytope $\MPPl{c}{P}$. For a generic $\wt$, let $v_0 = (P^{-c})^\wt$
and define $v_i := \arb^\wt_{P,c}(v_{i-1})$ for $i \ge 1$. If $k$ is
minimal with $v_k = v_{k+1}$, then $v_0,v_1 \dots, v_k$ is the
coherent monotone path of $(P,c)$ with respect to $\wt$. From this, we
deduced the following geometric implication.

\begin{prop}[{\cite[Theorem~1.6]{PivotPoly}}] \label{prop:weakMink} 
    The monotone path polytope $\MPPl{c}{P}$ is a weak Minkowski summand of the
    max-slope pivot rule polytope $\PP_{P,c}$.
\end{prop}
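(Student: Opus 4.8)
The plan is to verify the characterisation of weak Minkowski summands recalled in Section~\ref{sec:background}: it suffices to show that $(\MPPl{c}{P})^\wt$ is a vertex whenever $(\PP_{P,c})^\wt$ is. Throughout I will assume that $c$ attains its minimum over $P$ at a single vertex $v_0$ (equivalently, $P^{-c}$ is a vertex); this is the only place where a genericity hypothesis on $c$ enters, and it holds in all cases of interest, in particular for $(P_f,\1)$ with $v_0 = 0$. Given this, the argument is essentially an unwinding of definitions combined with Lemma~\ref{lem:cMP_local} and the Billera--Sturmfels criterion.

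First I would unwind what it means for $(\PP_{P,c})^\wt$ to be a vertex. By the canonical Minkowski decomposition \eqref{eqn:PPsum}, $(\PP_{P,c})^\wt = \sum_{v \in V(P)}(\PP_{P,c}(v))^\wt$, and a Minkowski sum of polytopes is a single point precisely when every summand is. Inspecting \eqref{eqn:PPv}, the face $(\PP_{P,c}(v))^\wt$ is a vertex exactly when $u \mapsto \inner{\wt}{u-v}/\inner{c}{u-v}$ has a unique maximiser over $\Nb{P,c}{v}$, i.e.\ when the max-slope rule \eqref{eqn:MSarb} makes an unambiguous choice at $v$. Hence $(\PP_{P,c})^\wt$ is a vertex if and only if the max-slope arborescence $\arb := \arb^\wt_{P,c}$ is defined without ties at any vertex.

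Assuming this, I would build the associated greedy path: set $v_i := \arb(v_{i-1})$ for $i \ge 1$ and let $k$ be minimal with $v_k = v_{k+1}$, giving a $c$-monotone path $v_* = (v_0,\dots,v_k)$. By the definition \eqref{eqn:MSarb} of $\arb$ together with the absence of ties, for each $i$ we have
\[
    \frac{\inner{\wt}{v_i - v_{i-1}}}{\inner{c}{v_i - v_{i-1}}} \ > \ \frac{\inner{\wt}{u - v_{i-1}}}{\inner{c}{u - v_{i-1}}} \qquad \text{for all } u \in \Nb{P,c}{v_{i-1}} \setminus \{v_i\},
\]
and $v_0 = (P^{-c})^\wt$ by the standing assumption. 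These are precisely the hypotheses of Lemma~\ref{lem:cMP_local}, so $v_*$ is coherent with respect to $\wt$; this makes precise the description, given just before the statement, of reading the shadow vertex path off the arborescence. By the Billera--Sturmfels criterion in Section~\ref{sec:mp}, coherence of $v_*$ with respect to $\wt$ means $(\MPPl{c}{P})^\wt = \MPv{P,c}{v_*}$, a single vertex, which is exactly the implication we need.

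The one delicate point — and the step I expect to be the real obstacle — is the hypothesis that $P^{-c}$ is a vertex. If it is higher-dimensional, then $(\PP_{P,c})^\wt$ being a vertex no longer pins down the starting vertex $v_0 = (P^{-c})^\wt$, and the statement genuinely fails: for $P$ a square and $c$ a coordinate functional, $\PP_{P,c}$ is a single point while $\MPPl{c}{P}$ is a segment. One therefore either imposes enough genericity on $c$ that $P^{-c}$ is a vertex (automatic for $(P_f,\1)$, and costing nothing after a generic perturbation of $c$), or, for full generality, replaces the direct argument above by a perturbation: pick a generic $\wt'$ in the open normal cone of the vertex $(\PP_{P,c})^\wt$, run the argument for $\wt'$, and then check that the resulting vertex of $\MPPl{c}{P}$ is independent of $\wt'$ — the content of that check being, once more, that the starting vertex is locally constant on the max-slope arborescence cone.
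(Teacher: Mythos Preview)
Your argument follows the same idea as the paper's brief sketch (the paragraph immediately preceding the proposition, which is all the paper offers before citing \cite{PivotPoly}): read off the coherent $c$-monotone path from the max-slope arborescence $\arb^\wt_{P,c}$ by iterating from the minimising vertex. Your proof is a correct and more explicit expansion of that sketch, with Lemma~\ref{lem:cMP_local} supplying the step the paper leaves implicit.

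Your observation about the hypothesis that $P^{-c}$ be a vertex is sharp and your square counterexample is correct: with $c$ a coordinate functional, $\PP_{P,c}$ collapses to a point while $\MPPl{c}{P}$ is a segment, so the summand relation fails as stated. The paper's sketch silently assumes exactly what you isolate, writing ``let $v_0 = (P^{-c})^\wt$'' without noting that this vertex must be determined by the arborescence data alone. For the paper's purposes this is harmless, since in every application $(P_f,\1)$ one has $P_f^{-\1}=\{0\}$. Your suggested repair via a perturbation $\wt'$ inside the open normal cone does not quite close the gap in full generality---as your own example shows, the resulting vertex of $\MPPl{c}{P}$ genuinely depends on $\wt'$ when $P^{-c}$ is positive-dimensional---so the honest fix is simply to impose the hypothesis, as you do.
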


We now show that the converse relation also holds for $(P_f,\1)$. 

\begin{theorem}\label{mainthm:ppnormmp}
    Let $(E,f)$ be a polymatroid. Then $\MPP{P_f}$ is normally equivalent to
    $\PP_{P_f,\1}$.
\end{theorem}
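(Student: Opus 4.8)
The plan is to establish normal equivalence by combining the two directions of weak Minkowski containment. One direction is already in hand: Proposition~\ref{prop:weakMink} gives that $\MPP{P_f}$ is a weak Minkowski summand of $\PP_{P_f,\1}$. So the real content is the reverse: that $\PP_{P_f,\1}$ is a weak Minkowski summand of $\MPP{P_f}$. By the weak-summand criterion (Proposition~[Thm.~15.1.2]{grunbaum}), it suffices to show that for every $\wt \in \R^E$ for which $\MPP{P_f}^\wt$ is a vertex, the face $\PP_{P_f,\1}^\wt$ is also a vertex.

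First I would reduce to generic $\wt$: the vertices of $\MPP{P_f}$ are attained on a dense open set of weights, and by \Cref{thm:MPP_Baues} and \Cref{thm:greedycoh}, for generic $\wt$ the face $\MPP{P_f}^\wt$ is the vertex $\Psi(j_*)$ corresponding to the greedy path $j_* = (j_1,\dots,j_k)$ determined by the order $\wt_{j_1} > \wt_{j_2} > \cdots$ on the support. Now I would compute $\PP_{P_f,\1}^\wt$ using the Minkowski decomposition~\eqref{eqn:PPsum}: $\PP_{P_f,\1}^\wt = \sum_{v \in V(P_f)} \PP_{P_f,\1}(v)^\wt$, so it is a vertex precisely when each summand $\PP_{P_f,\1}(v)^\wt$ is a vertex, i.e.\ when the max-slope arborescence $\arb^\wt_{P_f,\1}$ is uniquely determined at every vertex $v$ of $P_f$. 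The key observation, already recorded in the proof of \Cref{thm:greedycoh}, is that for $P_f$ the improving neighbors of $v$ differ from $v$ by positive multiples $\delta e_i$ of coordinate vectors, so the slope $\frac{\inner{\wt}{u-v}}{\1(u-v)}$ at such a neighbor equals $\wt_i$. Hence $\arb^\wt_{P_f,\1}(v)$ is the unique maximizer as long as the relevant coordinates $\wt_i$ are pairwise distinct — which holds for generic $\wt$. Therefore for generic $\wt$, $\PP_{P_f,\1}^\wt$ is a vertex whenever $\MPP{P_f}^\wt$ is (in fact whenever $\wt$ is generic), which is exactly what the criterion requires.

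The main obstacle I anticipate is bookkeeping around non-generic weights and ensuring the reduction to generic $\wt$ is legitimate: I must check that if $\MPP{P_f}^\wt$ happens to be a vertex for some non-generic $\wt$, this forces enough genericity in the relevant coordinates of $\wt$ that the max-slope arborescence is still single-valued at every vertex of $P_f$. Concretely, if $\wt$ is constant on some coordinates that never appear as improving directions at any vertex on a greedy path, that is harmless; the delicate point is that two coordinates $i \neq j$ with $\wt_i = \wt_j$ could create a non-vertex face of $\PP_{P_f,\1}$ while $\MPP{P_f}^\wt$ remains a vertex only if no vertex $v$ of $P_f$ has both $e_i$ and $e_j$ as improving directions — and one shows, via the greedy/truncation argument of \Cref{thm:greedy}, that such a coincidence also prevents $\MPP{P_f}^\wt$ from being a vertex. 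Once this case analysis is dispatched, combining with Proposition~\ref{prop:weakMink} yields that $\PP_{P_f,\1}$ and $\MPP{P_f}$ are weak Minkowski summands of each other, hence normally equivalent.
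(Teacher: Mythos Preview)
Your overall architecture is right, and one direction is indeed Proposition~\ref{prop:weakMink}. But the argument you give for the reverse direction has a genuine gap.

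The generic case is vacuous: for \emph{any} two polytopes and any generic $\wt$, both $\MPP{P_f}^\wt$ and $\PP_{P_f,\1}^\wt$ are vertices, so the implication ``$\MPP{P_f}^\wt$ a vertex $\Rightarrow$ $\PP_{P_f,\1}^\wt$ a vertex'' carries no information there. What the criterion actually demands is that this implication hold for \emph{every} $\wt$ on the boundary of a maximal normal cone of $\PP_{P_f,\1}$ as well, and that is precisely the non-generic case you defer. Your proposed treatment of that case is also mis-framed: you claim that a tie $\wt_i = \wt_j$ together with the existence of some vertex $v$ having both $e_i$ and $e_j$ as improving directions would already force $\MPP{P_f}^\wt$ to be a non-vertex. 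That is false. Take $U_{4,2}$ with $\wt = (3,2,1,1)$: at $v = e_1$ both $e_3$ and $e_4$ are improving with $\wt_3 = \wt_4$, yet the greedy path $(1,2)$ is uniquely determined, so $\MPP{P_f}^\wt$ is a vertex. The point is that a tie only matters if the tied directions are both \emph{maximal} at $v$, and ruling that out is the actual content.

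What is missing is the paper's Lemma~\ref{lem:mp_implies_pr}, which says that from any vertex $v$ the $\prec$-minimal improving direction lies in the optimal basis $B = \{j_1,\dots,j_k\}$. Concretely: if $\MPP{P_f}^\wt$ is a vertex with greedy path $(j_1,\dots,j_k)$, set $I = I(v)$ and let $i$ be minimal with $j_i \notin \ov{I}$. Then $j_1,\dots,j_{i-1} \in \ov{I}$, hence $\ov{\{j_1,\dots,j_{i-1}\}} \subseteq \ov{I}$, so every improving direction $j' \notin \ov{I}$ also satisfies $j' \notin \ov{\{j_1,\dots,j_{i-1}\}}$. Uniqueness of the greedy path at step $i$ then gives $\wt_{j'} < \wt_{j_i}$ for all such $j' \neq j_i$, so $j_i$ is the \emph{unique} max-slope direction at $v$. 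This shows the arborescence is determined by the greedy path alone, which is exactly what you need; your sketch does not supply this closure-containment argument.
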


From the perspective of optimization, Theorem~\ref{mainthm:ppnormmp} implies
the following.

\begin{cor}\label{cor:max_slope_greedy}
    The greedy algorithm on $P_f$ corresponds to linear optimization on
    $(P_f,\1)$ with respect to the max-slope pivot rule.
\end{cor}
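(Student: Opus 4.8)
The plan is to match Edmonds' greedy algorithm with the max-slope simplex method on $(P_f,\1)$ weight by weight, and then to feed this into Theorem~\ref{mainthm:ppnormmp} to obtain a genuine (bijective) correspondence, which moreover shows that the greedy path determines the entire pivot rule.

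First I would fix the starting vertex. Since $P_f \subseteq \Rnn^E$ and $\1(x)=\sum_{i\in E}x_i$ vanishes on $P_f$ only at $\0$, the face $P_f^{-\1}$ is the single vertex $\0$. By the description recalled just before Proposition~\ref{prop:weakMink}, for generic $\wt$ the vertex sequence produced by the max-slope pivot rule on $(P_f,\1)$ started at $v_0=(P_f^{-\1})^\wt=\0$ — namely $v_i=\arb^\wt_{P_f,\1}(v_{i-1})$ until it stabilizes — is precisely the coherent $\1$-monotone path of $(P_f,\1)$ with respect to $\wt$.

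Second I would read off $\arb^\wt_{P_f,\1}$ at an arbitrary vertex $v'\in V(P_f)$. Every $\1$-improving neighbor of $v'$ has the form $u=v'+\delta e_i$ with $\delta>0$, so
\[
    \frac{\inner{\wt}{u-v'}}{\inner{\1}{u-v'}} \ = \ \frac{\delta\,\wt_i}{\delta} \ = \ \wt_i \, ,
\]
and hence by \eqref{eqn:MSarb} the max-slope rule moves from $v'$ along the feasible, $\1$-improving coordinate direction $e_i$ of largest weight $\wt_i$. Iterating this from $\0$ is exactly what the greedy algorithm of Theorem~\ref{thm:greedy} does — run without Edmonds' early-stopping clause, which applies only when one instead maximizes $\wt$ over $P_f$, whereas here the monotone objective is $\1$ — and it produces the greedy path of $\wt$, ending at the $\wt$-maximal vertex of $B_f$. (This is also Proposition~\ref{thm:greedycoh} read in the other direction: the local criterion \eqref{eqn:coMP_local} singles out exactly the greedy path, so the coherent $\1$-monotone path of $\wt$ and the greedy path of $\wt$ coincide.) Combining the two steps, the max-slope simplex path on $(P_f,\1)$ with respect to $\wt$ equals the greedy path of $\wt$.

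Finally I would promote this weightwise equality to a bijective correspondence via Theorem~\ref{mainthm:ppnormmp}. Normal equivalence of $\MPP{P_f}$ and $\PP_{P_f,\1}$ means that the assignment $\MPP{P_f}^\wt \mapsto \PP_{P_f,\1}^\wt$ is an isomorphism of face lattices; for generic $\wt$ its left-hand side is the vertex $\MPv{P_f,\1}{v_*}$ indexed by the coherent $\1$-monotone path $v_*$ of $\wt$, which by the second step is the greedy path of $\wt$, while its right-hand side is the vertex of $\PP_{P_f,\1}$ indexed by the max-slope arborescence $\arb^\wt_{P_f,\1}$. Thus greedy paths and max-slope arborescences are in bijection, matching the greedy path of $\wt$ with $\arb^\wt_{P_f,\1}$; in particular the greedy path of $\wt$ not only coincides with the max-slope simplex path of $(P_f,\1)$ but determines $\arb^\wt_{P_f,\1}$ at every vertex of $P_f$. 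I do not expect a substantive obstacle here — the only computation, the slope identity above, is already contained in the proof of Proposition~\ref{thm:greedycoh} — so the step needing the most care is bookkeeping: handling non-generic $\wt$ (pass to a generic perturbation, or argue through the truncations $f_\alpha$ as in the proof that every $\1$-monotone path on $P_f$ is a greedy path) and being precise that ``the greedy algorithm on $P_f$'' here refers to the run whose monotone objective is $\1$, so that Edmonds' stopping rule never triggers and the path runs all the way to $B_f$.
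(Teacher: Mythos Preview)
Your proposal is correct and follows essentially the same route as the paper. The paper states the corollary without a separate proof, treating it as immediate from Theorem~\ref{mainthm:ppnormmp}; the slope computation you spell out in step two is exactly the one carried out in the proofs of Proposition~\ref{thm:greedycoh} and Theorem~\ref{mainthm:ppnormmp}, and your use of the normal-equivalence to upgrade the weightwise equality to a bijection is precisely how the paper intends the corollary to be read.
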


We start by making an observation about the behavior of the greedy algorithm.
Any generic $\wt \in \R^E$ induces a total order $\preceq$ on $E$ by setting
$i \prec j$ if $\wt_i > \wt_j$. The greedy algorithm on $P_f$ with respect to
$\wt$ produces a vertex $u \in V(B_f)$. We call $I(u) = \{b_1 \prec b_2
\prec \cdots \prec b_k\}$ the \Def{optimal basis} of $f$ with respect to
$\wt$.
	
\begin{lemma} \label{lem:mp_implies_pr} 
    Let $(E,f)$ be a polymatroid with total order $\preceq$ and optimal basis
    $B$. Let $v$ be a vertex of $P_f$ and $j \in E$ $\prec$-minimal with the
    property that $v + \lambda e_j \in P_f$ for some $\lambda > 0$. Then $j
    \in B$.
\end{lemma}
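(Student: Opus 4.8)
The plan is to reformulate the increase condition in terms of \emph{tight sets}. Call $S \subseteq E$ \emph{tight} (at $v$) if $\1_S(v) = f(S)$. Since $S \mapsto \1_S(v)$ is modular and $f$ is submodular with $v \in P_f$, a short submodularity argument (using $\1_S(v) + \1_{S'}(v) = \1_{S \cup S'}(v) + \1_{S \cap S'}(v)$) shows that the family of tight sets is closed under unions and intersections and contains $\emptyset$. As $P_f$ is cut out by the inequalities $x_a \ge 0$ $(a \in E)$ and $\1_S(x) \le f(S)$ $(S \subseteq E)$, one has $v + \lambda e_a \in P_f$ for some $\lambda > 0$ if and only if $a$ lies in no tight set. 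Thus the hypothesis says that $j$ is $\prec$-minimal among elements lying in no tight set, and by $\prec$-minimality every $a \prec j$ lies in some tight set.

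Next I would recall the combinatorial description of the optimal basis. Writing $E = \{\sigma(1) \prec \cdots \prec \sigma(n)\}$ and $A_i = \{\sigma(1),\dots,\sigma(i)\}$, the greedy algorithm (\Cref{thm:greedy}) produces the vertex $u$ of $B_f$ with $u_{\sigma(i)} = f(A_i) - f(A_{i-1})$, so $\sigma(i) \in B = I(u)$ precisely when $f(A_i) > f(A_{i-1})$. Put $j = \sigma(i)$ and $C := A_{i-1} = \{a \in E : a \prec j\}$; we must show $f(C \cup j) > f(C)$, i.e.\ $j \notin \ov{C}$. Using that tight sets are union-closed, the union over $a \in C$ of a tight set containing $a$ is itself a tight set $A \supseteq C$, and $j \notin A$ because $j$ lies in no tight set.

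Finally I would argue by contradiction: suppose $j \in \ov{C}$, i.e.\ $f(C \cup j) = f(C)$, and transfer this degeneracy up to $A$ via monotonicity of the polymatroid closure. Applying submodularity to $A$ and $C \cup j$, whose intersection is $C$ (as $C \subseteq A$ and $j \notin A$), gives $f(A) + f(C) = f(A) + f(C \cup j) \ge f(A \cup j) + f(C)$, whence $f(A \cup j) = f(A)$ by monotonicity of $f$. Then $\1_{A \cup j}(v) = f(A) + v_j \le f(A \cup j) = f(A)$ forces $v_j = 0$, hence $\1_{A \cup j}(v) = f(A \cup j)$, so $A \cup j$ is a tight set containing $j$ --- contradicting that $j$ lies in no tight set. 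Therefore $j \in B$. The step I expect to take the most care is the bookkeeping rather than any single computation: noticing that $\prec$-minimality of $j$ confines all of $\{a : a \prec j\}$ to one tight set $A$, and that the failure $f(C \cup j) = f(C)$ then propagates to $A$ (via closure monotonicity, itself a two-line submodularity argument), producing the strictly larger tight set that closes the contradiction.
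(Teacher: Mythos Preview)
Your argument is correct and follows the same contradiction strategy as the paper: assume $j \notin B$, convert this to a rank equality, and use submodularity to manufacture a tight set containing $j$, contradicting the hypothesis that $v$ can be increased in direction $e_j$. The packaging differs slightly. The paper works with the concrete sets $I = I(v)$ and $B^+ = \{b \in B : b \prec j\}$: from $f(B^+ \cup j) = f(B^+)$ and $f(I \cup b) = f(I)$ for each $b \in B^+$ it deduces $f(I \cup B^+ \cup j) = f(I)$ via two submodularity steps. You instead take the full initial segment $C = \{a : a \prec j\}$ and an abstract union-of-tight-sets $A \supseteq C$, applying submodularity once to $A$ and $C \cup j$. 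Your tight-set framing is a bit more self-contained --- it never uses the vertex-specific fact that $I(v)$ is itself tight, so the argument goes through verbatim for any $v \in P_f$ --- while the paper's version is marginally more direct given the greedy description of vertices already in play.
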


Geometrically, the lemma states that if we start the geometric greedy
algorithm at a vertex $v$, then the set of directions taken is a subset of the
directions taken from the vertex $0$ along the greedy path.
Figure~\ref{fig:permut_pivot} shows this for the polymatroid polytope of the
permutahedron $\Pi_2$.
\begin{figure}[h]
    \centering
    \includegraphics[width=.9\textwidth]{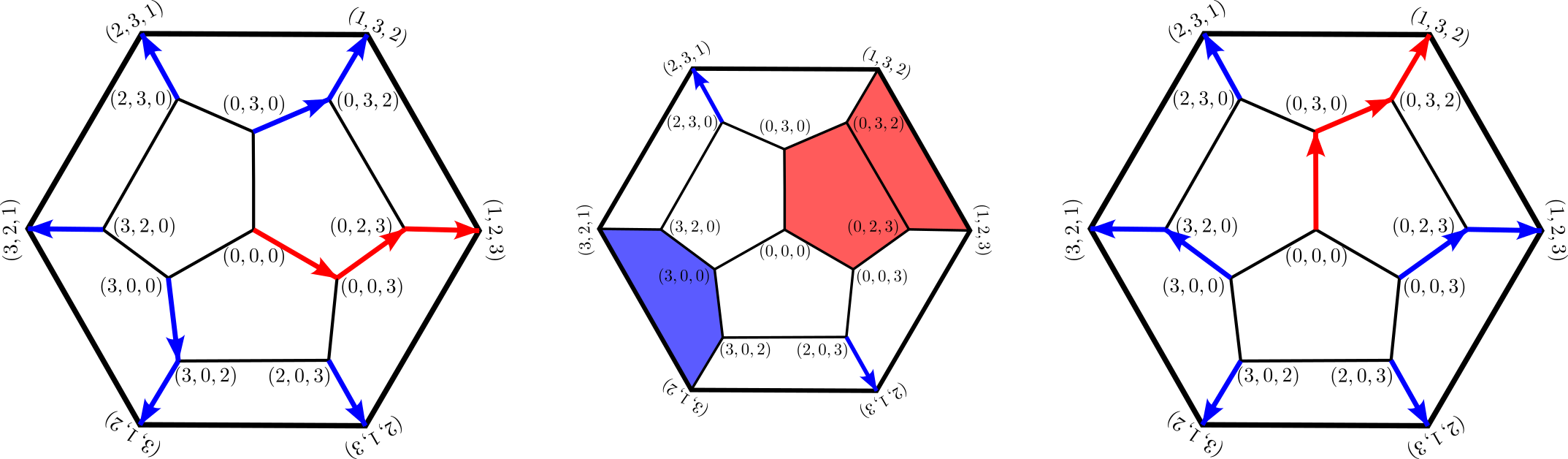}
    \caption{The figures left and right show two max-slope arborescences of the
    polymatroid polytope of the $2$-dimensional permutahedron. The red paths
    are the greedy paths. The arborescences are adjacent on the pivot rule
    polytope. The middle figure shows the multi-arborescence corresponding to
    the edge. The cellular string is shown in red.}
\label{fig:permut_pivot}
\end{figure}
	
\begin{proof}
    Assume that $j \not\in B$. Let $B^+ := \{b \in B: b \prec j\}$. Since $j$
    is not added to $B$ by the greedy algorithm, we have $B^+ \neq \emptyset$
    and $f(B^+ \cup j) = f(B^+)$. Let $I = I(v) = \{ i \in E : v_i >0 \}$ be
    the basis of $v$. Since $j$ is the next direction chosen at $v$, $f(I \cup
    b) = f(I)$ for each $b \in B^+$. Monotonicity and submodularity implies
    $f(I \cup B^+) = f(I)$. Again by monotonicity and submodularity, 
    \[
        f(I) \le f(I \cup j) \le \
        f(I \cup B^+ \cup j) \le f(I \cup B^+) + f(B^+ \cup j)
        - f(B^+) = f(I \cup B^+) = f(I) \, 
    \]
    which contradicts the fact that  $v + \lambda e_j \in P_f$ for $\lambda
    >0$.
\end{proof}

\begin{proof}[Proof of Theorem~\ref{mainthm:ppnormmp}]
    We may assume that $P_f$ is full-dimensional.
    We need to show for every weight $\wt$ that $(\PP_{P_f,\1})^\wt$ is a vertex
    whenever $\MPP{f}^w$ is a vertex. To that end, let $\MPP{f}^w$ be a vertex
    corresponding to a coherent monotone path of $P_f$ with respect to $\wt$.
    The path is encoded by the optimal basis $B = (j_1 \prec j_2 \prec \dots
    \prec j_k)$ of $P_f$ with respect to $\wt$. We need to show that $B$
    completely determines the max-slope arborescence $\arb^\wt_{P_f,\1}$.

    Let $v$ be a vertex of $P_f$ not contained in $P_f^\1 = B_f$ and let $I =
    I(v)$. It follows from the structure of polymatroid polytopes
    and~\eqref{eqn:MSarb} that
    \[
        \arb^\wt_{P_f,\1}(v) \ = \ \argmax \Bigl\{ \frac{\inner{\wt}{u-v}}{\1(u-v)} : u
        \in \Nb{P_f,\1}{v} \Bigr\}  \ = \ v + (f(I \cup j) - f(I)) e_j \, ,
    \]
    where $j$ is minimal with $j \notin \ov{I}$. Now
    Lemma~\ref{lem:mp_implies_pr} implies that $j = j_i$, where $i$ is minimal
    with $j_i \not \in \ov{I}$. This shows the claim.
\end{proof}

\newcommand\Ch{\mathrm{CH}}%
Thus, describing the monotone path polytope is equivalent to describing the
max-slope pivot polytope. Theorem~\ref{mainthm:ppnormmp} also implies that
$\PP_{P_f,\1}$ is a generalized permutahedron. In fact, we can give a nice
presentation as a Minkowski sum of standard simplices.  For any $S \subseteq
E$, we define the standard simplex $\Delta_S = \conv( e_s : s \in S)$.
For a flat $F \in \LF(f)$, let us define
\[
    \rho(F) \ := \  \sum_{\ov{\emptyset} = F_0 \subset \cdots \subset F_l
    \subset F} \prod_{i=1}^{l-1} |F_i\setminus F_{i-1}| \, ,
\]
where the sum is over all saturated chains in $\LF(f)$ ending in $F$.

\begin{prop}\label{prop:pivpolymat}
    Let $(E,f)$ be a polymatroid with lattice of flats $\LF(f)$.  Then
    \[
        \PP_{P_f,\1} \ = \ 
        \sum_{F \in \LF(f) \setminus\{ E\}}  \rho(F)\,\Delta_{E \setminus F} \, .
    \]
\end{prop}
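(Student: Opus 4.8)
The plan is to start from the canonical Minkowski decomposition $\PP_{P_f,\1} = \sum_{v \in V(P_f)} \PP_{P_f,\1}(v)$ of~\eqref{eqn:PPsum} and to compute each summand. Fix a vertex $v$ of $P_f$ with basis $I = I(v)$. By the structure of polymatroid polytopes already exploited in the proof of Theorem~\ref{mainthm:ppnormmp}, the $\1$-improving neighbors of $v$ are exactly the vertices $v + (f(I \cup j) - f(I))\,e_j$ for $j \in E \setminus \ov{I}$, and $f(I\cup j) > f(I)$ for each such $j$. Hence $\frac{u - v}{\1(u - v)} = e_j$ for every such neighbor $u$, so by~\eqref{eqn:PPv}
\[
    \PP_{P_f,\1}(v) \ = \ \conv\{\, e_j : j \in E \setminus \ov{I} \,\} \ = \ \Delta_{E \setminus \ov{I(v)}}\,,
\]
which is the single point $\{0\}$ precisely when $v \in B_f$, where $v$ has no $\1$-improving neighbor.

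Next I would regroup the Minkowski sum by the value of the flat $\ov{I(v)}$. Since $\ov{I(v)} = E$ holds exactly for the vertices of $B_f$ (an immediate consequence of the greedy algorithm), and those vertices contribute only $\{0\}$, one obtains
\[
    \PP_{P_f,\1} \ = \ \sum_{F \in \LF(f)\setminus\{E\}} m_F \cdot \Delta_{E\setminus F}\,,
    \qquad m_F \ := \ \#\{\, v \in V(P_f) : \ov{I(v)} = F \,\}\,.
\]
Thus the proposition reduces to the combinatorial identity $m_F = \rho(F)$ for every proper flat $F$.

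For this identity I would first observe that a vertex $v$ of $P_f$ with $\ov{I(v)} = F$ is supported on $F$ with $\1_F(v) = f(F)$, while conversely $P_f \cap \{x : x_e = 0 \text{ for } e \notin F\}$ is a face of $P_f$ equal to $P_{f|_F}$, the polymatroid polytope of the restriction $f|_F(A) := f(A)$, $A \subseteq F$; hence $m_F$ is the number of vertices of the base polytope $B_{f|_F}$. I would then enumerate $V(B_{f|_F})$ via the greedy algorithm: a linear order $b_1, b_2, \dots$ on $F$ produces a vertex together with the strictly increasing chain $\ov{\emptyset} = F_0 \subsetneq F_1 \subsetneq \cdots \subsetneq F_k = F$ of flats $F_i = \ov{\{b_1,\dots,b_i\}}$ that it traverses in the interval $[\ov{\emptyset},F]$ of $\LF(f)$, together with the "responsible" element $b_i \in F_i \setminus F_{i-1}$ satisfying $\ov{F_{i-1}\cup b_i} = F_i$ at each step. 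Counting these chain-with-transversal data and quotienting by the orders that yield the same vertex of $B_{f|_F}$ produces exactly the sum over chains of flats defining $\rho(F)$; I would set this up as an induction on $\LF(f)$, peeling off a coatom $F' \lessdot F$ so that it reduces to the recursion implicit in the definition of $\rho$.

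The last step is the main obstacle. Computing $\PP_{P_f,\1}(v)$ and the regrouping are routine bookkeeping with the (known) edge structure of anti-blocking polytopes, whereas matching $m_F$ with $\rho(F)$ requires pinning down precisely when two greedy orders on $F$ give the same vertex of $B_{f|_F}$. For a matroid a saturated chain of flats of rank $r$ is realized $r!$ times; for a genuine polymatroid the chains traversed need not be saturated and the multiplicities are more delicate, and this is exactly what the sum-over-chains shape of $\rho(F)$ is designed to account for. Care in that count is essential, but everything else follows mechanically from the two displays above.
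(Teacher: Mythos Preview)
Your approach is the paper's: compute each summand $\PP_{P_f,\1}(v)=\Delta_{E\setminus\ov{I(v)}}$ from~\eqref{eqn:PPv}, regroup the Minkowski sum by the flat $F=\ov{I(v)}$, and identify the multiplicity $m_F$ with $\rho(F)$. The only difference is in the last step, where the paper is more direct than your detour through $V(B_{f|_F})$: it simply asserts that each vertex $v$ with $\ov{I(v)}=F$ corresponds to a \emph{unique} pair consisting of a saturated chain of flats ending in $F$ together with a transversal $i_s\in F_s\setminus F_{s-1}$, so that $m_F=\rho(F)$ by definition of $\rho$. No induction and no quotienting is invoked; note in particular that $\rho(F)$ already \emph{is} the count of (chain, transversal) pairs, so the identity $m_F=\rho(F)$ is the assertion that this correspondence is a bijection, not a surjection that still needs to be quotiented.
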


\begin{proof}
    For a vertex $v \in P_f$ not contained in $B_f$, we infer
    from~\eqref{eqn:PPv} that
    \[
        \PP_{P_f,\1}(v) \ := \ \conv\bigl\{ (f(I(v) \cup j) - f(I(v))) \, e_j
        : j \not \in I(v) \bigr\} \, .
    \]
    Now $ f(I(v) \cup j) - f(I(v)) > 0$ if and only if $j \not \in \ov{I(v)}$.
    In particular $\PP_{P_f,\1}(v)$ only depends on the flat $F = \ov{I(v)}$.
    For every vertex $v$ with $F = \ov{I(v)}$ there is a unique chain of flats
    $\ov{\emptyset} = F_0 \subset \cdots \subset F_l \subset F$ and $i_s \in
    F_s \setminus F_{s-1}$. Thus, the number of vertices with $\ov{I(v)} = F$
    is precisely $\rho(F)$. The representation then follows
    from~\eqref{eqn:PPsum}.
\end{proof}

A nonempty collection $\Build \subseteq 2^E$ is a \Def{building
set}~\cite{GenPermOrig} if for all $S,T \in \Build$
\[
    S \cap T \neq \emptyset 
    \quad \Longrightarrow \quad
    S \cup T  \in  \Build \, .
\]
Let $y_S \in \R_{>0}$ for all $S \in \Build$.  The generalized permutahedron
\[
    \Delta(\Build) \ : = \ \sum_{S \in \Build} y_S \Delta_S
\]
is called a \Def{nestohedron}. Building sets and nestohedra were introduced by
Postnikov~\cite{GenPermOrig} and independently by
Feichtner--Sturmfels~\cite{nestsets}. In~\cite{GenPermOrig}, the definition of
building sets requires $\{i\} \in \Build$ for every $i \in E$. This only adds a
translation by $e_E = (1,\dots,1)$ but is quite handy for the combinatorial
description of $\Delta(\Build)$. We leave it out for the following reason.

\begin{prop}
    Let $\emptyset \neq \UC \subseteq 2^E$ be a \Def{union-closed} family of
    sets, that is, $S \cup T \in \UC$ for all $S,T \in \UC$. Then $\UC$ is a
    building set.
\end{prop}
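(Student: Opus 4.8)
The plan is to verify the defining implication of a building set directly from the definition of union-closed. Recall that $\UC \subseteq 2^E$ is a building set if for all $S,T \in \UC$ with $S \cap T \neq \emptyset$, we have $S \cup T \in \UC$. So let $S,T \in \UC$ be arbitrary with $S \cap T \neq \emptyset$. Since $\UC$ is union-closed, $S \cup T \in \UC$ automatically, with no hypothesis on $S \cap T$ needed. Hence the building set condition holds vacuously-strengthened, and $\UC$ is a building set.

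The only subtlety is the nonemptiness clause in the definition of building set: a building set is required to be a \emph{nonempty} collection. This is exactly guaranteed by the hypothesis $\emptyset \neq \UC$. So there is nothing further to check.

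I do not expect any obstacle here; the statement is essentially an unpacking of definitions, recording that union-closed families are a convenient special case of building sets (and this is why, in the companion Proposition, the family $\UC(f) = \{E \setminus F : F \text{ flat of } f\}$ — which is union-closed since the intersection of flats is a flat — qualifies as a building set, so that $\Delta(\UC(f))$ is a well-defined nestohedron). The one-line proof is: union-closedness gives $S \cup T \in \UC$ for all $S, T \in \UC$, in particular whenever $S \cap T \neq \emptyset$, and $\UC$ is nonempty by assumption, so $\UC$ is a building set.
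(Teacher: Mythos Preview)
Your proof is correct. The paper states this proposition without proof, since it is immediate from the definitions; your direct verification that union-closedness implies the building set implication (together with the nonemptiness hypothesis) is exactly the intended one-line argument.
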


Edmonds~\cite[Theorem 27]{edmondsorig} showed that $\LF(f) \subseteq 2^E$ is
closed under intersections.	We define for a polymatroid $(E,f)$
\[
    \UC(f) \ := \ \{ E \setminus F : F \in \LF(f) \} \, .
\]

\newcommand\NSC{\mathcal{N}}%
\newcommand\Nest{N}%
Let $\Build \subseteq 2^E$ be a building set. A \Def{nested set} is a subset
$\Nest \subseteq \wBuild := \Build \cup \binom{E}{1}$ such that 
\begin{enumerate}[\rm (N1)]
    \item For any $S,T \in \Nest$, we have $S \subseteq T$, $T \subseteq S$, or
        $S \cap T = \emptyset$;
    \item For any $S_1,\dots,S_k \in \Nest$ with $k \ge 2$ if $S_1 \cup \cdots
        \cup S_k \in \wBuild$, then $S_i \cap S_j \neq \emptyset$ for some $i
        < j$;
    \item If $S \in \wBuild$ is inclusion-maximal, then $S \in \Nest$.
\end{enumerate}

The collection $\NSC(\Build)$ of nested sets of $\Build$ is called the
\Def{nested set complex}.

\begin{prop}[{\cite[Thm.~7.4]{GenPermOrig}}]\label{prop:nested_set_complex}
    Let $\Build \subseteq 2^E$ be a building set. Then the face lattice of
    $\Delta(\Build)$ is anti-isomorphic to the nested set complex
    $\NSC(\Build)$. In particular $\Delta(\Build)$ is a simple polytope.
\end{prop}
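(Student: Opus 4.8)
This is \cite[Thm.~7.4]{GenPermOrig}; I sketch how the proof goes. The plan is to pass to normal fans. Since $\Delta(\Build) = \sum_{S \in \Build} y_S \Delta_S$ with every $y_S > 0$, its normal fan is the common refinement of the normal fans of the $\Delta_S$, $S \in \Build$, and in particular does not depend on the choice of the $y_S$. Working modulo the line $\R\1$, the normal fan of the simplex $\Delta_S$ is simplicial: its cones are indexed by the nonempty subsets $T \subseteq S$, the cone of the face $\Delta_T$ being $\{w : w_i = w_j \ge w_k \text{ for } i,j \in T,\ k \in S\}$, and its maximal cones are the $\{w : w_i \ge w_j \text{ for } j \in S\}$, $i \in S$. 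Since the face lattice of a polytope is anti-isomorphic to the poset of cones of its normal fan ordered by inclusion, with vertices corresponding to the maximal cones, it suffices to identify this common refinement with the poset of cones dual to the nested sets.

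The combinatorial heart is that identification. For generic $w \in \R^E$, the vertex of $\Delta(\Build)$ selected by $w$ is $\sum_{S \in \Build} y_S\, e_{\mu_S(w)}$, where $\mu_S(w)$ is the unique $w$-largest element of $S$. I would show that two generic weights select the same vertex if and only if they determine the same family
\[
    N(w) \ := \ \bigl\{\, S \in \wBuild \ :\ \mu_T(w) \in S \text{ for every } T \in \Build \text{ with } T \cap S \neq \emptyset \,\bigr\}\,,
\]
which is precisely the collection of sets recording the facets of $\Delta(\Build)$ through that vertex. The three things to check are: (i) $N(w)$ satisfies the nested-set axioms (N1)--(N3); here the building-set property $S \cap T \neq \emptyset \Rightarrow S \cup T \in \Build$ is exactly what forces (N2), while (N3) holds because any inclusion-maximal element of $\wBuild$ automatically lies in $N(w)$; (ii) every maximal nested set arises as some $N(w)$, and $w \mapsto N(w)$ descends to a bijection between the vertices of $\Delta(\Build)$ and the maximal nested sets; (iii) the construction is compatible with faces: on a cone of the common refinement the argmax data $(\mu_S)_{S}$ becomes multivalued, and the corresponding face of $\Delta(\Build)$ is recorded by the nested set obtained from $N(w')$, for a generic perturbation $w'$, by deleting some non-maximal elements, with coarser cones giving smaller nested sets. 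Together (i)--(iii) yield the anti-isomorphism $\LF(\Delta(\Build)) \cong \NSC(\Build)$. An alternative is to induct on $|E|$, using that each facet of $\Delta(\Build)$ is a product of nestohedra for a restriction and a contraction building set, paralleling the recursive structure of the nested set complex.

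The final sentence follows once one shows that every maximal nested set $N$ has exactly $|E|$ elements (equivalently, that every maximal cone of the common refinement is simplicial): axiom (N1) makes $N$ a forest under inclusion whose roots are the inclusion-maximal elements of $\wBuild$, and a short induction on this forest, using (N2), gives $|N| = |E|$. Hence the facets of $\Delta(\Build)$ through the vertex indexed by $N$ are in bijection with the non-root elements of $N$, numbering $|E| - m = \dim \Delta(\Build)$ where $m$ is the number of maximal elements of $\wBuild$, so $\Delta(\Build)$ is simple. The main obstacle is (i)--(ii): verifying that $N(w)$ is genuinely a maximal nested set and that the assignment is an order-reversing bijection is the delicate step, and it is exactly here that the union-closure of $\Build$ on intersecting pairs is indispensable; without it the common refinement need not be simplicial.
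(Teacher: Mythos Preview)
The paper does not give its own proof of this proposition: it is quoted verbatim as \cite[Thm.~7.4]{GenPermOrig} and then used as a black box in the proof of Theorem~\ref{Imainthm:ppnesto}. So there is nothing in the paper to compare your argument against; your write-up is a sketch of Postnikov's original proof rather than a reconstruction of anything the present authors do.

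That said, your outline has the right architecture---pass to normal fans, identify the maximal cones of the common refinement with maximal nested sets via the argmax data $(\mu_S(w))_S$, and deduce simplicity from $|N|=|E|$---but your explicit formula for $N(w)$ is off. As written, $N(w)=\{S\in\wBuild:\mu_T(w)\in S\text{ for all }T\in\Build\text{ with }T\cap S\neq\emptyset\}$ forces, for a singleton $\{i\}$, that $i$ be the $w$-maximum of \emph{every} building-set element containing it; only the global $w$-maximum satisfies this, so your $N(w)$ would miss all the leaf singletons of the B-forest. The condition you want is the complementary one: $S\in N(w)$ iff for every $T\in\wBuild$ with $S\subsetneq T$ one has $\mu_T(w)\notin S$ (equivalently, $S$ is the inclusion-maximal element of $\wBuild$ whose $w$-maximum is $\mu_S(w)$). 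With that correction your steps (i)--(iii) go through as in Postnikov's argument, and the count $|N|=|E|$ via the forest structure is exactly the standard way to conclude simplicity.
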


\begin{proof}[Proof of Theorem~\ref{Imainthm:ppnesto}]
    Theorem~\ref{mainthm:ppnormmp} shows that $\MPP{P_f}$ is normally
    equivalent to $\PP_{P_f,\1}$.  It now follows from
    Proposition~\ref{prop:pivpolymat} that $\PP_{P_f,\1}$ equals
    $\Delta(\UC(f))$ for $y_{E\setminus F} = \rho(F)$ for all $F \in \LF(f)$.
    Since nestohedra are simple, this holds true for $\PP_{P_f,\1}$ as well as
    for $\MPP{P_f}$.
\end{proof}

\begin{cor}\label{cor:flag_simple}
    For every polymatroid $(E,f)$, the monotone path polytope $\MPP{f}$ as
    well as the max-slope pivot polytope $\PP_{P_f,\1}$ are simple polytopes.
\end{cor}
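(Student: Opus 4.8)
The plan is to read this off from the proof of Theorem~\ref{Imainthm:ppnesto}, making the two ingredients explicit. For the pivot polytope, I would start from Proposition~\ref{prop:pivpolymat}, which writes $\PP_{P_f,\1}$ as the Minkowski sum $\sum_{F \in \LF(f)\setminus\{E\}} \rho(F)\,\Delta_{E\setminus F}$ with strictly positive coefficients $\rho(F)$. Since Edmonds showed that $\LF(f)$ is closed under intersection, the complement family $\UC(f) = \{E\setminus F : F \in \LF(f)\}$ is union-closed, hence a building set, and therefore $\PP_{P_f,\1} = \Delta(\UC(f))$ is a nestohedron. Proposition~\ref{prop:nested_set_complex} then gives that its face lattice is anti-isomorphic to the nested set complex $\NSC(\UC(f))$, and in particular that $\PP_{P_f,\1}$ is simple.

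For the monotone path polytope, I would invoke Theorem~\ref{mainthm:ppnormmp}, which asserts that $\MPP{f}$ and $\PP_{P_f,\1}$ are normally equivalent. As recalled in Section~\ref{sec:background}, normal equivalence implies combinatorial isomorphism via the map $P^\wt \mapsto Q^\wt$. Since being simple is a purely combinatorial property of the face lattice --- every vertex lies in exactly $\dim P$ facets, equivalently every vertex figure is a simplex --- it transfers from $\PP_{P_f,\1}$ to $\MPP{f}$. Together with the previous paragraph this proves the corollary.

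There is essentially no obstacle here: the corollary is the combinatorial shadow of Theorem~\ref{Imainthm:ppnesto}, and all the work has already been done in Propositions~\ref{prop:pivpolymat} and~\ref{prop:nested_set_complex} and Theorem~\ref{mainthm:ppnormmp}. The one bookkeeping point is the reduction to the full-dimensional case made in the proof of Theorem~\ref{mainthm:ppnormmp}: if $P_f$ is not full-dimensional one argues inside the affine hull of $P_f$, and both $\MPP{f}$ and $\PP_{P_f,\1}$ are understood as polytopes in that subspace, so no generality is lost and simplicity is unaffected.
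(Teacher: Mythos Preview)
Your proposal is correct and follows essentially the same route as the paper: the corollary is immediate from the proof of Theorem~\ref{Imainthm:ppnesto}, which uses Proposition~\ref{prop:pivpolymat} to identify $\PP_{P_f,\1}$ with the nestohedron $\Delta(\UC(f))$, Proposition~\ref{prop:nested_set_complex} for simplicity, and Theorem~\ref{mainthm:ppnormmp} to transfer the conclusion to $\MPP{f}$ via normal equivalence.
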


The facial structure of a nestohedron is determined by the maximal nested sets
of $\Build$. Postnikov~\cite{GenPermOrig} gave a nice description in terms of
certain rooted forests. We encode a rooted forest $\Forest$ on $E$ by the map
$\dForest : E \to 2^E$ such that $\dForest(i)$ is the collection of nodes
(including $i$) in the subtree rooted at $i$. That is, $\dForest(i)$ are the
descendants of $i$. Two nodes $i,j$ are \Def{comparable} if $\dForest(i)
\subseteq \dForest(j)$ or $\dForest(j) \subseteq \dForest(i)$.

For a building set $\Build$, a \Def{B-forest} is a rooted forest $\Forest$ on
$E$ such that 
\begin{enumerate}[\rm (F1)]
    \item $\dForest(i) \in \wBuild$ for all $i \in E$;
    \item If $s_1,\dots,s_k \in E$ for $k \ge 2$ satisfy $\bigcup_j
        \dForest(s_j) \in \wBuild$, then $s_i,s_j$ are comparable for
        some $i < j$;
    \item For every inclusion-maximal $S \in \wBuild$ there is $i \in E$ with
        $\dForest(i) = S$.
\end{enumerate}
The maximal nested set corresponding to a B-tree is $\{ \dForest(i) : i
\in E \}$.

\begin{prop}\label{prop:UC-Btrees}
    Let $\UC \subseteq 2^E$ be a union-closed family such that $\bigcup\UC =
    E$. The B-forests are in bijection to a collection $(t_i, S_i)$ for
    $i=1,\dots,k$ such that $S_1 \subset S_2 \subset \cdots \subset S_k = E$
    is a chain in $\UC$, $t_i \in S_i \setminus S_{i-1}$ with $S_0 :=
    \emptyset$, and for any $i \ge 0$ and nonempty $R \subseteq E \setminus (S_i
    \cup \{ t_{i+1},\dots,t_k\})$ it holds that $S_i \cup R \not\in \UC$.
\end{prop}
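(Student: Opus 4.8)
The plan is to show that $B$-forests for $\UC$ (a union-closed family with $\bigcup \UC = E$) are governed entirely by the ordered chain of sets $\dForest(i)$ that are \emph{in} $\UC$, since by (F1) and union-closedness these $\dForest(i)$ that lie in $\UC$ must be totally ordered by inclusion — any two of them intersect (they both contain a common root along the tree if comparable, but the point is different) so I should argue as follows. First, I would observe that in a $B$-forest $\Forest$, a node $i$ with $\dForest(i) \in \UC$ has the property that $\dForest(i)$ is comparable to $\dForest(j)$ for every other node $j$ with $\dForest(j) \in \UC$: if they were disjoint, their union would be in $\UC \subseteq \wBuild$ but $i,j$ are incomparable, violating (F2). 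So the sets $\{\dForest(i) : \dForest(i) \in \UC\}$ form a chain $S_1 \subsetneq S_2 \subsetneq \cdots \subsetneq S_k$; condition (F3) (and $\bigcup\UC=E$ being the unique inclusion-maximal member of $\wBuild$ once we also note all other maximal members of $\wBuild$ are singletons, or handle them as trivial) forces $S_k = E$. Let $t_i$ be the node with $\dForest(t_i) = S_i$; then $t_i$ is the root of the subtree on $S_i$, and $t_1, t_2, \dots, t_k$ lie on a root-to-leaf spine with $t_i$ the parent (in the forest's ancestor order) structure so that $t_i \in S_i \setminus S_{i-1}$, where $S_0 := \emptyset$.

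Next I would extract the extra constraint. The remaining nodes $E \setminus \{t_1,\dots,t_k\}$ must each have $\dForest(\cdot) \in \binom{E}{1}$, i.e.\ they are leaves, OR have $\dForest(\cdot) \in \UC$ — but we already listed all nodes with descendant set in $\UC$, so every node other than the $t_i$ is a leaf. Now the forest structure on the leaves: a leaf $r$ is attached somewhere, and its placement is constrained only by (N1)-type disjointness and by (F2). Concretely, if a leaf $r$ has its nearest ancestor among the $t_i$ equal to $t_{j}$ — meaning $r \in S_j \setminus S_{j-1}$ geometrically after we decide placement, wait, I need to be careful: $r$ being a leaf, $\dForest(r) = \{r\}$, and the $\UC$-sets appearing as $\dForest$ of spine nodes are exactly $S_1, \dots, S_k$. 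For a leaf $r$ whose set of spine-ancestors is $\{t_i, t_{i+1}, \dots, t_k\}$, the descendant sets containing $r$ that lie in $\UC$ are exactly $S_i, \dots, S_k$, so $r \in S_i$, and I claim the constraint (F2) applied to collections of leaves plus their common spine context yields precisely: for any $i \ge 0$ and any nonempty $R \subseteq E \setminus (S_i \cup \{t_{i+1},\dots,t_k\})$, we need $S_i \cup R \notin \UC$ — equivalently $S_{i}$ cannot be ``grown'' into a larger $\UC$-set using only leaves that are not forced to sit below $S_i$ in the spine. I would verify this equivalence by unpacking (F2) with $s_1 = t_i$ (or the relevant spine node) and $s_2, \dots$ ranging over the leaves of $R$: $\bigcup_j \dForest(s_j) = S_i \cup R$, and incomparability of the leaves to each other and to $t_i$ (since none of them is below $t_i$) means (F2) is violated exactly when $S_i \cup R \in \wBuild = \UC \cup \binom{E}{1}$; since $|S_i \cup R| \ge 2$ this means $S_i \cup R \in \UC$.

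For the converse direction, given the data $(t_i, S_i)$ with the stated properties, I would construct the forest: put $t_1, \dots, t_k$ on a path with $t_i$ ancestor of $t_{i-1}$ reversed appropriately so $\dForest(t_i) \cap \{t_1,\dots,t_k\} = \{t_1,\dots,t_i\}$, and attach each leaf $r \in E \setminus\{t_1,\dots,t_k\}$ to the spine node $t_{i(r)}$ where $i(r)$ is the largest index with $r \notin S_{i(r)-1}$, i.e.\ so that the spine-ancestors of $r$ are exactly the $t_j$ with $S_j \ni r$. Then I check (F1): $\dForest(r) = \{r\} \in \wBuild$ for leaves, and $\dForest(t_i) = S_i \in \UC$; (F3): the inclusion-maximal members of $\wBuild$ are $E = S_k = \dForest(t_k)$ together with singletons $\{j\}$ for $j$ that are in no larger member, which are realized by leaves — modulo a small bookkeeping argument that if $\{j\} \in \UC$ this is still fine; and (F2): this is exactly the content of the displayed hypothesis, reversing the unpacking above.

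The main obstacle I anticipate is the careful bookkeeping in condition (F2): translating the quantifier ``for all $S_1, \dots, S_k \in E$ with $\bigcup \dForest(S_j) \in \wBuild$'' into the clean statement about $S_i \cup R$ requires knowing that it suffices to test \emph{anti-chains} of nodes (if some pair is already comparable, (F2) is satisfied for free), and then that an anti-chain's union of descendant-sets decomposes as $S_i \cup (\text{leaves not below the spine down to } t_i)$ for the appropriate $i$ — plus handling the degenerate case where the anti-chain is a single non-spine leaf, where $\wBuild$-membership would need $\{r\} \in \wBuild$, which is automatic and harmless. A secondary subtlety is ensuring $S_k = E$: this uses that $\bigcup \UC = E$ so $E \in \UC$, hence $E \in \wBuild$ is its unique inclusion-maximal non-singleton member, and (F3) forces some $\dForest(i) = E$, which must be the top spine node. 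Once these points are pinned down, the bijection and its inverse are routine.
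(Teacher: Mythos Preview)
Your proposal is correct and follows essentially the same route as the paper. Both arguments hinge on the observation that, since $\UC$ is union-closed, any two nodes whose descendant sets lie in $\UC$ must be comparable by (F2), which forces a single ``spine'' $t_1,\dots,t_k$ with $\dForest(t_i)=S_i$ and all remaining nodes leaves; the condition on $S_i\cup R$ is then exactly the translation of (F2) for antichains consisting of one spine node together with leaves not below it. One small point worth noting: you define the spine as the nodes with $\dForest(i)\in\UC$, whereas the paper takes the spine to be the non-leaves. These agree except possibly when some singleton $\{r\}$ lies in $\UC$, in which case your convention is the one that makes the $i=0$ case of the displayed condition come out correctly (since such an $r$ must then be one of the $t_j$). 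Your handling of that edge case is implicitly right; just make it explicit when you write the proof up.
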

\begin{proof}
    Let $\Forest$ be a B-forest for $\UC$. Since $E \in \UC$, it
    follows that $\Forest$ is a tree. If $s_1,s_2 \in E$ are not leaves, then
    $\dForest(s_j) \in \UC$ for $j=1,2$ and $\dForest(s_1) \cup \dForest(s_2)
    \in \UC$ implies that $s_1$ and $s_2$ are comparable.  It follows that
    every node has at most one non-leaf child. Let $t_1,\dots,t_k$ be the
    non-leaves and set $S_i := \dForest(t_i)$. Then $S_1 \subset \cdots \subset
    S_k$ is a chain in $\UC$. The leaves are $L = \bigcup_{i \ge 1} S_i
    \setminus (S_{i-1} \cup t_i)$ and $T$ of nodes is incomparable iff $T
    \subseteq L \cup \{t_i\}$ for some $i$ and $T \setminus t_i \subseteq L
    \setminus S_i$; see also Figure~\ref{fig:Btree}. This shows that $t_i \in
    S_i$ for $i=1,\dots,k$ satisfies the condition and it is straightforward
    to check that every such collection yields a B-tree.
\end{proof}

\begin{figure}
    \centering
    \begin{tikzpicture}
        \draw (0, 0) node[inner sep=0] 
        {\includegraphics[width=5cm]{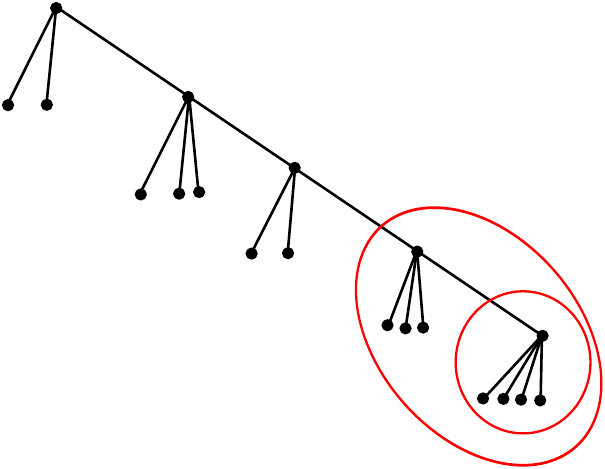}};
        \draw (-1.7, 1.9) node {$t_k$};
        \draw (1.2,-.0) node {$t_2$};
        \draw (2.2,-1) node {$t_1$};
        \draw (1.5,-1) node {\color{red}$s_1$};
        \draw (1,-1.2) node {\color{red}$s_2$};
    \end{tikzpicture}
    \caption{Bijection between B-trees and marked chains for union-closed
    families.}\label{fig:Btree}

\end{figure}

For the union closed family $\UC(f) = \{ E \setminus F : F \in \LF(f)\}$
associated to a polymatroid $(E,f)$ Proposition~\ref{prop:UC-Btrees} recovers
the greedy paths. Every chain $\emptyset =  S_0 \subset S_1 \subset \cdots
\subset S_k = E$ corresponds to a maximal chain of flats $\emptyset = F_0
\subset \cdots \subset F_k =E$ with $F_i = E \setminus S_{k-i}$ and $t_i \in F_i \setminus
F_{i-1}$. From this description, we can also deduce adjacency.

\begin{prop}
    Let $(j_1,\dots,j_k)$ be a greedy path for $(E,f)$ and $v \in \MPP{f}$ the
    corresponding vertex. The neighbors of $v$ correspond to the greedy paths
    $(j_1,\dots,j_t',\dots,j_k)$ for some $1 \le t \le k$ and
    $j_t' \in \ov{\{j_1,\dots,j_t\}} \setminus (\ov{\{j_1,\dots,j_{t-1}\}}
    \cup \{j_t\})$ or to greedy paths derived from the sequences 
    $(j_1,\dots,j_{s+1},j_s,\dots,j_k)$ for some $1
    \le s < k$.
\end{prop}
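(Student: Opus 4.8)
The plan is to stay on the combinatorial side and exploit that, by Theorem~\ref{mainthm:ppnormmp} and the proof of Theorem~\ref{Imainthm:ppnesto}, $\MPP{f}$ is normally equivalent to the nestohedron $\Delta(\UC(f))$, which by Proposition~\ref{prop:nested_set_complex} is a simple polytope whose face lattice is anti-isomorphic to the nested set complex $\NSC(\UC(f))$. In particular the edge graph of $\MPP{f}$ is that of $\Delta(\UC(f))$, and (assuming, as we may after restricting to $E\setminus\ov{\emptyset}$, that $\emptyset$ is closed) every vertex has exactly $\dim\MPP{f} = |E|-1$ neighbours. Under the dictionary of Corollary~\ref{cor:comb_vertices} and Proposition~\ref{prop:UC-Btrees}, the vertex $\Psi(j_*)$ of the greedy path $j_* = (j_1,\dots,j_k)$ with flag $\emptyset = F_0\subsetneq F_1\subsetneq\cdots\subsetneq F_k = E$ corresponds to the $B$-tree $\Forest$ for $\UC(f)$ whose non-leaf nodes are $j_1,\dots,j_k$ and in which the leaf $x$ is a child of $j_i$ exactly when $x\in F_i\setminus(F_{i-1}\cup\{j_i\})$; the associated maximal nested set is $\Nest(j_*) = \{E\setminus F_i : 0\le i\le k-1\}\cup\{\{x\} : x\in E\setminus\{j_1,\dots,j_k\}\}$, of cardinality $|E|$.

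Since $\Delta(\UC(f))$ is simple, two vertices are adjacent precisely when their maximal nested sets agree in all but one element. I therefore run through $\Nest(j_*)$, delete one element $S$ at a time, and identify the unique maximal nested set $\Nest_1\supseteq\Nest(j_*)\setminus\{S\}$ different from $\Nest(j_*)$; the element $E = E\setminus F_0$ is the only one that cannot be deleted. \emph{Case 1: $S = \{x\}$ is a singleton.} Then $x$ is a leaf of $\Forest$ sitting below the spine node $j_t$, so $x\in F_t\setminus(F_{t-1}\cup\{j_t\})$, and $\Nest_1$ is obtained by promoting $x$ into the spine in place of (or just above) $j_t$. Translating back through Proposition~\ref{prop:UC-Btrees} produces the neighbour ``$(j_1,\dots,j_t',\dots,j_k)$'' of the statement with $j_t' = x$: when $\ov{F_{t-1}\cup\{x\}} = F_t$ this is literally the replacement of $j_t$ by $x$ along the same flag, and otherwise it is the greedy path derived from $(j_1,\dots,j_{t-1},x,j_t,j_{t+1},\dots,j_k)$, which refines the jump $F_{t-1}\subsetneq F_t$. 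Summing over all $t$ and all admissible $x$ gives $\sum_{t=1}^{k}\bigl(|F_t\setminus F_{t-1}|-1\bigr) = |E|-k$ neighbours.

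\emph{Case 2: $S = E\setminus F_i$ for some $1\le i\le k-1$.} Deleting $S$ collapses the flag to $F_0\subsetneq\cdots\subsetneq F_{i-1}\subsetneq F_{i+1}\subsetneq\cdots\subsetneq F_k$, creating a jump of length two; checking which of the spine nodes can mark the two halves of this jump while preserving the leaf set forces $\Nest_1$ to be the $B$-tree whose greedy path is derived from the adjacent transposition $(j_1,\dots,j_{i+1},j_i,\dots,j_k)$ (if $\ov{F_{i-1}\cup\{j_{i+1}\}}\ne F_{i+1}$ this transposed sequence is itself a greedy path, with flag $F_0\subsetneq\cdots\subsetneq F_{i-1}\subsetneq\ov{F_{i-1}\cup\{j_{i+1}\}}\subsetneq F_{i+1}\subsetneq\cdots$; if $\ov{F_{i-1}\cup\{j_{i+1}\}} = F_{i+1}$ then $j_i$ becomes redundant and one passes to the derived, shorter path). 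This accounts for the remaining $k-1$ neighbours, so that the two families together give $(|E|-k)+(k-1) = |E|-1$ vertices — which, by simplicity, is exactly the number of neighbours, so the list is complete and the two families are disjoint.

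The step I expect to be the main obstacle is the bookkeeping in Case~2, and more generally proving that after deleting $S$ from $\Nest(j_*)$ exactly one of the candidate completions is a legitimate nested set (equivalently a $B$-tree satisfying (F1)--(F3) for $\UC(f)$) and then reading off the corresponding greedy path. Simplicity of the nestohedron (Proposition~\ref{prop:nested_set_complex}) shoulders much of this: it guarantees a unique second vertex, so one only has to exhibit a single valid candidate rather than, say, prove from scratch that $\ov{F_{i-1}\cup\{j_{i+1}\}}$ covers $F_{i-1}$ in $\LF(f)$ — such cover relations then follow a posteriori. The remaining care concerns the degenerate manifestations of the two moves: one must fix the precise meaning of ``derived from'' (run the greedy algorithm on the given, possibly non-increasing sequence and discard the redundant steps), handle the harmless situation in which a singleton $\{x\}$ itself lies in $\UC(f)$, and record the flags of the neighbours so as to confirm that a Case~1 neighbour never coincides with a Case~2 neighbour.
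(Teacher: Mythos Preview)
Your proposal is correct and follows essentially the same route as the paper: both arguments pass through the nestohedron $\Delta(\UC(f))$ and the $B$-tree $\Forest$ attached to the greedy path $j_*$, and both identify the $|E|-1$ neighbours with the $|E|-1$ edges of $\Forest$ (spine edges giving the transposition moves, leaf edges giving the replacement moves). The only cosmetic difference is that the paper works on the normal-fan side---it quotes Postnikov's description of the normal cone at a vertex of a nestohedron as the order cone of the $B$-tree, so that facets of this simplicial cone are literally indexed by edges of $\Forest$---whereas you work on the dual, face-lattice side, deleting one element of the maximal nested set $\Nest(j_*)$ at a time and invoking simplicity to guarantee a unique second completion; these are two readings of the same simplicial structure.
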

\begin{proof}
    Let $v$ be the vertex of $\MPP{f}$ corresponding to the greedy path
    $(j_1,\dots,j_k)$ and let $\Forest$ be the associated B-tree. For a weight
    $\wt \in \R^E$ it follows from Proposition~7.10 of~\cite{GenPermOrig} that
    $v \in \MPP{f}^\wt$ if and only if $\wt_i \ge \wt_j$ for all $i,j \in E$
    with $j \in \dForest(i)$. That is, $\wt$ is an order preserving map from
    the poset $\Forest$ into the real numbers. The cone of such $\wt$ is
    simplicial. The facets of the cone are given by the edges of the B-tree
    and correspond to neighbors of $v$ in $\MPP{f}$. The description of
    B-trees above now yields the claim. 
\end{proof}

Let us finally note that although Theorem~\ref{mainthm:ppnormmp} states that
$\PP_{P_f,\1}$ and $\MPP{f}$ are normally equivalent, they are not homothetic in
general.
	
\begin{prop}\label{prop:MPP_not_nesto}
    $\MPP{f}$ is in general not a sum of simplices. In particular, $\MPP{f}$
    is not necessarily a nestohedron.
\end{prop}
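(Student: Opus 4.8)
The plan is to exhibit a single polymatroid whose monotone path polytope fails to be a nonnegative Minkowski sum of dilated standard simplices; since every nestohedron $\Delta(\Build) = \sum_{S \in \Build} y_S \Delta_S$ is such a sum, this settles both parts of the statement. I will use the standard fact that a generalized permutahedron $Q$ with support function $g$ (the associated submodular function on $E$) has a \emph{unique} Minkowski decomposition $Q = \sum_{S \subseteq E} y_S \Delta_S$ modulo translation, with the coefficients recovered by Möbius inversion from $\tilde g(B) := g(E) - g(E \setminus B)$ via $y_S = \sum_{T \subseteq S} (-1)^{|S| - |T|}\,\tilde g(T)$. Consequently $Q$ is a \emph{nonnegative} sum of simplices exactly when $y_S \ge 0$ for all $|S| \ge 2$ (the $y_S$ with $|S|\le 1$ only encode a translation), so it suffices to find one polymatroid $(E,f)$ with $y_S < 0$ for some such $S$ when $g = \Mf f$; recall that $\MPP{f} = B_{\Mf f}$ by Theorem~\ref{thm:MPP_polymatroid}, so its support function is $A \mapsto \Mf f(A)$.

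For the example I take $E = [4]$ and $f$ the rank function of $U_{3,2} \oplus U_{1,1}$, that is, a rank-$2$ uniform matroid on $\{1,2,3\}$ with a coloop $4$ adjoined. Since $f \mapsto \Mf f$ is homogeneous of degree $1$ and the property of being a nonnegative sum of simplices is scale-invariant, I normalize $f(E) = 1$; then $\Mf f(A) = 2f(A) - f(A)^2$ and, crucially,
\[
    \tilde{\Mf f}(B) \ = \ 1 - \bigl(2 f(E \setminus B) - f(E \setminus B)^2\bigr) \ = \ c(B)^2, \qquad c(B) := f(E) - f(E \setminus B) .
\]
A short case analysis on $|B \cap \{1,2,3\}|$ and on whether $4 \in B$ shows that $c(B) = 0$ for $B = \emptyset$ and for the three singletons contained in $\{1,2,3\}$, that $c(B) = \tfrac13$ for $B = \{4\}$ and for all six pairs, that $c(B) = \tfrac23$ for all four triples, and that $c(E) = 1$.

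Plugging this into the Möbius formula at $S = E = [4]$, the contribution is $+c(E)^2$ from the top set, $-\sum_{|T|=3} c(T)^2$ from the triples, $+\sum_{|T|=2} c(T)^2$ from the pairs, $-\sum_{|T|=1} c(T)^2$ from the singletons, and $+c(\emptyset)^2 = 0$; hence
\[
    y_{[4]} \ = \ 1 \;-\; 4 \cdot \tfrac{4}{9} \;+\; 6 \cdot \tfrac{1}{9} \;-\; \tfrac{1}{9} \ = \ -\tfrac{2}{9} \ < \ 0 .
\]
Thus the unique Minkowski-simplex decomposition of $\MPP{f}$ has a negative coefficient, so $\MPP{f}$ is not a nonnegative sum of simplices and, in particular, not a nestohedron. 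A single negative coefficient is all that is needed (in fact $y_{[4]}$ turns out to be the only negative one).

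The main difficulty I anticipate is pinning down a valid witness: for $|E| \le 3$, and for uniform matroids in general, $\MPP{f}$ does turn out to be a nonnegative sum of simplices, so the example must be genuinely asymmetric. It is tempting to simply name a generalized permutahedron that is not a nonnegative sum of simplices (a hypersimplex $\Delta(3,2)$, say, or a sheared hexagon), but one has to check that it actually arises as $\MPP{f}$ for some polymatroid $f$ — not every generalized permutahedron does, because the substitution $g = 2f(E) f - f^2$ is rather constrained. Once the matroid $U_{3,2} \oplus U_{1,1}$ is chosen, the remaining steps — computing $c$ and the single coefficient $y_{[4]}$ — are routine.
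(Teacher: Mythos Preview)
Your proof is correct and follows the same strategy as the paper: compute the unique signed Minkowski decomposition of $\MPP{f}$ into standard simplices and exhibit a negative coefficient. The paper's witness is the uniform matroid $U_{4,3}$ rather than your $U_{3,2}\oplus U_{1,1}$; it exploits the $\SymGrp_4$-symmetry to collapse the inversion to a small linear system, whereas your identity $\tilde{\Mf f}(B)=\bigl(f(E)-f(E\setminus B)\bigr)^2$ is a neat shortcut the paper does not record. One correction to your closing remarks: uniform matroids do \emph{not} in general give nonnegative coefficients --- the paper's counterexample is precisely $U_{4,3}$, for which the top coefficient is $-1$ (or $-1/9$ after your normalization), so your example did not need to be ``genuinely asymmetric.''
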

\begin{proof}
    Let $f$ be the rank function of the uniform matroid $U_{n,k}$, that is,
    $f(I) = \min(|I|,k)$. The polymatroid polytope $P_f$ is the convex hull of
    all $u \in \{0,1\}^n$ with $\sum_i u_i \le k$. The monotone path polytope
    satisfies
    \[
        \MPP{f} \ = \ 2\Delta_{n,1} + 2\Delta_{n,2} + \cdots + 2
        \Delta_{n,k-1} + \Delta_{n,k} \, .
    \]
    This is the permutahedron for the point $v = (0,\dots,0,1,3,\dots,2k-1)$;
    see also~\cite{Hypersimps}. Assume that there are $y_I \ge 0$ for all
    $I \subseteq [n]$ such that
    \[
        \MPP{f} \ = \ \sum_{I} y_I \Delta_I \, .
    \]
    Note that the left-hand side is invariant under the symmetric group.
    Hence, we can symmetrize to get
    \[
        \MPP{f} \ = \ \sum_{j=1}^n c_j S_j \quad \text{ where }
        \quad S_j \ := \ \sum_{I : |I| = j} \Delta_I 
    \]
    and $c = (c_1,\dots,c_n) \ge 0$. The vertex maximizing the right-hand side
    for the linear function \mbox{$\wt = (1,2,\dots,n)$} is given by $Mc$, where
    $M_{ij} = \binom{i-1}{j-1}$. In particular $c = M^{-1}v$. For $n = 4$ and
    $k=3$, we get $v = (0,1,3,5)$ and $c = (0,1,1,-1)$.
\end{proof}

\section{Flag Matroids}\label{sec:flag}

Let $M = (E,\Ind)$ be a matroid of rank $r$ and let $\kk = (k_1,\dots,k_s)$ be
a vector of integers satisfying $0 \le k_1 < k_2 < \cdots < k_s \le r$. The
\Def{flag matroid} $\FlagM_M^\kk$ of $M$ of rank $\kk$ is the
collection of chains 
\[
    I_* \ : \ I_1 \ \subset \ I_2 \ \subset \ \cdots \ \subset \ I_s
\]
of independent sets of $M$ with $|I_j| = k_j$ for $j=1,\dots,s$. Borovik,
Gelfand, Vince, and White~\cite{flagmats} introduced flag matroids more
generally in terms of strong maps. In this paper, we only treat the special
case of flag matroids of a matroid $M$. We refer
to~\cite{CoxeterMat} for relations to Coxeter matroids and to
Section~\ref{sec:toric} for the algebro-geometric point of view.  We call
$\FlagM_M := \FlagM_M^{(0,1,\dots,r)}$ the \Def{underlying flag matroid}
of $M$.

For a flag $I_*$, define $\delta(I_*)  :=  e_{I_1} + e_{I_2} + \cdots +
e_{I_s} \in \Z^E$ and with it the \Def{flag matroid polytope}~\cite{flagmats}
\[
    \Delta(\FlagM_M^\kk) \ := \ \conv \{ \delta(I_*) : I_* \in \FlagM_M^\kk \}
    \, .
\]
In this section, we relate flag matroid polytopes and monotone path polytopes
of matroids via a generalization of the independence polytope.

For $\kk = (k_1,\dots,k_s)$ define the \Def{rank-selected independent sets}
\[
    \Ind^\kk \ := \ \{ I \in \Ind: |I| = k_i \text{ for some } i=1,\dots,s \}
\]
and the \Def{rank-selected independence polytope}
$P_M^\kk  :=  \conv \{ e_I : I \in \Ind^\kk \}$.

\begin{lemma}\label{lem:inc_edges}
    Let $I,J \in \Ind^\kk$ with $|I| < |J|$. Then $[e_I,e_J]$ is an edge of
    $P_M^\kk$ if and only if $I \subset J$ and $|I|=k_i$, $|J| = k_{i+1}$ for
    some $1 \le i < s$.
\end{lemma}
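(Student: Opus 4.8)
The plan is to use the face criterion for edges directly, splitting into the two implications. For the backward direction I would compare coordinates along the segment $[e_I,e_J]$; for the forward direction the greedy algorithm of Theorem~\ref{thm:greedy} does the work.

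For ``if'', assume $I\subseteq J$ with $|I|=k_i$ and $|J|=k_{i+1}$. I claim the segment $[e_I,e_J]$ is a face of $P_M^\kk$, which (joining two distinct vertices) makes it an edge. Suppose a point $p=(1-t)e_I+te_J$ is written as $\sum_\ell\lambda_\ell e_{K_\ell}$ with $\lambda_\ell>0$, $\sum_\ell\lambda_\ell=1$, and $K_\ell\in\Ind^\kk$. Looking at the coordinates indexed by $I$ (all equal to $1$) forces $I\subseteq K_\ell$ for every $\ell$, and the coordinates outside $J$ (all equal to $0$) force $K_\ell\subseteq J$; since $J$ is independent, so is each $K_\ell$. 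As $\kk$ is strictly increasing, $|K_\ell|\in\kk\cap[k_i,k_{i+1}]=\{k_i,k_{i+1}\}$, so $K_\ell\in\{I,J\}$. Hence every representation of $p$ uses only $e_I,e_J$, so $[e_I,e_J]$ is disjoint from $\conv\{e_K:K\in\Ind^\kk\setminus\{I,J\}\}$ and is therefore a face of $P_M^\kk$. (Alternatively, $w$ with $w_a=2$ on $I$, $w_a=0$ on $J\setminus I$, $w_a=-1$ elsewhere supports $[e_I,e_J]$.)

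For ``only if'', let $[e_I,e_J]$ be an edge with $k_i:=|I|<|J|=:k_j$; pick $w$ with $(P_M^\kk)^w=[e_I,e_J]$ and set $\mu:=\inner{w}{e_I}=\inner{w}{e_J}=\max_{x\in P_M^\kk}\inner{w}{x}$. Order $E$ by non-increasing $w$ and run the greedy algorithm, producing a maximal chain $\emptyset=B^{(0)}\subset\cdots\subset B^{(r)}$ with $|B^{(k)}|=k$; applying Theorem~\ref{thm:greedy} to the truncation $(r_M)_k=\min(r_M,k)$ of the rank function of $M$ gives $\inner{w}{e_{B^{(k)}}}=\max\{\inner{w}{e_K}:K\in\Ind,\ |K|=k\}$ for every $k\le r$ — and no genericity of $w$ is needed, since this optimality and the non-increasing order of the chosen weights survive any consistent tie-breaking. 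For $k=k_i$, the independent set $I$ is a competitor, so $\inner{w}{e_{B^{(k_i)}}}\ge\inner{w}{e_I}=\mu$, while $B^{(k_i)}\in\Ind^\kk$ forces $\inner{w}{e_{B^{(k_i)}}}\le\mu$; hence $e_{B^{(k_i)}}$ is a maximizer, i.e.\ a vertex lying on $[e_I,e_J]$, so $B^{(k_i)}\in\{I,J\}$, and cardinality gives $B^{(k_i)}=I$. Symmetrically $B^{(k_j)}=J$, and monotonicity of the chain yields $I=B^{(k_i)}\subseteq B^{(k_j)}=J$.

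To finish, suppose $j\ge i+2$. Writing $B^{(r)}=\{c_1,c_2,\dots\}$ with $c_\ell$ added at the $\ell$-th greedy step, we have $w_{c_1}\ge w_{c_2}\ge\cdots$ and $\inner{w}{e_{B^{(k)}}}=w_{c_1}+\cdots+w_{c_k}$. From $\inner{w}{e_{B^{(k_i)}}}=\inner{w}{e_{B^{(k_j)}}}=\mu$ we get $w_{c_{k_i+1}}+\cdots+w_{c_{k_j}}=0$; a non-increasing list with zero sum has nonnegative prefix sums, so $\inner{w}{e_{B^{(k_{i+1})}}}=\mu+(w_{c_{k_i+1}}+\cdots+w_{c_{k_{i+1}}})\ge\mu$, while $B^{(k_{i+1})}\in\Ind^\kk$ gives $\le\mu$. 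Then $e_{B^{(k_{i+1})}}$ is again a vertex on $[e_I,e_J]$, so $B^{(k_{i+1})}\in\{I,J\}$ — impossible since $|B^{(k_{i+1})}|=k_{i+1}\notin\{k_i,k_j\}$. Hence $j=i+1$. The only place real care is needed is this forward direction: an edge of $P_M^\kk$ may have a lower-dimensional normal cone, so one cannot take $w$ generic and must instead exploit the optimality of the greedy solution at each cardinality level; the sole non-formal input is the elementary prefix-sum fact just used.
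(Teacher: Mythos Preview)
Your proof is correct and follows essentially the same strategy as the paper's: the greedy algorithm (applied to truncations) for the forward direction, and an explicit supporting functional for the converse. Your forward argument is in fact more carefully worked out --- the paper asserts in one line that $\wt(K)=\wt(I)$ for all $I\subseteq K\subseteq J$, whereas your prefix-sum observation cleanly produces the intermediate $B^{(k_{i+1})}\in\Ind^\kk$ with $\inner{w}{e_{B^{(k_{i+1})}}}=\mu$ and hence the contradiction.
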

\begin{proof}
    Assume that $[e_I,e_J] = (P_M^\kk)^\wt$ for some $\wt \in \R^E$. Let
    $\emptyset = I_0 \subset I_1 \subset \cdots \subset I_r$ be the sequence
    of independent sets obtained from the greedy algorithm on $P_M$ with
    respect to $\wt$. Let $|I|=k_i$ and $|J|=e_j$ with $i < j$. Since $e_I$ is
    the unique maximizer over the base polytope of the restriction $M_{k_i}$,
    we have $I_{k_i} = I$ and likewise $I_{k_j} = J$.  Now, since
    $\inner{\wt}{e_I} = \inner{\wt}{e_J} = \wt(I) + \wt(J \setminus I)$, it
    follows that $\wt(K) = \wt(I)$ for all $I \subseteq K \subseteq J$. Hence,
    $j = i+1$.  For the converse, take the linear function $\wt = e_I -
    e_{E\setminus J}$.
\end{proof}

In the same way as in Section~\ref{sec:MPP}, one shows that every cellular
string of $P_M^\kk$ is coherent.

\begin{theorem}
    Let $M = (E,\Ind)$ be a matroid and $\kk$ a rank vector. Then every
    $\1$-cellular string of $P^\kk_M$ is coherent. 
\end{theorem}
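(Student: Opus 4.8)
The plan is to mimic the argument used for Theorem~\ref{thm:MPP_Baues}, replacing the greedy algorithm on $P_f$ by the greedy algorithm on $P_M$ restricted to the ranks prescribed by $\kk$. Concretely, given a $\1$-cellular string $\0 = F_0, F_1, \dots, F_m$ on $P_M^\kk$, I first want to reconstruct it from combinatorial data. By Lemma~\ref{lem:inc_edges}, the only $\1$-increasing edges of $P_M^\kk$ go from a rank-$k_i$ independent set to a rank-$k_{i+1}$ independent set with $I \subset J$; hence each face $F_h$ of the cellular string, being a face on which $\1$ is non-constant, has its $\1$-minimal face $F_h^{-\1}$ contained in some rank level $k_{i_h}$ and its $\1$-maximal face $F_h^{\1}$ contained in level $k_{i_h + p_h}$ for some $p_h \ge 1$, and these levels chain together along the string from $k_1$ to $k_s$. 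So the cellular string determines a composition of the rank jumps together with, for each piece, the data of which coordinate directions are used; I would package the latter exactly as in the proof of Theorem~\ref{thm:MPP_Baues}, namely via the sets $I_h := \{ i \in E : p + \delta e_i \in F_h \text{ for some } p \in F_{h-1}\cap F_h,\ \delta>0\}$.

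The second step is to show this combinatorial data is realized by a coherent cellular string, i.e. to exhibit an explicit weight $\wt$ whose shadow gives back $F_0,\dots,F_m$. Here I would run the greedy algorithm on $P_M$ (Theorem~\ref{thm:greedy}): inductively, if $F_h$ has been reconstructed, use the greedy algorithm to locate a point $p$ in $F_h^{\1} = F_h \cap F_{h+1}$, and then $F_{h+1} = P_M^\kk \cap (p + L_{h+1})$ where $L_{h+1} = \mathrm{span}\{ e_i : i \in I_{h+1}\}$, using that the rank-$k_{i}$ level of $P_M$ is precisely the base polytope of the truncation $M_{k_i}$, so that the greedy path through the prescribed ranks is forced. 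Choosing $\wt$ to be a suitable positive combination $\wt = m\, e_{I_1} + (m-1) e_{I_2} + \cdots + e_{I_m} - e_{E \setminus I_m}$ (possibly perturbed within each $I_h$ so as to respect the interior rank jumps), the coherent cellular string of $(P_M^\kk,\1)$ associated to $\wt$ is exactly $F_0, F_1, \dots, F_m$, by the same bookkeeping as in Section~\ref{sec:MPP} together with Lemma~\ref{lem:cMP_local}.

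The main subtlety, and the one place where this is not a verbatim copy of the $P_f$ argument, is that $P_M^\kk$ only sees selected rank levels, so a single face $F_h$ of the cellular string may ``skip over'' intermediate ranks $k_{i}, k_{i}+1, \dots, k_{i+p}$ that are present in $P_M$ but not vertices of $P_M^\kk$; I need to check that running the greedy algorithm on $P_M$ between levels $k_{i_h}$ and $k_{i_h+p_h}$ and then discarding the intermediate vertices still produces precisely the face $F_h$ of $P_M^\kk$, and that the weight can be chosen generic enough to pin down each $F_h$ uniquely while not introducing spurious subdivisions. Granting this, the theorem follows: every $\1$-cellular string of $P_M^\kk$ is coherent, and consequently (by the Billera--Sturmfels theorem quoted in Section~\ref{sec:mp}) the Baues poset $\Baues(P_M^\kk,\1)$ is isomorphic to the face lattice of $\MPP{P_M^\kk}$.
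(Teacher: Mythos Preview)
Your plan matches the paper exactly: the paper's own ``proof'' is the single sentence ``In the same way as in Section~\ref{sec:MPP}, one shows that every cellular string of $P_M^\kk$ is coherent,'' so reproducing the Theorem~\ref{thm:MPP_Baues} argument with the greedy algorithm replaced by its rank-selected version is precisely what is intended.

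That said, there is a concrete problem with the part of your argument you yourself flag. When consecutive entries of $\kk$ differ by more than one, Lemma~\ref{lem:inc_edges} says the $\1$-increasing edges of $P_M^\kk$ point in directions $e_{J\setminus I}$ with $|J\setminus I| = k_{i+1}-k_i > 1$, so no single coordinate direction $e_i$ leaves a bottom vertex while staying inside $F_h$. Your definition $I_h = \{ i : p + \delta e_i \in F_h \text{ for some } p \in F_{h-1}\cap F_h,\ \delta>0\}$ is then \emph{empty}. For example, take $M = U_{3,3}$, $\kk = (0,2)$, and the triangular face $F_1 = \conv\{0, e_{\{1,2\}}, e_{\{1,3\}}\}$: the tangent cone of $F_1$ at $0$ is spanned by $e_1+e_2$ and $e_1+e_3$, and contains no coordinate ray, so $I_1 = \emptyset$ and the weight you write down is meaningless. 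You acknowledge something must be adjusted (``possibly perturbed within each $I_h$''), but as written this is exactly the missing idea, not a resolution of it.

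One way to close the gap: record, for each $h$, the ordered partition of $E$ that cuts out $F_h$ as a face of $P_M^\kk$ (equivalently, pick any $\wt_h$ with $F_h = (P_M^\kk)^{\wt_h}$ and remember its level sets). Compatibility along the cellular string --- $F_h^\1 = F_{h+1}^{-\1}$ is a common face of truncation base polytopes --- lets you splice these ordered partitions into a single preorder on $E$, and any $\wt$ compatible with that preorder realizes the string coherently. This is the ``same as Section~\ref{sec:MPP}'' argument at the level of ordered partitions rather than single-coordinate increments; the paper suppresses it, but your write-up should not.
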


Lemma~\ref{lem:inc_edges} in particular implies that $\1$-monotone paths on
$P_M^\kk$ are precisely the elements of the flag matroid $\FlagM_M^\kk$.

\begin{theorem}\label{thm:FlagDelta}
    Let $M = (E,\Ind)$ be a matroid of rank $r$ and $\kk$ a rank vector.  The
    monotone path polytope $\MPP{P_M^\kk}$ is normally equivalent to the flag
    matroid polytope $\Delta(\FlagM_M^\kk)$.
\end{theorem}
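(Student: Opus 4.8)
The plan is to apply Proposition~\ref{prop:MPPnorm} to the pair $(P_M^\kk,\1)$ and then to recognize the resulting Minkowski sum as the flag matroid polytope $\Delta(\FlagM_M^\kk)$. We may assume $s \ge 2$; for $s = 1$ the function $\1$ is constant on $P_M^\kk = \conv\{e_I : I \in \Ind,\ |I| = k_1\}$ and both polytopes in the statement coincide with that single polytope.

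First I would identify the slices of $P_M^\kk$ along $\1$. The vertices of $P_M^\kk$ are exactly the $e_I$ with $I \in \Ind^\kk$, and each $k_i \le r$ is attained by some independent set, so $I' := \{\1(v) : v \in V(P_M^\kk)\} = \{k_1 < k_2 < \cdots < k_s\}$. By Proposition~\ref{prop:MPPnorm}, $\MPP{P_M^\kk}$ is normally equivalent to $\sum_{i=1}^s (P_M^\kk)_{k_i}$, where $(P_M^\kk)_{k_i} = P_M^\kk \cap \{x : \1(x) = k_i\}$. I claim that this slice equals $B_{M_{k_i}}$, the base polytope of the rank-$k_i$ truncation of $M$. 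Indeed $P_M^\kk \subseteq P_M$, so by the description of truncation base polytopes in Section~\ref{sec:polymatroids} we get $(P_M^\kk)_{k_i} \subseteq P_M \cap \{x : \1(x) = k_i\} = B_{M_{k_i}} = \conv\{e_I : I \in \Ind,\ |I| = k_i\}$; conversely every such $e_I$ has $I \in \Ind^\kk$ and lies on the hyperplane $\1(x) = k_i$, which gives the reverse inclusion after taking convex hulls. Hence $\MPP{P_M^\kk}$ is normally equivalent to $Q := B_{M_{k_1}} + \cdots + B_{M_{k_s}}$.

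It then remains to show $Q = \Delta(\FlagM_M^\kk)$, which I would do by comparing vertices. Any vertex of $Q$ is a sum $e_{I_1} + \cdots + e_{I_s}$ for which some generic $\wt$ simultaneously maximizes each $e_{I_i}$ over $B_{M_{k_i}}$; running the greedy algorithm (Theorem~\ref{thm:greedy}) on $M$ with respect to $\wt$ produces a maximal chain $\emptyset = J_0 \subset J_1 \subset \cdots \subset J_r$ of independent sets with $I_i = J_{k_i}$, so that $I_1 \subset \cdots \subset I_s$ and $e_{I_1} + \cdots + e_{I_s} = \delta(I_*)$ for $I_* \in \FlagM_M^\kk$. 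Conversely, given $I_* \in \FlagM_M^\kk$, order $E$ so that the elements of $I_1$ come first, then those of $I_2 \setminus I_1$, and so on through $I_s \setminus I_{s-1}$, then the remaining elements, and pick $\wt$ strictly decreasing along this order; the greedy chain then passes through $I_1, I_2, \dots, I_s$ at sizes $k_1, \dots, k_s$, so $\delta(I_*)$ is the unique $\wt$-maximal point of $Q$. Thus the vertex set of $Q$ is exactly $\{\delta(I_*) : I_* \in \FlagM_M^\kk\}$, so $Q = \conv\{\delta(I_*) : I_* \in \FlagM_M^\kk\} = \Delta(\FlagM_M^\kk)$. Transitivity of normal equivalence finishes the proof.

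The bookkeeping is routine; the one step that deserves care is the identification $Q = \Delta(\FlagM_M^\kk)$, and specifically the verification that \emph{every} $\delta(I_*)$ is genuinely a vertex of the Minkowski sum, for which the nested structure of the greedy output and the explicit choice of $\wt$ are essential. (Alternatively, one may invoke the known fact that the flag matroid polytope of a concordant flag of matroids is the Minkowski sum of the constituent base polytopes.) Everything else follows formally from Proposition~\ref{prop:MPPnorm}, the inclusion $P_M^\kk \subseteq P_M$, and Theorem~\ref{thm:greedy}; one may additionally remark that Lemma~\ref{lem:inc_edges} together with the coherence of all $\1$-cellular strings on $P_M^\kk$ already identifies the vertices of $\MPP{P_M^\kk}$ with $\FlagM_M^\kk$ at the combinatorial level.
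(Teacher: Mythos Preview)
Your proof is correct and follows essentially the same route as the paper: apply Proposition~\ref{prop:MPPnorm}, identify each slice $(P_M^\kk)_{k_i}$ with the base polytope $B_{M_{k_i}}$ of the truncation, and then recognize the resulting Minkowski sum as $\Delta(\FlagM_M^\kk)$. The only difference is in the last step: the paper simply cites Corollary~1.13.5 of \cite{CoxeterMat} for the identity $B_{M_{k_1}}+\cdots+B_{M_{k_s}}=\Delta(\FlagM_M^\kk)$, whereas you supply a self-contained argument via the greedy algorithm (and even note the citation as an alternative).
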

\begin{proof}
    Note that the distinct values of the linear function $\1$ on the vertices
    of $P_M^\kk$ are precisely $k_1 < k_2 < \cdots < k_s$. For $i =
    1,\dots,s$, the fiber $\{ x: \1(x) = k_i\} \cap P_M^\kk$ is the base
    polytope of the truncation $M_{k_i}$ that we denote by $B_{k_i}$. It
    follows from Proposition~\ref{prop:MPPnorm} that $\MPP{P_M^\kk}$ is
    normally equivalent to
    \[
        B_{k_1} + B_{k_2} + \cdots + B_{k_s} \, .
    \]
    This is precisely the decomposition of $\Delta(\FlagM_M^\kk)$ given in
    Corollary~1.13.5 of~\cite{CoxeterMat}.
\end{proof}

It seems likely that the obvious generalization of rank-selected independence
polytopes to the setting of general flag matroids~\cite{flagmats} will
generalize Theorem~\ref{thm:FlagDelta}.

\begin{remark}
    For a rank-$r$ matroid $M$ with rank function $r_M$,
    Theorem~\ref{thm:FlagDelta} states that the base polytope of the flag
    polymatroid $\Mf{r}_M$ is normally equivalent to the base polytope of the
    underlying flag matroid $\FlagM_M$. This gives another justification for
    calling $\Mf{f}$ a \emph{(underlying) flag polymatroid}:
    \Cref{mainthm:ppnormmp} and~\Cref{Imainthm:ppnesto} imply that the facial
    structure of $P_{\Mf{f}}$ only depends on the flags of $\LF(f)$.  

    This prompts the question as to a notion of \emph{partial} flag
    polymatroid.  The rank vectors of flag matroids are subsets of the values
    $\{ \1(v) : v \in V(P_M) \} = \{  r_M(A) : A \in \LF(M) \}$. The important
    property for the description of flag matroid polytopes is that for every
    flat $A \in \LF(M)$ the vertices of $P_f \cap \{x : \1(x) = f(A)\}$ are
    vertices of $P_f$. This happens if and only if there are no \emph{long}
    edges: If $[u,v] \subset P_f$ is an edge with $\1(u) < \1(v)$ then $f(A)
    \le \1(u)$ or $\1(v) \le f(A)$ for all flats $A \in \LF(f)$. Note that the
    greedy algorithm implies $\1(u) = f(I(v)) = f(\ov{I(v)})$. The next result
    implies that this is characteristic for matroids.
\end{remark}

\newcommand\cov{\prec\!\mathrel{\raisebox{1pt}{$\scriptscriptstyle\bullet$}}}%
For flats $A, B \in \LF(f)$, we write $A \cov B$ if $A$ is covered by $B$,
that is, if $A \subset B$ and there is no flat $C$ with $A \subset C \subset
B$.

\begin{prop}
    Let $(E,f)$ be a polymatroid such that for all closed sets $A,B,C \in
    \LF(f)$ with $A \cov B$ we have $f(B) \le f(C)$ or $f(C) \le f(A)$, then
    $f$ is a multiple of a matroid rank function.
\end{prop}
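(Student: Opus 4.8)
The plan is to produce a constant $c>0$ such that $g:=c^{-1}f$ is a matroid rank function on $E$; since $g$ is automatically monotone, submodular, and normalized by $g(\emptyset)=0$, it is enough to show that every increment $f(A\cup e)-f(A)$ lies in $\{0,c\}$. Write $v_0<v_1<\dots<v_m$ for the distinct values taken by $f$ on $\LF(f)$. First I would use the hypothesis to show that $\LF(f)$ is graded. If $A\cov B$, then $f(A)<f(B)$ — otherwise any $e\in B\setminus A$ would satisfy $f(A\cup e)\le f(B)=f(A)$ and hence lie in $\ov A=A$ — and applying the hypothesis to $A\cov B$ with $C$ a flat whose value is the successor of $f(A)$ in $v_0<\dots<v_m$ forces $f(B)$ to be exactly that successor. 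Consequently $f$ runs through consecutive values $v_i$ along every maximal chain from $\ov\emptyset$ to a flat $F$, so all such chains have the same length $\rho(F)$, giving a well-defined rank function $\rho\colon\LF(f)\to\Znn$ with $\rho(\ov\emptyset)=0$.

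Next I would identify the atoms of $\LF(f)$: for $e\notin\ov\emptyset$ the hypothesis forces $\rho(\ov{\{e\}})=1$ (this is exactly the long-edge considerations preceding the statement, applied to the covering steps below $\ov{\{e\}}$). Since $F=\bigvee_{e\in F}\ov{\{e\}}$ for every flat $F$, the lattice $\LF(f)$ is then atomistic.

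The crux is to show that the consecutive gaps $v_{i+1}-v_i$ are all equal to $c:=v_1-v_0$. One inequality is easy: for a flat $A$ and $e\notin A$, submodularity of $f$ together with $\ov\emptyset\subseteq A$ gives $f(A\cup e)-f(A)\le f(\ov\emptyset\cup e)-f(\ov\emptyset)=f(\ov{\{e\}})-v_0=c$, and choosing $e$ inside a cover $A\cov B$ this reads $v_{\rho(A)+1}-v_{\rho(A)}\le c$. The reverse inequality $v_{i+1}-v_i\ge c$ is the main obstacle: I would argue that a strict gap $v_{i+1}-v_i<c$ forces the existence of a covering pair of flats together with a third flat whose value lies strictly between their values, contradicting the hypothesis; concretely, I would fix a flat $A$ of rank $i$ and a cover $A\cov B$ and then manufacture such an intermediate flat by an exchange argument on the increments of $f$ in the neighborhood of $A$. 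This is the step I expect to require the most care.

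Granting uniform gaps, $f(F)=f(\ov\emptyset)+c\,\rho(F)$ for every flat $F$, and if $\emptyset$ is closed this is simply $f(F)=c\,\rho(F)$ (if not, the elements of $\ov\emptyset$ are loops of the matroid below and are handled identically). Submodularity of $f$ now becomes the semimodular inequality $\rho(X\vee Y)+\rho(X\wedge Y)\le\rho(X)+\rho(Y)$ on $\LF(f)$, using that $\LF(f)$ is intersection-closed (Edmonds) so that $X\wedge Y=X\cap Y$; together with atomisticity this makes $\LF(f)$ a geometric lattice. Letting $M$ be the matroid on $E$ with rank function $r_M(S):=\rho(\ov S)$, semimodularity and the previous step give $r_M(S)\le\sum_{e\in S}\rho(\ov{\{e\}})\le|S|$, so $r_M$ is a genuine matroid rank function, and $f(S)=f(\ov S)=c\,\rho(\ov S)=c\,r_M(S)$ for all $S\subseteq E$. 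Hence $f=c\cdot r_M$.
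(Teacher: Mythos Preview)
Your plan follows the paper's argument closely: first establish that $\LF(f)$ is graded, then argue that all consecutive gaps $v_{i+1}-v_i$ are equal, and finally deduce the matroid axioms. The grading step is fine and matches the paper. However, the step you correctly flag as ``the main obstacle''---proving $v_{i+1}-v_i\ge c$---cannot be completed, because the proposition as stated is false.

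Take $E=\{1,2\}$ with $f(\emptyset)=0$, $f(\{1\})=f(\{2\})=2$, $f(\{1,2\})=3$. This is monotone and submodular, every subset is a flat, and the distinct flat values are $0<2<3$. The covering pairs have value-intervals $[0,2]$ and $[2,3]$, and no flat value lies strictly inside either, so the hypothesis holds. Yet $f$ is not a scalar multiple of any matroid rank function on a two-element set: matching $f(\{1\})=f(\{2\})=2$ forces loopless singletons and $c=2$, whence $f(\{1,2\})$ would have to be $2c=4\neq 3$. Here the gaps are $2$ and $1$, so $v_2-v_1<c$, and there is no intermediate flat to manufacture---your proposed exchange argument cannot produce one. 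The paper's own proof shares this gap: after establishing gradedness it asserts ``we can scale $f$ so that $f(\{i\})=1$ \dots\ This implies that for $A\in\LF(f)$, $f(A)\in\Znn$,'' which is precisely the unjustified equal-gap claim.

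Your side claim that the hypothesis forces $\rho(\ov{\{e\}})=1$ for every $e\notin\ov\emptyset$ is also false. With $f(\emptyset)=0$, $f(\{1\})=1$, $f(\{2\})=f(\{1,2\})=2$ one gets $\LF(f)=\{\emptyset,\{1\},\{1,2\}\}$; the hypothesis holds (flat values $0,1,2$ with cover-gaps $[0,1]$ and $[1,2]$), yet $\ov{\{2\}}=\{1,2\}$ has rank~$2$, and again $f$ is not a multiple of a matroid rank function. So both the atomicity step and the equal-gap step fail, and the proposition needs an additional hypothesis to be salvageable.
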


\begin{proof}
    For any $A \in \LF(f)$ choose $B \in \LF(f)$ with $A \cov B$ and $f(B)$
    minimal. If $A \cov B'$, then $f(A) < f(B) \le f(B')$ and the condition
    implies that $f(B) = f(B')$. Now, for $A \cov B \cov C$ and $A \cov B'
    \cov C'$ with $f(C) < f(C')$, our condition implies $f(C) \le f(B') = f(B)
    < f(C)$. Thus $f(C) = f(C')$. Iterating the argument then shows that given
    two maximal chains if
    $A_0 \cov A_1 \cov \cdots \cov A_k$ and $A'_0 \cov A'_1 \cov \cdots \cov
    A'_l$, we have $f(A_i) = f(A_i')$ for all $i$ and, in
    particular, $k=l$. This implies that $\LF(f)$ is a graded poset. Assuming
    that $\{i\}$ is closed for every $i \in E$, we can scale $f$ so that
    $f(\{i\})=1$ for all $i \in E$. This implies that for $A \in \LF(f)$,
    $f(A) \in \Znn$ and submodularity shows
    \[
        f(A) \ \le \ \sum_{i \in A} f(\{i\}) \ = \ |A| \, . \qedhere
    \]
\end{proof}

\begin{example}[$S$-hypersimplices]
    Let $M$ be the uniform matroid $U_{n,n}$ on $n$ elements for which every
    subset $I \subseteq E$ is independent. The independence polytope $P_M$ is
    the unit cube and $\LF(M) = 2^E$ is the Boolean lattice. The base polytope
    of a truncation of $M$ to $k$ is the $(n,k)$-hypersimplex, that is, the
    convex hull of all $v \in \{0,1\}^E$ with $\sum_i v_i = k$.

    For $0\le k_1  < \cdots < k_s \le n$, the rank-selected independence
    polytope $P_M^\kk$ is an $S$-hypersimplex~\cite{Hypersimps} with $S = \{k_1,
    \dots, k_s \}$. For $k_i = i$, these are also the line-up polytopes for the
    cube introduced in \cite[Sect.~6.2.2]{lineuppolys}.  The corresponding
    monotone path polytope $\MPP{P_M^\kk}$ is homothetic to the permutahedron
    $\Pi(s,\dots,s,s-1,\dots,s-1,\dots,1,\dots,1)$ with multiplicities given
    by
    $k_1,\dots,k_s-k_{s-1}$.  The proof of
    Proposition~\ref{prop:MPP_not_nesto} shows that these need not be
    nestohedra.
\end{example}

The facial structure of $\Delta(\FlagM_M^\kk)$ and hence of $\MPP{P_M^\kk}$ is
given in Exercise~1.14.26 of~\cite{CoxeterMat}. For the underlying flag matroid
we can give an alternative description.  Recall that a set $K \subseteq E$ is
a \Def{cocircuit} of $M$ if it is inclusion-minimal with the property that it
meets every basis of $M$.

\begin{cor}
    For any matroid $M$, the flag matroid polytope $\Delta(\FlagM_M)$
    is a simple polytope normally equivalent to a nestohedron for the building
    set 
    \[
        \UC(M) \ = \ \{ K_1 \cup \cdots \cup K_m : m \ge 0, K_1,\dots, K_m
        \text{ cocircuits} \} \, .
    \]
\end{cor}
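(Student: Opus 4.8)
The plan is to apply Theorem~\ref{Imainthm:ppnesto} to the rank function $f = r_M$ and identify the union-closed family $\UC(r_M)$ explicitly. By Theorem~\ref{Imainthm:ppnesto}, $B_{\Mf{r_M}}$ is normally equivalent to the nestohedron $\Delta(\UC(r_M))$, and by Theorem~\ref{thm:FlagDelta} (with $\kk = (0,1,\dots,r)$) the base polytope $B_{\Mf{r_M}}$ is normally equivalent to $\Delta(\FlagM_M)$. Since nestohedra are simple (Proposition~\ref{prop:nested_set_complex}), this already shows $\Delta(\FlagM_M)$ is a simple polytope. So the only remaining work is to verify that
\[
    \UC(r_M) \ = \ \{ E \setminus F : F \text{ flat of } M \} \ = \
    \{ K_1 \cup \cdots \cup K_m : m \ge 0, \ K_i \text{ cocircuits of } M \} \, .
\]

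First I would recall the standard matroid duality dictionary: a set $F \subseteq E$ is a flat of $M$ if and only if its complement $E \setminus F$ is a union of cocircuits of $M$. This is a classical fact (cocircuits of $M$ are the circuits of the dual matroid $M^*$, and complements of flats of $M$ are the "cyclic" sets of $M^*$, i.e.\ unions of circuits of $M^*$; see e.g.~\cite{Oxley}). Concretely: $F$ is a flat iff $F$ contains no element $e$ with $r_M(F \cup e) = r_M(F)$, and one checks directly that $e \in E \setminus F$ lies in a cocircuit contained in $E \setminus F$ precisely when $F$ is closed "at $e$." The empty union ($m=0$) gives $\emptyset = E \setminus E$, matching the complement of the top flat $E$. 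This identification is purely combinatorial and essentially a lemma one can cite.

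Having this, the equality $\UC(r_M) = \{E \setminus F : F \in \LF(M)\}$ is immediate from the definition of $\UC(f)$ for a polymatroid, applied to $f = r_M$ (here $\LF(r_M) = \LF(M)$ is the usual lattice of flats). Combining, $\UC(M) := \{ K_1 \cup \cdots \cup K_m : K_i \text{ cocircuits}\}$ equals $\UC(r_M)$, so $\Delta(\FlagM_M)$ is normally equivalent to the nestohedron $\Delta(\UC(M))$, which is simple. I would also remark, for completeness, that $\UC(M)$ is manifestly union-closed (a union of two unions of cocircuits is again a union of cocircuits), hence a building set by the Proposition preceding the definition of $\UC(f)$, so the nestohedron $\Delta(\UC(M))$ is well-defined.

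The only genuinely nontrivial ingredient is the duality lemma "complements of flats = unions of cocircuits," but this is textbook matroid theory and requires no real calculation—it is the statement that the cyclic flats and closed sets are exchanged under complementation with the dual. Everything else is a direct invocation of Theorem~\ref{Imainthm:ppnesto}, Theorem~\ref{thm:FlagDelta}, and Proposition~\ref{prop:nested_set_complex}. I expect no obstacle beyond making sure the indexing convention ($F$ ranges over \emph{all} flats, including $\emptyset$ and $E$, and the $m=0$ term is included on the cocircuit side) is stated cleanly.
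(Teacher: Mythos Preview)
Your proposal is correct and follows essentially the same route as the paper: invoke Theorem~\ref{Imainthm:ppnesto} (together with Theorem~\ref{thm:FlagDelta}) to get that $\Delta(\FlagM_M)$ is normally equivalent to the nestohedron for $\UC(r_M) = \{E \setminus F : F \in \LF(M)\}$, and then cite the standard matroid fact (Oxley, Ex.~2.1.13(a)) that complements of flats are exactly unions of cocircuits. The paper's proof is the same two-line argument, also citing Oxley for the duality lemma.
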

\begin{proof}
    It follows from \Cref{mainthm:ppnormmp}, \Cref{Imainthm:ppnesto},
    and \Cref{thm:FlagDelta} that $\Delta(\FlagM_M)$ is normally equivalent to
    the nestohedron for the union-closed family of sets $E
    \setminus F$ where $F$ ranges over all flats of $M$. Now $F$ is a flat if
    and only if $E\setminus F$ is a union of cocircuits
    \cite[Ex.~2.1.13(a)]{Oxley}.
\end{proof}

We close this section with a few thoughts on the max-slope pivot polytopes of
rank-selected independence polytopes. If $I$ is an independent set of rank
$|I| = k_i$ for $i < s$, then Lemma~\ref{lem:inc_edges} yields that the
$\1$-improving neighbors correspond to independent sets $J$ with $I \subset J$
and $|J| = k_{i+1}$. From~\eqref{eqn:PPsum}, we infer that 
\[
    \PP_{P_M^\kk,\1}(e_I) \ = \ \tfrac{1}{k_{i+1}-k_{i}} \conv\{ e_{J \setminus
    I} : I \subset J \in \Ind, |J| = k_{i+1} \} \, .
\]
The independent sets of the contraction $M/I$ are precisely those $K \subseteq
E \setminus I$ with $I \cup K$ independent in $M$. Hence
\[
    (k_{i+1}-k_{i}) \cdot 
    \PP_{P_M^\kk,\1}(e_I) \ = \ B_{(M/I)_{k_{i+1}-k_{i}}} \, ,
\]
where $(M/I)_{k_{i+1}-k_{i}}$ is the contraction of $M/I$ to rank
$k_{i+1}-k_{i}$. Consequently, the max-slope pivot polytope $\PP_{P_M^\kk,\1}$
is normally equivalent to 
\[
    \sum_{i=1}^{s-1} \sum_{\substack{F \in \LF(f)\\ \rk(F) = k_i}}
    B_{(M/F)_{k_{i+1}-k_{i}}} \, .
\]
If $k_{i+1} = k_i + 1$, then $B_{(M/F)_{k_{i+1}-k_{i}}}$ is the convex hull of
all $e_j$ such that $I \cup j \in \Ind$ and hence a standard simplex. This
prompts a generalization of  nestohedra where standard simplices are replaced
by matroid base polytopes. 

It is still true that $\Delta(\FlagM_M^\kk)$ is a weak Minkowski summand of
$\PP_{P_M^\kk,\1}$ but normal equivalence does not hold in general. We suspect
that the refinement of the normal cone of $\Delta(\FlagM_M^\kk)$ corresponding
to a flag $I_*$ reflects the freedom of the greedy algorithm to order the
elements in $I_{j+1} \setminus I_j$.

\section{Toric varieties in Grassmannians and flag varieties}\label{sec:toric}

In this section, we give a toric perspective on the monotone path polytopes of
realizable polymatroids and the relation between Grassmannians and flag
varieties.

For $1 \le r \le n$, let $\Gr{n,r}$ be the Grassmannian of $r$-dimensional
linear subspaces in $\C^n$. We can view a point $L \in \Gr{n,r}$ as the
rowspan of a full-rank matrix $A \in \C^{r \times n}$. The algebraic torus
$\T^n = (\C^*)^n$ acts on $\Gr{n,r}$ as follows.  If $L$ is represented by $A
= (a_1,\dots,a_n)$ and $\t = (t_1,\dots,t_n) \in \T^n$, then $\t$ sends $L$ to
$\t \cdot L = \rspan(\t\cdot A)$, where $\t \cdot A =
(t_1a_1,t_2a_2,\dots,t_na_n)$. 
The fixed points of this action are precisely the $r$-dimensional coordinate
subspaces of $\C^n$. 
In its Pl\"ucker embedding, a subspace $L$ is identified with its Pl\"ucker
vector $p(L) \in \Proj( \bigwedge^k\C^n) \cong \Proj^{\binom{n}{r}-1}$ with
$p(L)_J = \det(A_J) = \det(a_{j_1},\dots,a_{j_r})$, where $J = \{j_1 < \cdots
< j_r\}$ is an ordered $r$-subset of $[n]$. The fixed points then correspond
to Pl\"ucker vectors $p$ of the form $p_{J_0} \neq 0$ for a fixed $r$-subset
$J_0$ and $p_J =0$ otherwise.

The moment map $\mu : \Gr{n,r} \to \R^n$ of the action of $\T^n$ on $\Gr{n,r}$
is given by
\[
    \mu(L)_j \ = \ 
        \frac{\sum_{j \in J} |p(L)_J|^2}{\sum_{J} |p(L)_J|^2} \, ,
\]
where $J$ ranges over all $r$-subsets of $[n]$;
see~\cite[Sect.~2.1]{GGMS}. The image of $\Gr{n,r}$ under $\mu$ is
precisely the $(n,r)$-hypersimplex $\Delta(n,r)$, whose vertices correspond to
the fixed points.

Let $M = M(L) = ([n],\Ind)$ be the rank-$r$ matroid with $I \in \Ind$ if and
only if $(a_i)_{i \in I}$ is linearly independent. Note that this only depends
on $L$ and not on $A$.

\begin{theorem}[{\cite[Sect.~2.4]{GGMS}}]
    Let $L \in \Gr{n,r}$ be a subspace with matroid $M$. The Zariski closure
    of $\T^n \cdot L$ is a projective toric variety in $\Gr{n,r}$ with moment
    polytope $B_M$.
\end{theorem}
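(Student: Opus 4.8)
The plan is to pass to the Plücker embedding $\Gr{n,r} \hookrightarrow \Proj(\bigwedge^r\C^n) = \Proj^{\binom{n}{r}-1}$ and to identify $\overline{\T^n \cdot L}$ with the closure of a torus orbit in projective space, for which the toric structure and the moment polytope are classical.

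First I would record the $\T^n$-action in Plücker coordinates. Writing $L = \rspan(a_1,\dots,a_n)$ and $\t = (t_1,\dots,t_n)$, for an ordered $r$-subset $J = \{j_1 < \cdots < j_r\}$ one gets $p(\t\cdot L)_J = \det(t_{j_1}a_{j_1},\dots,t_{j_r}a_{j_r}) = \bigl(\prod_{j\in J} t_j\bigr)\, p(L)_J$, so $\T^n$ acts diagonally on $\Proj^{\binom{n}{r}-1}$ through the characters $\t \mapsto \prod_{j\in J}t_j$, that is, through the weight vectors $e_J = \sum_{j \in J} e_j \in \Z^n$. By the definition of $M = M(L)$, the Plücker coordinate $p(L)_J$ is nonzero precisely when $J$ is a basis of $M$; thus the set of weights occurring at the point $[p(L)]$ is $\{e_B : B \text{ basis of } M\}$. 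The orbit $\T^n \cdot [p(L)]$ is the image of the orbit map $\T^n \to \Proj^{\binom{n}{r}-1}$, hence isomorphic to $\T^n/\mathrm{Stab}([p(L)])$, which is again a torus; therefore its Zariski closure $X$ is a projective toric variety, and it lies inside $\Gr{n,r}$ because $\Gr{n,r}$ is Zariski closed in $\Proj(\bigwedge^r\C^n)$ and stable under $\T^n$.

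It remains to identify the moment polytope. Here I would invoke the standard description of torus orbit closures in projective space: if a torus $T$ acts diagonally on $\Proj^N$ with weights $m_0,\dots,m_N$ and $[x]$ has support $S = \{i : x_i \neq 0\}$, then the normal fan of $\overline{T\cdot[x]}$ is that of $P := \conv\{m_i : i \in S\}$, and the Fubini--Study moment map $\mu_{\mathrm{FS}}([y])_\ell = \bigl(\sum_i |y_i|^2\, (m_i)_\ell\bigr)\big/\bigl(\sum_i |y_i|^2\bigr)$ restricted to $\overline{T\cdot [x]}$ has image exactly $P$: every value is a convex combination of the occurring weights, the vertices of $P$ are attained at the $T$-fixed points of the orbit closure, and convexity of the image follows from Atiyah--Guillemin--Sternberg (or directly from the toric picture). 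The moment map $\mu$ on $\Gr{n,r}$ from the excerpt is precisely the pullback of $\mu_{\mathrm{FS}}$ along the $\T^n$-equivariant Plücker embedding, so $\mu(X) = \mu_{\mathrm{FS}}(X) = \conv\{e_B : B \text{ basis of } M\} = B_M$, as claimed.

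The step I expect to be the main obstacle --- or at least the one demanding the most care --- is the moment-polytope identification: verifying that the combinatorially defined $\mu$ really is (the pullback of) the symplectic moment map for the $\T^n$-action on $X$, and that its image is the full weight polytope rather than a smaller set. This uses that every weight $e_B$ with $B$ a basis of $M$ is actually attained at $[p(L)]$, so that no basis is lost upon passing to the closure, together with the fact that the vertices of $B_M$, which are exactly the $e_B$, are realized by the torus-fixed points of $X$ (the coordinate subspaces $\C^B$).
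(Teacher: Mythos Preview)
The paper does not prove this statement at all: it is quoted as a known result from \cite[Sect.~2.4]{GGMS} and used as input for the subsequent discussion. Your argument is the standard one via the Pl\"ucker embedding and the diagonal torus action on $\Proj(\bigwedge^r\C^n)$, and it is correct. The paper's proof of the next result (Corollary~\ref{cor:toric_indep}) uses the same toolkit you assemble here---Pl\"ucker coordinates, identification of the nonzero coordinates with bases, and the explicit moment map formula---so in spirit your approach matches how the authors themselves reason in the surrounding material.
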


The independence polytope can also be obtained as a moment polytope. Choose a
representation $A$ of $L$ such that $e_1,\dots,e_r$ is in general position
with respect to $a_1,\dots, a_n$. That is, every linearly independent
collection $(a_i : i \in I)$ can be completed to a basis of $\C^r$ by any
choice of $r - |I|$ vectors from $e_1,\dots,e_r$.  Define $\widehat{A} :=
(A,E) = (a_1,\dots,a_n,e_1,\dots,e_r) \in \C^{r \times (n+r)}$ and
$\widehat{L} := \rspan(\widehat{A})$.  We can view $\T^n$ as a subtorus of
$\T^{n+r}$ acting on $\widehat{L}$ by 
\[
    \t \cdot \widehat{A} \ = \ (t_1a_1,\dots,t_na_n,e_1,\dots,e_r) \, .
\]

\begin{cor}\label{cor:toric_indep}
     The Zariski closure of the orbit of $\widehat{L} = \rspan(\widehat{A})$
     under $\T^n$ is a projective toric variety $X_{\widehat{L}} \subseteq
     \Gr{n+r,r}$ with moment
     polytope $P_M$.
\end{cor}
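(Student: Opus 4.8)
The plan is to apply the theorem of Gelfand--Goresky--MacPherson--Serganova (quoted just above) to the subspace $\Mf{L} = \rspan(\Mf{A}) \in \Gr{n+r,r}$ and then identify the resulting moment polytope with $P_M$. First I would check that $\Mf{L}$ is a point of $\Gr{n+r,r}$, i.e.\ that $\Mf{A} = (a_1,\dots,a_n,e_1,\dots,e_r)$ has full rank $r$; this is immediate since the last $r$ columns already span $\C^r$. Next I would determine the matroid $\Mf{M} := M(\Mf{L})$ on ground set $[n+r]$. By the general-position hypothesis on $e_1,\dots,e_r$ relative to $a_1,\dots,a_n$, a set $J \subseteq [n+r]$ indexes a linearly independent subfamily of the columns of $\Mf{A}$ if and only if, writing $J = I \sqcup K$ with $I \subseteq [n]$ and $K \subseteq \{n+1,\dots,n+r\}$, the columns $(a_i)_{i \in I}$ are independent and $|I| + |K| \le r$; equivalently $I \in \Ind(M)$ and $|K| \le r - |I|$. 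Thus $\Mf{M}$ is the matroid union (free coextension pattern) whose bases are exactly the sets $I \sqcup K$ with $I \in \Ind(M)$, $|I \sqcup K| = r$, and $K$ disjoint from $[n]$. In particular the bases of $\Mf{M}$ restricted to the $[n]$-coordinates run over \emph{all} independent sets of $M$.

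Now apply the GGMS theorem: the Zariski closure of $\T^{n+r}\cdot \Mf{L}$ is a projective toric variety in $\Gr{n+r,r}$ with moment polytope $B_{\Mf{M}} = \conv\{e_B : B \text{ basis of } \Mf{M}\} \subset \R^{n+r}$. The subtorus $\T^n \subseteq \T^{n+r}$ of the statement is the one acting trivially on the last $r$ coordinates; I would note that because the representative $\Mf{A}$ has its last $r$ columns fixed equal to $e_1,\dots,e_r$, the full-torus orbit and the $\T^n$-orbit of $\Mf{L}$ have the same Zariski closure (rescaling the standard basis vectors $e_1,\dots,e_r$ only rescales rows of $\Mf{A}$, which does not change the rowspan). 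Hence $X_{\Mf{L}} := \overline{\T^n\cdot \Mf{L}}$ is again that same toric variety, and its moment polytope with respect to the $\T^n$-action is the image of $B_{\Mf{M}}$ under the projection $\pi : \R^{n+r} \to \R^n$ forgetting the last $r$ coordinates.

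It remains to identify $\pi(B_{\Mf{M}})$ with $P_M$. Since $\pi$ is linear and $B_{\Mf{M}}$ is a polytope, $\pi(B_{\Mf{M}}) = \conv\{\pi(e_B) : B \text{ basis of } \Mf{M}\} = \conv\{e_I : I \in \Ind(M)\}$, using the description of the bases of $\Mf{M}$ above: a basis $B = I \sqcup K$ projects to $e_I$, and every $I \in \Ind(M)$ arises (complete it by any $K \subseteq \{n+1,\dots,n+r\}$ of size $r-|I|$). By the definition of the independence polytope, $\conv\{e_I : I \in \Ind(M)\} = P_M$. Here I would invoke the standard fact that the moment polytope of a torus-orbit closure is functorial under the projection dual to a subtorus inclusion, so that the $\T^n$-moment polytope of $X_{\Mf{L}}$ is exactly $\pi$ applied to the $\T^{n+r}$-moment polytope. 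The main obstacle is the bookkeeping in the second paragraph --- verifying precisely which subsets of the columns of $\Mf{A}$ are independent, i.e.\ that $\Mf{M}$ is the free extension/matroid-union described --- and making the general-position hypothesis do exactly this work; everything after that is the formal GGMS machinery plus a one-line convex-hull computation.
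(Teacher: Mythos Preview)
Your approach via GGMS is natural, but there is a genuine gap in the middle step. The claim that the $\T^n$-orbit and the $\T^{n+r}$-orbit of $\widehat{L}$ have the same Zariski closure is false, and the justification you give is incorrect. Scaling the last $r$ columns of $\widehat{A}=(A\mid I_r)$ by $s_1,\dots,s_r$ produces the matrix $(A\mid \mathrm{diag}(s))$, whose rowspan equals that of $(\mathrm{diag}(s)^{-1}A\mid I_r)$, \emph{not} that of $(A\mid I_r)$; this is not ``rescaling rows'' of $\widehat{A}$. Concretely, the stabilizer of $\widehat{L}$ in $\T^{n+r}$ is the diagonal $\{(t,\dots,t)\}$, so the full orbit has dimension $n+r-1$, whereas the $\T^n$-orbit has trivial stabilizer and dimension $n$. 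For $r\ge 2$ the two closures therefore differ.

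This breaks the rest of your argument: the functoriality you invoke (``the $\T^n$-moment polytope is the projection of the $\T^{n+r}$-moment polytope'') applies to a fixed $\T^{n+r}$-invariant variety, but $X_{\widehat{L}}=\overline{\T^n\cdot\widehat{L}}$ is not $\T^{n+r}$-invariant, so $B_{\widehat{M}}$ is the moment polytope of a different, strictly larger variety. All you get from the inclusion $X_{\widehat{L}}\subseteq\overline{\T^{n+r}\cdot\widehat{L}}$ is $\mu_{\T^n}(X_{\widehat{L}})\subseteq\pi(B_{\widehat{M}})=P_M$, i.e.\ one containment. To obtain the reverse containment you still have to produce, for every $I\in\Ind$, a $\T^n$-fixed point of $X_{\widehat{L}}$ with moment image $e_I$. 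The paper does exactly this: it takes a one-parameter subgroup $\lambda^{\wt}$ of $\T^n$, shows that $\lim_{t\to\infty}\lambda^{\wt}(t)\cdot\widehat{L}$ is $\T^n$-fixed precisely when there is a unique $I\in\Ind$ with $\wt(I)$ maximal, computes its Pl\"ucker vector ($p_J\neq 0$ iff $I\subseteq J$ and $J\setminus I\subseteq\{n{+}1,\dots,n{+}r\}$), and reads off $\mu(p)=e_I$ from the explicit moment map formula. Your identification of the matroid $\widehat{M}$ and of $\pi(B_{\widehat{M}})=P_M$ is correct and useful for the easy containment, but the missing step is precisely this fixed-point analysis.
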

\begin{proof}
    Let $\lambda^\wt(t) = (t^{\wt_1},\dots,t^{\wt_n})$ be a one-parameter
    subgroup. On the level of Plücker vectors, it can be seen that $\lim_{t
    \to \infty} \lambda^\wt(t) \cdot \widehat{L}$ is fixed by $T^n$ if and
    only if there is a unique $I \in \Ind$ such that $\wt(I) = \sum_{i \in I}
    \wt_i$ is maximal. Let $p$ be the Plücker vector of the limit point for
    some $I \in \Ind$.  Then $p_J \neq 0$ if and only if $I \subseteq J$ and
    $J\setminus I \subseteq \{n+1,\dots,n+r\}$. The representation of the
    moment map above yields $\mu(p) = e_I$ and shows $\mu(X_{\widehat{L}}) =
    P_M$.
\end{proof}

\newcommand\GL{\mathrm{Gl}}%
For $1 \le r \le n$, let $\Fl{n,r}$ be the \Def{flag variety} of complete
flags $0 = F_0 \subset F_1 \subset \cdots \subset F_r \subseteq \C^n$ with
$\dim F_i = i$ for $i = 1,\dots,r$. Any such flag can be represented by a
full-rank matrix $A \in \C^{r \times n}$. If $A_i \in \C^{i \times n}$ is the
submatrix obtained from $A$ by taking the first $i$ rows, then $F_i =
\rspan(A_i)$ for $i=0,\dots,r$ defines a complete flag $F_{\bullet} =
(F_i)_{i=0,\dots,r}$. If $A$ and $A'$ define the same flag, then $A' = gA$,
where $g \in B \subset \GL(\C^r)$,  the (standard) Borel subgroup of
invertible lower-triangular matrices.

Notice that $M(F_{i-1})$ is a quotient of $M(F_i)$ and Theorem~1.7.3
of~\cite{CoxeterMat} asserts that $(M(F_1), M(F_2), \dots, M(F_r))$ is a
general flag matroid. We call the flag $F_\bullet$ \Def{very general} if
$M(F_i)$ is the $i$-th truncation of $M(F_r)$ for each $i=1,\dots,r-1$. 

$\Fl{n,r}$ is naturally a subvariety of $\prod_{i=1}^r \Gr{n,i}$ and the
diagonal action of $\T^n$ extends to $\Fl{n,r}$. Thus, any flag $F_\bullet$
yields a toric subvariety
\[
    Y_{F_\bullet} \ \subseteq \ X_{F_1} \times X_{F_2} \times \cdots \times
    X_{F_r} \, .
\]
Theorem 6.19 in \cite{FlagMatAlgGeo} asserts that the moment polytope of
$Y_{F_\bullet}$ is $B_{M(F_1)} + B_{M(F_2)} + \cdots + B_{M(F_r)}$, the
polytope of the flag matroid $(M(F_1), M(F_2), \dots, M(F_r))$.

\begin{theorem}\label{thm:genflag_uflagmat}
    If $F_\bullet$ is very general, then the moment polytope of
    $Y_{F_\bullet}$ is $\Delta(\FlagM_M)$, where $M = M(F_r)$. Conversely, if 
    $M$ is a rank-$r$ matroid realizable over $\C$, then there is very
    general flag $F_\bullet$ with $M(F_r) = M$.
\end{theorem}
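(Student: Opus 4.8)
The plan is to establish the two implications separately. The forward direction is essentially bookkeeping: assume $F_\bullet$ is very general, so $M(F_i)$ equals the rank-$i$ truncation of $M = M(F_r)$ for $i = 1,\dots,r-1$; write $M_i$ for this truncation, with $M_r = M$ since $r_M \le r$. By the quoted Theorem~6.19 of~\cite{FlagMatAlgGeo}, the moment polytope of $Y_{F_\bullet}$ is $B_{M(F_1)} + \cdots + B_{M(F_r)} = B_{M_1} + \cdots + B_{M_r}$; and, as recalled in the proof of \Cref{thm:FlagDelta}, Corollary~1.13.5 of~\cite{CoxeterMat} identifies exactly this Minkowski sum with $\Delta(\FlagM_M^{(0,1,\dots,r)}) = \Delta(\FlagM_M)$, the summand $B_{M_0} = \{0\}$ contributing nothing. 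This proves the first statement.

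For the converse I would start from a realization of $M$: since $M$ is realizable over $\C$ there is a matrix $A \in \C^{r \times n}$ of rank $r$ with $M(\rspan(A)) = M$; put $L := \rspan(A) \in \Gr{n,r}$. Choosing a basis of $L$ adapted to a given complete flag of $L$ shows that every such flag can be written $G_i = \rspan\big((gA)_i\big)$ for some $g \in \GL_r(\C)$, where $(gA)_i$ is the submatrix of the first $i$ rows of $gA$; note $G_r = L$ for all $g$. Fix $J \subseteq E$, let $W_J \subseteq \C^r$ be the column span of $A_J$ (so $\dim W_J = r_M(J)$), and let $g^{(i)} \colon \C^r \to \C^i$ be the map given by the first $i$ rows of $g$. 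Then the rank of $J$ in $M(G_i)$ equals $\rk\big(g^{(i)} A_J\big) = \dim g^{(i)}(W_J) = \dim W_J - \dim\big(W_J \cap \ker g^{(i)}\big)$. As $g$ ranges over $\GL_r$ the subspace $\ker g^{(i)}$ ranges over all $(r-i)$-dimensional subspaces of $\C^r$, so transversality gives $\dim\big(W_J \cap \ker g^{(i)}\big) = \max(0,\,r_M(J) - i)$, and hence $r_{M(G_i)}(J) = \min\big(r_M(J),\,i\big) = r_{M_i}(J)$, for all $g$ outside a proper Zariski-closed subset of $\GL_r$.

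I would finish by noting that there are only finitely many pairs $(i,J)$ with $1 \le i \le r-1$ and $J \subseteq E$, so the union of the corresponding bad loci remains a proper closed subset of the irreducible variety $\GL_r$. For any $g$ in its complement, the flag $F_i := \rspan\big((gA)_i\big) \subseteq L$ lies in $\Fl{n,r}$, has $M(F_r) = M(L) = M$, and satisfies $M(F_i) = M_i$ for all $i$ — that is, it is a very general flag realizing $M$. The one genuine obstacle is this genericity step: one must be sure that imposing the correct truncation matroid at \emph{every} level simultaneously still leaves flags available, which is exactly where irreducibility of $\GL_r$ (equivalently of the flag variety of $L$) is used, together with the transversality computation of $\dim(W_J \cap \ker g^{(i)})$ for generic $g$; that computation is standard but should be written out, as should the (elementary) fact that every complete flag of $L$ comes from some $g \in \GL_r$.
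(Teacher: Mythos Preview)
Your proof is correct and follows essentially the same approach as the paper. For the forward direction both you and the paper invoke Theorem~6.19 of~\cite{FlagMatAlgGeo} and the decomposition $\Delta(\FlagM_M) = B_{M_1}+\cdots+B_{M_r}$; for the converse, the paper argues more tersely that projecting $L$ onto a general codimension-one subspace realizes the truncation and then iterates (equivalently, takes $gA$ for general $g\in\GL_r$), while you spell out the same genericity argument explicitly via the transversality computation $\dim(W_J\cap\ker g^{(i)})=\max(0,r_M(J)-i)$ and the irreducibility of $\GL_r$.
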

\begin{proof}
    The first statement follows directly from the preceeding discussion and
    the definition of very general flag.  For the second statement, let $L =
    \rspan(A)$ be a realization of $M$ with $A \in \C^{r \times n}$.
    Projecting $L$ onto a general linear subspace $L' \subset L$ of dimension
    $r-1$ yields a realization of the first truncation of $M$. Iterating this
    yields a very general flag. Up to a change of coordinates this is means
    that the flag $F_\bullet$ associated to $gA$ for any general $g \in
    \GL(\C^r)$ is very general and since $L = \rspan(gA)$, this proves the
    claim.
\end{proof}

There is a rational map $\phi : \Gr{n+r,r} \dashrightarrow \Fl{n,r}$.  Let
$\widehat{L} \in \Gr{n+r,r}$ such that $\widehat{L}$ is represented by a matrix of the
form $\widehat{A} = (A,E)$, where $A \in \C^{r \times n}$ is of full rank and $E =
(e_1,\dots,e_r)$. Then $\phi$ takes $\widehat{L}$ to the flag $F_\bullet$ with
$F_i = \rspan(A_i)$ as above. The set of such $\widehat{L}$ is Zariski open and
$\phi$ is a rational surjective map. The fibers of $F_{\bullet}$ are
represented by $(gA,E)$ with $g \in B$.

Note that $\phi$ is equivariant with respect to the action of $\T^n$. 

\begin{prop}
    Let $\widehat{L} \in \Gr{n+r,r}$ such that
    $\phi(\widehat{L}) =  F_\bullet$ is defined.  Then $\phi$ is a regular map
    on $X_{\widehat{L}}$ with image $Y_{F_\bullet}$. The preimage of
    $F_{\bullet}$ in $X_{\widehat{L}}$ are the linear subspaces $\t
    \widehat{L}$, where $\t = (t,t,\dots,t) \in \T^n$. 
\end{prop}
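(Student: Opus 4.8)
The plan is to exploit that $\phi$ is $\T^n$-equivariant and that $X_{\widehat{L}}$ and $Y_{F_\bullet}$ are both torus-orbit closures, so that $\phi|_{X_{\widehat{L}}}$ is essentially a toric morphism, and then to determine its fibre over $F_\bullet$ by an explicit computation with representing matrices. Deleting the loops of $M$ (which $\T^n$ scales trivially) I may assume $M$ is loopless, and I will assume $M$ connected, without which $H$ has to be replaced throughout by the full stabiliser $\mathrm{Stab}_{\T^n}(F_\bullet)$. In the affine chart $\{\rspan[B\mid E]:B\in\C^{r\times n}\}$ of $\Gr{n+r,r}$ one has $\phi(\rspan[B\mid E])=(\rspan(B_1),\dots,\rspan(B_r))$, so on Pl\"ucker coordinates $\phi$ is the restriction of a linear projection; the dense torus orbit is $\T^n\cdot\widehat{L}=\{\rspan[AD_\t\mid E]:\t\in\T^n\}$ with $D_\t=\mathrm{diag}(\t)$, and since $\phi$ sends $\t\cdot\widehat{L}$ to $\t\cdot F_\bullet$ its image is dense in $\overline{\T^n\cdot F_\bullet}=Y_{F_\bullet}$, hence $\phi(X_{\widehat{L}})=Y_{F_\bullet}$. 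The genericity of $A$ built into \Cref{cor:toric_indep} makes $F_\bullet$ very general, so the moment polytope of $Y_{F_\bullet}$ is $B_{M_1}+\dots+B_{M_r}$, with $M_i$ the $i$-th truncation of $M$.

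For regularity I would argue that, being $\T^n$-equivariant, $\phi|_{X_{\widehat{L}}}$ is the toric rational map attached to the quotient homomorphism $\T^n\cong\T^n\cdot\widehat{L}\to\T^n/\mathrm{Stab}_{\T^n}(F_\bullet)\cong\T^n\cdot F_\bullet$, whose kernel contains $H=\{(t,\dots,t):t\in\C^*\}$, and that such a map extends to a morphism on all of $X_{\widehat{L}}$ precisely when every cone of the fan of $X_{\widehat{L}}$ is carried into a cone of the fan of $Y_{F_\bullet}$. The fan of $X_{\widehat{L}}$ is the normal fan of $P_M$ by \Cref{cor:toric_indep}; the fan of $Y_{F_\bullet}$ is the normal fan of $B_{M_1}+\dots+B_{M_r}$, which by (the proof of) \Cref{thm:FlagDelta} is normally equivalent to $\MPP{P_M}$ and hence, by \Cref{prop:MPPnorm}, to the Minkowski sum of the slices $P_M\cap\{x:\1(x)=t\}$. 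I would then obtain the fan condition from the fact that the normal fan of $P_M$, pushed forward along $\R^E\to\R^E/\R\1$, refines the normal fans of these slices.

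For the fibre, suppose $\widehat{L}_0\in X_{\widehat{L}}$ lies in the domain of $\phi$ with $\phi(\widehat{L}_0)=F_\bullet$, and write $\widehat{L}_0=\rspan[A'\mid E]$. Then $\rspan(A'_i)=\rspan(A_i)$ for $i=1,\dots,r$ forces $A'=gA$ for an invertible lower-triangular $g$, i.e.\ $\widehat{L}_0$ lies in the Borel orbit recorded before the statement. If moreover $\widehat{L}_0\in\T^n\cdot\widehat{L}$, say $A'=AD_\t$, then for each basis $J$ of $M$ the submatrix $A_J$ is invertible and $\mathrm{diag}(\t|_J)=A_J^{-1}gA_J$, so the multiset $\{t_j:j\in J\}$ equals the spectrum of $g$ and is independent of $J$; since $M$ is connected this forces $t_1=\dots=t_n$, whence $\widehat{L}_0=(t,\dots,t)\cdot\widehat{L}$. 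Conversely $(t,\dots,t)\cdot\widehat{L}\mapsto(t,\dots,t)\cdot F_\bullet=F_\bullet$, so the preimage of $F_\bullet$ is exactly $\{(t,\dots,t)\cdot\widehat{L}:t\in\C^*\}=H\cdot\widehat{L}$.

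I expect the regularity statement to be the main obstacle. The Pl\"ucker description of $\phi$ acquires base points at the torus-fixed points of $X_{\widehat{L}}$ coming from the non-maximal independent sets of $M$, so one genuinely has to resolve the indeterminacy of $\phi|_{X_{\widehat{L}}}$, and the fan comparison above---in particular controlling the normal cones of the faces of $P_M$ that lie below the base polytope $B_M$---is exactly where \Cref{prop:MPPnorm} and \Cref{thm:FlagDelta} do the real work. By contrast, once the target is known to be $Y_{F_\bullet}$, both the identification of the image and the matrix computation of the fibre are routine.
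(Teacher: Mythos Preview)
Your approach diverges from the paper's on the regularity of $\phi$. The paper argues directly in Pl\"ucker coordinates: writing $\widehat A=(A,E)$, the flag minor $p(F_\bullet)_K=\det((A_k)_K)$ for $K\subseteq[n]$ with $|K|=k$ equals (up to sign) the Pl\"ucker coordinate $p(\widehat L)_J$ with $J=K\cup\{n+k+1,\dots,n+r\}$, so $\phi$ is the restriction of a linear projection of ambient projective spaces, and regularity reduces to checking that these coordinates do not simultaneously vanish on $X_{\widehat L}$. You notice this linear-projection description but dismiss it because of alleged base points at the fixed points for non-maximal independent sets. That worry is overstated: for $|I|=k\ge1$ the general-position hypothesis on $e_1,\dots,e_r$ (built into the construction of $\widehat L$) forces $\det((A_k)_I)\neq0$, so the target coordinate $p_I$ survives at the corresponding fixed point, and since the indeterminacy locus is closed and $\T^n$-invariant it would have to contain a fixed point if nonempty.

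More seriously, the fan-comparison route you propose does not work as written. For a toric morphism $X_{\widehat L}\to Y_{F_\bullet}$ one needs every cone of the normal fan of $P_M$ to map, under $\R^E\to\R^E/\R\1$, into a single cone of the normal fan of $\Delta(\FlagM_M)$. But the normal cone of the vertex $0\in P_M$ is the orthant $-\R^E_{\ge0}$, whose image under this quotient is all of $\R^E/\R\1$; it therefore meets every maximal cone of the target fan, and no compatible map of fans exists. Your proposed justification (``the normal fan of $P_M$, pushed forward, refines the normal fans of the slices'') conflates the quotient-fan construction of Kapranov--Sturmfels--Zelevinsky with the fan-map criterion for toric morphisms; these are different conditions, and the latter fails here. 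So regularity cannot be obtained along the lines you sketch, and one is pushed back to the direct Pl\"ucker argument the paper uses.

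For the fibre your argument is the paper's, only more explicit: where the paper simply asserts that $(gA,E)=\t\cdot(A,E)$ forces $g$ scalar, you observe that $A_J^{-1}gA_J=\mathrm{diag}(\t|_J)$ for each basis $J$, so the eigenvalue multiset of $g$ equals $\{t_j:j\in J\}$ independently of $J$, and then use basis exchange in a connected $M$ to conclude that all $t_j$ agree. This is a genuine clarification; the connectedness hypothesis is left implicit in the paper.
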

\begin{proof}
    The flag variety $\Fl{n,r}$ is embedded in the projective space over
    $\bigoplus_{k=1}^r \bigwedge^k\C^n$ with coordinates $(p_K)_{K}$, where
    $K$ ranges over all non-empty subsets of $[n]$ of size $|K| \le r$.
    If $F_\bullet$ is represented by $A$,
    then it is represented by the flag minors $(p(F_\bullet)_K)_{K}$
    with $p(F_\bullet)_K = \det((A_k)_K)$, where $k = |K|$;
    see~\cite[Ch.~14.1]{MillerSturmfels}. Let $\widehat{L} =
    \rspan(\widehat{A})$, where $\widehat{A} = (A,E)$. On the level of Plücker
    vectors, the map $\phi$ is given by a coordinate projection: For $K
    \subseteq [n]$ and $|K| = k$, $p(\phi(\widehat{L}))_K =
    p(\widehat{L})_{K \cup \{n+1,\dots,n+k\}}$.
    Let $(gA,E)$ represent a preimage of $F_\bullet$. Then $(gA,E) = \t \cdot
    (A,E)$ if and only if $g$ is a multiple of the identity matrix.
\end{proof}

\newcommand\Fan{\mathscr{N}}%
Kapranov, Sturmfels, and Zelevinsky~\cite{KSZ} studied quotients of toric
varieties by subtori. Let $X \subset \Proj^{n-1}$ be a projective toric
variety with $n$-dimensional torus $\T$ and fan $\Fan$ in $\R^n$. A subtorus
$H \subset \T$ is represented by a rational subspace $U \subset \R^n$. Define
an equivalence relation on $\R^n/U$ by setting $q + U \sim q'+U$ if $q + U$
meets the same cones of $\Fan$ as $q' + U$. The equivalence classes form a fan
$\Fan/U$ in $\R^n/U$ called the \Def{quotient fan}. A toric variety $Y$ with
fan $\Fan/U$ is called a \Def{combinatorial quotient}. We can now state the
relationship between $X_{\widehat{L}}$ and $Y_{F_\bullet}$.

\begin{theorem}\label{thm:comb_quot}
    Let $F_\bullet$ be a very general flag. Then the toric variety
    $Y_{F_\bullet}$ is a combinatorial quotient for the action of $H$ on
    $X_{\widehat{L}}$.  Moreover, $Y_{F_\bullet}$ is a smooth toric variety.
\end{theorem}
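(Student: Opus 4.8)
The plan is to identify the fan of $Y_{F_\bullet}$ with the quotient fan $\Fan/U$ of $X_{\widehat{L}}$ by $H$, and then to deduce smoothness from the nestohedron description of Section~\ref{sec:flag}. After deleting loops we may assume $P_M$ is full-dimensional; this changes none of the polytopes or fans below. By \Cref{cor:toric_indep}, $X_{\widehat{L}}$ is the projective toric variety with moment polytope $P_M$, so its fan $\Fan$ is the normal fan of $P_M$, and the subtorus $H = \{(t,\dots,t) : t \in \C^*\} \subseteq \T^n$ is represented by the line $U = \R\1 \subseteq \R^n$. On the other hand, since $F_\bullet$ is very general, \Cref{thm:genflag_uflagmat} identifies the moment polytope of $Y_{F_\bullet}$ with $\Delta(\FlagM_M)$, which by \Cref{Ithm:relation} is normally equivalent to $\MPP{P_M}$. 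Hence the fan of $Y_{F_\bullet}$ is the normal fan of $\MPP{P_M}$, and it remains to show that the normal fan of $\MPP{P_M}$ equals the quotient fan $\Fan/U$.

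For that step, fix $\wt \in \R^n$ and consider the affine line $\wt + \R\1$. The cones of $\Fan$ that this line meets are determined by, and determine, the chain of faces $\bigl((P_M)^{\wt + t\1}\bigr)_{t \in \R}$: as $t$ runs from $-\infty$ to $+\infty$, this chain sweeps through the faces of $P_M$ lying over the vertices and edges of the upper hull of $\pi(P_M)$ with $\pi(x) = (\1(x), \inner{\wt}{x})$, which is precisely the coherent $\1$-cellular string of $P_M$ determined by $\wt$. By the theorem of Billera--Sturmfels, this cellular string is the combinatorial datum of the face $\MPP{P_M}^{\wt}$. Therefore $\wt + \R\1$ and $\wt' + \R\1$ meet the same cones of $\Fan$ if and only if $\MPP{P_M}^{\wt} = \MPP{P_M}^{\wt'}$; that is, $\Fan/U$ equals the normal fan of $\MPP{P_M}$. (This is the one-dimensional instance of the principle that the quotient fan is the normal fan of the relevant fiber polytope; by \Cref{thm:MPP_Baues} one in fact knows that the whole Baues poset $\Baues(P_M,\1)$, not just its coherent part, is the face lattice of $\MPP{P_M}$.) Together with the preceding Proposition, which realizes $\phi$ as a regular surjection $X_{\widehat{L}} \to Y_{F_\bullet}$ whose fibres are the $H$-orbits, this exhibits $Y_{F_\bullet}$ as a combinatorial quotient for the action of $H$ on $X_{\widehat{L}}$ in the sense of \cite{KSZ}.

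For smoothness, the fan of $Y_{F_\bullet}$ is the normal fan of $\Delta(\FlagM_M)$, which by \Cref{Imainthm:ppnesto} applied to $f = r_M$, together with the identification $\Delta(\FlagM_M) \simeq B_{\Mf{r_M}}$ from the remark following \Cref{thm:FlagDelta}, is the normal fan of the nestohedron $\Delta(\UC(M))$ for $\UC(M) = \{E \setminus F : F \text{ a flat of } M\}$. Nested set fans of building sets are unimodular: one obtains them from the smooth normal fan of a product of simplices by a sequence of stellar subdivisions at rays, each of which is a blow-up along a smooth torus-invariant subvariety; see \cite{GenPermOrig, nestsets}. Hence $Y_{F_\bullet}$ is a smooth projective toric variety.

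I expect the heart of the argument to be the identification, in the second paragraph, of the combinatorially defined quotient fan $\Fan/U$ with the normal fan of the monotone path polytope $\MPP{P_M}$; the remainder assembles results already at hand (\Cref{cor:toric_indep}, \Cref{thm:genflag_uflagmat}, \Cref{Ithm:relation}, \Cref{Imainthm:ppnesto}) together with the standard smoothness of nested set fans. A minor point to settle first is the reduction to loopless $M$, which is harmless since loops only shrink the effectively acting torus and leave the quotient fan unchanged.
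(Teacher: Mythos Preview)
Your proof is correct and follows the same overall architecture as the paper: identify the moment polytope of $X_{\widehat{L}}$ as $P_M$ via \Cref{cor:toric_indep}, identify the quotient fan for $H$ with the normal fan of $\MPP{P_M}$, and then use \Cref{thm:genflag_uflagmat} and \Cref{thm:FlagDelta}/\Cref{Ithm:relation} to match this with the fan of $Y_{F_\bullet}$.

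There are two places where your execution differs from the paper. First, the paper does not argue the identity ``quotient fan $=$ normal fan of the monotone path polytope'' directly; instead it invokes the Kapranov--Sturmfels--Zelevinsky theorem that the Chow quotient of a projective toric variety by a subtorus is the toric variety of the fiber polytope, and then observes that the fiber polytope for $\pi = \1$ is precisely $\MPP{P_M}$. Your sweep argument in the second paragraph is an elementary proof of the one-dimensional case of this KSZ result and is perfectly fine; it has the advantage of being self-contained, while the paper's citation makes the structural reason (fiber polytope $=$ Chow quotient polytope) more visible. Second, for smoothness the paper argues that $\MPP{P_M}$ is a \emph{simple} generalized permutahedron (\Cref{cor:flag_simple}), so at each vertex the $\dim\MPP{P_M}$ primitive edge directions are of the form $e_i-e_j$ and therefore form a lattice basis; you instead appeal to the fact that nested set fans arise by iterated stellar subdivision of a smooth fan. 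Both arguments are valid; the paper's is slightly more direct here since simplicity has already been established, while yours imports an external structural fact about nestohedra.
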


In~\cite{KSZ}, the authors construct a canonical combinatorial quotient
associated to $X$ and $H$, called the \Def{Chow quotient} $X/\!\!/H$. This is
a toric variety associated to the Chow form of the closure of $H \cdot E_0$,
where $E_0$ is the distinguished point of $X$. The embedding $H \subset T$
yields a linear projection $\pi : \R^n \to U$.  Let $\Sigma_\pi(P)$ be the
fiber polytope~\cite{BSFiberPoly} of the pair $(P,\pi)$; see
also~\cite[Section 2]{KSZ}.  The following is a consequence of Theorem 2.1,
Proposition 2.3, and Lemma 2.6 of \cite{KSZ}.

\begin{theorem}
    Let $X$ be the toric variety associated to the lattice polytope $P$.  Then
    the Chow quotient $X/\!\!/H$ is the toric variety associated to the fiber
    polytope $\Sigma(P,\pi)$. 
\end{theorem}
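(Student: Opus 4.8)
The plan is to obtain the statement by assembling the three cited results of \cite{KSZ}, tracking how the combinatorial data is transported at each step; as a sanity check, when $P$ is a simplex this specializes to the classical description of the Chow quotient of projective space by a torus as the toric variety of the secondary polytope, which by Billera--Sturmfels is itself a fiber polytope.

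First I would unwind the definition: by construction $X/\!\!/H$ is the toric variety associated to the Chow polytope of the cycle $\overline{H \cdot E_0} \subset X$, where $E_0$ is the distinguished point, and Theorem~2.1 of \cite{KSZ} is what guarantees that this is indeed a toric variety and describes its fan. Next, Proposition~2.3 of \cite{KSZ} identifies this Chow polytope with the fiber polytope $\Sigma(P,\pi)$ for the projection $\pi : \R^n \to U$ induced by the inclusion $H \subset \T$: the vertices of the Chow polytope are indexed by the $\pi$-coherent polyhedral subdivisions of $P$, which are precisely the subdivisions parametrising the faces of $\Sigma(P,\pi)$, and this correspondence is compatible with normal fans. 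Finally, Lemma~2.6 of \cite{KSZ} supplies the lattice bookkeeping needed to upgrade this normal-fan statement to an identification of toric varieties, i.e.\ that the normal fan of $\Sigma(P,\pi)$, equipped with the appropriate lattice in $\R^n/U$, is exactly the quotient fan $\Fan/U$. Chaining these three identifications yields that $X/\!\!/H$ is the toric variety of $\Sigma(P,\pi)$.

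The one genuine mathematical point, as opposed to definitional unwinding and lattice bookkeeping, is the matching in the middle step: that the face poset of the Chow polytope of $\overline{H \cdot E_0}$ is anti-isomorphic, compatibly with normal fans, to the poset of $\pi$-coherent subdivisions of $P$. This is exactly what \cite[Thm.~2.1 and Prop.~2.3]{KSZ} provide, so in the write-up I would cite it directly rather than reprove it; everything else in the argument is assembling the normal-fan and lattice identifications.
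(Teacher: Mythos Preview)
Your proposal is correct and matches the paper's approach: the paper does not give a proof at all but simply records the statement as ``a consequence of Theorem~2.1, Proposition~2.3, and Lemma~2.6 of \cite{KSZ},'' which is exactly the chain of identifications you sketch. Your write-up is in fact more detailed than what the paper provides.
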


\begin{proof}[Proof of Theorem~\ref{thm:comb_quot}]
    By Corollary~\ref{cor:toric_indep}, the polytope associated to
    $X_{\widehat{L}}$ is the independence polytope $P_M$ of the matroid $M =
    M(L)$ for $L = \rspan(A)$. The linear subspace associated to the subtorus
    $H$ is $U = \{ (u,u,\dots,u) : u \in \R \}$. The linear projection $\pi$
    is $\pi(x) = x_1 + \cdots + x_n$. Hence the fiber polytope $\Sigma(P,\pi)$
    is the monotone path polytope $\MPP{P_M}$. If $F_\bullet$ is very general,
    then the moment polytope of $Y_{F_\bullet}$ is $\Delta(\FlagM_M)$ by
    Theorem~\ref{thm:genflag_uflagmat}.  The first claim now follows from
    Theorem~\ref{thm:FlagDelta} and the fact that normally equivalent
    polytopes have the same underlying fan.

    As for the second claim, we note from Theorem~\ref{Imainthm:ppnesto} (see
    also Corollary~\ref{cor:flag_simple}) that $\MPP{P_M}$ is a simple
    generalized permutahedron. This implies that at every vertex, there are
    precisely $\dim \MPP{P_M}$ many incident edges and primitive vectors along
    the edge directions are of the form $e_i - e_j$ and hence provide a
    lattice basis.  This is equivalent to $Y_{F_\bullet}$ being smooth.
\end{proof}

We can extend this relation to realizable polymatroids; see end of
Section~\ref{sec:polymatroids}. Let $f : 2^{[n]} \to \Z_{\ge0}$ be a integral
polymatroid realized by linear subspaces $V_1,\dots,V_n \subset \C^r$ so that
$f(I) = \dim_\C \sum_{i \in I} V_i$. For $i=0,\dots,n$ define $s_i =
\sum_{j=1}^i \dim V_j$. We can represent $f$ by a full-rank matrix $A =
(a_1,\dots,a_{s_n}) \in \C^{r \times s_n}$ by letting
$a_{s_{i-1}+1},\dots,a_{s_i}$ be a basis of $V_i$.  Let $L_f = \rspan(A,E) \in
\Gr{s_n+r,r}$. We view $T^n$ as a subtorus of $T^{s_n+r}$ 
\[
    \T^n = \{ (t_1,\dots,t_1,t_2,\dots,t_2,\dots,t_n,\dots,t_n,1,\dots,1) :
    t_1,\dots,t_n \in \C^* \} \, .
\]
The same argument as before then shows 
\begin{theorem}\label{thm:X_f}
    Let $L_f \in \Gr{s_n+r,r}$ as above. The Zariski closure of the orbit
    $\T^n \cdot L_f$ is a projective toric variety $X_f$ with moment polytope
    $P_f$.
\end{theorem}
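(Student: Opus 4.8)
The plan is to run the argument of Corollary~\ref{cor:toric_indep} almost verbatim, the only genuinely new inputs being the bookkeeping of $\T^n$-weights under the embedding $\T^n\subseteq\T^{s_n+r}$ and a combinatorial identification of the resulting polytope with $P_f$. I would begin by recalling the standard fact that the Zariski closure of an orbit of an algebraic torus $\T$ in a projective space $\Proj(W)$ is a projective toric variety whose moment polytope is the convex hull of the $\T$-weights occurring in $W$ with nonzero coefficient at the chosen point. Applied to the Plücker embedding $\Gr{s_n+r,r}\hookrightarrow\Proj(\bigwedge^r\C^{s_n+r})$ and to $L_f=\rspan(A,E)$, this shows that $X_f:=\overline{\T^n\cdot L_f}$ is a projective toric variety and identifies its moment polytope with $\conv\{\mu_J:p(L_f)_J\neq 0\}$, where $J$ ranges over the $r$-subsets of $[s_n+r]$ and $\mu_J\in\Znn^n$ has $i$-th coordinate $|J\cap\{s_{i-1}+1,\dots,s_i\}|$ (the last $r$ coordinates of $\C^{s_n+r}$ carry $\T^n$-weight $0$). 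Equivalently this is the image of the moment map obtained by restricting the map recalled in the text to $\T^n$,
\[
    \mu(L)_i \ = \ \frac{\sum_J |J\cap\{s_{i-1}+1,\dots,s_i\}|\,|p(L)_J|^2}{\sum_J |p(L)_J|^2}\,,
\]
and, as in Corollary~\ref{cor:toric_indep}, the vertices may alternatively be produced as limits $\lim_{t\to\infty}\lambda^\wt(t)\cdot L_f$ along generic one-parameter subgroups.

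Next I would prove $\mu(X_f)\subseteq P_f$. Write $J=J'\sqcup J''$ with $J'\subseteq[s_n]$ and $J''\subseteq\{s_n+1,\dots,s_n+r\}$. If $p(L_f)_J\neq 0$ then the columns $\{a_k:k\in J'\}$ of $A$ are linearly independent. For any $A\subseteq[n]$ the columns $\{a_k:k\in J'\cap\bigcup_{i\in A}\{s_{i-1}+1,\dots,s_i\}\}$ are then linearly independent vectors of $\sum_{i\in A}V_i$, so their number, namely $\sum_{i\in A}|J'\cap\{s_{i-1}+1,\dots,s_i\}|=\1_A(\mu_J)$, is at most $\dim_\C\sum_{i\in A}V_i=f(A)$. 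Hence $\mu_J\in P_f$, which gives the inclusion.

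The reverse inclusion is the only new step, and for it it suffices to realize every vertex $v$ of $P_f$ as some $\mu_J$ with $p(L_f)_J\neq 0$. By Theorem~\ref{thm:greedy} there is a permutation $\sigma$ of $[n]$ and an index $m$ with $v_{\sigma(i)}=f(A_i)-f(A_{i-1})$ for $i\le m$ and $v_{\sigma(i)}=0$ for $i>m$, where $A_i=\{\sigma(1),\dots,\sigma(i)\}$. Running the matroid greedy algorithm on the columns of $A$ listed block-by-block in the order $V_{\sigma(1)},V_{\sigma(2)},\dots,V_{\sigma(m)}$ — at block $\sigma(i)$ keeping a maximal subset of its basis $a_{s_{\sigma(i)-1}+1},\dots,a_{s_{\sigma(i)}}$ that is independent modulo the span accumulated so far — produces $J'\subseteq[s_n]$ with $\{a_k:k\in J'\}$ linearly independent and with exactly $f(A_i)-f(A_{i-1})=v_{\sigma(i)}$ indices of $J'$ in block $\sigma(i)$; thus the vector with $i$-th coordinate $|J'\cap\{s_{i-1}+1,\dots,s_i\}|$ equals $v$, and $|J'|=\1(v)=f(A_m)\le f(E)=r$. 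It remains to complete $\{a_k:k\in J'\}$ to a basis of $\C^r$ by $r-|J'|$ of the standard basis vectors $e_1,\dots,e_r$, that is, to find $S\subseteq[r]$ with $|S|=r-|J'|$ and $\mathrm{span}\{a_k:k\in J'\}\oplus\langle e_l:l\in S\rangle=\C^r$; this is automatic, with no general-position hypothesis needed: let $B$ be the $r\times|J'|$ matrix with columns $\{a_k:k\in J'\}$, pick a nonzero maximal minor of $B$ with row set $R$, and take $S=[r]\setminus R$. Then $J:=J'\cup\{s_n+l:l\in S\}$ has $p(L_f)_J\neq 0$ and $\mu_J=v$ (elements $>s_n$ do not affect $\mu$). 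Hence every vertex of $P_f$ lies in $\mu(X_f)$, so $P_f\subseteq\mu(X_f)$, and combining with the previous paragraph gives $\mu(X_f)=P_f$.

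I expect the main obstacle to be the bookkeeping in the block-greedy construction of $J'$: one must check that at each block the newly added independent vectors can always be chosen from the fixed basis of $V_i$, which holds because their images span the relevant quotient $\bigl(\sum_{j\le i}V_{\sigma(j)}\bigr)/\bigl(\sum_{j<i}V_{\sigma(j)}\bigr)$, and one must make the elementary linear-algebra fact about complementary coordinate subspaces precise. Everything else is a direct transcription of Corollary~\ref{cor:toric_indep}.
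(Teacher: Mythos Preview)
Your proof is correct and follows the approach the paper indicates (it gives no separate proof for this theorem, only the remark ``the same argument as before then shows,'' referring to Corollary~\ref{cor:toric_indep}). You are in fact more careful than the paper on one point: the matroid version explicitly assumed $e_1,\dots,e_r$ to be in general position relative to the columns of $A$, whereas you correctly observe that here any linearly independent set of columns can be completed to a basis by \emph{some} subset of the standard basis vectors via the nonzero-maximal-minor argument, so no general-position hypothesis is needed (and none is stated).
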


The matrix $A$ also defines a flag $F_\bullet \in \Fl{s_n,r}$ and a toric
variety $Y_{f}$ with respect to the action of $\T^n$. The constituents in
every $\Gr{n,i}$ are not so easy to describe as they depend on the choice of a
basis for each $V_i$. However, the relation between the toric
varieties stays intact and the same proof as for Theorem~\ref{thm:comb_quot}
yields the following.

\begin{theorem}\label{thm:comb_quot_poly}
    Let $F_\bullet$ be a very general flag,  Then the toric variety $Y_f$ is a
    combinatorial quotient for the action of $H$ on $X_f$ and the moment
    polytope of $Y_f$ is normally equivalent to $\MPP{f}$. In particular,
    $Y_f$ is a smooth toric variety for every realizable polymatroid.
\end{theorem}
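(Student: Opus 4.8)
The plan is to follow the proof of Theorem~\ref{thm:comb_quot} essentially verbatim, substituting polymatroid truncations for matroid truncations and exploiting that realizable polymatroids are integer valued. By Theorem~\ref{thm:X_f} the projective toric variety $X_f$ has moment polytope $P_f$. As before, the subtorus $H \subseteq \T^n$ is the diagonal one-parameter subgroup, corresponding to the line $U = \{(u,\dots,u) : u \in \R\}$, and the induced projection $\pi \colon \R^n \to \R^n/U \cong \R$ is $\pi(x) = \1(x)$. Hence the fiber polytope $\Sigma(P_f,\pi)$ is by definition the monotone path polytope $\MPP{f}$, and the Kapranov--Sturmfels--Zelevinsky correspondence (Theorem~2.1, Proposition~2.3, and Lemma~2.6 of \cite{KSZ}) identifies the Chow quotient $X_f /\!\!/ H$ with the projective toric variety of $\MPP{f}$; in particular its fan is the normal fan of $\MPP{f}$ and it is a combinatorial quotient for the $H$-action on $X_f$.

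Next I would compute the moment polytope of $Y_f$. Write $F_\bullet = (0 = F_0 \subset F_1 \subset \cdots \subset F_r)$ for the very general flag, so that $F_i = \rspan(A_i)$ is obtained from a generic representative of the realizing matrix $A$. With respect to the block torus $\T^n$, the step $F_i$ is a realization of the truncation $f_i(A) = \min(i, f(A))$: the first $i$ rows of $gA$ induce a generic surjection $\C^r \to \C^i$ that carries $\sum_{j \in I} V_j$ to a subspace of dimension $\min(i, f(I))$, and this is the natural extension to polymatroids of the notion of a very general flag (such flags exist since this property holds for the flag determined by $gA$ with $g \in \GL(\C^r)$ generic). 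By \cite[Thm.~6.19]{FlagMatAlgGeo}, applied as in the discussion preceding Theorem~\ref{thm:genflag_uflagmat} and then pushed forward along the block-sum projection $\R^{s_n} \to \R^n$, the moment polytope of $Y_f$ is the Minkowski sum $B_{f_1} + B_{f_2} + \cdots + B_{f_r}$.

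It then remains to check that this Minkowski sum is normally equivalent to $\MPP{f}$. Since $f$ is realizable it is integer valued, so the distinct values $I' = \{0 = t_0 < t_1 < \cdots < t_m = f(E)\}$ of $\1$ on $V(P_f)$ all lie in $\{0,1,\dots,r\}$, and $t_1,\dots,t_m \in \{1,\dots,r\}$. For $t_j < i < t_{j+1}$ the slice $B_{f_i} = P_f \cap \{\1 = i\}$ is normally equivalent to $B_{f_{t_j}} + B_{f_{t_{j+1}}}$, so the common refinement of the normal fans of $B_{f_1},\dots,B_{f_r}$ equals the common refinement of the normal fans of $B_{f_{t_1}},\dots,B_{f_{t_m}}$, which is the normal fan of $\sum_{s \in I'} P_f \cap \{\1 = s\}$; by Proposition~\ref{prop:MPPnorm} this is the normal fan of $\MPP{f}$. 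Therefore $Y_f$ has the same fan as $X_f /\!\!/ H$, hence is a combinatorial quotient for the $H$-action with moment polytope normally equivalent to $\MPP{f}$. Finally, by Theorem~\ref{Imainthm:ppnesto} (see also Corollary~\ref{cor:flag_simple}) the polytope $\MPP{f}$ is a simple generalized permutahedron, so at every vertex the incident edge directions are primitive vectors $e_i - e_j$ forming a lattice basis, which is equivalent to $Y_f$ being smooth.

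The step I expect to be the main obstacle is the identification of the moment polytope of $Y_f$ with $B_{f_1} + \cdots + B_{f_r}$: one must verify that passing to a generic realizing matrix makes the constituent polymatroid in each $\Gr{s_n,i}$ equal to the truncation $f_i$, and that the Minkowski additivity of moment polytopes of flag orbit closures from \cite{FlagMatAlgGeo} survives the restriction from the full torus $\T^{s_n}$ to the block subtorus $\T^n$ (equivalently, that it is compatible with push-forward of the flag matroid polytope along the block-sum map). Everything else is either a direct transcription of the matroid case or the elementary fan computation above.
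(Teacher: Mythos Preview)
Your proof is correct and follows the paper's approach, which simply states that ``the same proof as for Theorem~\ref{thm:comb_quot} yields'' the result; you have in fact supplied the details that the paper leaves implicit, in particular the identification of the moment polytope of $Y_f$ with $B_{f_1}+\cdots+B_{f_r}$ and the normal equivalence with $\MPP{f}$ via Proposition~\ref{prop:MPPnorm}. Regarding your flagged obstacle: the moment map for a subtorus action is the composition of the full-torus moment map with the linear projection dual to the inclusion $\T^n \hookrightarrow \T^{s_n}$, so both the Minkowski-sum description of the flag moment polytope from \cite{FlagMatAlgGeo} and the identification of each summand push forward along the block-sum map $\R^{s_n} \to \R^n$, carrying $B_{M(F_i)}$ to $B_{f_i}$ exactly as you claim.
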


\newcommand\ef{f'}%
\newcommand\eE{E'}%
\section{Independent Set Greedy Paths and partial permutahedra}\label{sec:indep_greedy}

The base polytope of the flag polymatroid $\MPP{f}$ of Section~\ref{sec:MPP}
is a polytope whose vertices encode the different greedy paths for optimizing
on the base polytope $B_f$. The greedy algorithm (Theorem~\ref{thm:greedy})
can also be used to optimize linear functions over $P_f$ by simply stopping
when $\wt_{\sigma(i)} < 0$. It turns out that up to a simple modification of
the polymatroid, the space of partial greedy paths may also be represented by
a flag polymatroid. We apply this extension to resolve a conjecture on partial
permutation polytopes of Heuer--Striker~\cite{partperms}.

For a polymatroid $f : 2^E \to \R$ with $E = [n]$, define $\ef : 2^{\eE}
\to \R$ with $\eE := [n+1]$ by 
\[
    \ef(A) \ := \ \begin{cases}
        f(A) & \text{if } n+1 \not \in A\\
        f(E) & \text{otherwise.}
    \end{cases}
\]

\begin{prop}
    Let $f$ be a polymatroid and $\ef$ as defined above. Then $(\eE,\ef)$ is a
    polymatroid with base polytope
    \[
        B_{\ef} \ = \ \{ (x,f(E)-\1(x)) : x \in P_f \} \ \cong \ P_f \, .
    \]
\end{prop}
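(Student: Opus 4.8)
The plan is to verify directly that $\ef$ is monotone and submodular, and then to identify the base polytope by chasing definitions. First I would check monotonicity: for $A \subseteq B \subseteq \eE$, if $n+1 \notin B$ then $\ef(A) = f(A) \le f(B) = \ef(B)$ by monotonicity of $f$; if $n+1 \in B$ but $n+1 \notin A$, then $\ef(A) = f(A) \le f(E) = \ef(B)$ using monotonicity of $f$ together with $A \subseteq E$; and if $n+1 \in A$ then $\ef(A) = f(E) = \ef(B)$. So monotonicity is immediate. For submodularity, I would check $\ef(A) + \ef(B) \ge \ef(A \cup B) + \ef(A \cap B)$ by cases on whether $n+1$ lies in $A$, in $B$, or in neither. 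When $n+1 \notin A \cup B$ this is just submodularity of $f$. When $n+1$ lies in exactly one of them, say $A$, the inequality reads $f(E) + f(B) \ge f(E) + f(B \setminus (n+1)) = f(E) + f(B)$ since $n+1 \notin B$, so it holds with equality after canceling $f(E)$ — more precisely it reduces to $f(B \cap E) \le f(B \cap E)$, i.e.\ $f(A \cap B) = f((A\setminus(n+1)) \cap B) \le f(B)$, which is monotonicity. When $n+1 \in A \cap B$, both sides have a matching $f(E)$ and the inequality reduces again to submodularity (or monotonicity) of $f$ on subsets of $E$. Also $\ef(\emptyset) = f(\emptyset) = 0$. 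This establishes that $(\eE,\ef)$ is a polymatroid.

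For the base polytope, recall from~\eqref{eqn:B_f} that $B_{\ef} = \{ z \in P_{\ef} : \1(z) = \ef(\eE) \}$ and $\ef(\eE) = f(E)$ since $n+1 \in \eE$. I would write a point of $\R^{\eE}$ as $(x, y)$ with $x \in \R^n$ and $y = z_{n+1}$. The constraint $\1(z) = f(E)$ forces $y = f(E) - \1(x)$, so $B_{\ef}$ is the graph of the affine map $x \mapsto (x, f(E) - \1(x))$ over the set of $x \in \R^n_{\ge 0}$ with $f(E) - \1(x) \ge 0$ satisfying all the defining inequalities $\1_A(z) \le \ef(A)$. For $A \subseteq E$ these read $\1_A(x) \le f(A)$, which is exactly the condition $x \in P_f$. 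For $A$ containing $n+1$, writing $A = A' \cup \{n+1\}$ with $A' \subseteq E$, the inequality is $\1_{A'}(x) + y \le f(E)$, i.e.\ $\1_{A'}(x) + f(E) - \1(x) \le f(E)$, i.e.\ $\1_{E \setminus A'}(x) \ge 0$, which is implied by $x \ge 0$. Finally the inequality $y \ge 0$ becomes $\1(x) \le f(E)$, which holds for every $x \in P_f$ since $B_f \ne \emptyset$ and $\1$ is maximized at value $f(E)$. Hence the projection $(x,y) \mapsto x$ is a bijection (indeed an affine isomorphism onto its image) between $B_{\ef}$ and $P_f$, giving $B_{\ef} = \{ (x, f(E) - \1(x)) : x \in P_f \} \cong P_f$.

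I do not expect any genuine obstacle here; the only mild subtlety is making sure the inequality $y \ge 0$ (equivalently $\1(x) \le f(E)$) is automatically satisfied on $P_f$, which follows because the base polytope $B_f = P_f^{\1}$ is nonempty with $\1$-value $f(E)$ and $P_f$ is anti-blocking (so no point of $P_f$ exceeds $\1$-value $f(E)$). One should also note the map is affine with lattice-compatible inverse, so the isomorphism is as polytopes, not merely combinatorially; this is what the symbol $\cong$ is meant to convey.
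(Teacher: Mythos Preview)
Your proof is correct. You take a different route from the paper: you verify directly that $\ef$ is monotone and submodular by case analysis on whether $n+1$ lies in the sets, and then read off the base polytope from the inequality description of $P_{\ef}$. The paper instead starts from the candidate polytope $B'_f := \{(x,f(E)-\1(x)) : x \in P_f\}$, checks that its edge directions are all of the form $e_i - e_j$ (by lifting edges of $P_f$), concludes from the edge-direction characterization of generalized permutahedra that $B'_f = B_g$ for some polymatroid $g$, and then recovers $g$ via $g(A) = \max\{\1_A(z) : z \in B'_f\}$, showing $g = \ef$. Your argument is more elementary and self-contained; the paper's argument avoids the case analysis but relies on the edge characterization of base polytopes. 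A small wording wobble in your submodularity case ``$n+1 \in A \cap B$'': there all four terms already equal $f(E)$, so the inequality is trivial and no further reduction to submodularity of $f$ is needed.
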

\begin{proof}
    For $x \in \R^E$, let $x' := (x,f(E)-\1(x))$.  Let $B'_f := \{ x' : x \in
    P_f \}$, which is linearly isomorphic to $P_f$. Every edge of $B'_f$ is of
    the form $[u',v']$ where $[u,v] \subseteq P_f$ is an edge.  If $u-v =
    \mu(e_i - e_j)$ for some $\mu \neq 0$, then $\1(u)=\1(v)$ and hence $u'-v'
    = \mu(e_i - e_j)$. If $u - v = \mu e_i$, then $u'-v' = \mu (e_i -
    e_{n+1})$. Hence $B'_f = B_g$ is a generalized permutahedron or
    polymatroid base polytope for some polymatroid $g : 2^{\eE} \to \R$. For
    $A \subseteq \eE$ we have $g(A) = \max\{ \1_A(x) : x \in B_g\}$. If $n+1
    \not\in A$, then $g(A) = \max \{ \1_A(x) : x \in P_f\} = f(A) = \ef(A)$.
    If $A = S \cup \{n+1\}$, then we maximize $\1_A(x') = \1_{S}(x) + f(E) -
    \1(x) = f(E) - \1_{E \setminus S}(x)$ over $P_f$. Since $0 \in P_f
    \subseteq \Rnn^E$, this means that the maximal value is $f(E) = \ef(A)$.
    Hence $g = \ef$ and $B_{\ef} \cong P_f$.
\end{proof}

For a weight $\wt \in \R^E$, define $\tilde{\wt} := (\wt,0) \in \R^{\eE}$.
Then optimizing $\wt$ over $P_f$ is precisely the same as optimizing
$\tilde{\wt}$ over $P_{\ef}$, which can be done with the usual greedy
algorithm. If $(j_1,j_2,\dots,j_k)$ represents a partial greedy path on $P_f$,
then $(j_1,j_2,\dots,j_k,n+1)$ is the corresponding greedy path on $P_{\ef}$.

From the definition we get that
\[
    \LF(\ef) \ = \ (\LF(f) \setminus \{E\}) \cup \{ \eE \} \, .
\]

\begin{cor}
    Let $(E,f)$ be a polymatroid. The $\1$-monotone paths on $P_f$ from $0$ to
    some vertex are in bijection to $\1$-monotone paths on $P_{\ef}$ from $0$
    to a vertex of $B_{\ef}$. All these greedy paths are coherent and the
    monotone path polytope $\MPP{\ef}$ is normally equivalent to the
    nestohedron with building set
    \[
        \UC(\ef) \ = \ \{ (E \setminus F) \cup \{n+1\} : F \in \LF(f) \} \, .
    \]
\end{cor}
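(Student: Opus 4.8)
The plan is to deduce the corollary by applying the general results \Cref{thm:MPP_Baues} and \Cref{Imainthm:ppnesto} to the polymatroid $(\eE,\ef)$ --- which is a polymatroid by the preceding proposition --- and then transporting the conclusions back to $P_f$. The first step is to pin down the bijection. Its forward direction is already recorded in the discussion before the corollary: a partial greedy path $(j_1,\dots,j_k)$ on $P_f$ ending at a vertex $v_k$ corresponds to the greedy path $(j_1,\dots,j_k,n+1)$ on $P_{\ef}$ ending at $(v_k,\,f(E)-\1(v_k))$, which is the image of $v_k$ under the linear isomorphism $P_f \cong B_{\ef}$ of the preceding proposition. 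To see this is a bijection, I would note first that $P_{\ef}$ is anti-blocking and contains $0$, with $B_{\ef} = P_{\ef}^{\1}$, so every $\1$-monotone path of $P_{\ef}$ runs from $0$ to a vertex of $B_{\ef}$; and second that in any such path, as soon as the basis of the current vertex contains $n+1$ we have $\ef$ of that basis equal to $f(E) = \ef(\eE)$, leaving no further $\1$-increasing edge. Hence $n+1$ occurs only as the last step, and deleting it (when present) recovers precisely a partial greedy path on $P_f$; this produces the inverse map.

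Coherence is then immediate from \Cref{thm:MPP_Baues} applied to $(\eE,\ef)$: every $\1$-cellular string on $P_{\ef}$, in particular every $\1$-monotone path, is coherent, which under the bijection is the assertion that every partial greedy path on $P_f$ is coherent.

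Finally, for the nestohedron presentation, \Cref{Imainthm:ppnesto} together with \Cref{thm:MPP_polymatroid}, applied to $(\eE,\ef)$, gives that $\MPP{\ef} = \MPP{P_{\ef}}$ is normally equivalent to the nestohedron for the building set $\{\,\eE \setminus F' : F' \in \LF(\ef)\,\}$. Substituting the identity $\LF(\ef) = (\LF(f)\setminus\{E\}) \cup \{\eE\}$ recorded just before the corollary, this building set equals $\{\,(E\setminus F)\cup\{n+1\} : F \in \LF(f),\ F\neq E\,\}\cup\{\emptyset\}$, which differs from the stated $\UC(\ef) = \{\,(E\setminus F)\cup\{n+1\} : F\in\LF(f)\,\}$ only in that the latter carries the singleton $\{n+1\}$ in place of the empty set. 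Since $\Delta_\emptyset$ contributes nothing and $\Delta_{\{n+1\}}=\{e_{n+1}\}$ is a single point, the corresponding nestohedra differ by a translation, hence are normally equivalent; transitivity of normal equivalence then closes the argument. I expect the only genuinely non-formal step to be the observation that $n+1$ must be the terminal element of every $\1$-monotone path on $P_{\ef}$ --- this is what forces the greedy-path correspondence to be a bijection --- while everything else is bookkeeping on top of results already established for general polymatroids.
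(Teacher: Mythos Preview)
Your proposal is correct and matches the paper's intended approach: the corollary is stated without proof in the paper, and is meant to follow exactly as you describe --- by applying \Cref{thm:MPP_Baues} and \Cref{Imainthm:ppnesto} to the polymatroid $(\eE,\ef)$ and using the computation $\LF(\ef) = (\LF(f)\setminus\{E\})\cup\{\eE\}$ recorded just before. Your handling of the two loose ends (that $n+1$ can only appear as the final step of a greedy path on $P_{\ef}$, and that replacing $\emptyset$ by $\{n+1\}$ in the building set merely translates the nestohedron) is correct and fills in details the paper leaves implicit.
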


\newcommand\ParPerm{\mathcal{P}}%
As an application of these tools, we completely resolve a conjecture of Heuer
and Striker~\cite{partperms} about partial permutahedra. For $m,n \ge 1$ the
\Def{$(m,n)$-partial permutahedron} $\ParPerm(m,n) \subset \R^m$ is the convex
hull of all points $x \in \{0,1,\dots,n\}^m$ such that the non-zero entries
are all distinct. 

\begin{customconj}{5.24}[\cite{partperms}]
    Faces of $\ParPerm(m,n)$ are in bijection with flags of subsets of $[m]$
    whose difference between largest and smallest nonempty subsets is at most
    $n-1$. A face of $\ParPerm(m,n)$ is of dimension $k$ if and only if the
    corresponding flag has $k$ missing ranks. 
\end{customconj}
	
In their paper, they prove the case when $m = n$ via the observation that
$\ParPerm(m,n)$ is the graph associahedron for the star graph, the so-called
\emph{stellohedron}. Graph associahedra are in particular nestohedra, and they
use the nested set structure to verify the desired bijection. Their missing
link for the general case with $m \neq n$ was a lack of a nested set
structure. We resolve their conjecture by using our new tools to endow the
partial permutahedron with a nested set structure. 

Note that from the convex hull description, it is apparent that the polytope
$\ParPerm(m,n)$ is anti-blocking. The vertices of $\ParPerm(m,n)$ are the
points $v \in \R^m$ with $0 \le k \le \max(m-n,0)$ zero entries and the
remaining entries a permutation of $\{n,n-1,\dots,n-(m-k)+1)\}$. The face $F$
of $\ParPerm(m,n)$ that maximizes $\1$ is the convex hull of permutations of
$(0,\dots,0,1,\dots,n)$ if $n \le m$ and $(n-m+1,\dots,n)$ if $n > m$. Since
$F$ is a permutahedron and $\ParPerm(m,n) = \Rnn^m \cap (F - \Rnn^m)$, we
conclude that the $(m,n)$-partial permutahedron $\ParPerm(m,n)$ is a
polymatroid polytope.

For $n > m$, $\ParPerm(m,n)$ is normally equivalent to the polymatroid of the
permutahedron and hence combinatorially (even normally) equivalent to 
$\ParPerm(m,m)$. Thus the only relevant case is $m > n$.

For $1 \le n \le m$, let $U_{m,n}$ be uniform matroid on $[m]$ of rank $n$.
The partial greedy paths for $U_{m,n}$ are precisely sequences
$(j_1,j_2,\dots,j_k)$ with $j_1,\dots,j_k \in [m]$ distinct and $k \le n$. The
corresponding chain of flats is $\emptyset = A_0 \subset A_1 \subset \cdots
\subset A_k$, where $A_i = \{j_1,\dots,j_i\}$ for $i < k$. If $k < n$, then
$A_k = \{j_1,\dots,j_k\}$ and $A_k = [m]$ otherwise.

The rank function of $U_{m,n}$ is given by $r_{m,n}(A) = \min(|A|,n)$ and we
let $f_{m,n} := r'_{m,n}$ as defined above. That is, $f_{m,n} : 2^{[m+1]}
\to \Znn$ with $f_{m,n}(A) = \min(|A|,n)$ if $m+1 \not \in A$ and $= n$
otherwise.

Let 
\[
    \ParPerm'(m,n) \ = \ \{ (x,\tbinom{n+1}{2} - \1(x)) : x \in \ParPerm(m,n)
    \} \ \subset \ \R^{m+1} \, .
\]
be the embedding of $\ParPerm(m,n)$ into the hyperplane $\{ y \in \R^{m+1} : y_1+\cdots+y_{m+1} = 1+
\cdots + n \}$.

\begin{theorem}
    The partial permutahedron $\ParPerm'(m,n)$ is normally equivalent to the
    polymatroid polytope of the flag polymatroid $\MPP{f_{m,n}}$.
\end{theorem}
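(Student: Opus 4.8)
The plan is to realize both $\ParPerm'(m,n)$ and $\MPP{f_{m,n}}$ as base polytopes obtained from $f_{m,n}$ by composing with a strictly increasing, strictly concave function, and then to argue that all such base polytopes carry the same normal fan, by tracing through the greedy algorithm on $P_{f_{m,n}}$ exactly as in Section~\ref{sec:MPP}.

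First I would identify the two polytopes. Since $\ParPerm(m,n)$ is a polymatroid polytope, it is $P_g$ for $g(A) := \max\{\1_A(x):x\in\ParPerm(m,n)\}$, and a direct count gives $g(A) = na - \tbinom a2$ for $|A| = a \le n$ and $g(A) = \tbinom{n+1}2$ for $|A|\ge n$. As $\tbinom{n+1}2 = g([m]) = \max_{x\in\ParPerm(m,n)}\1(x)$, combining the definition of $\ParPerm'(m,n)$ with the earlier identity $B_{\ef}=\{(x,f(E)-\1(x)):x\in P_f\}$ (applied to $f=g$) identifies $\ParPerm'(m,n)$ with the base polytope $B_{g'}$, where $g'$ extends $g$ to $[m+1]$ by $g'(A)=g([m])$ whenever $m+1\in A$; using $f_{m,n}(A)=\min(|A|,n)$ on $2^{[m]}$ and $f_{m,n}(A)=n$ for $m+1\in A$ one checks $g' = \alpha_0\circ f_{m,n}$ with $\alpha_0(t):=nt-\tfrac{t(t-1)}2$. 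On the other side, Theorem~\ref{thm:MPP_polymatroid} gives $\MPP{f_{m,n}} = B_{\Mf{f_{m,n}}}$ with $\Mf{f_{m,n}} = \alpha_1\circ f_{m,n}$, $\alpha_1(t):=2nt-t^2$, since $f_{m,n}([m+1])=n$. Both $\alpha_0$ and $\alpha_1$ vanish at $0$ and are strictly increasing and strictly concave on $[0,n]$, the interval containing all values of $f_{m,n}$ (the only slightly delicate point being that $\alpha_1$ is still strictly increasing across this closed interval although $\alpha_1'(n)=0$).

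It then suffices to prove that $B_{\alpha_0\circ f_{m,n}}$ and $B_{\alpha_1\circ f_{m,n}}$ are normally equivalent, and more generally that for a polymatroid $f$ and any strictly increasing, strictly concave $\alpha\colon[0,f(E)]\to\R$ with $\alpha(0)=0$, the base polytope $B_{\alpha\circ f}$ has a normal fan independent of $\alpha$. Running the greedy algorithm (Theorem~\ref{thm:greedy}) on $B_{\alpha\circ f}$ for a generic weight $\wt$: ordering $E$ by decreasing $\wt$, the $\sigma(t)$-coordinate of the output vertex is $\alpha(f(A_t))-\alpha(f(A_{t-1}))$, which is positive precisely when $\sigma(t)\notin\ov{A_{t-1}}$. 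Thus the vertex is determined by the sequence $j_*=(j_1,\dots,j_k)$ of selected indices together with the flats $F_i:=\ov{\{j_1,\dots,j_i\}}$, via $v_{j_i}=\alpha(f(F_i))-\alpha(f(F_{i-1}))$ and $v_e=0$ otherwise; the rule extracting $j_*$ from $\wt$ makes no reference to $\alpha$, exactly as in Corollary~\ref{cor:comb_vertices}. As in Corollary~\ref{cor:comb_vertices_coord} and Proposition~\ref{prop:all_insep}, strict concavity of $\alpha$ makes $j_*\mapsto$ vertex injective, so two generic weights yield the same vertex of $B_{\alpha\circ f}$ exactly when they extract the same $j_*$; since this condition does not involve $\alpha$, all the $B_{\alpha\circ f}$ share one normal fan. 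Taking $f=f_{m,n}$ and $\alpha\in\{\alpha_0,\alpha_1\}$ then gives $\ParPerm'(m,n)=B_{\alpha_0\circ f_{m,n}}$ normally equivalent to $B_{\alpha_1\circ f_{m,n}}=\MPP{f_{m,n}}$.

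The step I expect to be the main obstacle is the injectivity of $j_*\mapsto$ vertex in the general claim: one must exclude two distinct greedy sequences hitting the same vertex of $B_{\alpha\circ f}$. This is exactly what strict concavity provides. For $f=f_{m,n}$ it is a short direct check: the greedy sequences on $P_{f_{m,n}}$ are either an ordered $n$-subset of $[m]$ or such a sequence followed by $m+1$, and the coordinate values $\alpha_i(f(F_t))-\alpha_i(f(F_{t-1}))$ separate these cases (the last coordinate $v_{m+1}$ is positive in the second case and $0$ in the first, and within each case the first coordinate at which two sequences disagree records strictly decreasing increments of $\alpha_i$). In general the needed input is the same one already underlying Corollary~\ref{cor:comb_vertices}, and everything else is bookkeeping.
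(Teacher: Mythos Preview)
Your approach is correct and genuinely different from the paper's. The paper argues directly at the level of weights: it fixes $c\in\R^{m+1}$, writes down the permutation $\sigma$ of $[m]$ induced by $c$ together with the position of $c_{m+1}$, and checks that ``$c$ picks out a unique greedy path on $P_{f_{m,n}}$'' and ``$c$ picks out a vertex of $\ParPerm(m,n)$ via $\tilde c$'' reduce to the identical combinatorial condition on $\sigma$. Your route is more structural: you recognise both polytopes as $B_{\alpha\circ f_{m,n}}$ for two strictly increasing, strictly concave $\alpha$ (the computation $g'=\alpha_0\circ f_{m,n}$ is correct), and then argue via the greedy description of vertices that the normal fan of such a base polytope is insensitive to the choice of $\alpha$. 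This is a cleaner explanation of \emph{why} the two polytopes coincide and points toward a general principle (essentially a strengthening of Propositions~\ref{prop:same_flats}--\ref{prop:all_insep} from $\Mf f$ to any strictly concave reparametrisation), whereas the paper's argument is shorter and entirely elementary.

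Two small corrections you should make. First, your enumeration of greedy sequences on $P_{f_{m,n}}$ is incomplete: besides ordered $n$-subsets of $[m]$ and sequences $(j_1,\dots,j_{n-1},m{+}1)$, there are also the shorter sequences $(j_1,\dots,j_l,m{+}1)$ for $0\le l\le n-2$ (corresponding to weights with $\wt_{m+1}$ ranked $(l{+}1)$-st); these give genuine vertices of $B_{\alpha\circ f_{m,n}}$. Your injectivity argument extends to them without difficulty, since $f_{m,n}(F_t)=t$ for $t\le l$ and then jumps to $n$, so the strictly decreasing increments $\alpha(t)-\alpha(t-1)$ together with the support still recover $j_*$. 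Second, for the general claim you should note that the greedy argument presupposes $\alpha\circ f$ is again a polymatroid; this is not automatic for arbitrary polymatroids $f$, but for your two specific $\alpha_0,\alpha_1$ submodularity follows from the identifications $g'=\alpha_0\circ f_{m,n}$ (a polymatroid by construction) and $\Mf{f_{m,n}}=\alpha_1\circ f_{m,n}$ (Theorem~\ref{thm:MPP_polymatroid}), so the argument goes through for the theorem at hand.
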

\begin{proof}
    Let $c \in \R^{m+1}$ be a general linear function. We show that
    $\MPP{f_{m,n}}^c$ is a vertex if and only if $\ParPerm'(m,n)^c$ is a
    vertex. Equivalently, we show that $c$ determines a $\1$-monotone path on
    $P_{f_{m,n}}$ from $0$ to some vertex $u \in P_{f_{m,n}}$ if and only if
    $\ParPerm'(m,n)^c$ is a vertex. Let $\sigma$ be a permutation of $[m]$
    such that $c_{\sigma(1)} \ge c_{\sigma(2)} \ge \cdots \ge c_{\sigma(k)} >
    c_{m+1} \ge c_{\sigma(k+1)} \ge  \cdots \ge c_{\sigma(m)}$.  Now, $c$
    determines a greedy path on $P_{f_{m,n}}$ if and only if $c_{\sigma(i)}
    \neq c_{\sigma(j)}$ for $1 \le i < j \le \min(k,n)$.

    The face $\ParPerm'(m,n)^c$ is linearly isomorphic to the face
    $\ParPerm(m,n)^{\tilde{c}}$ for the function $\tilde{c} = (c_1 - c_{m+1},
    c_2 - c_{m+1},\dots, c_m - c_{m+1})$.  From the definition of vertices of
    $\ParPerm(m,n)$, we see that $\ParPerm(m,n)^{\tilde{c}}$ is a vertex if
    and only if the same condition is satisfied.
\end{proof}

\begin{cor} \label{cor:ppermnest}
    For $m \ge n \ge 1$, the $(m,n)$-partial permutahedron is combinatorially
    isomorphic to the nestohedron $\Delta(\UC(m,n))$ for the union closed set
    \[
        \UC(m,n) \ = \  \{ S \cup \{m+1\} : S \subseteq [m], |S| > m-n \text{
            or } S = \emptyset \}  \, .
    \]
\end{cor}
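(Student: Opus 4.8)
The plan is to chain together two facts already proved in this section. First, the map $x\mapsto(x,\binom{n+1}{2}-\1(x))$ is an injective affine map, so $\ParPerm(m,n)$ is affinely — hence combinatorially — isomorphic to $\ParPerm'(m,n)$, and the preceding theorem shows $\ParPerm'(m,n)$ is normally equivalent to $\MPP{f_{m,n}}$. Next, $f_{m,n}=r'_{m,n}$ is precisely the polymatroid obtained from the uniform matroid rank function $f=r_{m,n}$ by the one-element extension $f\mapsto\ef$ of this section, so the earlier corollary describing $\MPP{\ef}$ as a nestohedron applies and gives that $\MPP{f_{m,n}}$ is normally equivalent to the nestohedron $\Delta(\UC(f_{m,n}))$, where (writing $m+1$ for the added ground-set element, since here the original ground set is $[m]$)
\[
    \UC(f_{m,n}) \ = \ \{ ([m]\setminus F)\cup\{m+1\} : F\in\LF(r_{m,n}) \} \, .
\]
Since normal equivalence implies combinatorial isomorphism (Section~\ref{sec:background}), it remains only to identify $\UC(f_{m,n})$ with the family $\UC(m,n)$ in the statement.

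This last step is an elementary computation of the lattice of flats of $U_{m,n}$. Since $r_{m,n}(A)=\min(|A|,n)$, any $F\subseteq[m]$ with $|F|\le n-1$ satisfies $r_{m,n}(F\cup e)=|F|+1>r_{m,n}(F)$ for every $e\notin F$ and is therefore a flat, while a set $F$ with $|F|\ge n$ and $F\ne[m]$ has $r_{m,n}(F\cup e)=n=r_{m,n}(F)$ for any $e\notin F$ and is not a flat. Hence $\LF(r_{m,n})=\{F\subseteq[m]:|F|\le n-1\}\cup\{[m]\}$. Complementing in $[m]$ and adjoining $m+1$: a flat $F$ with $|F|\le n-1$ yields $S\cup\{m+1\}$ with $S=[m]\setminus F$ of size $m-|F|\ge m-n+1>m-n$, and conversely every $S\subseteq[m]$ with $|S|>m-n$ arises this way since then $[m]\setminus S$ has size at most $n-1$; the flat $[m]$ yields $\emptyset\cup\{m+1\}$, i.e.\ the case $S=\emptyset$. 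Thus $\UC(f_{m,n})=\{S\cup\{m+1\}:S\subseteq[m],\ |S|>m-n\text{ or }S=\emptyset\}=\UC(m,n)$, and combining the equivalences above gives $\ParPerm(m,n)\cong\ParPerm'(m,n)\cong\MPP{f_{m,n}}\cong\Delta(\UC(m,n))$ — indeed with the stronger conclusion of normal equivalence.

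I do not expect a genuine obstacle here: all of the geometric content is carried by the two cited results (and, through them, by Theorems~\ref{mainthm:ppnormmp} and~\ref{thm:MPP_polymatroid}), and the only new work is the description of $\LF(U_{m,n})$ together with the bookkeeping of complements and the added coordinate $m+1$. The one point worth a sentence is that the hypothesis $m\ge n\ge1$ forces $|[m]|=m>m-n$, so $[m]\cup\{m+1\}\in\UC(m,n)$ and $\bigcup\UC(m,n)=[m+1]$; this is what makes $\UC(m,n)$ a building set on the full ground set $[m+1]$, as required for the combinatorial description via Proposition~\ref{prop:nested_set_complex} that underlies the cited corollary.
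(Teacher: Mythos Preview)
Your proposal is correct and follows exactly the implicit argument the paper intends: the corollary is stated without proof, being the immediate concatenation of the preceding theorem ($\ParPerm'(m,n)$ normally equivalent to $\MPP{f_{m,n}}$), the earlier corollary on $\MPP{\ef}$ as a nestohedron for $\UC(\ef)$, and the routine identification of $\LF(U_{m,n})$. Your write-up simply makes these steps explicit, including the flat computation and the check that $[m+1]\in\UC(m,n)$, so there is nothing to add.
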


With this description, we are able to prove Conjecture $5.24$. Our proof is
analogous to their proof of the case in which $m = n$. 
	
\begin{theorem}\label{thm:heuer_striker}
    Faces of $\mathcal{P}(m,n)$ are in bijection with flags of subsets of
    $[m]$ whose difference between largest and smallest nonempty subsets is at
    most $n-1$. A face of $\mathcal{P}(m,n)$ is of dimension $k$ if and only
    if the corresponding flag has $k$ missing ranks.	
\end{theorem}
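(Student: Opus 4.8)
The plan is to read the face structure of $\ParPerm(m,n)$ off the nestohedron presentation of \Cref{cor:ppermnest} and then translate the nested set complex of $\UC(m,n)$ into the language of flags of subsets of $[m]$, following closely the argument that Heuer--Striker give when $m=n$ and $\ParPerm(n,n)$ is the stellohedron. First I would dispose of the case $n>m$: as recorded after \Cref{cor:ppermnest}, $\ParPerm(m,n)$ is then combinatorially equivalent to $\ParPerm(m,m)$, and for $n\ge m$ the condition ``the difference between the largest and smallest nonempty subsets is at most $n-1$'' is vacuous for flags of subsets of $[m]$; so the two statements refer to the same collection of flags and it suffices to treat $m\ge n$.

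Assume $m\ge n$. By \Cref{cor:ppermnest}, $\ParPerm(m,n)$ is combinatorially isomorphic to the nestohedron $\Delta(\UC(m,n))$ on ground set $[m+1]$ with $\UC(m,n)=\{S\cup\{m+1\}: S\subseteq[m],\ |S|>m-n \text{ or } S=\emptyset\}$. Since $[m+1]$ is the unique inclusion-maximal element of $\UC(m,n)$, this polytope has dimension $m$; by \Cref{prop:nested_set_complex} its face lattice is anti-isomorphic to the nested set complex $\NSC(\UC(m,n))$, with the face indexed by a nested set $\Nest$ having dimension $(m+1)-|\Nest|$. The remaining task is to put the nested sets of $\UC(m,n)$ into a dimension-consistent bijection with the flags described in the statement.

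To do this I would unwind the axioms (N1)--(N3) for the very explicit building set $\UC(m,n)$, equivalently describing the $\UC(m,n)$-forests via \Cref{prop:UC-Btrees}. Every non-singleton member of $\wBuild=\UC(m,n)\cup\binom{[m+1]}{1}$ contains $m+1$, so by (N1) the non-singleton members of a nested set form a chain $D^{(1)}\subsetneq\cdots\subsetneq D^{(p)}=[m+1]$; complementing within $[m+1]$ (cf.\ the discussion after \Cref{prop:UC-Btrees}) turns this into a nested chain of flats of the uniform matroid $U_{m,n}$, i.e.\ a chain of subsets of $[m]$ of size $<n$, to which $[m]$ is adjoined precisely when $\{m+1\}\in\Nest$. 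The remaining members of $\Nest$ are singletons $\{a\}$ with $a\in[m]$: applying (N2) to such a singleton together with the smallest apex member forces all these $a$ to lie in $D^{(1)}\setminus\{m+1\}$, while applying (N2) to $\{m+1\}$ together with several of them bounds their number by $m-n$. Assembling the chain of flats together with this extra data into a single flag of subsets of $[m]$, these two uses of (N2) translate exactly into the bound $n-1$ on the size difference of the extreme nonempty members, and conversely every admissible flag is produced this way; a careful count then relates the number of missing ranks of the flag to $(m+1)-|\Nest|$, which gives the dimension statement.

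The main obstacle is precisely this last translation: pinning down which part of the flag the singleton tubes and the optional set $\{m+1\}$ contribute to, verifying that the assignment is a genuine bijection onto the admissible flags (no flag produced twice, none missed), and getting the rank accounting right so that ``dimension $=$ number of missing ranks'' comes out correctly — this is the step where Heuer--Striker's bookkeeping for the stellohedron must be redone with $\UC(m,n)$ in place of the star-graph building set. Everything else is a direct unwinding of the nested set axioms.
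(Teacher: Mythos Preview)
Your plan is the paper's plan: invoke \Cref{cor:ppermnest}, read faces off the nested set complex of $\UC(m,n)$, and translate nested sets into flags in $[m]$. The structural observations you make (non-singleton tubes must form a chain because they all contain $m+1$; singleton tubes $\{a\}$ with $a\in[m]$ are forced by (N2) into the smallest chain member; the dimension formula $\dim = (m+1)-|\Nest|$) all appear in the paper's argument.

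The one place your sketch diverges is the concrete bijection. You propose to \emph{complement} the chain $D^{(1)}\subsetneq\cdots\subsetneq D^{(p)}$ inside $[m+1]$ to obtain a chain of flats of $U_{m,n}$, and then somehow fold in the singletons. The paper does \emph{not} complement: it takes the full chain $S_1\subset\cdots\subset S_k=[m+1]$ of all tubes containing $m+1$ (so $\{m+1\}$, if present, is simply $S_1$) together with the set $S_0$ of singleton labels in $[m]$, and sends the nested set to the flag $\bigl(S_i\setminus(S_0\cup\{m+1\})\bigr)_{i}$. The inverse takes a flag $T_1\subset\cdots\subset T_k$ to $\{T_i\cup([m+1]\setminus T_k)\}\cup\{\{x\}:x\in[m]\setminus T_k\}$. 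With this choice the size bound $|T_k|-|T_1|\le n-1$ is exactly the building-set membership condition $|T_1\cup([m+1]\setminus T_k)|>m-n$, and the missing-rank count lines up with $(m+1)-|\Nest|$ by adding tubes one at a time.

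Your complementing route is less direct: after complementing you get a chain of sets of size $<n$ and a separate collection of singletons lying \emph{outside} that chain, and it is not obvious how to ``assemble'' these into a single flag so that the $n-1$ bound and the rank count fall out. (In particular, your claim that (N2) applied to $\{m+1\}$ with the singletons bounds their number by $m-n$ only fires when $\{m+1\}\in\Nest$; when it is absent there is no such bound, and this asymmetry has to be absorbed by the encoding.) You are right that this translation is the crux; the paper's subtraction map is the clean way to do it.
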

\begin{proof}
    It suffices to describe the nested set complex. We first define a
    bijection between nested sets and flags. Let $\Nest \subseteq
    \Build$ be a nested set. Consider the set $S = \{X \in
    \Nest: m + 1 \in X\}$. Since a nested set contains the maximal
    element of the building set, $[m+1] \in \Nest$ meaning that $S$ is
    nonempty. Furthermore, $\Nest \setminus S$ must consist entirely of
    singletons, since every set in $\Build$ of size at least $2$ contains
    $m+1$. Let $S_{0} = \{y: \{y\} \in \Nest \setminus S\}$. Let $x \in
    S_{0}$, and let $T \in S$. Then, by the nested set axioms, either $x \in
    T$ or $T \cup x \notin \Build$. However, $|T \cup x| \geq |T| \geq
    m-n+1$ and $m+1 \in T \cup x$, so $T \cup x \in \Build$. Hence, for
    all $x \in S_{0}$, we must have that $x \in T$ for all $T \in S$. 
		
    With these observations about the nested set structure in mind, we are
    prepared to define our bijection. Note that any two sets in $S$ must
    intersect, since they all contain $m+1$, so by the nested set axioms, $S$
    must be a flag of subsets $S_{1} \subseteq S_{2} \subseteq \dots \subseteq
    S_{k} = [m+1]$ in $[m+1]$. Then the flag we associate to $\Nest$ is
    exactly 
    \[
        S_{1} \setminus (S_{0} \cup m+1) \subseteq S_{2}\setminus (S_{0} \cup
        m+1)  \subseteq \dots \subseteq S_{k} \setminus (S_{0} \cup m+1) =
        [m+1] \setminus (S_{0} \cup m+1). 
    \] 
    Note that $|S_{1}| \geq m-n+1$, so $|[m+1] \setminus S_{1}| \leq n-1$.
    Hence, this map is well-defined. To see it is a bijection, start with a
    chain $T_{1} \subseteq T_{2} \subseteq \dots \subseteq T_{k}$. Then the
    corresponding nested set is the following:
    \[
        \Nest = \{T_{i} \cup [m+1] \setminus T_{k}: i \in [k]\} \cup
    \{\{x\}: x \in [m] \setminus T_{k}\}.
    \] 
    Since $|T_{k} \setminus T_{1}| \leq n-1$, $|T_{1} \cup ([n] \setminus
    T_{k}))| \geq m - (n-1) = m-n+1$, which is precisely the condition
    necessary to ensure that each $T_{i} \cup ([n+1] \setminus T_{k})$ is in
    the building set. It remains to show that $\Nest$ satisfies the
    nested set axioms, but this is immediate since a flag of building sets and
    collection of singletons contained in every set will always satisfy the
    nested set axioms so long as the union of the singletons is not the
    minimal element of the flag. The union of the singletons cannot be a
    minimal element of the flag, since $m+1$ is not contained in the
    singletons. Hence, this map is well-defined and is clearly of the inverse
    of the previous map. Thus, we have a bijection between the face lattices. 
		
    It remains to understand how the grading is mapped via this bijection
    using the notion of missing rank. Note that each face in the nested set
    complex is given by adding sets to $[m+1]$. In that sense, the nested set
    $\{[m+1]\}$ corresponds to the trivial $m$ dimensional face. Each face is
    attained by adding compatible sets to $[m+1]$ one at a time. A compatible
    set to a given system is either a singleton that appears in all sets
    containing $n+1$ or a set in the building set containing $m+1$ that is
    contained in or contains all the sets containing $m+1$. Under the
    bijection, adding a singleton corresponds to removing a single element
    from every set in the current chain, and adding a new set containing $m+1$
    corresponds to adding a set in the chain. Removing a single element from
    everything chain reduces the number of missing ranks by $1$ by decreasing
    the size of the maximal rank set without affecting any of the subsets
    considered for missing ranks. Similarly, adding a new set to the chain
    decreases the number of missing ranks by $1$ by filling up a rank.
    Therefore, both of these operations reduce the numbers of missing ranks by
    precisely $1$ and equivalently reduce the dimension by $1$. Hence, this
    bijection takes the dimension statistic to the missing ranks statistic,
    which finishes the proof.  
\end{proof}

\section{Paths on Base Polytopes}\label{sec:base_poly}

In the previous section, we showed that for a polymatroid $([n],f)$, the
linear embedding 
\[
    \{ (x,f(E)-\1(x)) : x \in P_f \}
\]
is the base polytope of an associated polymatroid $\ef$. Thus, $\1$-monotone
paths on $P_f$ correspond to $\1_{[n]}$-monotone paths on $B_{\ef} \subset
\R^{n+1}$. This suggests the investigation of (coherent) monotone paths on
polymatroid base polytopes or, equivalently, generalized permutahedra.  In this
section, we make some first observations on this very interesting but widely
unexplored subject.

We begin with the permutahedron $\Pi_{n-1}$. For a generic linear function $c$,
the oriented graph of the permutahedron is the Hasse diagram of the weak Bruhat
order of the symmetric group $\SymGrp_{n}$; cf.~\cite{BjornerBrenti}.  The
monotone paths are precisely maximal chains in the weak Bruhat order, which
correspond to reduced words for the longest element $(n,n-1,\dots,2,1)$. In this
language, the results of Edelmann--Greene~\cite{EdelmanGreene} can be
interpreted as a bijection between the set of monotone paths on the
permutahedron for a generic orientation and standard Young tableaux of staircase
shape. These monotone paths have also appeared in the literature under the name
of sorting networks, for which their random behavior has a remarkable
description~\cite{RandSort, ArchLim}.  Furthermore, there is a canonical method
of drawing a sorting network as a wiring diagram, which has appeared in the
study of cluster algebras for describing the cluster structure on the complete
flag variety \cite[Section 1.3]{ClusterBook}. Adjacency of monotone paths in the
Baues poset correspond to polygonal flips on the permutahedron, that is,
changing a monotone path along a $2$-dimensional face. These flips correspond
precisely to applying the Coxeter relations $s_is_{i+1}s_i = s_{i+1}s_is_{i+1}$
or $s_is_j = s_js_i$ for $j > i+1$ to a reduced expression.  Describing the
\emph{coherent} monotone paths on the permutahedron was left open in
Billera--Sturmfels~\cite{BSFiberPoly}. Coherent monotone paths for generic
orientations on the permutahedron have also appeared in the literature
previously under the name of allowable sequences or stretchable/geometrically
realizable sorting networks \cite{PattTheorem, GoodmanPollack,  sweeppolys}.
Theorem~1.3 of~\cite{PattTheorem} states that the proportion of coherent
monotone paths tends to $0$ for large $n$. 

Even for matroids, not all $c$-monotone paths on base polytopes will be
coherent. For example, the base polytope for the uniform matroid $U_{4,2}$ is
the hypersimplex $\Delta(4,2)$, which is linearly isomorphic to the octahedron
$C_3^\triangle = \conv(\pm e_1, \pm e_2, \pm e_3)$. Monotone paths on
cross-polytopes are studied in detail in~\cite{MPPofCP} and Theorem~1.1 in
this paper yields that there is always at least one incoherent monotone path
for every generic linear function.

In this section, we will focus on (coherent) monotone paths on $B_f$ with
respect to the special linear functions $\1_S(x) = \sum_{i \in S} x_i$ for $S
\subseteq [n]$. We start with the case of matroids.

\begin{theorem}
    Let $M$ be a matroid with ground set $E$ and $S \subseteq E$. Every
    $\1_S$-monotone path on the base polytope $B_M$ is coherent.
\end{theorem}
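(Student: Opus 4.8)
The plan is to establish coherence directly: for an arbitrary $\1_S$-monotone path $v_* = (v_0,v_1,\dots,v_k)$ on $B_M$ I will exhibit a weight $\wt \in \R^E$ for which $v_*$ is the shadow vertex path, using the local criterion of \Cref{lem:cMP_local}. We may assume $\1_S$ is nonconstant on $B_M$, as otherwise there is nothing to prove. Write $T := E\setminus S$. Since $B_M$ is a $0/1$-polytope, each step of the path is an edge increasing $\1_S$, hence of the form $v_i-v_{i-1} = e_{a_i}-e_{b_i}$ with $a_i\in S$, $b_i\in T$; letting $B_i := \{e\in E : (v_i)_e>0\}$ be the basis of $v_i$, we get $B_i = (B_{i-1}\setminus b_i)\cup a_i$, so $B_i\cap S = (B_0\cap S)\sqcup\{a_1,\dots,a_i\}$ and $B_i\cap T = (B_0\cap T)\setminus\{b_1,\dots,b_i\}$. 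In particular the $a_i$ are distinct, the $b_i$ are distinct, $b_i\in B_0\cap T$ for all $i$, and $B_0\cap T$ is a basis of $M|_T$. Also, every $\1_S$-improving edge at a vertex with basis $C$ has the form $e_p-e_q$ with $p\in S\setminus C$, $q\in C\cap T$ and $(C\setminus q)\cup p$ again a basis, and $\inner{\1_S}{e_p-e_q}=1$. Hence \Cref{lem:cMP_local} reduces the problem to finding $\wt$ with $v_0$ the unique $\wt$-maximizer over the $\1_S$-minimal face $(B_M)_{\min}$ and, for each $i$,
\[
    \wt_{a_i}-\wt_{b_i} \ > \ \wt_p-\wt_q \qquad\text{for every $\1_S$-improving pair $(p,q)\ne(a_i,b_i)$ at $v_{i-1}$.}
\]

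The point that makes such a $\wt$ exist is a structural observation: if $(p,q)$ is an $\1_S$-improving pair at $v_{i-1}$, then $q$ lies in the fundamental circuit of $p$ over $B_{i-1}$, so $q\in B_{i-1}\cap T = (B_0\cap T)\setminus\{b_1,\dots,b_{i-1}\}$; in particular $q\in B_0\cap T$, and if $q=b_j$ then $j\ge i$. Thus the elements of $S\cap B_0$ and of $T\setminus B_0$ never occur in the displayed inequalities, so their weights can be pushed to extremes freely. I would therefore set $\wt_{a_i} := (k+1-i)L$ on the path elements of $S$, $\wt_a := H$ on $B_0\cap S$ with $H\gg L\gg k$, and $\wt_a := 0$ on the remaining elements of $S$; and on $T$ set $\wt_{b_i} := -(k+1-i)$, $\wt_q := 0$ on $(B_0\cap T)\setminus\{b_1,\dots,b_k\}$, and $\wt_q := -(k+1)$ on $T\setminus B_0$.

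Verifying the conditions is then routine. If $p=a_i$ and $q\ne b_i$ the inequality is $\wt_q>\wt_{b_i}$, which holds because $q$ is either $b_j$ with $j>i$ (whence $\wt_{b_j}=-(k+1-j)>-(k+1-i)=\wt_{b_i}$) or $q\in(B_0\cap T)\setminus\{b_1,\dots,b_k\}$ (whence $\wt_q=0>\wt_{b_i}$). If $p\ne a_i$ then $p\in\{a_{i+1},\dots,a_k\}$ or $\wt_p=0$, so $\wt_p\le(k-i)L$, while every admissible $q$ lies in $B_0\cap T$ so $\wt_q\ge -k$; hence $\wt_{a_i}-\wt_{b_i}\ge(k+1-i)L+1>(k-i)L+k\ge\wt_p-\wt_q$ once $L>k$. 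For the face condition, the vertices of $(B_M)_{\min}$ are the $e_B$ with $B$ a basis and $|B\cap S|=s_{\min}:=\min_{B'\ \mathrm{basis}}|B'\cap S|=|B_0\cap S|$. If $B\cap S\ne B_0\cap S$ then $B\cap S$ contains at most $s_{\min}-1$ elements of $B_0\cap S$, so $\inner{\wt}{e_B}\le(s_{\min}-1)H+s_{\min}kL<s_{\min}H-k^2\le\inner{\wt}{v_0}$ for $H\gg L$. If $B\cap S=B_0\cap S$ then $B\cap T$ ranges over the bases of the matroid $M':=(M/(B_0\cap S))|_T$, which has rank $r_M(T)$ and has $B_0\cap T$ among its bases, and whenever $B\cap T\ne B_0\cap T$ one computes $\inner{\wt}{e_{B\cap T}}-\inner{\wt}{e_{B_0\cap T}}\le-|(B_0\cap T)\setminus B|<0$, since the elements of $(B\cap T)\setminus B_0$ carry weight $-(k+1)$ and those of $(B_0\cap T)\setminus B$ carry weight $\ge-k$. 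Thus $v_0=((B_M)_{\min})^\wt$, and \Cref{lem:cMP_local} shows $v_*$ is coherent.

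The only genuinely delicate point — and the reason for the weight bookkeeping — is the compatibility verified last: the path elements of $S$ need large positive weights to be selected along the path, yet $v_0$, which contains none of them, must still maximize $\wt$ over $(B_M)_{\min}$ (forced by boosting $B_0\cap S$ with the much larger weight $H$); dually, the path elements of $T$ need very negative weights for the tie-breaking among improving neighbours, yet $B_0\cap T$ must remain the $\wt$-optimal completion of $B_0\cap S$ (forced by pushing $T\setminus B_0$ even more negative, which the structural observation shows is harmless since those elements never appear in the comparisons). The same argument applies verbatim to the rank-selected independence polytopes $P_M^{\kk}$, and specializing to $S=E$ recovers the matroid case of \Cref{thm:MPP_Baues}. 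I emphasize that the $0/1$-structure of $B_M$ is essential: it forces every improving edge to have $\1_S$-length exactly $1$, which collapses the slope comparison to the comparison of the differences $\wt_a-\wt_b$ — exactly the feature that fails for the permutahedron, where by \Cref{Ithm:rSYT} not all $\1_S$-monotone paths are coherent.
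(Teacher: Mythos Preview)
Your proof is correct and follows essentially the same strategy as the paper: both reduce to the local coherence criterion of \Cref{lem:cMP_local}, exploit that every $\1_S$-improving edge has $\1_S$-length exactly $1$ (so slope comparisons become weight-difference comparisons), and then exhibit an explicit weight. The paper's weight is a single-parameter construction ($\wt_e=1$ on $B_0$, $\pm\epsilon^i$ on the exchanged elements, $0$ elsewhere); yours uses two scales $H\gg L\gg k$ and carries out the verification of both the local inequalities and the face condition $v_0=((B_M)_{\min})^\wt$ in full detail. Your version is wordier but arguably cleaner: the paper's case definition of $\wt$ has an overlap (the removed elements $a_i$ lie in $B_0$), and it asserts rather than checks that $1_{B_0}$ is the unique $\wt$-maximizer on the minimal face.

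Two minor remarks on your closing comments. The sentence ``specializing to $S=E$ recovers the matroid case of \Cref{thm:MPP_Baues}'' is not right: $\1_E$ is constant on $B_M$, so there are no nontrivial $\1_E$-monotone paths on $B_M$; \Cref{thm:MPP_Baues} concerns the independence polytope $P_M$, not $B_M$. Also, the claim that the argument extends ``verbatim'' to $P_M^{\kk}$ would need separate justification, since the improving edges there need not have $\1_S$-length $1$. Neither of these affects the main proof.
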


\begin{proof}
    Let $M$ be a matroid rank $r$ matroid on $E$ and $S \subseteq E$ a set of
    size $s$. Two bases $B,B'$ are adjacent on $B_M$ if and only if $B' = (B
    \setminus a) \cup b$ for $a \in B \setminus B'$ and $b \in B' \setminus
    B$. Moreover, $B'$ is a $\1_S$-improving neighbor of $B$ if and only if $a
    \not\in S$ and $b \in S$. In particular $\1_S(1_B - 1_{B'}) = 1$. 

    It follows that an $\1_S$-monotone path on $B_M$ is a sequence of bases
    $B_0, B_1,\dots, B_m$ such that $|B_0 \cap S|$ is minimal, $|B_m \cap S|$
    is maximal and $B_i = (B_{i-1} \setminus a_i) \cup b_i$ for $a_i \in
    B_{i-1} \setminus S$ and $b_i \in S$ for $i=1,\dots,m$. The elements
    $a_1,\dots,a_m,b_1,\dots,b_m$ are all distinct. For $0 < \epsilon \ll 1$
    and we can define $\wt \in \R^E$ by 
    \[
        \wt_e \ := \
        \begin{cases}
            1 & \text{ if } e \in B_0 \\
            \epsilon^i & \text{ if } e = a_i \\
            -\epsilon^i & \text{ if } e = b_i \\
            0 & \text{ otherwise.}
        \end{cases}
    \]
    Then $1_{B_0}$ is the unique maximizer of $\wt$ over $B_M \cap \{ \1_S(x) =
    |S \cap B_0|\}$ and Lemma~\ref{lem:cMP_local} asserts that $B_0,\dots,B_m$
    is the coherent path with respect to $\wt$.
\end{proof}

\begin{example}[Hypersimplices]
    Let $M = U_{n,k}$ be the uniform matroid of rank $k$ on $n$ elements and
    $S \subseteq [n]$ of cardinality $s$. The base polytope is the
    $(n,k)$-hypersimplex $\Delta(n,k)$ and the monotone path polytope
    $\MPPl{\1_S}{B_M} = \MPPl{\1_S}{\Delta(n,k)}$ is normally equivalent to
    \[
        \sum_{j = \max(0,k+s-n)}^{\min(k,s)} \Delta(s,j) \times
        \Delta(n-s,k-j) \, .
    \]
    Indeed, for every $k$-element subset $B \subseteq [n]$, we have
    $\max(0,k+s-n) \le|B \cap S| \le \min(k,s)$ and every value can be
    attained. For any $j$ in that range, $\Delta(n,k) \cap \{ \1_S(x) = j\}$ is
    the convex hull of $1_G + 1_H$, where $G \in \binom{S}{j}$ and $H \in
    \binom{[n] \setminus S}{k-j}$.

    If $s = k$ and $n = 2k$, then $\MPPl{\1_S}{\Delta(n,k)}$ is normally
    equivalent to $\Pi_k \times \Pi_k$.
\end{example}

It would be very interesting to further understand the combinatorics of
$\MPPl{\1_S}{B_M}$. Observe that for $|S| \ge 2$, the polytopes $B_M \cap
\{\1_S(x) = j\}$ for $j \in \Z$ are $0/1$-polytopes with edge directions $e_i -
e_j + e_k - e_l$.  Such polytopes were studied by Castillo and
Liu~\cite{CastilloFu22} in the context of nested braid fans. 

For $S = \{e\}$, where $e$ is not a loop or coloop, the monotone path polytope
$\MPPl{\1_S}{B_M}$ is normally equivalent to $B_{M \backslash e} + B_{M / e}$,
which is again a polymatroid base polytope. This holds in general.

\begin{prop}
    Let $(E,f)$ be a polymatroid and $S \subseteq E$ such that $|S| = 1$ or
    $|S| = |E| - 1$. Then $\MPPl{\1_S}{B_f}$ is a polymatroid base polytope.
\end{prop}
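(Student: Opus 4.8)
The plan is to combine the Minkowski description of monotone path polytopes in \Cref{prop:MPPnorm} with the fact, recalled in \Cref{sec:polymatroids}, that polymatroid base polytopes are exactly the polytopes lying in a hyperplane $\{\1(x)=c\}$ whose edges point in root directions $e_i-e_j$. I would first dispose of the case $|S|=|E|-1$ by reducing it to $|S|=1$: if $S=E\setminus\{e\}$, then $\1_S(x)=\1(x)-x_e=f(E)-x_e$ for every $x\in B_f$, so the slices of $B_f$ in direction $\1_S$ are exactly the slices $B_f\cap\{x_e=c\}$, merely re-indexed; hence $\MPPl{\1_S}{B_f}$ and $\MPPl{\1_{\{e\}}}{B_f}$ are normally equivalent by \Cref{prop:MPPnorm}, and it suffices to treat $S=\{e\}$.

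By \Cref{prop:MPPnorm}, $\MPPl{\1_{\{e\}}}{B_f}$ is normally equivalent to the Minkowski sum $\sum_t Q_t$, where $Q_t:=B_f\cap\{x_e=t\}$ and $t$ ranges over the finitely many values attained by $x_e$ on the vertices of $B_f$. Since polymatroid base polytopes are closed under Minkowski sums, it is enough to show that each $Q_t$, after forgetting the constant coordinate $x_e$, is a translate of a polymatroid base polytope on $E\setminus\{e\}$. As $Q_t\subseteq\{\1(x)=f(E)\}$ and $x_e$ is constant on $Q_t$, this amounts to checking that every edge of $Q_t$ has direction $e_i-e_j$ for some $i,j\neq e$.

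The crux is a local analysis of the faces of $B_f$ of dimension at most $2$. Using that $F\cap\{x_e=t\}$ is a face of $Q_t$ whenever $F$ is a face of $B_f$ (it equals $Q_t^w$ if $F=B_f^w$), one checks that every edge of $Q_t$ is contained in $F\cap\{x_e=t\}$ for some face $F$ of $B_f$ with $\dim F\le 2$, so it suffices to slice edges and $2$-faces. An edge of $B_f$ slices to a point or to a root-direction edge inside $\{x_e=t\}$. For a $2$-face $F$ — itself a generalized permutahedron — I would first classify $2$-dimensional generalized permutahedra: taking two edges at a vertex in directions $e_a-e_b$ and $e_c-e_d$, every edge direction of $F$ is a root in $\operatorname{span}(e_a-e_b,e_c-e_d)$, and a short support count shows that either (i) $\{a,b\}\cap\{c,d\}\neq\emptyset$ and all edge directions of $F$ involve only three fixed coordinates, or (ii) $\{a,b\}\cap\{c,d\}=\emptyset$ and $F$ is a parallelogram with edge directions $e_a-e_b$ and $e_c-e_d$. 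If $e$ is not among the coordinates occurring in these directions, then $x_e$ is constant on $F$ and $F\cap\{x_e=t\}$ is either empty or the face $F$ itself, whose edges are edges of $B_f$. Otherwise, in case (i) the face $F$ lies in a plane on which only three coordinates vary, so intersecting with $\{x_e=t\}$ yields a segment in a root direction; and in case (ii), writing $F=\{P+\lambda(e_a-e_b)+\mu(e_c-e_d)\}$, the condition $x_e=t$ pins down one of $\lambda,\mu$ and leaves a segment in the other root direction. Thus every slice of a face of dimension $\le 2$ has its edges in root directions, hence so does $Q_t$, which completes the argument.

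The main obstacle is precisely this $2$-face classification together with its case analysis; it is also the step where the hypothesis $|S|=1$ is essential, since for $|S|\ge 2$ slicing a $2$-face such as a square $\Delta_{\{i,j\}}+\Delta_{\{k,l\}}$ produces an edge in the non-root direction $e_i-e_j+e_k-e_l$, as observed just before the statement. A secondary point to pin down carefully is the reduction to slices of faces of dimension at most $2$, which rests on the standard behaviour of faces under hyperplane sections together with the observation that $F\cap\{x_e=t\}$ is a face of $Q_t$ for every face $F$ of $B_f$.
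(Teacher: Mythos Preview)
Your proposal is correct and follows essentially the same strategy as the paper: reduce $|S|=|E|-1$ to $|S|=1$ using that $\1_S$ and $\1_{\{e\}}$ differ by a constant on $B_f$, then show that every slice $B_f\cap\{x_e=t\}$ has only root edge directions by classifying the $2$-faces of $B_f$ and intersecting them with the hyperplane. The only cosmetic difference is that the paper phrases the $2$-face classification as ``either a Cartesian product of two $1$-dimensional base polytopes or the base polytope of a polymatroid on three elements,'' whereas you derive the same dichotomy directly from the edge-direction characterization; your treatment is in fact a bit more careful in also covering the case where the slice edge is already an edge of $B_f$.
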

\begin{proof}
    Since $B_f \subset \{ \1(x) = f(E)\}$, the linear functions $\1_{\{e\}}$
    and $\1_{E \setminus \{e\}}$ induce the same monotone paths. We may thus
    assume that $S = \{e\}$.

    Let $\alpha \in \R$ such that $H_\alpha = \{ x \in \R^E : x_e = \alpha\}$
    meets $B_f$ in the relative interior. An edge of $B_f \cap H_\alpha$ is of
    the form $F \cap H_\alpha$, where $F \subset B_f$ is a face of dimension
    $2$ that meets $H_\alpha$ in its relative interior. Now, $F$ is itself the
    base polytope of a polymatroid, which is either the Cartesian product of
    two $1$-dimensional base polytopes or the base polytope of a polymatroid
    on three elements. In both cases, it follows that $F \cap H_\alpha =
    [u,v]$ and $u-v = \lambda (e_i - e_j)$ for some $i,j \in E \setminus
    \{e\}$. Since base polytopes are polytopes all whose edge directions are
    of the form $e_i-e_j$ for some $i,j \in E$, this proves the claim.
\end{proof}

We do not know if for a general polymatroid $(E,f)$ and $|S| = 1$, all
$\1_S$-monotone paths of $B_f$ are coherent nor what the corresponding
polymatroid is. 

\begin{example}[Monotone paths on the associahedron]
    Let $\Ass_{n-1} \subset \R^n$ be the Loday associahedron;
    cf.~Example~\ref{ex:associahedra}. Let $i \in [n]$. A binary tree $T$
    corresponding to a vertex $v$ of $\Ass_{n-1}$ maximizes $v_i$ (with value
    $i(n-i+1)$) if and only if $i$ is the root of $T$. It minimizes $v_i$
    (with value $1$) if and only if $i$ is a leaf. It is easy to see that
    for $S = \{i\}$, $\Ass_{n-1}^{-\1_S}$ is linearly isomorphic to
    $\Ass_{n-2}$. By removing
    the leaf $i$ from $T$ and relabelling every node $j > i$ to $j-1$, this
    yields a plane binary tree on $n-1$ nodes and every such tree arises
    uniquely this
    way. Two trees $T$ and $T'$ correspond to adjacent vertices of
    $\Ass_{n-1}$ if they differ by a \emph{rotation} of two adjacent nodes $x$
    and $y$. The tree $T'$ corresponds to a $\1_S$-improving neighbor of $T$
    iff the rotation decreased the distance of $i$ to the root. It follows
    that for every tree $T$ with $i$ a leaf, $T$ is the starting point of a
    unique monotone path. The nodes that are rotated along the path, readily
    yield a weight $\wt$ which certifies that the path is coherent. In
    particular, since the starting point of the path determines the whole
    path, this shows that the polytopes $\Ass_{n-1} \cap \{ x_i = \alpha\}$
    are weak Minkowski summands of $\Ass_{n-1} \cap \{ x_i = 1\} =
    \Ass_{n-1}^{-\1_S}$. Thus $\MPPl{\1_S}{\Ass_{n-1}}$ is normally equivalent
    to the associahedron $\Ass_{n-2}$.
\end{example}

We conclude the section with a discussion of $\1_S$-monotone paths on the
permutahedron.  Recall that a standard Young tableau (SYT) of shape $m \times
n$ is a rectangular array filled with numbers from $1,\dots,mn$ without
repetitions and such that rows and columns are increasing top-to-bottom and
left-to-right, respectively. Let $\SYT(m,n)$ denote the collection of all such
standard Young tableaux.

\begin{prop}\label{prop:lat_perm}
    For $1 \le k < n$ let $S = \{1,\dots,k\}$. The $\1_S$-monotone paths on
    the permutahedron $\Pi_{n-1}$ starting from $(1,2,\dots, n)$ are in
    bijection with standard Young tableaux of shape $k \times (n-k)$.
\end{prop}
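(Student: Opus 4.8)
The plan is to encode each $\1_S$-monotone path by the sequence of $k$-element value-sets occupying the first $k$ coordinates along the path, to recognize that this is a saturated chain in a certain poset of $k$-subsets of $[n]$, and finally to identify that poset with an interval in Young's lattice whose maximal chains are the standard Young tableaux of shape $k \times (n-k)$. Write a vertex of $\Pi_{n-1}$ as a permutation $v = (v_1,\dots,v_n)$ in one-line notation and recall that $v,v'$ span an edge of $\Pi_{n-1}$ exactly when $v'$ arises from $v$ by exchanging the positions of two consecutive values $j$ and $j+1$. The first step is the elementary bookkeeping of how $\1_S(v) = v_1 + \cdots + v_k$ changes along such an edge: writing $A(v) := \{v_1,\dots,v_k\}$, the exchange raises $\1_S$ by $1$ when $j \in A(v)$ and $j+1 \notin A(v)$, lowers it by $1$ when $j+1 \in A(v)$ and $j \notin A(v)$, and leaves it unchanged otherwise. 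Since $(1,2,\dots,n)$ has $A = \{1,\dots,k\}$, it is a vertex of $\Pi_{n-1}^{-\1_S}$; and every step of a $\1_S$-monotone path started there takes some $j \in A$ with $j+1 \notin A$, swaps the positions of $j$ and $j+1$, and replaces $A$ by $(A \setminus \{j\}) \cup \{j+1\}$, raising $\1_S$ by exactly $1$. In particular each such path has length $k(n-k)$.

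The key lemma I would prove is that along every $\1_S$-monotone path started at $(1,2,\dots,n)$ each vertex has its first $k$ entries increasing and its last $n-k$ entries increasing. This holds at the start, and it is preserved by a step: if $j$ is in position $p \le k$ and $j+1$ in position $q > k$, then since $j+1 \notin A(v)$ every element of $A(v)$ larger than $j$ is actually at least $j+2$, so overwriting position $p$ by $j+1$ keeps the first block increasing; symmetrically, overwriting position $q$ by $j$ keeps the last block increasing. Hence at each step the vertex is completely determined by $A(v)$, and the path is encoded by a chain $A_0 = \{1,\dots,k\} \lessdot A_1 \lessdot \cdots \lessdot A_{k(n-k)} = \{n-k+1,\dots,n\}$, where $A_i = (A_{i-1} \setminus \{j\}) \cup \{j+1\}$ for some $j \in A_{i-1}$ with $j+1 \notin A_{i-1}$. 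Conversely any such chain of $k$-sets comes from an actual $\1_S$-monotone path started at $(1,2,\dots,n)$: each step is an edge of $\Pi_{n-1}$ on which $\1_S$ increases, the path begins at the prescribed vertex, and it terminates at a vertex of $\Pi_{n-1}^{\1_S}$ since $\{n-k+1,\dots,n\}$ is the unique $k$-subset of $[n]$ of maximal element-sum. Thus $\1_S$-monotone paths from $(1,2,\dots,n)$ are in bijection with saturated chains from $\{1,\dots,k\}$ to $\{n-k+1,\dots,n\}$ in the poset of $k$-subsets of $[n]$ ordered by the cover relation $A \lessdot (A \setminus \{j\}) \cup \{j+1\}$.

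It remains to invoke the classical identification of that poset with the interval $[\emptyset, (n-k)^k]$ in Young's lattice: map $A = \{a_1 < \cdots < a_k\}$ to the partition $\lambda(A)$ with $\lambda(A)_i := a_{k+1-i} - (k+1-i)$, which sends $\{1,\dots,k\}$ to $\emptyset$ and $\{n-k+1,\dots,n\}$ to the $k \times (n-k)$ rectangle, and takes the cover $A \mapsto (A \setminus \{a_l\}) \cup \{a_l+1\}$ to the addition of a single cell in row $k+1-l$. A saturated chain from $\emptyset$ to the rectangle is then exactly a filling of the $k \times (n-k)$ rectangle recording, in each cell, the step at which it is added -- that is, a standard Young tableau of shape $k \times (n-k)$. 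Composing the two bijections proves the proposition. The only genuinely delicate point is the structural lemma of the second paragraph; once the ``sorted within the two blocks'' invariant is established, the remaining identifications are routine.
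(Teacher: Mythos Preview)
Your proof is correct. Both your argument and the paper's rest on the same structural invariant---that along any $\1_S$-monotone path from $(1,2,\dots,n)$ the first $k$ entries and the last $n-k$ entries each stay increasing---and you prove this invariant more carefully than the paper, which simply asserts that ``it can be seen.'' Where the two diverge is in the encoding used after that point. The paper records, for each step, the \emph{position} $i\in[k]$ whose value increased (setting $a_h:=k+1-i$), obtains a rectangular lattice permutation, and then invokes Stanley's Proposition~7.10.3 in \emph{EC2} for the bijection between lattice permutations and rectangular SYT. You instead record, at each step, the \emph{value set} $A(v)\subseteq[n]$ occupying the first $k$ positions, obtain a saturated chain of $k$-subsets, and then pass through the standard identification of $k$-subsets of $[n]$ with partitions in the $k\times(n-k)$ box. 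Your route is slightly more self-contained (no external citation is needed for the chain-to-tableau step), while the paper's route makes the connection to lattice permutations explicit, which is natural given that the vertex coordinates of $\MPP{f}$ in earlier sections were computed position-by-position. The two encodings are dual and either one completes the proof cleanly.
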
 

\begin{proof}
    A \Def{rectangular lattice permutation} of size $k \times (n-k)$ is a
    sequence $a_{1}, a_{2}, \dots, a_{k(n-k)} \in [k]$ such that
    \begin{enumerate}[\rm (i)]
        \item The number of occurrences of $j \in [k]$ is $n-k$, and 
        \item For any $1 \leq m \leq k(n-k)$, the number of occurrences of $i$
            in $a_1,\dots,a_m$ is at least as large as the number of
            occurrences of $i+1$. 
    \end{enumerate}
    For a rectangular lattice permutation, one associates a rectangular SYT by
    starting with an empty rectangular array and appending the number $k$ in
    row $a_k$.  Proposition 7.10.3 in~\cite{EC2} yields that this is a
    bijection from rectangular lattice permutations of size $k \times (n-k)$
    to rectangular SYT of shape $k \times (n-k)$. We give an explicit
    bijection between monotone paths and rectangular lattice permutations.

    Let $(1,2,\dots,n) = \sigma_0,\sigma_1,\dots,\sigma_M$ be a
    $\1_S$-monotone path on $\Pi_{n-1}$. For every $1 \le h \le M$, we have
    $\sigma_h - \sigma_{h-1} = e_i - e_j$ with $1 \le i \le k < j \le n$ and
    we  define $a_1,a_2,\dots,a_M$ by $a_h := k+1 - i$.

    Each step along the path will always increase the value of an element of
    the first $k$ coordinates by precisely $1$. Since the first $k$
    coordinates move from $(1,2,\dots, k)$ to $(n-k+1, \dots, n)$, the total
    length of the path is $M = k(n-k)$. It also shows that every $i =
    1,\dots,k$ occurs exactly $n-k$ times in $a_1,\dots,a_M$, which verifies
    (i). Moreover, $\sigma_h$ is a permutation for every $h$ and it can be
    seen that the first $k$ and the last $n-k$ entries of $\sigma_h$ are
    always increasing. This shows that (ii) is satisfied and hence
    $a_1,\dots,a_M$ is a lattice permutation. This also shows that $\sigma_M$
    is $(n-k+1,\dots,n,1,\dots,n-k)$.

    For a given rectangular lattice permutation $a_1,\dots,a_{k(n-k)}$, we
    define a sequence of permutations $\sigma_0,\dots,\sigma_{k(n-k)}$ as
    follows. We set $\sigma_0 := (1,\dots,n)$ and for $h \ge 1$, we define
    $\sigma_h$ by swapping the values $\sigma_{h-1}(k+1-a_h)$ and
    $\sigma_{h-1}(k+1-a_h)+1$. Since $1 \le k+1-a_h \le k$, this increases the
    values on the first $k$ coordinates. The sequence is well-defined since by
    (i), every coordinate is swapped $n-k$ times for a larger one. Moreover,
    condition (ii) ensures that the first $k$ coordinates are increasing,
    which implies that the sequence is a $\1_S$-monotone path.
\end{proof}

For $n=5$ and $k = 3$, a monotone path is given by 
\[
    \
    12345 \stackrel{e_3-e_4}{\longrightarrow} 
    12435 \stackrel{e_3-e_5}{\longrightarrow} 
    12534 \stackrel{e_2-e_4}{\longrightarrow} 
    13524 \stackrel{e_1-e_4}{\longrightarrow} 
    23514 \stackrel{e_2-e_5}{\longrightarrow} 
    24513 \stackrel{e_1-e_5}{\longrightarrow} 
    34512 \, .
\]
The corresponding rectangular lattice permutation is $1, 1, 2, 3, 2, 3$ and
the rectangular SYT is 
\[
    \begin{bmatrix}
        1 & 2 \\
        3 & 5 \\
        4 & 6
    \end{bmatrix}
\]

As observed by Postnikov in Example 10.4 of \cite{grasspolysubdivs},
$\SYT(k,n-k)$ is also in bijection with the \emph{longest} monotone paths on
the hypersimplex $\Delta(n,k)$ for a generic orientation. Thus, the monotone
paths on the permutahedron for these special orientations correspond exactly
to the longest monotone paths on hypersimplices for generic orientations. 

Not all rectangular SYT correspond to coherent monotone paths. Coherent
$\1_S$-monotone paths of $\Pi_{n-1}$ are related to \emph{realizable} SYT by
work of Mallows and Vanderbei~\cite{VanderbeiMallows}. For vectors $u \in
\R^m$ and $v \in \R^n$, the \Def{outer sum} or \Def{tropical rank-$1$ matrix}
is the matrix $u \oplus v \in \R^{m \times n}$ with $(u \oplus v)_{ij} := u_i
+ v_j$.  If $u_{1} < u_{2} < \cdots < u_{m}$ and $v_{1} < v_{2} < \cdots <
v_{n}$ are sufficiently generic, then all entries of $u \oplus v$ are distinct
and strictly increasing along rows and columns. Replacing every entry in $u
\oplus v$ with its rank yields a SYT of shape $m \times n$, that Mallows and
Vanderbei call \Def{realizable}. For example, for $u = (0,10,11)$ and $v =
(1,3)$, this yields the SYT above.  In~\cite{VanderbeiMallows}, they ask which
SYT are realizable. We write $\rSYT(m,n) \subseteq \SYT(m,n)$ for the
collection of realizable SYT. Realizable SYT are closely related to coherent
monotone paths.

\newcommand\SP{\mathrm{SP}}%
To prove Theorem~\ref{Ithm:rSYT}, we relate the monotone path polytopes to
another class of polytopes. For a finite set $T \subset \R^d$ the \Def{sweep
polytope} \cite{sweeppolys} is defined as 
\[
    \SP(T) \ := \ \tfrac{1}{2}\sum_{a,b \in T} [a-b,b-a] \, .
\]
This is a zonotope whose vertices record possible orderings of $T$ induced
by generic linear functions. Let us also write $Z(T) = \sum_{a \in T} [0,a]$
for the zonotope associated to $T$.

\begin{prop}[{\cite[Prop.~2.10]{sweeppolys}}] \label{prop:zonosweep}
    Let $T \subset \R^d$. For $c \in \R^d$ define $T' := \{
        \frac{a}{\inner{c}{a}} : a \in T, \inner{c}{a} \neq 0\}$ and $T'' := T
    \setminus T'$. Then the monotone path polytope $\MPPl{c}{Z(T)}$ is
    normally equivalent to $\SP(T') + Z(T'')$.
\end{prop}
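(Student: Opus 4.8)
The plan is to realise $\MPPl{c}{Z(T)}$, up to normal equivalence, as a linear image of the monotone path polytope of a box, and then to invoke the classical computation for boxes. I will use two elementary facts, both immediate from the Minkowski-integral formula $\MPPl{c}{P} = 2\int_I P_s\,ds$ (with $P_s = \{x\in P : \inner{c}{x} = s\}$) and from the fact that Minkowski integration commutes with linear maps. \textbf{(i)} For a linear map $L$ and a polytope $P$ on which $c\circ L$ is nonconstant, $\MPPl{c}{L(P)} = L\!\left(\MPPl{c\circ L}{P}\right)$, because $L(P)\cap\{c=s\} = L\!\left(P\cap\{c\circ L = s\}\right)$. \textbf{(ii)} If $c \equiv 0$ on a polytope $Q$, then $(P+Q)_s = P_s + Q$ and hence $\MPPl{c}{P+Q} = \MPPl{c}{P} + \lambda Q$ with $\lambda = 2\,\mathrm{vol}_1(I) > 0$. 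Since translating a polytope and rescaling a Minkowski summand by a positive factor leave the normal fan unchanged, I may freely discard translations and such rescalings; in particular (ii) gives that $\MPPl{c}{P+Q}$ is normally equivalent to $\MPPl{c}{P} + Q$.

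Let $T'' := \{a\in T : \inner{c}{a} = 0\}$ (the part of $T$ not represented in $T'$) and $T_0 := T\setminus T''$, so $Z(T) = Z(T_0) + Z(T'')$ with $c\equiv 0$ on $Z(T'')$; by (ii) it suffices to show that $\MPPl{c}{Z(T_0)}$ is normally equivalent to $\SP(T')$. For $a\in T_0$ set $p_a := a/\inner{c}{a}$ and $\gamma_a := |\inner{c}{a}| > 0$, so $\inner{c}{p_a} = 1$ and $T' = \{p_a : a\in T_0\}$. Put $\mathrm{Box} := \prod_{a\in T_0}[0,\gamma_a] = \sum_{a\in T_0}[0,\gamma_a e_a]\subset\R^{T_0}$ and let $L : \R^{T_0}\to\R^d$ be the linear map with $L(e_a) = p_a$. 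Then $L(\mathrm{Box}) = \sum_{a\in T_0}[0,\gamma_a p_a]$ is a translate of $Z(T_0) = \sum_{a\in T_0}[0,a]$ (the segments $[0,\gamma_a p_a]$ and $[0,a]$ differ by a translation since $\gamma_a p_a = \pm a$), and $c\circ L = \1$, the all-ones functional on $\R^{T_0}$, because $\inner{c}{p_a} = 1$. Hence by (i) it remains to compute $\MPPl{\1}{\mathrm{Box}}$ and to push it forward by $L$.

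For the box: at each vertex $\sum_{a\in A}\gamma_a e_a$ of $\mathrm{Box}$ one may increase any coordinate not already at its maximum, so every linear order $\pi$ of $T_0$ defines a $\1$-monotone path $v_*$ on $\mathrm{Box}$, and each such path is coherent --- Lemma~\ref{lem:cMP_local} applies with any weight $\wt$ satisfying $\wt_{\pi(1)} > \wt_{\pi(2)} > \cdots$, just as in the matroid example of Section~\ref{sec:MPP}. Thus the vertices of $\MPPl{\1}{\mathrm{Box}}$ are exactly the points $\MPv{\mathrm{Box},\1}{v_*}$, and a direct evaluation of \eqref{eqn:MPv} gives, up to a $\pi$-independent translation,
\[
   \MPv{\mathrm{Box},\1}{v_*} \ = \ \sum_{\{a,b\}\subseteq T_0} \gamma_a\gamma_b\,\mathrm{sign}_\pi(a,b)\,(e_a - e_b),
\]
where $\mathrm{sign}_\pi(a,b) = +1$ if $a$ precedes $b$ in $\pi$ and $-1$ otherwise. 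As $\pi$ ranges over all orders these are precisely the vertices of the zonotope $\sum_{\{a,b\}}\gamma_a\gamma_b\,[e_a - e_b,\, e_b - e_a]$, so up to translation $\MPPl{\1}{\mathrm{Box}}$ equals this zonotope. Applying $L$ and using $L(e_a - e_b) = p_a - p_b$ shows that $L(\MPPl{\1}{\mathrm{Box}})$ is, up to translation, the zonotope $\sum_{\{a,b\}}\gamma_a\gamma_b\,[p_a - p_b,\, p_b - p_a]$. Since the normal fan of a zonotope depends only on the directions of its generating segments and not on their lengths, this polytope is normally equivalent to $\sum_{\{a,b\}}[p_a - p_b,\, p_b - p_a] = \SP(T')$. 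Chaining the normal equivalences yields that $\MPPl{c}{Z(T)}$ is normally equivalent to $\SP(T') + Z(T'')$.

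The main obstacle is less a hard step than the need for care with what ``normally equivalent'' buys: one must work throughout at the level of normal fans, thereby discarding translations, positive rescalings of Minkowski summands, and the lengths (but never the directions) of zonotope generators. The one genuinely load-bearing idea is the observation that $Z(T)$ is a linear image of a box on which $c$ pulls back to $\1$, together with fact (i) that passing to the monotone path polytope commutes with linear maps; once these are secured, the residual box computation is the familiar statement that the monotone path polytope of a box with respect to the coordinate sum is a permutohedron.
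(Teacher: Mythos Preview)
The paper does not supply a proof of this proposition; it is cited from \cite[Prop.~2.10]{sweeppolys} and used without argument. Your proof is correct and self-contained. The two auxiliary facts (i) and (ii) follow directly from the Minkowski-integral description~\eqref{eqn:MPint} (linear maps commute with Minkowski sums and limits, and a summand on which $c$ vanishes factors out of every fiber). The realisation of $Z(T_0)$ as the $L$-image of a box on which $c$ pulls back to $\1$ is the right reduction, and your identification of $\MPPl{\1}{\mathrm{Box}}$, up to a $\pi$-independent translation, with the zonotope $\sum_{\{a,b\}}\gamma_a\gamma_b[e_a-e_b,e_b-e_a]$ is the computation underlying Example~\ref{ex:cube}, carried out for non-unit side lengths. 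The final step---passing from this zonotope to $\SP(T')$---uses only that the normal fan of a zonotope is the hyperplane arrangement determined by its generator directions, so the positive weights $\gamma_a\gamma_b$ may be discarded.
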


In the case of the permutahedron and special orientations, we can be more
explicit.

\begin{prop} \label{thm:4perm}
    Let $n \ge 1$ and $\emptyset \neq S \subseteq [n]$. The monotone path polytope
    $\MPPl{\1_S}{\Pi_{n-1}}$ is normally equivalent to the sweep polytope
    $\SP(T')$ for $T' = \{ e_i - e_j : i \in S, j \not\in S \}$.
\end{prop}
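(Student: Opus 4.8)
The plan is to realize the permutahedron as a zonotope and then invoke Proposition~\ref{prop:zonosweep}. Recall the standard fact that $\Pi_{n-1}$ is, up to translation, the graphical zonotope $Z(T_0)$ with $T_0 := \{ e_i - e_j : 1 \le i < j \le n\}$ (for generic $c$ with $c_1 > \cdots > c_n$, the $c$-maximal vertices of $\Pi_{n-1}$ and of $\sum_{i<j}[e_i,e_j]$ differ by $\1$). As is immediate from the Minkowski-integral definition, translating the base polytope translates its monotone path polytope, and normal equivalence is insensitive to translation; so it suffices to show $\MPPl{\1_S}{Z(T_0)}$ is normally equivalent to $\SP(T')$. (If $S = [n]$ then $\1_S$ is constant on $\Pi_{n-1}$ and there is nothing to prove, so I assume $\emptyset \ne S \subsetneq [n]$ throughout.)

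Next I would apply Proposition~\ref{prop:zonosweep} to the pair $(Z(T_0), \1_S)$. For $i < j$ one has $\inner{\1_S}{e_i - e_j} = 1$ if $i \in S \not\ni j$, $= -1$ if $j \in S \not\ni i$, and $= 0$ if $i,j$ lie on the same side of $S$. Hence the rescaled generator set $T_0' = \{ a/\inner{\1_S}{a} : a \in T_0,\ \inner{\1_S}{a} \ne 0\}$ consists of the vectors $e_i - e_j$ coming from pairs with $i \in S \not\ni j$ together with the vectors $e_j - e_i$ coming from pairs with $j \in S \not\ni i$; writing each such vector as $e_a - e_b$ with $a \in S$ and $b \notin S$, these two families together produce every such vector exactly once, so $T_0' = \{ e_a - e_b : a \in S,\ b \notin S\} = T'$. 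The orthogonal generators are $T_0'' = \{ e_i - e_j : i < j,\ \{i,j\} \subseteq S \text{ or } \{i,j\} \cap S = \emptyset\}$. Proposition~\ref{prop:zonosweep} then gives that $\MPPl{\1_S}{Z(T_0)}$ is normally equivalent to $\SP(T') + Z(T_0'')$.

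It remains to absorb the summand $Z(T_0'')$, that is, to show $Z(T_0'')$ is a weak Minkowski summand of $\SP(T')$; then $\SP(T') + Z(T_0'')$ is normally equivalent to $\SP(T')$ and chaining the three equivalences proves the claim. By \cite[Thm.~15.1.2]{grunbaum} it suffices to verify that $Z(T_0'')^\wt$ is a vertex whenever $\SP(T')^\wt$ is. Both are zonotopes — $Z(T_0'')$ with generators $T_0''$, and $\SP(T')$ with generators $\{ a - b : a, b \in T',\ a \ne b\}$ — and a face $Z(G)^\wt$ of a zonotope is a vertex precisely when $\wt$ is non-orthogonal to every generator in $G$. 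So it is enough to exhibit each element of $T_0''$ as a difference of two distinct elements of $T'$: for $e_i - e_j \in T_0''$ with $i,j \in S$, fixing any $q \notin S$ gives $e_i - e_j = (e_i - e_q) - (e_j - e_q)$; for $e_i - e_j \in T_0''$ with $i,j \notin S$, fixing any $p \in S$ gives $e_i - e_j = (e_p - e_j) - (e_p - e_i)$. Both choices are available by $\emptyset \ne S \subsetneq [n]$, which completes the argument.

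I expect the only genuine work to be the second step: correctly matching the rescaled generator set $T_0'$ output by Proposition~\ref{prop:zonosweep} with $T' = \{e_a - e_b : a \in S,\ b \notin S\}$, where one must track how the sign of $\inner{\1_S}{e_i - e_j}$ flips the generator and thereby recovers both orderings of the pair $\{i,j\}$. The absorption step is then pure bookkeeping — the leftover directions $e_i - e_j$ with $i,j$ on the same side of the cut $S$ already occur among the pairwise differences of elements of $T'$ that share a coordinate.
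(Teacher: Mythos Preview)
Your proof is correct and follows essentially the same route as the paper: realize $\Pi_{n-1}$ as a zonotope, apply Proposition~\ref{prop:zonosweep}, then absorb the leftover zonotope $Z(T'')$ into $\SP(T')$ by exhibiting each of its generators as a difference of two elements of $T'$. The only cosmetic differences are that the paper takes the generator set $T = \{e_i - e_j : i \neq j\}$ (all ordered pairs) rather than your $T_0$ with $i<j$, and that you spell out the second absorption case $i,j \notin S$ and the degenerate case $S=[n]$ explicitly while the paper leaves these implicit.
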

\begin{proof}
    The permutahedron is normally equivalent to the zonotope $Z = Z(T)$ for $T
    = \{ e_i - e_j : i,j \in [n], i \neq j\}$. By
    Proposition~\ref{prop:zonosweep}, $\MPPl{\1_S}{Z(T)}$ is normally
    equivalent to $\SP(T') + Z(T'')$.  Note that $T'$ consists of all vectors
    $e_i - e_j$ for $i \in S$ and $j \in S^c := [n] \setminus S$. For $i,k \in
    S$ and $j \in S^c$ arbitrary, $e_i - e_k = (e_i - e_j) - (e_k - e_j)$ is a
    generator for $\SP(T')$ and hence $Z(T'')$ is a weak Minkowski summand of
    $\SP(T')$. This means that $\MPPl{\1_S}{Z(T)}$ is normally equivalent to
    $\SP(T')$.
\end{proof}

Note that for $S \subseteq [n]$ of size $k \ge 1$, the set $T' = \{ e_i - e_j
: i \in S, j \not\in S \} \subset \R^n$ is linearly isomorphic to $\{
(e_i,e_j) : i \in [k], j \in [n-k]\}$, that is, the vertices of the polytope
$\Delta_{k-1} \times \Delta_{n-k-1}$.

\begin{cor}\label{cor:prod_simplices}
    Let $n \ge 1$ and $S \subseteq [n]$ of size $k = |S| \ge 1$. Then
    $\MPPl{\1_S}{\Pi_{n-1}}$ is combinatorially equivalent to the sweep
    polytope of the product of simplices $\Delta_{k-1} \times \Delta_{n-k-1}$.
\end{cor}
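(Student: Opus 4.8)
The plan is to deduce the corollary from Proposition~\ref{thm:4perm} together with the fact that the sweep polytope construction intertwines with linear automorphisms of the ambient space. Throughout we assume $1 \le k \le n-1$, since otherwise $\1_S$ is constant on $\Pi_{n-1}$ and $\MPPl{\1_S}{\Pi_{n-1}}$ is not defined. By Proposition~\ref{thm:4perm}, the monotone path polytope $\MPPl{\1_S}{\Pi_{n-1}}$ is normally equivalent, and hence combinatorially isomorphic (see Section~\ref{sec:background}), to the sweep polytope $\SP(T')$ of the configuration $T' = \{ e_i - e_j : i \in S,\ j \in [n]\setminus S\} \subset \R^n$. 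It therefore suffices to produce a linear isomorphism carrying $\SP(T')$ onto the sweep polytope of the vertex set of $\Delta_{k-1} \times \Delta_{n-k-1}$.

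First I would decompose $\R^n = \R^S \oplus \R^{[n]\setminus S}$ and let $\phi$ be the linear automorphism of $\R^n$ acting as the identity on $\R^S$ and as $-\mathrm{id}$ on $\R^{[n]\setminus S}$. Writing $e_i - e_j = (e_i,-e_j) \in \R^S \oplus \R^{[n]\setminus S}$ for $i \in S$ and $j \in [n]\setminus S$, we get $\phi(e_i - e_j) = (e_i, e_j)$; identifying $\R^S$ with $\R^k$ and $\R^{[n]\setminus S}$ with $\R^{n-k}$ turns $\phi(T')$ into $\{ (e_i, e_j) : i \in [k],\ j \in [n-k]\}$, which is exactly the vertex set of $\Delta_{k-1}\times\Delta_{n-k-1}$. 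Since a linear map commutes with Minkowski sums and sends segments to segments, we obtain
\[
    \SP(\phi(T')) \ = \ \tfrac{1}{2}\sum_{a,b \in T'} \phi\bigl([a-b,\,b-a]\bigr) \ = \ \phi\Bigl(\tfrac{1}{2}\sum_{a,b \in T'}[a-b,\,b-a]\Bigr) \ = \ \phi(\SP(T')) \, .
\]
Thus $\SP(T')$ and the sweep polytope of $\Delta_{k-1}\times\Delta_{n-k-1}$ are linearly isomorphic, in particular combinatorially isomorphic, and combining this with the previous paragraph finishes the proof.

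I do not expect any genuine obstacle here: all of the substance is carried by Proposition~\ref{thm:4perm}, and what remains is the routine observation that $\SP(\cdot)$ is equivariant under linear automorphisms. The only point deserving a line of care is that $T'$ and the vertex set of $\Delta_{k-1}\times\Delta_{n-k-1}$ do not span all of $\R^n$ but only an $(n-1)$-dimensional subspace; this causes no trouble because $\phi$ is an automorphism of the whole ambient space and so restricts to an isomorphism of the relevant linear spans (both being the orthogonal complement of a single vector, namely $\1$ in the first case and $\phi(\1)$ in the second).
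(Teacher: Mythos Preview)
Your proof is correct and follows essentially the same approach as the paper: the paper records, in the sentence just before the corollary, that $T' = \{e_i - e_j : i \in S, j \notin S\}$ is linearly isomorphic to the vertex set $\{(e_i,e_j) : i \in [k], j \in [n-k]\}$ of $\Delta_{k-1}\times\Delta_{n-k-1}$, and then states the corollary without further argument. You have simply made this explicit by writing down the sign-flip automorphism $\phi$ and noting that $\SP(\cdot)$ is equivariant under linear isomorphisms.
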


Let us note Proposition~\ref{thm:4perm} also implies that
$\MPPl{\1_S}{\Pi_{n-1}}$ is normally equivalent to $\MPPl{\1_S}{Z(T')}$ and, by
Theorem 1.7 in \cite{PivotPoly}, to the max-slope pivot polytope of
$\Pi_{n-1}$ with respect to $\1_S$.

\begin{proof}[Proof of Theorem \ref{Ithm:rSYT}]
    Up to symmetry, we may assume that $S = \{1,\dots,k\}$. If
    $\sigma_0,\dots,\sigma_M$ is a $\1_S$-monotone path of $\Pi_{n-1}$, then
    the first $k$ coordinates of $\sigma_0$ are a permutation of $1,\dots,k$.
    Likewise, the last $n-k$ coordinates are a permutation of $k+1,\dots,n$.
    Up to symmetry, we can assume that $\sigma_0 = (1,\dots,n)$.
    Proposition~\ref{prop:lat_perm} now proves the first claim.

    As for the second claim, note that by Proposition~\ref{thm:4perm}, it
    suffices show that the vertices of $\SP(T')$ with $T' = \{ e_i - e_j : 1
    \le i \le k < j \le n \}$ are in bijection to $\SymGrp_{k} \times
    \SymGrp_{n-k} \times \rSYT(k,n-k)$. 

    Let $\wt \in \R^n$ be a generic linear function.  The segments $[e_i-e_j,
    e_j-e_i]$ for $i,j \in S$ or $i,j \in S^c$ are subsumed by $\SP(T')$.
    Hence, the sweep of $T'$ induced by $\wt$ totally orders $S$ and $S^c$.
    Without loss of generality, we may assume that the ordering is the natural
    ordering on $S$ and $S^c$. Let us write $\wt = (u,-v)$ with $u \in \R^k$
    and $v \in \R^{n-k}$. Then $\inner{\wt}{e_i - e_j} = u_i + v_j$. The sweep
    of $T'$ is thus determined by the ranks of $u \oplus v$ and hence
    determines a unique element in $\rSYT(k,n-k)$. Conversely, every element
    in $\rSYT(k,n-k)$ is determined by some $(u,-v)$ up to a total order on
    $S$ and $S^c$. This proves the claim.
\end{proof}

This perspective in terms of coherent $\1_S$-monotone paths provides an
alternative geometric perspective on realizable SYT. 

\begin{theorem}[{\cite{VanderbeiMallows}}]\label{thm:MV2}
    All rectangular standard Young Tableaux of shape $2 \times (n-2)$ are
    realizable. 
\end{theorem}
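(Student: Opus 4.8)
The plan is to prove $\rSYT(2,n-2)=\SYT(2,n-2)$ by comparing cardinalities: both sides equal the Catalan number $C_{n-2}$, and since the left-hand set is contained in the right-hand one, equality is forced. That $|\SYT(2,n-2)|=C_{n-2}$ is the hook length formula for the rectangular shape with two rows of length $n-2$. The work is in showing $|\rSYT(2,n-2)|=C_{n-2}$, and for this I would realize $\rSYT(2,n-2)$ as the set of regions of a Shi-type hyperplane arrangement lying in a fixed Weyl chamber.

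Set $m:=n-2$. By definition a realizable tableau of shape $2\times m$ is the rank matrix of an outer sum $u\oplus v$ with $u=(u_1,u_2)\in\R^2$ and $v=(v_1,\dots,v_m)\in\R^m$ sufficiently generic; using the evident $\SymGrp_2\times\SymGrp_m$ symmetry we may assume $v_1<\cdots<v_m$ and $u_1<u_2$, and after an affine rescaling $u=(0,1)$. The entries of $u\oplus v$ are then $v_j$ in cell $(1,j)$ and $v_j+1$ in cell $(2,j)$. First I would record that this is automatically a standard Young tableau and, crucially, that every comparison between a first-row cell $(1,i)$ and a second-row cell $(2,j)$ is already forced whenever $i\le j$, since $v_i\le v_j<v_j+1$. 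Hence the tableau depends only on the signs of $v_i-v_j-1$ for $i>j$, i.e.\ only on the region of the arrangement $\mathcal A=\{x_i-x_j=1 : 1\le j<i\le m\}$ that contains $v$ inside the open chamber $C=\{x_1<\cdots<x_m\}$. Conversely the tableau records, for each $i>j$, whether $(1,i)$ lies above or below $(2,j)$, so it recovers this region; and every region of $\mathcal A$ meeting $C$ is realized by a generic point. Therefore $\rSYT(2,m)$ is in bijection with the set of regions of $\mathcal A$ inside $C$, which after the substitution $x\mapsto -x$ is precisely the set of dominant regions of the Shi arrangement $\mathrm{Shi}(A_{m-1})$.

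Next I would count these regions. Each such region is encoded by the weakly increasing step function $g\colon[m]\to\Znn$ with $g(i):=\max\bigl(\{0\}\cup\{\,j<i : v_i-v_j>1\,\}\bigr)$, which satisfies $0\le g(i)\le i-1$; conversely any such $g$ is realized by choosing the consecutive gaps $v_i-v_{i-1}$ suitably (large exactly when $g(i)=i-1$, then fine-tuned so that each prescribed comparison of $v_i-v_j$ with $1$ is correct — the monotonicity of $g$ is exactly what makes the transitivity relations $v_i-v_k=(v_i-v_j)+(v_j-v_k)$ consistent). The number of weakly increasing sequences $0\le g(1)\le g(2)\le\cdots\le g(m)$ with $g(i)\le i-1$ is the Catalan number $C_m$; equivalently one may invoke the classical fact that $\mathrm{Shi}(A_{m-1})$ has exactly $C_m$ dominant regions. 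Combining, $|\rSYT(2,n-2)|=C_{n-2}=|\SYT(2,n-2)|$, and since $\rSYT(2,n-2)\subseteq\SYT(2,n-2)$ the two sets coincide.

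The step I expect to be the main obstacle is the bijection of the second paragraph: one must make precise that the combinatorial type of $u\oplus v$ depends only on the Shi region — the observation that the cross-row comparisons with $i\le j$ are automatic is the heart of this — that the map from regions to tableaux is injective, and that conversely every dominant region (equivalently every admissible step function $g$) is actually realized by real numbers $v_1<\cdots<v_m$, i.e.\ that the pairwise prescriptions $v_i-v_j\lessgtr 1$ can always be met simultaneously. Once these routine verifications are in place, the Catalan enumeration closes the argument; alternatively the explicit step-function bijection can be bypassed by citing the known count of dominant Shi regions.
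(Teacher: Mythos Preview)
Your approach is essentially the same as the paper's. Both normalize to $u=(0,1)$, observe that the combinatorial type of $u\oplus v$ inside the chamber $v_1<\cdots<v_{n-2}$ is governed by the hyperplanes $v_i-v_j=1$, and conclude by citing the Catalan count of chamber regions; the paper phrases this as the Catalan arrangement restricted to the fundamental chamber (citing Stanley and Athanasiadis), while you phrase it as dominant Shi regions and add an explicit step-function bijection, but these are the same enumeration.
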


\begin{cor}
    For $S \subseteq [n]$ and $|S| = 2$, all $\1_S$-monotone paths on
    $\Pi_{n-1}$ are coherent. The number of such paths is $2(n-2)!C_{n-2}$,
    where $C_{k}$ denotes the $k$th Catalan number. 
\end{cor}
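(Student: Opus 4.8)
The plan is to read the statement off from \Cref{Ithm:rSYT} together with Mallows and Vanderbei's \Cref{thm:MV2}. Put $k = |S| = 2$. By \Cref{Ithm:rSYT}, the $\1_S$-monotone paths on $\Pi_{n-1}$ are in bijection with $\SymGrp_2 \times \SymGrp_{n-2} \times \SYT(2,n-2)$, and such a path is coherent if and only if the standard Young tableau of shape $2 \times (n-2)$ that it corresponds to is realizable. Now \Cref{thm:MV2} asserts that \emph{every} rectangular SYT of shape $2 \times (n-2)$ is realizable, i.e.\ $\rSYT(2,n-2) = \SYT(2,n-2)$. Hence every $\1_S$-monotone path on $\Pi_{n-1}$ is coherent, which is the first claim.

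For the enumeration, the bijection of \Cref{Ithm:rSYT} shows that the $\1_S$-monotone paths on $\Pi_{n-1}$ are counted by
\[
    |\SymGrp_2| \cdot |\SymGrp_{n-2}| \cdot |\SYT(2,n-2)|
    \ = \ 2\,(n-2)!\,|\SYT(2,n-2)| \, .
\]
It remains to recall the classical fact that the number of standard Young tableaux of rectangular shape with two rows of length $m$ equals the $m$-th Catalan number $C_m$. This follows, for instance, from the hook-length formula: the product of the hook lengths of the shape $(m,m)$ is $(m+1)!\,m!$, so the number of such tableaux is $\frac{(2m)!}{(m+1)!\,m!} = \frac{1}{m+1}\binom{2m}{m} = C_m$; alternatively one uses the standard bijection with ballot sequences. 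Setting $m = n-2$ gives $|\SYT(2,n-2)| = C_{n-2}$, and hence the count $2\,(n-2)!\,C_{n-2}$.

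There is no genuine obstacle here: the corollary is a direct specialization of \Cref{Ithm:rSYT} once one invokes \Cref{thm:MV2} and the Catalan enumeration of $2 \times m$ tableaux. The only points that warrant a word of care are the convention on the shape $\SYT(2,n-2)$ (two rows of length $n-2$, equivalently two columns of length $n-2$ after transposing, both enumerated by $C_{n-2}$) and the observation that the two symmetric-group factors in \Cref{Ithm:rSYT} account precisely for the freedom of reordering the two-element set $S$ and its complement $[n]\setminus S$, so that the realizability condition genuinely only involves the tableau.
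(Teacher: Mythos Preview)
Your proof is correct and follows exactly the route the paper intends: the corollary is stated without its own proof in the paper, as an immediate consequence of \Cref{Ithm:rSYT} combined with \Cref{thm:MV2} and the standard Catalan enumeration of $2\times m$ tableaux. Your write-up simply makes these steps explicit, including the hook-length justification for $|\SYT(2,n-2)| = C_{n-2}$, and there is nothing to add.
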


We give a short geometric proof of Theorem~\ref{thm:MV2}. To that end, we make
the observation that, since the sweep polytopes are zonotopes, the normal fan
of $\MPPl{\1_S}{\Pi_{n-1}}$ for $S = \{1,\dots,k\}$ is given by the
arrangement of hyperplanes 
\[
    \{ (x,y) \in \R^k \times \R^{n-k} : x_i + y_k = x_j + y_l \}
\]
for $i,j \in [m], k,l \in [n]$ with $i\neq j$ and $k \neq l$. Inspecting the
proof of Theorem~\ref{Ithm:rSYT}, we arrive at the following conclusion.

\begin{cor}
    For $m,n \ge 1$, realizable SYT of shape $m \times n$ are in bijection to
    the regions of the arrangement of hyperplanes $\{ (x,y) \in \R^m \times
    \R^n : x_i + y_k = x_j + y_l \}$ for $i,j \in [m], k,l \in [n]$ with
    $i\neq j$ and $k \neq l$ restricted to the cone $\{(x,y) : x_1 \le \cdots
    \le x_m, y_1 \le \dots \le y_n \}$.
\end{cor}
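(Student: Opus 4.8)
The plan is to read the statement straight off the definition of a realizable SYT; it also drops out of the proof of Theorem~\ref{Ithm:rSYT} together with the normal-fan description preceding the corollary, and I would record that second viewpoint at the end as a consistency check. Set $C := \{(u,v)\in\R^m\times\R^n : u_1\le\cdots\le u_m,\ v_1\le\cdots\le v_n\}$, write $\mathrm{int}(C)$ for its interior, and let $\mathcal{A}$ be the arrangement of hyperplanes $H_{ijkl}:=\{(u,v) : u_i+v_k=u_j+v_l\}$ with $i\ne j\in[m]$ and $k\ne l\in[n]$; a region of $\mathcal{A}$ restricted to $C$ is a connected component of $\mathrm{int}(C)\setminus\bigcup_{H\in\mathcal{A}}H$. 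Let $\Phi(u,v)$ denote the entrywise rank matrix of the outer sum $u\oplus v$. First I would check that for $(u,v)\in\mathrm{int}(C)$ the entries of $u\oplus v$ are pairwise distinct exactly when $(u,v)\notin\bigcup\mathcal{A}$: for distinct cells $(i,k)\ne(j,l)$, if $i\ne j$ and $k\ne l$ then $u_i+v_k\ne u_j+v_l$ is precisely $(u,v)\notin H_{ijkl}$, whereas if exactly one index coincides the strict inequalities defining $\mathrm{int}(C)$ force distinctness. Since $u,v$ are strictly increasing on $\mathrm{int}(C)$, the matrix $u\oplus v$ then has distinct entries increasing along rows and columns, so $\Phi(u,v)\in\SYT(m,n)$ and, by definition, $\Phi(u,v)\in\rSYT(m,n)$. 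Conversely, every realizable SYT of shape $m\times n$ is by definition $\Phi(u,v)$ for some strictly increasing $u,v$ with all $u_i+v_k$ distinct, i.e.\ for some $(u,v)\in\mathrm{int}(C)\setminus\bigcup\mathcal{A}$. Hence $\Phi$ is a surjection from $\mathrm{int}(C)\setminus\bigcup\mathcal{A}$ onto $\rSYT(m,n)$.

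Next I would identify the fibers of $\Phi$ with the regions. Two points $(u,v),(u',v')\in\mathrm{int}(C)\setminus\bigcup\mathcal{A}$ satisfy $\Phi(u,v)=\Phi(u',v')$ iff $u_i+v_k<u_j+v_l\Leftrightarrow u'_i+v'_k<u'_j+v'_l$ for all cells $(i,k),(j,l)$. When $i=j$ this comparison reduces on $\mathrm{int}(C)$ to $v_k<v_l$, equivalently $k<l$, and symmetrically when $k=l$ it reduces to $i<j$, so those comparisons agree automatically. The only genuine constraints are thus those with $i\ne j$ and $k\ne l$, and $\Phi(u,v)=\Phi(u',v')$ holds iff $(u,v)$ and $(u',v')$ lie on the same side of every $H_{ijkl}$, i.e.\ in the same connected component of $\mathrm{int}(C)\setminus\bigcup\mathcal{A}$. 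Therefore $\Phi$ descends to the asserted bijection between the regions of $\mathcal{A}$ restricted to $C$ and $\rSYT(m,n)$.

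Finally I would reconcile this with the polytopal picture as a check: with $S=\{1,\dots,m\}\subseteq[m+n]$, Proposition~\ref{thm:4perm} gives that $\MPPl{\1_S}{\Pi_{m+n-1}}$ is normally equivalent to the zonotope $\SP(T')$, whose maximal normal cones are the regions of the arrangement obtained from $\mathcal{A}$ by adjoining the braid hyperplanes $\{u_i=u_{i'}\}$ and $\{v_k=v_l\}$ (these appear because $\SP(T')$ also has edge directions $e_i-e_{i'}$ and $e_k-e_l$); by the proof of Theorem~\ref{Ithm:rSYT} these regions are indexed by $\SymGrp_m\times\SymGrp_n\times\rSYT(m,n)$. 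Restricting to the open cone $\mathrm{int}(C)$, which is a single region of the braid sub-arrangement, kills the $\SymGrp_m\times\SymGrp_n$ factor and deletes the braid hyperplanes without changing the induced regions, leaving exactly the $\rSYT(m,n)$-indexed regions of $\mathcal{A}$ — in agreement with the direct count. I do not anticipate a real obstacle; the only point needing a moment's care, used in both arguments, is the observation that inside the open cone the non-mixed "coincidence" hyperplanes of $u\oplus v$ never occur, so they can be dropped from the arrangement without affecting the regions contained in $C$.
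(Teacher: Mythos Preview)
Your proposal is correct and takes essentially the same approach as the paper. The paper does not give a separate proof of this corollary---it just says the statement follows by ``inspecting the proof of Theorem~\ref{Ithm:rSYT}'' together with the normal-fan observation immediately preceding it---and your direct argument via the rank map $\Phi$ is exactly what that inspection amounts to (compare the paragraph in the proof of Theorem~\ref{Ithm:rSYT} where $\wt=(u,-v)$ and the sweep is read off from the ranks of $u\oplus v$); your polytopal consistency check is the same observation the paper records about the normal fan of $\SP(T')$, with the helpful extra remark that the braid hyperplanes must be adjoined to $\mathcal{A}$ to get the full normal fan but drop out upon restriction to $\mathrm{int}(C)$.
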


\begin{proof}[Proof of Theorem~\ref{thm:MV2}]
    Let $m = 2$.  the cone $\{(x,y) : x_1 \le  x_2,
    y_1 \le \dots \le y_n \}$ has a $2$-dimensional lineality space given by
    adding a constant to all coordinates of $x$ and, independently, to $y$.
    Since all the hyperplanes are linear, we may thus assume that $x_1 = 0$.
    We may also restrict to $x_2 = 1$ and count the number of regions in the
    cone $C = \{ y \in \R^n : y_1 \le \dots \le y_n \}$ induced by the
    hyperplanes 
    \[
        y_k - y_l \ = \ \pm 1
    \]
    for $k \neq l$. The cone $C$ is the fundamental cone for the braid
    arrangement and hyperplanes constitute the so called \Def{Catalan
    arrangement}.  The number of regions in $C$ is well-known to be $C_n$; see,
    for example,~\cite[Prop.~5.14]{Stanley-HA} or~\cite{Athanasiadis} for the
    connection to Shi arrangements.
\end{proof}

Mallows and Vanderbei also discuss realizability of general rectangular SYT
and show that the tableau
\[
    \footnotesize
    \begin{bmatrix}
        1 & 2 & 6\\
        3 & 5 & 7\\
        4 & 8 & 9
    \end{bmatrix}
\]
is not realizable.

\begin{cor}
    For $|S| \ge 3$ not all $\1_S$-monotone paths on $\Pi_{n-1}$ are
    coherent.
\end{cor}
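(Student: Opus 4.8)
The plan is to reduce the statement to the existence of a single non-realizable tableau via \Cref{Ithm:rSYT}, and then propagate it to all relevant shapes by a short stability argument. By \Cref{Ithm:rSYT}, for $S \subseteq [n]$ with $k = |S|$ the coherent $\1_S$-monotone paths on $\Pi_{n-1}$ are precisely those corresponding to realizable tableaux in $\SYT(k,n-k)$; hence it suffices to exhibit one non-realizable tableau of shape $k \times (n-k)$. (For $n-k \le 2$ this is impossible: transposing a realization $u \oplus v$ to $v \oplus u$ shows realizability is preserved under transposition of tableaux, so \Cref{thm:MV2} implies every tableau of shape $k \times 1$ or $k \times 2$ is realizable and hence every $\1_S$-monotone path is coherent. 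I would therefore phrase the corollary in the genuine regime $k \ge 3$ and $n - k \ge 3$.) Mallows and Vanderbei exhibit the non-realizable tableau of shape $3 \times 3$ displayed above, which settles $k = n - k = 3$.

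For larger shapes I would establish a stability lemma: \emph{adjoining a maximal last row (or a maximal last column) to a rectangular standard Young tableau preserves its realizability status}. Concretely, given $T \in \SYT(m,\ell)$ let $T^{\downarrow} \in \SYT(m+1,\ell)$ be $T$ with a new bottom row filled left to right by $m\ell+1, m\ell+2, \dots, (m+1)\ell$. If generic increasing $u' \in \R^{m+1}$, $v' \in \R^{\ell}$ realize $T^{\downarrow}$, then the entries placed in the new row carry the ranks $m\ell+1,\dots,(m+1)\ell$, i.e.\ they are the $\ell$ largest values of $u' \oplus v'$; consequently the mutual ranks of the remaining $m\ell$ entries agree with their ranks in all of $u' \oplus v'$, so $(u'_1,\dots,u'_m) \oplus v'$ realizes $T$. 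Conversely, if $u \oplus v$ realizes $T$, then choosing $u_{m+1}$ generic with $u_{m+1}+v_1 > u_m + v_\ell$ makes the new row the $\ell$ largest values in increasing order, so $(u,u_{m+1}) \oplus v$ realizes $T^{\downarrow}$. Thus $T^{\downarrow}$ is realizable iff $T$ is, and the column statement follows by transposition.

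Finally, starting from the $3 \times 3$ Mallows--Vanderbei tableau and adjoining $k-3$ maximal rows followed by $(n-k)-3$ maximal columns yields, by the lemma, a non-realizable tableau of shape $k \times (n-k)$ for every $k \ge 3$ and $n-k \ge 3$. Applying \Cref{Ithm:rSYT} produces a non-coherent $\1_S$-monotone path on $\Pi_{n-1}$ for every such $S$, which is the claim. The main technical point --- and the only step that is not pure bookkeeping --- is the forward direction of the stability lemma: one must observe that in \emph{any} realization of $T^{\downarrow}$ the adjoined line is forced (by the tableau itself) to consist of the globally largest entries, so that restricting the realization leaves the ranks of the other entries undisturbed.
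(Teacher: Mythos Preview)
Your argument is correct, and in fact more complete than what the paper does: the paper simply displays the Mallows--Vanderbei $3\times 3$ tableau and records the corollary without further justification, so only the case $k=n-k=3$ is literally covered there. You supply the missing propagation to arbitrary shapes $k\times(n-k)$ with $k,n-k\ge 3$ via the row/column stability lemma, and you also correctly flag that the statement fails when $n-k\le 2$ (by transposition symmetry together with \Cref{thm:MV2}), so the corollary should really be read with the hypothesis $\min(|S|,\,n-|S|)\ge 3$; this is consistent with the fact that $\1_S$ and $\1_{[n]\setminus S}$ induce the same monotone paths on $\Pi_{n-1}$.

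The stability lemma is fine as written. In the forward direction the key point you single out is exactly right: because the appended row of $T^{\downarrow}$ carries the labels $m\ell+1,\dots,(m+1)\ell$, any realization forces those $\ell$ outer-sum entries to be the globally largest, hence deleting them leaves the ranks of the remaining $m\ell$ entries unchanged and produces a realization of $T$. The backward direction is immediate by taking $u_{m+1}$ sufficiently large and generic. So your proof both sharpens the hypothesis and fills the gap the paper leaves implicit.
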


All monotone paths being coherent on zonotopes in general is a strong
restriction. In~\cite{Edman} it is shown that for a generic objective function
all monotone paths being coherent implies that all cellular strings are
coherent and they provide a complete characterization of the cases in which this
arises. However, in this special case, the objective function $\1_{\{1,2\}}$
is not generic, so their tools do not apply. 

The combinatorics of the monotone path polytope for the permutahedron in other
cases remains complicated. However, it is surprisingly natural and connected
to applications through the motivation of Mallows and Vanderbei in
\cite{VanderbeiMallows}. We end on the open question of whether we can obtain
a more robust description of the $\1_{S}$-monotone path polytopes of large
classes of generalized permutahedra or the permutahedron itself. 

\begin{op}
    For fixed $m,n \ge 3$, determine the (number of) realizable SYT of shape
    $m \times n$. Equivalently, determine the (number of) coherent monotone
    paths of $\Pi_{n-1}$ for special directions $\1_S$ with $|S| \ge 3$.
\end{op}

\bibliographystyle{siam}
\bibliography{References.bib} 

\end{document}